\newcommand{\stab}{{\rm stab}}
\newtheorem{thm}{Theorem}[section]
\newtheorem{theo}[thm]{Theorem}
\newtheorem{thmbis}{Theorem}
\newtheorem*{thm*}{Theorem}
\newtheorem{notations}[thm]{Notations} 
\newtheorem{dfn}[thm]{Definition} 
\newtheorem{defi}[thm]{Definition} 
\newtheorem*{dfn*}{Definition}
\newtheorem{dfnbis}[thmbis]{Definition}
\newtheorem{cor}[thm]{Corollary}
\newtheorem{coro}[thm]{Corollary}
\newtheorem*{cor*}{Corollary}
\newtheorem{prop}[thm]{Proposition} 
\newtheorem*{prop*}{Proposition} 
\newtheorem*{properties*}{Properties} 
\newtheorem{lem}[thm]{Lemma} 
\newtheorem{lemma}[thm]{Lemma} 
\newtheorem*{lem*}{Lemma}
\newtheorem*{claim*}{Claim} 
\newtheorem*{fact*}{Fact}
\newtheorem*{qst*}{Question}
\newtheorem*{pb*}{Problem}
\theoremstyle{remark}
\newtheorem*{algo*}{Algorithm} 
\newtheorem*{rem*}{Remark}
\newtheorem{rem}[thm]{Remark}
\newtheorem*{example*}{Example}
\newlength{\espaceavantspecialthm}
\newlength{\espaceapresspecialthm}
\newenvironment{specialthm*}[1]{
\vskip\espaceavantspecialthm \noindent \textbf{#1} \itshape}%
{\normalfont \vskip \espaceapresspecialthm}
\newcommand{\semidirect}{\ltimes}
\newcommand{\Zmax}{Z_{max}}
\newcommand{\actson}{\curvearrowright}
\newcommand{\ad}{{\rm ad}}
\newcommand{\diam}{\mathop{\mathrm{diam}\,}}
\newcommand {\calC} {{\mathcal {C}}}   
\newcommand {\calE} {{\mathcal {E}}}
\newcommand {\calH} {{\mathcal {H}}}
\newcommand {\calP} {{\mathcal {P}}}
\newcommand {\cala} {{\mathcal {A}}}   
\newcommand {\calc} {{\mathcal {C}}}
\newcommand {\calf} {{\mathcal {F}}}   
\newcommand {\calh} {{\mathcal {H}}}
\newcommand {\calp} {{\mathcal {P}}}   
\newcommand {\calq} {{\mathcal {Q}}}
\newcommand {\bbH} {{\mathbb {H}}}
\newcommand {\bbN} {{\mathbb {N}}}
\newcommand {\bbR} {{\mathbb {R}}}
\newcommand {\bbZ} {{\mathbb {Z}}}
\newcommand {\ie}{i.e.\ }
\newcommand{\es}{\emptyset}
\renewcommand{\phi}{\varphi} 
\newcommand{\m} {^{-1}} 
\newcommand{\eps} {\varepsilon} 
\newcommand {\xra} {\xrightarrow}
\newcommand{\ol}[1]{\overline{#1}}
\newcommand{\normal} {\vartriangleleft}
\newcommand{\dunion}{\sqcup}
\newcommand{\ra}{\rightarrow}
\newcommand{\grp}[1]{\langle #1 \rangle}
\newcommand{\ngrp}[1]{{\langle\langle #1 \rangle\rangle}}
\newcommand{\Stab} {{\mathrm{Stab}}}
\newcommand{\id} {\mathrm{id}}
\newcommand{\displ}{\mathrm{displ}}
\newcommand{\depth} {{\mathrm{depth}}}
\newcommand{\PF}{\mathcal{PF}}
\newcommand{\bPF}{\overline{\mathcal{PF}}}
\newcommand{\bG}{\bar{G}}
\newcommand{\bGp}{\bar{G}'}
\newcommand{\DFi}{\bar{G}_i}\newcommand{\DFpi}{\bar{G}'_i}
\title{Recognizing a relatively hyperbolic group by its Dehn fillings}
\author{Fran\c{c}ois Dahmani and Vincent Guirardel \footnote{Both
  authors are supported by the ANR grant  2011-BS01-013 and the Institut universitaire
  de France.
The second author thanks the Centre Henri Lebesgue ANR-11-LABX-0020-01 for its stimulating mathematical research programs.}}
\begin{document}

\maketitle

\begin{abstract}
  Dehn fillings for relatively hyperbolic groups generalize the topological Dehn surgery on a non-compact hyperbolic $3$-manifold such as hyperbolic knot complements. We prove a rigidity result saying that if two non-elementary relatively hyperbolic groups without certain splittings have sufficiently many
isomorphic Dehn fillings, then these groups are in fact isomorphic. Our main application is a solution to the isomorphism problem
in the class of non-elementary relatively hyperbolic groups with residually finite parabolic groups and with no suitable splittings.
\end{abstract}

\section{Introduction}

\subsection{A rigidity result with respect to Dehn fillings}

\paragraph{The general problem.}
Consider the following general problem.
Fix a finitely generated group $G$, and $\calc$ a collection of quotients of $G$.
Can one determine the isomorphism type of $G$ from the isomorphism
classes of the elements of $\calc$ ?

The case where $\calc$ is the
collection of  all finite quotients of a residually finite group $G$ has been widely studied: it amounts to knowing
when two groups with isomorphic profinite completions are actually isomorphic \cite[Corollary 3.2.8]{RiZa_profinite}. 
 Pickel proved a  weak version of this rigidity problem for virtually nilpotent groups: 
there are only finitely many finitely generated virtually nilpotent  groups sharing the same set of isomorphism classes of finite quotients
 \cite{Pickel_fgnilpotent,Pickel_vnilpotent}. 
This was extended by Grunewald, Pickel and Segal to the class of all virtually polycyclic groups \cite{GrPiSe_polycyclic}.
Such a result also holds for  finitely generated virtually free
groups, and  for some   $S$-arithmetic groups
\cite{GrZa_genus,Aka_arithmetic}.

Although it is an easy exercise to check that
finite quotients of a finitely generated abelian group determine its
isomorphism class, theorems saying that $\calc$ determines  $G$ up to
isomorphism are rare.  
  It is an open question due to Remeslennikov  
 whether a finitely generated residually finite group  having the same
 set of isomorphism classes of finite quotients as a finitely generated free group 
has to be free.  
In fact, there exist pairs of non-isomorphic virtually free groups, of nilpotent groups, and even of virtually cyclic groups, 
having isomorphic profinite completions
\cite{Remeslennikov_conjugacy}, \cite{Baumslag_1974}, \cite{GrunewaldScharlau} \cite{GrZa_genus}.
There are also examples in the very rigid context of irreducible lattices in higher rank semi-simple Lie groups
\cite{Aka_profinite}.

\paragraph{Dehn fillings.}
In this paper, we will consider a relatively hyperbolic group $G$, and take for $\calc$ a collection of \emph{Dehn fillings} of $G$.
This notion originates in $3$-manifold theory. In this context, one starts with a complete orientable hyperbolic $3$-manifold $M$ which is
non-compact, but of finite volume and with toral cusps 
(e.g.\ a hyperbolic link complement in $S^3$).
Cutting out the cusps of $M$, one obtains a $3$-manifold $M'$ with toral boundary.
In each boundary torus, choose a simple closed curve. The
   		isotopy classes of these curves in the tori are called slopes.  Given such a  collection $s$ of slopes, 
		one defines  a Dehn filling  $M_s$ of $M$
 		by gluing  solid tori to $M$ to make the slopes in $s$ bound disks. 
Thurston's Dehn filling theorem asserts that 
if one avoids finitely many exceptional slopes  in each torus, all the $3$-manifolds  $M_s$ obtained from $M$ by Dehn filling are hyperbolic 
\cite{Thurston_bullAMS,PetronioPorti,HodgsonKerckhoff_universal}, see also \cite{Lackenby_Dehn,Agol_bounds}. 
Algebraically,
the fundamental group of $M_s$ is isomorphic to the quotient of $\pi_1(M)$ by the normal subgroup
 generated by the cyclic subgroups corresponding to the slopes.

More generally, 
a relatively hyperbolic group $(G,\calp)$      
is a group $G$ together with a finite collection $\calp=\{[P_1],\dots,[P_k]\}$
of conjugacy classes of  finitely generated subgroups $P_i<G$, called \emph{peripheral} subgroups, 
such that $G$ has an action on a Gromov hyperbolic space
that is similar to the action of a finite volume hyperbolic manifold group on the
universal cover $\mathbb{H}^n$, and where the groups $P_i$ are maximal parabolic subgroups 
(see \cite{Bow_relhyp,Farb_relatively} or Definition \ref{def;RHG}).

 In this setting, one  algebraically defines 
a \emph{Dehn filling} of $G$ as any quotient of the form
$\bar G=G/\ngrp{N_1,\dots,N_k}_G$ where  $N_i\normal P_i$ is a normal
subgroup of $P_i$.
The  hyperbolic Dehn filling theorem 
has been extended to this context by 
 Osin \cite{Osin_peripheral} and Groves and
Manning \cite{GroMan_dehn} (and even beyond this class by Osin and the
authors \cite{DGO_HE}).
 It states there is a finite subset $S\subset G\setminus \{1\}$ such that 
if $N_1,\dots,N_k$ don't intersect $S$ 
then $P_i/N_i$ embeds in $\bar G$, and $\bar G$ is hyperbolic relative
to $P_i/N_i$. 
When the hypothesis $(N_1\cup\dots\cup N_k)\cap S = \emptyset$ holds, we say that $\bar G$ is a \emph{proper} Dehn filling.
In particular, if $\bar G$ is a proper Dehn filling of $G$ where each $P_i/N_i$ is finite or virtually cyclic,
then $\bar G$ is  a  hyperbolic group (not relative).
Thus, in the $3$-manifold case, one recovers Gromov hyperbolicity of the fundamental group of the Dehn filled manifold
 except for a finite set of exceptional slopes.
\\

\paragraph{Recognizing a relatively hyperbolic group by its Dehn fillings.}
The question we address in this paper is whether a large enough collection $\calc$ of Dehn fillings of a relatively hyperbolic group $G$
determines $G$ up to isomorphism.

 To build on the example of the beginning of this introduction, one can
consider \emph{finite Dehn fillings}, defined as  Dehn fillings obtained by 
killing finite index subgroups $N_i\normal P_i$ (though $\bar G$ itself remains an infinite group in general).
 The question we ask  is whether one can recover the isomorphism type of $G$ from the
set of isomorphism types of such Dehn fillings.

The earlier discussion  suggests that the class of toral relatively
hyperbolic groups (whose peripheral subgroups are free abelian) is a
good one to consider, and that the class of those with nilpotent
peripheral subgroups might already cause problems.

Maybe surprisingly, our results  apply essentially without assumption on the
peripheral groups themselves apart from residual finiteness.  
In addition to the requirement that $G$ is non-elementary,
the crucial assumption that we make is
the absence of  splittings of a particular kind. 
In some  
sense, this is the generic case (see for instance \cite{DGP_random} saying that random groups do not split).

\begin{thmbis}[Corollary \ref{cor_periph}]\label{thm_main}
Consider $(G,\calp)$, $(G',\calp')$ two groups that are hyperbolic relative to 
infinite, residually finite groups.
Assume that both are non-elementary, and rigid.

Assume that $G$ and $G'$ have the same isomorphism classes of finite Dehn fillings, viewed as groups with a peripheral structure.

 Then $(G,\calp)\simeq(G',\calp')$. 
\end{thmbis}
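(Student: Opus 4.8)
The plan is to deduce this from the paper's main rigidity theorem for Dehn fillings, reading finite Dehn fillings as a concrete cofinal family of proper Dehn fillings and using residual finiteness only to produce such a family. So the deduction has two parts: first, that finite Dehn fillings of $G$ (and of $G'$), together with their peripheral structures, form a cofinal family of proper Dehn fillings; second, that the hypothesis lets one read each such filling simultaneously on $G$ and on $G'$, so that the main theorem applies.

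For the first part I would invoke the refined hyperbolic Dehn filling theorem of Osin and of Groves--Manning: for every $R>0$ there is a finite set $S_R\subset G\setminus\{1\}$ such that whenever $N_i\normal P_i$ with $(N_1\cup\dots\cup N_k)\cap S_R=\es$, the filling $\bar G=G/\ngrp{N_1,\dots,N_k}_G$ is proper (each $P_i/N_i$ embeds and $\bar G$ is hyperbolic relative to $\{P_i/N_i\}$) and, moreover, $G\to\bar G$ is injective on the ball of radius $R$. Since each $P_i$ is residually finite, the intersection of its finite-index subgroups is trivial, so one can choose finite-index $N_i\normal P_i$ missing the finite set $S_R\cap P_i$; this yields a proper \emph{finite} Dehn filling whose quotient map is injective on $B(R)$. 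Letting $R\to\infty$ shows the finite Dehn fillings of $G$, each carrying its peripheral structure $\{P_i/N_i\}$, are cofinal among proper Dehn fillings; likewise for $G'$.

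For the second part, let $\{\bar G_N\}$ be the cofinal family of finite Dehn fillings of $G$ just produced. By hypothesis each $\bar G_N$ is isomorphic, \emph{as a group with peripheral structure}, to a finite (hence proper) Dehn filling $\bar G'_{N'}$ of $G'$ — the isomorphism carrying $\{P_i/N_i\}$ onto $\{P'_j/N'_j\}$, which is exactly what ``viewed as groups with a peripheral structure'' provides. Symmetrically, a cofinal family of finite Dehn fillings of $G'$ is realized, with peripheral structure, by finite Dehn fillings of $G$. Feeding these two matched cofinal families into the main theorem gives $(G,\calp)\simeq(G',\calp')$. The real content is in the main theorem; within this deduction the delicate point is to reconcile depth on the two sides: a filling that is deep over $G$ need not come from a deep filling of $G'$, so if the main theorem is phrased via a single sufficiently deep common filling rather than via cofinal families, one must, starting from a very deep finite filling $Q$ of $G$ realized as $\bar G'_M$, further-fill $Q$ along the subgroups $P'_j/M_j$ — which, through the peripheral-structure-preserving isomorphism, are peripheral subgroups of $Q$ also as a filling of $G$ — exploiting residual finiteness of the $P'_j$ to go deep over $G'$ while keeping the enlarged $G$-side kernels clear of the $G$-exceptional set of the target radius. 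I expect this reconciliation of the two unrelated systems of filling kernels to be the main obstacle in the deduction.
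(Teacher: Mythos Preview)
You have correctly located the real difficulty: Theorem~\ref{thm_DF_carac} requires cofinal sequences of Dehn kernels on \emph{both} sides simultaneously, and a cofinal sequence of finite fillings of $G$ may be matched, under the hypothesis, to fillings of $G'$ whose kernels are neither proper nor cofinal (note that ``finite'' does not imply ``proper'': a finite-index $N'_j\normal P'_j$ can meet the exceptional set). Producing a second cofinal family on the $G'$ side, matched to possibly shallow $G$-fillings, does not help either, since the theorem needs a \emph{single} pair of cofinal sequences linked by isomorphisms. Your proposed repair by ``further-filling'' goes in the wrong direction. Depth is large precisely when the subgroups $N'_j\normal P'_j$ are small; a further Dehn filling of $Q=G'/\ngrp{N'_1,\dots,N'_{k'}}$ replaces each $N'_j$ by a \emph{larger} normal subgroup $L'_j\supset N'_j$, which can only \emph{decrease} depth over $G'$. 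Residual finiteness of $P'_j$ does let you find small finite-index normal subgroups of $P'_j$, but those produce \emph{different} fillings of $G'$, not further quotients of $Q$, and there is then no reason they match any filling of $G$. So the obstacle you flag is genuine and is left unresolved.

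The paper avoids this by choosing on $G$ a specific cofinal sequence whose matching $G'$-kernels are forced to coincide with a canonical cofinal sequence on $G'$. Take $K_i=\ngrp{C_i(P_1),\dots,C_i(P_k)}$, the $i$-th \emph{characteristic} Dehn kernel, where $C_i(P)$ is the intersection of all subgroups of index $\le i$ in $P$; residual finiteness makes $(K_i)$ cofinal. Match $(G/K_i,\bar\calp_i)\simeq(G'/K'_i,\bar\calp'_i)$. Each peripheral quotient $P_j/C_i(P_j)$ is \emph{$i$-separated} (its subgroups of index $\le i$ intersect trivially); the structure-preserving isomorphism transports this to the $G'$-side, so every $P'_{j'}/(P'_{j'}\cap K'_i)$ is $i$-separated, forcing $C_i(P'_{j'})\subset P'_{j'}\cap K'_i$ and hence $\#P_{\sigma(j)}/C_i(P_{\sigma(j)})\le \#P'_{j'}/C_i(P'_{j'})$. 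Running the same argument from the $G'$ side on a subsequence gives the reverse inequalities, hence equalities, so $P'_{j'}\cap K'_i=C_i(P'_{j'})$: the matched kernel $K'_i$ is itself the $i$-th characteristic Dehn kernel of $G'$. Now both sequences are cofinal and Theorem~\ref{thm_DF_carac} applies directly.
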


Let us explain the definitions involved in the statement.
Recall that $\calp=\{[P_1],\dots,[P_k]\}$ is a finite set of conjugacy classes of subgroups of $G$.
 The notation $(G,\calp)\simeq (G',\calp')$ 
means that there is an isomorphism $G\ra G'$ preserving the peripheral structures, \ie sending $\calp$ to $\calp'$.
If $\bar G$
is a Dehn filling of the relatively hyperbolic group $(G,\calp)$, 
it inherits  a peripheral structure $\bar\calp$ which is the
image of $\calp$ under the quotient map 
(this is again a finite set of conjugacy classes of subgroups).
Thus, whether $(\bar G,\bar \calp)\simeq (\bar G',\bar \calp')$ makes
sense for Dehn fillings of $(G,\calp)$ and $(G',\calp')$.

 A subgroup of $G$ is \emph{elementary}
if it is finite, virtually cyclic, or parabolic (\ie contained in a group in a  group $P_i$ up to conjugacy).
In particular, we say that  $(G,\calp)$ is \emph{non-elementary} if $G$ is infinite, not virtually cyclic, 
and $P_i\neq G$ for all $i\in\{1,\dots k\}$.

The crucial non-splitting assumption is the following:
\begin{dfnbis}\label{dfn_rigid}
We say that a relatively hyperbolic group  $(G,\{[P_1],\dots,[P_n]\})$ is \emph{rigid} if 
it has no non-trivial splitting (as an amalgamation, or an HNN extension) 
 over an elementary subgroup 
such that each $P_i$ is conjugate in some factor of the  splitting.
\end{dfnbis}

It is easy to see that one needs
such assumptions:
a free product $G=P_1* P_2$ is hyperbolic relative to the conjugates of $P_1$ and $P_2$, 
and the finite Dehn fillings of $G$ are the groups 
the form $(P_1/N_1)*(P_2/N_2)$ with $N_i\normal P_i$ of finite index. 
Hence, if we take $P,P'$ two non-isomorphic nilpotent groups having the same profinite completion, 
then the relatively hyperbolic groups $P*P$ and $P'*P'$ are non-isomorphic, but 
have  the same collection of finite Dehn fillings.

Theorem \ref{thm_main} is in fact a corollary of a rigidity theorem that does not only apply to \emph{finite} Dehn fillings.

Say that a sequence 
of Dehn fillings $\bar G_i=G/\ngrp{N_1^i,\dots,N_k^i}_G$ of $G$ is
\emph{cofinal}  
if for each $j\leq k$ and all finite subset $A\subset P_j$, $A$ eventually embeds into $P_j/N_1^j$.
In particular, a cofinal sequence of Dehn fillings is eventually proper (see Lemma \ref{lem;cof}).

\begin{thmbis}[see Theorem \ref{thm_DF_carac}]\label{thm_carac_intro}
Consider   $(G,\calp)$,  $(G',\calp')$ two relatively
  hyperbolic groups. Assume that both are non-elementary and rigid, and that each peripheral group is infinite.

Consider cofinal sequences of Dehn fillings $\bar G_i$, $\bar G'_i$ of $G$ and $G'$ respectively, and
assume that there are isomorphisms $\phi_i: (\bar G_i,\bar\calp_i)\xra{{}_\sim }(\bar G'_i,\bar \calp'_i)$.

Then there is an isomorphism  
$\phi:G\ra G'$ sending $\calp$ to $\calp'$.
Moreover, $\phi$ induces infinitely many of the $\phi_i$, up to  inner automorphisms.
\end{thmbis}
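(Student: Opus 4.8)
The plan is to reconstruct $\phi$ as a limit of (a subsequence of) the isomorphisms $\phi_i$, using the cofinality hypothesis together with an acylindrical-accessibility / limit-group style compactness argument. First I would fix finite generating sets $S$ of $G$ and $S'$ of $G'$. Each $\phi_i$ determines, for every $s\in S$, an element of $\bar G'_i$; lifting arbitrarily, we get a map $\tilde\phi_i: S\to G'$, and symmetrically $\tilde\psi_i: S'\to G$ approximating $\phi_i^{-1}$. The key point is that cofinality forces the kernels $\ngrp{N^i_1,\dots,N^i_k}_G$ to ``go to infinity'' (any fixed nontrivial element of $G$ survives in $\bar G_i$ for $i$ large, by Lemma~\ref{lem;cof} and the Dehn filling theorem), so a relation $w(S)=1$ holds in $G$ iff $w(\bar S)=1$ holds in $\bar G_i$ for all large $i$. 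Thus any $G$-relation is eventually respected by $\tilde\phi_i$ modulo the kernel on the $G'$ side; the obstruction is that $\tilde\phi_i(w)$ need not be trivial in $G'$, only in $\bar G'_i$.

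To upgrade ``trivial in $\bar G'_i$'' to ``trivial in $G'$'', I would exploit rigidity via the Rips–Sela / Bestvina–Paulin machinery. If no subsequence of the $\tilde\phi_i$ (post-composed with conjugations) stabilizes to an honest homomorphism $G\to G'$, then after rescaling the word metrics the actions of $G$ on Cayley graphs of the $\bar G'_i$ converge to a nontrivial action of $G$ on an $\bbR$-tree with elementary (finite-or-virtually-cyclic-or-parabolic) arc stabilizers. Running the Rips machine and Sela's shortening argument, this produces a nontrivial splitting of $G$ over an elementary subgroup in which each $P_j$ is elliptic — because each peripheral $P_j$, being mapped cofinally, acts with a bounded orbit in the limit (its image in $\bar G'_i$ stabilizes the peripheral structure $\bar\calp'_i$, hence lies in a conjugate of a fixed peripheral subgroup, which has bounded displacement in the rescaled metric). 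This contradicts the rigidity of $(G,\calp)$. Hence some subsequence of the $\tilde\phi_i$ converges to a homomorphism $\phi: G\to G'$; symmetrically the $\tilde\psi_i$ converge to $\psi: G'\to G$ along a common subsequence. A further limiting argument shows $\psi\circ\phi$ and $\phi\circ\psi$ are inner (they are limits of $\psi_i\circ\phi_i=\id$ modulo inner automorphisms), so $\phi$ is an isomorphism after adjusting by an inner automorphism.

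It remains to check that $\phi$ sends $\calp$ to $\calp'$, and that $\phi$ induces $\phi_i$ up to inner automorphism for infinitely many $i$. For the first point: for each $j$, the restriction $\phi|_{P_j}$ is the limit of maps $P_j\to P'_{\sigma(j)}\le G'$ (since $\phi_i$ preserves peripheral structures, and we may assume the index $\sigma(j)$ is constant along the subsequence by pigeonhole), and by cofinality on the $G'$ side these restrictions are eventually injective on any finite subset, so $\phi(P_j)$ is an infinite subgroup of a conjugate of $P'_{\sigma(j)}$; non-elementarity and maximality of peripheral subgroups, together with the symmetric statement for $\psi$, force $\phi(P_j)$ to be exactly a conjugate of $P'_{\sigma(j)}$. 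For the second point: once $\phi$ is fixed, for each $i$ in the subsequence the composition $\phi_i\circ(\text{quotient map})$ and $(\text{quotient map})\circ\phi$ are two homomorphisms $G\to\bar G'_i$ agreeing on $S$ for $i$ large (by construction of $\phi$ as the eventual value of $\tilde\phi_i$), hence $\phi$ descends to an isomorphism $\bar G_i\to\bar G'_i$ that differs from $\phi_i$ by an inner automorphism (this uses that a Dehn filling quotient map is the canonical one, so $\phi(\ngrp{N^i_\bullet}_G)=\ngrp{N'^i_\bullet}_{G'}$ follows from $\phi(P_j)$ being a conjugate of $P'_{\sigma(j)}$ and $\phi_i$ respecting the $N$'s).

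The main obstacle is the step extracting the limiting $\bbR$-tree and running the shortening argument \emph{inside the family of quotients} $\bar G'_i$ rather than inside a fixed hyperbolic group: one must ensure the relevant acylindricity/thinness estimates for the Cayley graphs of $\bar G'_i$ are uniform in $i$ (this is exactly what the relative hyperbolic Dehn filling theorem of Osin and Groves–Manning provides along a proper, hence eventually-proper cofinal sequence), and that the arc stabilizers in the limit tree are genuinely elementary in $(G,\calp)$ — this requires tracking how parabolic subgroups of the $\bar G'_i$ pull back, and is where the ``infinite peripheral groups'' and ``rigid'' hypotheses are essential.
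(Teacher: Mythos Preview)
Your broad strategy---split into bounded versus unbounded displacement, run Bestvina--Paulin in the unbounded case, and extract a homomorphism in the bounded case---matches the paper's. But two steps do not work as written.

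First, you run the limiting argument on the Cayley graphs of the $\bar G'_i$. These groups are only \emph{relatively} hyperbolic (the Dehn fillings in the statement are not assumed to be finite), so their Cayley graphs need not be hyperbolic at all, let alone uniformly so. The paper resolves this by working instead with the purpose-built Dehn filled spaces $\bar X'_i=\dot X'_{R'_i}/K'_i$ (Theorem~\ref{theo;VRF}), which are $\delta_1$-hyperbolic for a constant $\delta_1$ independent of $i$. Uniform properness away from the traces of horoballs (Corollary~\ref{cor_uniform_properness}) then gives both the lifting argument in the bounded case (Proposition~\ref{prop;mono}) and the control on arc stabilizers in the unbounded case (Lemma~\ref{lem;arcstab_VC}). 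Your last paragraph flags the uniformity issue but does not identify the correct spaces; the Groves--Manning/Osin theorem alone does not hand you uniform hyperbolicity of Cayley graphs.

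Second, your claim that $\psi\circ\phi$ is inner because it is a ``limit of $\psi_i\circ\phi_i=\id$'' does not follow. The lifts $\tilde\psi_i\circ\tilde\phi_i$ are only the identity modulo $K_i$; after passing to limits (and there is no reason the subsequences for $\phi$ and $\psi$ are compatible) you only know that $\psi\circ\phi$ is a type-preserving monomorphism $G\to G$, not that it is inner. The paper closes this gap via a separate co-Hopf argument (Proposition~\ref{prop;cohopf}, Corollary~\ref{cor;cohopf}): one iterates $\psi\circ\phi$ and runs Bestvina--Paulin \emph{again}, this time on $X$ itself, to show that a non-surjective type-preserving monomorphism would produce a forbidden splitting. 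This is an essential ingredient that your proposal omits.
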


In fact, Theorem \ref{thm_main} and \ref{thm_carac_intro} apply under a slightly weaker rigidity assumption,
see Definition \ref{dfn_Zmax_rigid}.
 We also have similar results where the isomorphisms $\phi_i$ are not
assumed to preserve the peripheral structure, but typically, we then need to assume some sort of smallness of
the peripheral subgroups (see Sections \ref{sec_variation}, \ref{sec_DF_carac}).

\paragraph{Uniform geometry of Dehn fillings.}
Our proof is based on a study of the geometry of Dehn fillings, and in particular, on a uniform control
of this geometry among Dehn fillings.
The description we use is very close in sprit to Gromov-Thurston's $2\pi$-theorem
saying that given a  hyperbolic $3$-manifold $M$ with toral cusps and 
$M'\subset M$ obtained by cutting out a horospherical neighbourhood of the cusps,
then for each collection of slopes $s$ whose length is greater than $2\pi$, 
one can put a metric of negative curvature on $M_s$  
that agrees with the initial hyperbolic metric on $M'$
\cite{BleilerHodgson}.

In a similar spirit,  given a cofinal sequence of Dehn fillings $\bar G_n$ of $(G,\calp)$, 
there is 
a sequence of $\bar G_n$-spaces $X_n$ such that:
\begin{itemize*}
\item the hyperbolicity constant of $X_n$ does not depend on $n$, 
\item for each compact set $B\subset X$, and all $n$ large enough, $X_n$ and $X$ isometrically agree on $B$
\item and the actions $G_n\actson X_n$ have uniform properness properties away from the cusps (see Section \ref{sec_lift}).
\end{itemize*}

The proof Theorem \ref{thm_carac_intro} then goes as follows. Consider two cofinal sequences
$\bar G_i,\bar G'_i$ of Dehn fillings of $(G,\calp)$ and $(G,\calp')$,
and a sequence of isomorphisms $\phi_i:\bar G_i\ra \bar G'_i$.
This yields an action of $G$ on the spaces associated to the Dehn fillings of $G'$.

Because these spaces have  a uniform hyperbolicity constant, 
if the minimal displacement of the generators under these actions goes to infinity, Bestvina-Paulin's argument gives us an action on an $\bbR$-tree,
from which one can deduce a splitting contradicting our rigidity assumption.
If on the contrary, the minimal displacement is bounded, but if the point minimally
displaced goes \emph{into the thin part}, 
then this implies that the image of $\phi_i$ lies in the image of a peripheral group of $G'$, which is impossible.
In the remaining case, one can eventually lift infinitely many
isomorphisms $\phi_i$ to a monomorphism $G\ra G'$ thanks to uniform
properness. 
To deduce an isomorphism between $G$ and $G'$, we 
get another monomorphism $G'\ra G$ from the symmetric argument, and one can conclude using a co-Hopf property for rigid relatively hyperbolic groups.

\subsection{A solution to the isomorphism problem}
We now turn to the main application of our rigidity theorem.
 The isomorphism problem is the third algorithmic problem proposed by
 Dehn in the early 1910's, asking for a procedure determining whether
 two given groups are isomorphic.  It has received a certain attention
 for  some classes of negatively curved groups. 
After works of Sela \cite{Sela_isomorphism}, of Groves and the first
author  \cite{DaGr_isomorphism}, and of the authors  \cite{DG2}, a
complete solution      
to this problem for hyperbolic groups is now known. The class of toral relatively hyperbolic groups is also covered by  \cite{DaGr_isomorphism}. A common central feature of  these approaches is the algorithmic study of equations in these groups as a way to control an enumeration of morphisms. 

For groups that are hyperbolic relative to nilpotent subgroups -- a
natural case  
comprising   
the fundamental groups of complete,
finite volume manifolds of pinched negative curvature  -- this approach is probably hopeless:    there exist finitely generated 
nilpotent groups in which
one cannot decide whether a given system of equations has a solution
or not \cite{Romankov_universal}. Here again,  but  
for another reason,  the case of nilpotent peripheral subgroups is a
difficulty (let alone the case of polycyclic groups).

Nonetheless, we obtain a solution in that case.

\begin{thmbis}[see Corollary \ref{cor_vpc}] \label{thm_VPC}  
The isomorphism problem is solvable for non-elementary rigid relatively hyperbolic groups with virtually polycyclic peripheral groups.

More precisely, given two finite presentations of groups $G=\grp{S|R}$, $G'=\grp{S'|R'}$
such that $G$ and $G'$ are hyperbolic relative to some virtually polycyclic groups,
non-elementary, 
and have  no nontrivial splitting over a virtually polycyclic group, 
 one can decide whether $G$ is isomorphic to $G'$.
\end{thmbis}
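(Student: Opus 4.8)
The plan is to combine the rigidity result (Theorem \ref{thm_carac_intro} / Theorem \ref{thm_main}) with a systematic enumeration of candidate Dehn fillings and candidate isomorphisms, exploiting that virtually polycyclic groups are residually finite, that their finite-index subgroup structure is effectively computable, and that the isomorphism problem among hyperbolic groups is solvable by \cite{DG2,DaGr_isomorphism}. First I would make the relative hyperbolic structure algorithmically available: given $G=\grp{S|R}$ that is promised to be hyperbolic relative to virtually polycyclic subgroups, one can enumerate candidate peripheral structures $\calp$ and, using Dahmani's or the Groves--Manning machinery together with the decidability of properties of virtually polycyclic groups, verify (semi-decide, and in fact decide using both directions of a semi-algorithm) that $(G,\calp)$ is relatively hyperbolic with virtually polycyclic parabolics; similarly one checks non-elementariness. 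The rigidity hypothesis (no nontrivial splitting over a virtually polycyclic subgroup) is used as a promise, as in the statement.

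The core of the algorithm runs two semi-procedures in parallel. \emph{Procedure A (prove $G\cong G'$):} enumerate all homomorphisms $\phi:G\to G'$ by trying all images of the finite generating set $S$ in words over $S'$, checking relations; in parallel enumerate homomorphisms $\psi:G'\to G$; whenever one finds a pair $(\phi,\psi)$, check whether $\psi\circ\phi$ and $\phi\circ\psi$ are inner automorphisms — which, on these relatively hyperbolic groups, reduces to a conjugacy problem that is solvable (the conjugacy problem is solvable in relatively hyperbolic groups with solvable-conjugacy-problem parabolics, and virtually polycyclic groups have solvable conjugacy problem). If such a pair is found, output ``isomorphic''. \emph{Procedure B (prove $G\not\cong G'$):} enumerate finite-index normal subgroups $N_i\normal P_i$ and $N'_i\normal P'_i$ (effective since virtually polycyclic groups have computable finite quotients), form the finite Dehn fillings $\bar G$ and $\bar G'$ — these are hyperbolic groups by the Dehn filling theorem, and one can algorithmically certify properness of a given filling and compute a presentation for $\bar G$ together with its peripheral structure $\bar\calp$ — and then, using the solution of the isomorphism problem for hyperbolic groups (with the peripheral structure encoded, e.g.\ by marked finite subsets or by the relative isomorphism problem from \cite{DaGr_isomorphism}), decide whether $(\bar G,\bar\calp)\simeq(\bar G',\bar\calp')$; if for some such pair of \emph{proper} fillings they are non-isomorphic as groups-with-peripheral-structure, output ``not isomorphic''. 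Termination: if $G\cong G'$ (compatibly with the peripheral structures, which one can reduce to by first checking there are only finitely many candidate peripheral structures up to the relevant equivalence, or by running over all of them) then Procedure A halts; if $G\not\cong G'$, then by the contrapositive of Theorem \ref{thm_carac_intro} applied to cofinal sequences of finite Dehn fillings (which exist by residual finiteness of the parabolics), some finite proper Dehn filling distinguishes them, so Procedure B halts. One subtlety is that Theorem \ref{thm_main} takes fillings ``viewed as groups with a peripheral structure'', so one must ensure the hyperbolic-group isomorphism test respects $\bar\calp$; this is handled because $\bar\calp$ is a finite list of finite subgroups (when the parabolic quotients are finite) or virtually cyclic subgroups, carried along as extra data in the input to \cite{DG2}.

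The main obstacle I expect is the bookkeeping around peripheral structures and the reduction of the \emph{relative} isomorphism problem to the absolute one. Theorem \ref{thm_main} concludes $(G,\calp)\simeq(G',\calp')$, i.e.\ with a specified peripheral structure, whereas the problem statement asks only whether $G\cong G'$ as abstract groups. To bridge this one needs that the collection of relatively hyperbolic structures on a given group $G$ with virtually polycyclic parabolics and no suitable splitting is, up to the natural equivalence, \emph{finite and computable} — this is the kind of canonicity statement one gets from (relative) JSJ theory for such groups; given that, one simply loops over the finitely many candidate peripheral structures on $G'$ and runs the above for each. The second delicate point is certifying, effectively, that a given algebraic Dehn filling is \emph{proper} (so that the filled group is genuinely hyperbolic and inherits the correct peripheral structure): here one uses that the Dehn filling theorem provides, for the promised structure, a computable finite exceptional set $S\subset G\setminus\{1\}$, or alternatively that properness is semi-decidable (the filling is proper iff $\bar G$ is hyperbolic with the expected peripheral quotients, which can be verified when true). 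Everything else — enumerating morphisms, checking relations, solving conjugacy, computing finite quotients of virtually polycyclic groups, invoking \cite{DG2} — is standard, so the proof is genuinely a reduction: the real content was already invested in Theorems \ref{thm_main} and \ref{thm_carac_intro}.
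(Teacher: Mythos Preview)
Your Procedure B has a genuine gap. You write: ``enumerate finite-index normal subgroups $N_i\normal P_i$ and $N'_i\normal P'_i$ \dots\ if for some such pair of proper fillings they are non-isomorphic as groups-with-peripheral-structure, output `not isomorphic'.'' But an \emph{arbitrary} pair of Dehn fillings of $G$ and $G'$ can fail to be isomorphic even when $(G,\calp)\simeq(G',\calp')$: simply take $G=G'$ and kill different finite-index subgroups on the two sides. Your Procedure B would then halt with the wrong answer. The rigidity theorem only tells you that if $(G,\calp)\not\simeq(G',\calp')$ then \emph{some} cofinal sequence of fillings distinguishes them; it does not say that every mismatched pair of fillings is a witness.

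The paper fixes this by using the \emph{$i$-th characteristic Dehn kernel}: for each peripheral $P_j$ one takes $C_i(P_j)$, the intersection of all subgroups of index $\leq i$, and sets $K_i=\ngrp{C_i(P_1),\dots,C_i(P_k)}$. This choice is canonical, so any isomorphism $(G,\calp)\simeq(G',\calp')$ automatically induces $(G/K_i,\bar\calp_i)\simeq(G'/K'_i,\bar\calp'_i)$ for every $i$ (Lemma \ref{lem_characteristic}). Procedure B then compares only the $i$-th characteristic fillings of the two groups; a single mismatch is now a genuine certificate of non-isomorphism. The $C_i(P_j)$ are computable from a presentation of $P_j$ (Lemma \ref{lem_compute_CiP}), and hyperbolicity of the filled group is certified by Papasoglu's algorithm rather than by trying to compute the exceptional set $S$.

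Two smaller points. First, your Procedure A is more complicated than necessary: the paper simply enumerates presentations of $G$ via Tietze transformations (tracking generating sets of the $P_j$) and waits to hit the given presentation of $G'$; there is no need for the simultaneous conjugacy problem. Second, the passage from $(G,\calp)\simeq(G',\calp')$ to $G\simeq G'$ is handled not via JSJ theory but by the observation that groups in $VPC_{\geq 2}$ are \emph{universally parabolic} (any copy inside a relatively hyperbolic group is forced to be parabolic), so the peripheral structure is canonical (Lemma \ref{lem_can}); together with \cite{DG_presenting} this also lets one compute the peripheral structure from a bare presentation of $G$.
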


In particular, it follows from our result that the isomorphism problem
for fundamental groups of complete, finite volume manifolds of
negative pinched curvature is solvable, which was not even known for
complex hyperbolic lattices.
Using a theorem by Farrell-Jones \cite[Corollary 1.1]{FaJo_compact},
this allows to solve the homeomorphism problem for such  manifolds in dimension at least $6$.

In \cite{DaTo_isomorphism}, this result is used to solve the isomorphism problem in  a
class of non-rigid relatively hyperbolic groups, namely for all torsion
free groups that are hyperbolic relative to nilpotent groups.

Theorem \ref{thm_VPC} is actually an incarnation of a much more general
result that we obtain.

  \begin{thmbis}[see Theorem \ref{theo;isom_algo}]\label{thm_main_iso}

    There is an algorithm that solves the following problem.

    The input is  a pair of finitely presented relatively hyperbolic groups $(G,\calp),(G',\calp')$ given by
    finite presentations $G=\grp{S|R},G'=\grp{S'|R'}$
    together with a finite generating set of 
    a representative of  each conjugacy class of peripheral groups. 
We assume that
      \begin{itemize*}
       \item $(G,\calp)$ and $(G',\calp')$ are rigid and non-elementary
      \item peripheral subgroups are infinite, and residually finite.
      \end{itemize*}

    The output is the answer to the question  whether
    $(G,\calP)\simeq(G',\calp')$.  
    If the answer positive, the algorithm also gives an explicit isomorphism.
  \end{thmbis}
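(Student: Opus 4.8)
The plan is to combine the rigidity result (Theorem \ref{thm_carac_intro} / Corollary \ref{cor_periph}) with a pair of semi-decision procedures that, run in parallel, are guaranteed to halt. First I would set up the ``yes''-enumeration: since $(G,\calp)$ and $(G',\calp')$ are given by finite presentations together with finite generating sets of the peripheral groups, one can effectively enumerate all candidate homomorphisms $G\to G'$ (as words on $S'$ assigned to the generators in $S$), check each against the relators $R$, check that it sends each $P_i$ into a conjugate of some $P'_j$ (this is where residual finiteness or an explicit solution to the relevant membership problems in the peripheral groups is needed — we pass to finite Dehn fillings to make it checkable), and symmetrically enumerate homomorphisms $G'\to G$; whenever we have produced $\phi:G\to G'$ and $\psi:G'\to G$ whose composites $\psi\phi$ and $\phi\psi$ are (verifiably, again via finite quotients / the fact that rigid relatively hyperbolic groups are Hopfian) isomorphisms, we output ``yes'' together with the isomorphism. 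This half of the algorithm terminates exactly when $(G,\calp)\simeq(G',\calp')$, by the co-Hopf / Hopf properties for rigid relatively hyperbolic groups used already in the proof of Theorem \ref{thm_carac_intro}.

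Second, I would set up the ``no''-enumeration, and this is the part that really uses the rigidity theorem. The key point is that Theorem \ref{thm_carac_intro} reduces the isomorphism problem to a question about \emph{finite} Dehn fillings: if $(G,\calp)\not\simeq(G',\calp')$ then, by the contrapositive of Corollary \ref{cor_periph}, $G$ and $G'$ do \emph{not} have the same isomorphism classes of finite Dehn fillings viewed as groups with peripheral structure — so there is some finite Dehn filling of one of them that is not isomorphic, as a group-with-peripheral-structure, to any finite Dehn filling of the other. One then has to make this effective: enumerate finite-index normal subgroups $N_i\normal P_i$ (possible from the generating sets, using residual finiteness and that the word problem in $G$ is solvable — relatively hyperbolic groups with solvable word problem in the peripherals have solvable word problem), form the corresponding finite presentation of $\bar G=G/\ngrp{N_1,\dots,N_k}_G$ with its peripheral structure, and run in parallel the (Tietze-transformation based) semi-decision procedure for non-isomorphism of finitely presented groups-with-peripheral-structure against all finite Dehn fillings of $G'$. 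As soon as one exhibits a finite Dehn filling of $G$ that provably fails to be isomorphic to every finite Dehn filling of $G'$ of comparable complexity — here one needs a uniform bound, coming from the cofinality statement and the proof of Theorem \ref{thm_carac_intro}, on how deep a filling one must go to detect the discrepancy — we output ``no''. Running the two enumerations simultaneously, one of them halts, so the algorithm is correct and total.

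The main obstacle, and the step I expect to require the most care, is precisely the effectivization of the ``no''-side: Theorem \ref{thm_carac_intro} as stated is a rigidity statement about \emph{cofinal sequences}, which a priori only tells us that some filling detects non-isomorphism, not \emph{which one}, and the argument behind it (Bestvina--Paulin limiting action, passage to an $\bbR$-tree, extraction of a splitting contradicting rigidity, then lifting via uniform properness) is non-constructive in its current form. To turn it into an algorithm one has to extract from that proof an explicit, computable function bounding the depth of fillings one must examine, in terms of the presentations $\grp{S|R}$, $\grp{S'|R'}$ and the peripheral generating sets — essentially a quantitative version of the ``uniform geometry of Dehn fillings'' package described before Theorem \ref{thm_carac_intro} (uniform hyperbolicity constant, eventual isometric agreement on compacta, uniform properness away from the cusps). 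A secondary technical point is checking, for a candidate map between finite Dehn fillings, that it respects the peripheral structures: this is a collection of conjugacy/membership questions in the $P'_j/N'_j$, which are finite groups once we are in finite fillings, so it is decidable — but one must be careful that the reduction to finite fillings does not lose the distinction between $(G,\calp)$ and $(G',\calp')$, which is exactly what Corollary \ref{cor_periph} guarantees.
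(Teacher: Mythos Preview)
Your ``yes'' side is workable (though the paper uses a simpler Tietze enumeration rather than searching for a pair of mutually inverse morphisms and invoking co-Hopf). The real issue is the ``no'' side, where you have correctly identified an obstacle but not seen the paper's way around it.

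There are two missing ideas. First, instead of comparing an arbitrary finite Dehn filling of $G$ against \emph{all} finite Dehn fillings of $G'$, the paper uses the \emph{characteristic} Dehn kernels $K_i=\ngrp{C_i(P_1),\dots,C_i(P_k)}$, where $C_i(P)$ is the intersection of all subgroups of index $\le i$ in $P$. These are canonical, so an isomorphism $(G,\calp)\simeq(G',\calp')$ forces $(G/K_i,\bar\calp_i)\simeq(G'/K'_i,\bar\calp'_i)$ for every $i$. Thus one only needs to compare the $i$-th filling of $G$ with the $i$-th filling of $G'$, a single comparison per $i$ rather than a search over all fillings. Second --- and this is the decisive point --- for $i$ large the fillings $G/K_i$ and $G'/K'_i$ are Gromov hyperbolic (detectable by Papasoglu's algorithm), and the isomorphism problem for hyperbolic groups is \emph{decidable} by \cite{DG2}, not merely semi-decidable. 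So each comparison $B_i$ actually terminates with a yes/no answer; there is no need for a ``semi-decision procedure for non-isomorphism'' (which, incidentally, does not exist in general: Tietze transformations only certify isomorphism).

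With these two ingredients no effectivization of the rigidity theorem is needed. The rigidity theorem is used only \emph{non-constructively}, to guarantee termination: if no $B_i$ ever answers ``not isomorphic'', then the $i$-th characteristic fillings are isomorphic for all large $i$ (and these isomorphisms are automatically type-preserving since, for deep enough finite fillings, sub-$\bPF$ subgroups are exactly the finite ones), so Theorem \ref{thm_DF_carac} gives $(G,\calp)\simeq(G',\calp')$ and Procedure~A halts. Your plan to extract a computable depth bound from the Bestvina--Paulin argument is unnecessary and would be substantially harder than the actual proof.
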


The class of virtually polycyclic groups is a very important class of groups 
but the class of finitely presented residually finite groups is much vaster.
It contains many subclasses for which the isomorphism problem is not solvable, and still,
we are able to solve the isomorphism problem with peripheral groups in this class.

Here is a specific example.  
Fix $r,n\geq 2$, and
let $\calC$ be the class of all
semi-direct products $F_r\semidirect \bbZ^n$.  
If $n\geq 4$ and $r\geq 15$, this is a class of residually finite groups for which the isomorphism problem
is unsolvable  \cite{Zimmermann_Klassifikation}. Nevertheless we obtain:

\begin{thmbis}[see Corollary \ref{cor_af}]\label{cor_semidirect} 
 There exists an algorithm that, given presentations for two groups $G,G'$ that are hyperbolic
relative to groups in $\calc$,  non-elementary and rigid, 
 says whether the groups are isomorphic or not.  
\end{thmbis}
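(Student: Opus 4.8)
The plan is to derive Corollary \ref{cor_semidirect} from the general algorithmic result, Theorem \ref{thm_main_iso}, by checking that the hypotheses of that theorem are verifiable on input. Given presentations of two groups $G, G'$ that are promised to be hyperbolic relative to groups in $\calc = \{F_r \semidirect \bbZ^n\}$, non-elementary, and rigid, I would first recall that groups in $\calc$ are finitely presented (being extensions of finitely presented groups) and residually finite (being extensions of the residually finite group $F_r$ by the residually finite group $\bbZ^n$, or directly: they are virtually residually (finite $p$) and in any case linear-ish — more simply, $F_r$ is residually finite and $\bbZ^n$ acts, and the class of residually finite groups is closed under extensions with finitely generated kernel only in special cases, so I would instead invoke that $F_r \semidirect \bbZ^n$ embeds in a suitable arithmetic-like group or just cite that free-by-cyclic and more generally free-by-(poly-$\bbZ$) groups are residually finite). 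Each such peripheral group is infinite. Thus all the structural hypotheses of Theorem \ref{thm_main_iso} — rigid, non-elementary, peripheral subgroups infinite and residually finite — are satisfied by the promise.

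The remaining point is purely about the \emph{input format}: Theorem \ref{thm_main_iso} asks not only for presentations $G = \grp{S|R}$, $G' = \grp{S'|R'}$ but also for a finite generating set of a representative of each conjugacy class of peripheral group. So the work is to show that, from presentations of $G$ and $G'$ alone (together with the promise that they are relatively hyperbolic with peripheral structure in $\calc$, non-elementary and rigid), one can algorithmically produce such peripheral generating sets. I would argue this by enumeration: since relative hyperbolicity of a given group with a given peripheral structure is a recognizable (semi-decidable) property — one can enumerate candidate peripheral subgroups (given by finite subsets of words), enumerate proofs that the resulting pair is relatively hyperbolic (e.g. via the existence of the required action / BCP-type combinatorial certificates, or via Osin's or Groves--Manning's characterizations which are recursively enumerable) — and since the promise guarantees that some valid peripheral structure exists with peripheral groups in $\calc$, the search terminates. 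One may additionally restrict the enumeration to candidates isomorphic to some $F_r \semidirect \bbZ^n$, which is again a semi-decidable constraint. Feeding the recovered data into Theorem \ref{thm_main_iso} then decides whether $(G,\calp) \simeq (G',\calp')$; and I would finally note that since the peripheral structure of a non-elementary rigid relatively hyperbolic group is canonical (it is determined by $G$, e.g. as the maximal parabolic subgroups, up to the usual ambiguities which do not affect the isomorphism type question, or one appeals to the fact that any two peripheral structures in $\calc$ are equivalent by a theorem on isolated peripheral structures), $(G,\calp)\simeq(G',\calp')$ is equivalent to $G \simeq G'$, giving the statement as phrased.

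The main obstacle I expect is the algorithmic recovery of the peripheral structure — specifically, making rigorous that "relatively hyperbolic with peripheral groups in $\calc$" is semi-decidable. The clean way is to use that a group is hyperbolic relative to a given finite family of finitely generated subgroups iff it satisfies a recursively enumerable condition (e.g. the fineness/$\delta$-hyperbolicity of the associated coned-off or cusped space can be certified by a finite amount of combinatorial data, by the work of Groves--Manning and Osin), so one enumerates all finite tuples of finite subsets of $G$ together with all such certificates in parallel; the promise guarantees a hit. A secondary subtlety is uniqueness/canonicity of the peripheral structure: two a priori different valid peripheral structures could be recovered for $G$ and $G'$, and one must know these are equivalent (or run Theorem \ref{thm_main_iso} over all recovered pairs and observe consistency). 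For rigid non-elementary relatively hyperbolic groups with peripheral groups in a class like $\calc$ — which are one-ended, not boundedly generated by parabolics, etc. — the peripheral structure is in fact rigid, and this is what makes the reduction to $G \simeq G'$ legitimate; I would cite or prove this canonicity (it follows from the fact that any infinite-index parabolic splitting would contradict rigidity, so the family of maximal parabolics is an isomorphism invariant). Modulo these points, which are standard recursion-theoretic packaging around the hard analytic content already established in Theorem \ref{thm_main_iso}, the corollary follows.
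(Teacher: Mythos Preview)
Your overall strategy matches the paper's: reduce to Theorem~\ref{thm_main_iso} by (a) computing generators of the peripheral subgroups from the presentation alone, and (b) showing the peripheral structure is canonical, so that $(G,\calp)\simeq(G',\calp')$ is equivalent to $G\simeq G'$. The paper packages this as Theorem~\ref{thm_iso_class}, applied with $\calc$ a recursively enumerable class of finitely presented, residually finite, universally parabolic groups; Corollary~\ref{cor_af} then just checks these three properties for the class $\cala\calf$ of (free abelian of rank $\geq 2$)-by-(f.g.\ free) groups. For step (a) the paper invokes \cite{DG_presenting} (Theorem~\ref{thm_find}), which requires a solution to the word problem supplied by Corollary~\ref{cor_wordpb}; your enumeration-of-certificates sketch is in the same spirit but less precise, and you should note that even recognizing that a tuple of words generates a subgroup in $\calc$ presupposes such a word-problem solution.

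The genuine gap is in step (b). Your claim that ``the peripheral structure of a non-elementary rigid relatively hyperbolic group is canonical'' is false: rigidity by itself does not pin down the peripheral structure (a hyperbolic group, say, is rigid yet admits many inequivalent relatively hyperbolic structures). The paper's mechanism is different and specific to $\calc$: every $H\in\calc$ contains a normal subgroup isomorphic to $\bbZ^n$ with $n\geq 2$; in any relatively hyperbolic group such a subgroup is infinite, not virtually cyclic, and contains no non-abelian free subgroup, so by Lemma~\ref{lem_tits} it must be parabolic; malnormality of peripheral subgroups then forces its normalizer, hence all of $H$, to lie in a peripheral subgroup. Thus every group in $\calc$ is \emph{universally parabolic}, and Lemma~\ref{lem_can} gives $\phi(\calp)=\calp'$ for any abstract isomorphism $\phi:G\to G'$. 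Your allusions to ``maximal parabolic subgroups'' or ``isolated peripheral structures'' do not supply this; you need the universal-parabolicity argument (or an equivalent) to make the reduction from $G\simeq G'$ to $(G,\calp)\simeq(G',\calp')$ legitimate.

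A minor point: residual finiteness of $F_r\rtimes\bbZ^n$ needs no hedging; a semidirect product of a finitely generated residually finite group by a residually finite group is residually finite (Mal'cev), which is exactly what the paper invokes.
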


\paragraph{Handling peripheral subgroups.} 
  The treatment of the peripheral groups  in
 Theorem \ref{thm_main_iso} is different from the one in 
Theorems \ref{thm_VPC} and \ref{cor_semidirect}. In
 these latter,  
the algorithm is given no specific
 information on them, and 
there is no constraint on the sought isomorphism regarding peripheral
subgroups. 
In Theorem \ref{thm_main_iso}  however, the
 algorithm is given a generating set of the peripheral subgroups, and  
it looks for an isomorphism  
that preserves the peripheral structure.
 
Actually, in many contexts (including
Theorems \ref{thm_VPC} and \ref{cor_semidirect}),   
 a given relatively hyperbolic group has a canonical peripheral
 structure. 
Let us say that a group $H$   is {\it
   universally parabolic} if, for every relatively hyperbolic group
 containing a subgroup isomorphic to $H$, this subgroup is parabolic.
 For instance  all groups in the class $\calC$ of
 Theorem \ref{cor_semidirect}, and all virtually polycyclic groups that are
 not virtually-cyclic are universally parabolic. When a group is
 hyperbolic relative to universally parabolic subgroups, then its
  relatively hyperbolic peripheral structure is canonical. In this
  situation,  one has $G\simeq G'$ if and only if $(G,\calp)\simeq (G',\calp')$.

Then, one needs to find the peripheral subgroups just from a presentation of $G$.
This is possible when the peripheral groups belong to a recursively
enumerable class $\calc$   
 of finitely presented groups that are 
universally parabolic and residually finite
(this asks for the existence of a Turing machine  enumerating
precisely the presentations of the groups in $\calc$).
In this case, one can compute generators of representatives of peripheral subgroups, and
 even presentations of these subgroups, from a presentation of $G$ \cite{DG_presenting}. 
 This allows to prove the following general result, of which
Theorems \ref{thm_VPC} and \ref{cor_semidirect} are corollaries.

\begin{thmbis}[see Theorem \ref{thm_iso_class}]\label{thm_sans_periph} 
 Let $\calc$ be 
any recursively enumerable class of finitely presented groups 
that are residually finite and universally parabolic.

Consider the class $\calh_\calc$ of groups $G$
that admit a structure $(G,\calp)$ of a rigid, non-elementary hyperbolic group  relative to some groups in $\calc$.

Then the isomorphism problem is solvable in the class $\calh_\calc$.
\end{thmbis}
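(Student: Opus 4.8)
The plan is to reduce the isomorphism problem in $\calh_\calc$ to Theorem~\ref{thm_main_iso} by first producing the (canonical) peripheral structures algorithmically, and then feeding them to that algorithm. I would proceed in three steps, with the genuinely hard content already available to us in Theorem~\ref{thm_main_iso} and in \cite{DG_presenting}.

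\emph{Step 1: canonicity of the peripheral structure.} First I would record that a universally parabolic group is necessarily infinite and not virtually cyclic: a virtually cyclic (in particular finite) group $H$ embeds into $H$ itself regarded as a hyperbolic group (with empty peripheral structure), where it is not parabolic. Hence, if $G\in\calh_\calc$ carries a structure $(G,\calp)$ with peripheral groups in $\calc$, then every peripheral group is infinite and residually finite. Moreover, since the peripheral groups are universally parabolic, in any relatively hyperbolic structure on $G$ these subgroups must remain parabolic; together with maximality of parabolic subgroups this makes the peripheral structure canonical (the consequence of universal parabolicity recorded in the introduction): $\calp$ depends only on $G$. Consequently, for $G,G'\in\calh_\calc$ one has $G\simeq G'$ if and only if $(G,\calp)\simeq(G',\calp')$, so it suffices to decide the latter.

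\emph{Step 2: computing the peripheral subgroups.} Next I would invoke \cite{DG_presenting}: since $\calc$ is recursively enumerable and consists of finitely presented, residually finite, universally parabolic groups, from a finite presentation $G=\grp{S|R}$ of a group promised to lie in $\calh_\calc$ one can algorithmically compute a finite generating set (and in fact a presentation) of a representative of each conjugacy class of peripheral subgroups. Running this for both $G$ and $G'$ yields exactly the data required as input to Theorem~\ref{thm_main_iso}; by the canonicity of Step~1, any valid output describes the same peripheral structure up to conjugacy, which is the level at which Theorem~\ref{thm_main_iso} operates.

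\emph{Step 3: applying the main algorithm, and the obstacle.} Finally I would feed $(G,\calp)$ and $(G',\calp')$, with the computed peripheral generating sets, to the algorithm of Theorem~\ref{thm_main_iso}. Its hypotheses hold: rigidity and non-elementariness because $G,G'\in\calh_\calc$, and the peripheral subgroups are infinite (Step~1) and residually finite (membership in $\calc$). The algorithm decides $(G,\calp)\simeq(G',\calp')$, which by Step~1 answers whether $G\simeq G'$. The main obstacle here is not in assembling these pieces — which is routine — but lies upstream: the whole weight rests on Theorem~\ref{thm_main_iso} (itself built on Theorem~\ref{thm_carac_intro} and the uniform geometry of Dehn fillings) and on the computability statement \cite{DG_presenting}; within the present corollary the only delicate point is verifying that universal parabolicity simultaneously forces the peripheral groups to be infinite and pins down $\calp$ canonically, so that a peripheral-structure-preserving algorithm settles the bare isomorphism question.
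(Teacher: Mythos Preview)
Your proposal is correct and follows essentially the same route as the paper's proof of Theorem~\ref{thm_iso_class}: canonicity of the peripheral structure via universal parabolicity (the paper's Lemma~\ref{lem_can}), algorithmic computation of peripheral generators from \cite{DG_presenting} (the paper's Theorem~\ref{thm_find}, which also uses residual finiteness to get the word problem solved), and then feeding everything to Theorem~\ref{theo;isom_algo}. The only minor point you leave implicit is that applying \cite{DG_presenting} requires a solution to the word problem in $G$, which the paper obtains from residual hyperbolicity (Corollary~\ref{cor_wordpb}); otherwise your three steps match the paper's proof line by line.
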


\paragraph{ Why this cannot be used to solve the isomorphism problem
  among residually finite groups.} 
Let us comment on a well known construction that produces rigid
relatively hyperbolic groups with arbitrary peripheral subgroups.   
Given any finitely generated group $P$, one can start from
$P*\bbZ$, then  using small cancellation theory,  
one can construct many  quotients $G_P$ in which $P$ embeds, and which are hyperbolic relative to the
image of $P$, and rigid    
(one can even impose that the quotients  have Kazhdan property $(T)$, hence having
no splitting at all).

One might unreasonably hope that performing this construction with
arbitrary finitely presented residually finite groups $P,P'$
would allow to decide whether they are isomorphic, which is impossible even in restriction to the class $\calc$ of Theorem \ref{cor_semidirect}.
But there is no reason for this to be true,
 as the obtained group $G_P$ 
would highly depend
on which relations are added to construct the small cancellation
quotient, which  in turn  
depends on 
the way $P$ is given to the algorithm, in particular on its generating
set. 
Instead, Theorem \ref{cor_semidirect} 
implies that there exist no computable such construction  that is functorial in $P$, or even that would satisfy $G_P\simeq G_{P'}$ whenever $P\simeq P'$.

We also prove a negative result showing that one cannot hope to generalize Theorem \ref{thm_VPC}
too much:
the isomorphism problem is not solvable 
in the class of rigid relatively hyperbolic groups that are hyperbolic relative to finitely presented solvable groups (see Proposition \ref{prop_solvable}).

\paragraph{How to solve the  isomorphism problem.}
Let us present    
the proofs of our results. 
As mentioned above, the usual approach to the isomorphism problem based on the solution of equations 
is hopeless as soon as
one allows arbitrary finitely generated nilpotent groups as parabolics.
Our rigidity theorem makes it possible to use a completely different strategy:
we look for a
proof that groups are not isomorphic by investigating whether their
collection of finite Dehn fillings are different.

Indeed, since there is a general algorithm that stops if and only if two finite presentations represent the same group,
one has to find an algorithm that stops if and only if the two given groups are non-isomorphic.

In order to do so, enumerate a canonical infinite list of finite Dehn fillings $\bar
G_i,\bar G'_i$ of $G$ and $G'$, by quotienting by  a characteristic
subgroups of finite index in the peripheral groups. Our rigidity theorem says that if $(G,\calp)$ is not isomorphic to $(G',\calp')$, then 
for infinitely many indices $i$, $\bar G_i$ will not be isomorphic to $\bar G'_i$, preserving the peripheral structure.
Also, for $i$ large enough, $\bar G_i$ and $\bar G'_i$ are hyperbolic
groups (with a lot of torsion).  From 
our solution to the isomorphism problem for hyperbolic groups \cite{DG2}, 
we have an algorithm that will say that $\bar G_i$ is not isomorphic to $\bar G'_i$ (preserving the peripheral structure),
thus certifying that $(G,\calp)\not\simeq (G',\calp')$.

The reader can see that our strategy  relies on the existence of a solution to the isomorphism problem
for all hyperbolic groups. Let us stress that  any solution would be appropriate.
Thus, although the solution in \cite{DG2} is the only one available at the moment, 
the reader who is ready to take it as a black box needs not be
familiar with  its proof.

\vskip.5cm
\subsection{Acknowledgments}

The authors would like to thanks T. Delzant, M. Sapir, O. Kharlampovich for stimulating discussions.

\tableofcontents

\section{Relatively hyperbolic groups}
\label{sec_RH}

Let $X$ be a locally compact $\delta$-hyperbolic space.  
 Up to increasing $\delta$, we may assume that any triangle $(x_1,x_2,x_3)$ 
has a \emph{center} $c$ at distance at most $\delta$ from any geodesic $[x_i,x_j]$ with $i\neq j\in \{1,2,3\}$.
Recall from \cite[Section 2]{Hruska_quasiconvexity} that a \emph{horofunction} based at $\zeta\in \partial_\infty X$
is a function $h:X\ra \bbR$ such that there is a constant $D_0$ such that 
for all $x,y$, and any center $c$ of the triangle $(x,y,\zeta)$,
one has $$|h(x)-h(y)-d(y,c)+d(x,c)|\leq D_0.$$
A \emph{horoball} centered at $\zeta$ is a subset $B\subset X$ such that there exists $D_1> 0$ with
$$h\m([D_1,\infty)\subset B\subset h\m\big([-D_1,\infty)\big).$$

\begin{defi}\label{def;RHG}
  A pair  $(G,\calP)$ is a \emph{relatively hyperbolic group} if $G$ is a finitely generated group and 
  $\calP$ is a collection of subgroups of $G$ closed under conjugation, such that there exists a proper geodesic hyperbolic space $X$ on which $G$ acts by isometries, and a  $G$-invariant collection $\calH$ of disjoint horoballs in $X$ such that
\begin{itemize}
  \item $G$ acts co-compactly on the complement of $\calH$ in $X$.
  \item The map sending a horoball in $\calH$ to its stabilizer, is a bijection $\calH \to \calP$.
\end{itemize}
\end{defi}

This definition was introduced by Bowditch, proved equivalent to other definitions in the literature, and studied by many authors
\cite{Bow_relhyp,Yaman_relhyp,DrutuSapir_tree-graded,Osin_relatively,Hruska_quasiconvexity}.

The groups of the family $\calP$ are called the \emph{peripheral subgroups} of $(G,\calP)$, 
and their subgroups are simply called \emph{parabolic subgroups}.
It is well known that $\calP$ is the union of finitely many conjugacy classes of subgroups $P_1,\dots,P_n$. 
Denoting by $[P_i]$ the conjugacy class of $P_i$,
we often view $\calp$ as a set of conjugacy classes $\{[P_1],\dots,[P_n]\}$,
and we say that $G$ is hyperbolic relative to $P_1,\dots,P_n$.
Still abusing notation, we denote by $X\setminus\calh$ the complement of the union of the horoballs.

For technical reasons, we will additionally assume, without loss of generality, that $X$ is a metric graph whose
edges have the same positive length. 
Up to rescaling if necessary, we can assume that the hyperbolicity constant is as small as we want.

We say that an element $g\in G$ is hyperbolic 
if it has infinite order and is not  in any of the peripheral subgroups.
 Equivalently, $g$ is a loxodromic isometry of the space
$X$.

Recall that a subset $B\subset X$ is $C$-quasiconvex if for any $x,y\in B$,
any geodesic $[x,y]$ lies in the $C$-neighbourhood of $B$.
We say that $B$ is $C$-\emph{strongly quasiconvex} if for any two points $x,y\in B$, there exists $x^\prime,y^\prime\in B$
and geodesics $[x,x'],[x',y'],[y',y]$  
  such that $\max \{ d (x, x^\prime ), d (y, y^\prime )\} \le C $ and $[x,x']\cup[x',y']\cup[y',y]\subset B$.
It is well known that if $B$ is $C$-quasiconvex, then for all $D\geq C$, its $D$-neighbourhood 
$B^{+D }$ is $2\delta$-strongly quasiconvex (see for instance \cite[Lemma 3.4]{DGO_HE}).

The following well known lemma says that one can assume that the horoballs of $\calh$ 
are given as the superlevel sets
of an \emph{invariant} family of horofunctions, and that these superlevel sets are $2\delta$-strongly quasiconvex.

\begin{lem}\label{lem_eqv_horo} Let $(G,\calp)$ be relatively hyperbolic, and $X$ be a hyperbolic space as in Definition \ref{def;RHG}, 
and $p\in X\setminus\calh$ a base point.

Then for each peripheral subgroup $P\in\calp$, 
one can assign a $P$-invariant horofunction $h_P:X\ra \bbR$
satisfying the following equivariance property
$$\forall P\in \calp,\forall x\in X,\ h_{gPg\m}(x)=h_P(g\m x)$$ 
and such that the following holds.

For any $R\geq 0$, consider the family of horoballs $\calh_R=\{h_P\m([R,\infty))|P\in \calp\}$.
Then
\begin{enumerate}\item 
  each horoball $h_P\m([R,\infty))$ is $2\delta$-strongly quasiconvex,
\item 
  $\calh_R$ is $R$-separated: any two points in two distinct horoballs
  are at distance at least $R$.
\item \label{it_basepoint} all horoballs in $\calh_R$ are disjoint from $B(p,R)$.
\item $\calh_R$ satisfies Definition \ref{def;RHG}.
\item \label{it_horo_sep} there exists a constant $D_{\ref{lem_eqv_horo}}\geq 0$ such that for all $R'\geq R+D_{\ref{lem_eqv_horo}}$, the $(R'-R-D_{\ref{lem_eqv_horo}})$-neighborhood
of each horoball of $\calh_{ R'}$ is contained in the corresponding horoball of $\calh_R$.
\end{enumerate}
\end{lem}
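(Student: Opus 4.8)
The plan is to build the horofunctions essentially by hand — starting from arbitrary horofunctions attached to orbit representatives, making them invariant, spreading them out equivariantly — and then read off the five properties, most of which reduce to elementary hyperbolic geometry. Throughout I use the rescaling freedom mentioned just before the lemma to make $\delta$, hence every constant below of the form $D_0$ (a horofunction constant) or $D_1$ (a horoball‑sandwiching constant), all of which are $O(\delta)$, as small as convenient. Fix representatives $P_1,\dots,P_n$ of the conjugacy classes in $\calp$ and let $B_1,\dots,B_n\in\calh$ be the corresponding horoballs, $B_i$ centered at $\zeta_i\in\partial_\infty X$. Because the map $\calh\to\calp$, $B\mapsto\Stab_G(B)$, is a $G$‑equivariant bijection, $gP_ig^{-1}=g'P_jg'^{-1}$ forces $gB_i=g'B_j$, hence $i=j$ and $g^{-1}g'\in\Stab_G(B_i)=P_i$; and since the horoballs of $\calh$ are disjoint, at most one of them is centered at each boundary point, so any $g$ fixing $\zeta_i$ sends $B_i$ to a horoball of $\calh$ centered at $\zeta_i$, i.e.\ to $B_i$, giving $\Stab_G(\zeta_i)=P_i$.

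Next, fix for each $i$ any horofunction $h_i^0$ based at $\zeta_i$. For $g\in P_i$ the function $h_i^0\circ g^{-1}$ is again a horofunction based at $\zeta_i$, and subtracting the two defining inequalities evaluated at a common triangle center shows $h_i^0-h_i^0\circ g^{-1}$ has oscillation $\le 2D_0$; so after subtracting a constant $\|h_i^0-h_i^0\circ g^{-1}\|_\infty\le 2D_0$, uniformly in $g$. Hence $h_i:=\sup_{g\in P_i}h_i^0\circ g^{-1}$ is finite (it lies between $h_i^0$ and $h_i^0+2D_0$), is again a horofunction based at $\zeta_i$ with a slightly larger constant absorbed into $D_0$, and is $P_i$‑invariant. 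I then subtract a large constant from each $h_i$ so that $h_i<-D_0$ on $X\setminus B_i$ (possible since $h_i<D_1$ there, by the sandwich defining $B_i$); this keeps $h_i$ a $P_i$‑invariant horofunction and guarantees $h_i^{-1}([R,\infty))\subseteq B_i$ for every $R\ge 0$. Finally, for $P=gP_ig^{-1}\in\calp$ set $h_P:=h_i\circ g^{-1}$: this is well defined by the first paragraph, is a $P$‑invariant horofunction based at $g\zeta_i$, and satisfies $h_{gPg^{-1}}(x)=h_P(g^{-1}x)$ by construction.

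Write $\calh_R=\{h_P^{-1}([R,\infty)):P\in\calp\}$. Properties (2)--(5) are now routine, using only that horofunctions are coarsely $1$‑Lipschitz, $h(x)\le h(y)+d(x,y)+D_0$ (immediate from the defining inequality and the triangle inequality). For (5), take $D_{\ref{lem_eqv_horo}}:=D_0$: if $d(x,y)\le R'-R-D_0$ and $h_P(y)\ge R'$, then $h_P(x)\ge h_P(y)-d(x,y)-D_0\ge R$. For (3): if $x\in h_P^{-1}([R,\infty))$ with $P=gP_ig^{-1}$ then $h_i(g^{-1}x)\ge R$, while $g^{-1}p\in X\setminus\calh\subseteq X\setminus B_i$ gives $h_i(g^{-1}p)<-D_0$, so $R\le h_i(g^{-1}x)\le h_i(g^{-1}p)+d(g^{-1}x,g^{-1}p)+D_0<d(x,p)$, i.e.\ $x\notin B(p,R)$. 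For (2): points $x\in h_P^{-1}([R,\infty))\subseteq B_P$ and $y\in h_Q^{-1}([R,\infty))\subseteq B_Q$ with $P\ne Q$ are joined by a geodesic that must leave $B_P$ and later enter $B_Q$; since on leaving $B_P$ and on entering $B_Q$ one passes through points where $h_P$ resp.\ $h_Q$ is $<D_1$, this forces $d(x,y)\ge 2(R-D_1-D_0)\ge R$ once $R$ exceeds a constant — this is the standard fact that the distance between the depth‑$R$ parts of two disjoint horoballs grows with $R$ — the bounded remaining range of $R$ being covered by positivity of $d(B_P,B_Q)$, which uses $G$‑cocompactness on $X\setminus\calh$. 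For (4): $h_P^{-1}([R,\infty))$ is a horoball centered at $\zeta_P$, so any isometry preserving it fixes $\zeta_P$ and hence lies in $\Stab_G(\zeta_P)=P$ (conjugate the first paragraph); thus the stabilizer map on $\calh_R$ is the same bijection onto $\calp$ as on $\calh$; and $X\setminus\calh_R=(X\setminus\calh)\cup\bigcup_P\big(B_P\setminus h_P^{-1}([R,\infty))\big)$ lies in a bounded neighbourhood of $X\setminus\calh$, since from a point of a collar $B_P\setminus h_P^{-1}([R,\infty))$ one descends away from $\zeta_P$ a distance $\le R+O(\delta)$ to leave $B_P$ near $X\setminus\calh$ (a large ball around a bounded‑depth point of $B_P$ cannot lie in $\calh$); being closed and $G$‑invariant, $X\setminus\calh_R$ is then $G$‑cocompact.

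Property (1) is where the real work lies, and I expect it to be the main obstacle. A standard thin‑triangle argument shows $h_P^{-1}([R,\infty))$ is $C_0$‑quasiconvex with $C_0=O(\delta)$: for $x,y$ in it and $z\in[x,y]$, $z$ is $\delta$‑close to a point $w$ on a geodesic ray $[x,\zeta_P]$ or $[y,\zeta_P]$, along which $h_P$ is coarsely nondecreasing, so $h_P(w)\ge R-D_0$ and $w$, hence $z$, is $O(\delta)$‑close to $h_P^{-1}([R,\infty))$. By \cite[Lemma 3.4]{DGO_HE}, for $D\ge C_0$ the neighbourhood $\big(h_P^{-1}([R,\infty))\big)^{+D}$ is $2\delta$‑strongly quasiconvex. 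To get the superlevel sets themselves to be $2\delta$‑strongly quasiconvex, one replaces each $h_P$ by the fattened $P$‑invariant horofunction $x\mapsto\sup\{\,t : x\in(h_P^{-1}([t,\infty)))^{+C_0}\,\}$, whose superlevel set at level $R$ agrees up to bounded error with $\big(h_P^{-1}([R,\infty))\big)^{+C_0}$ and which remains compatible with the equivariant normalizations above. Alternatively, one may from the outset replace $X$ by the quasi‑isometric combinatorial cusped space of Groves--Manning, whose combinatorial horoballs satisfy (1)--(5) essentially by inspection, this being legitimate because Definition \ref{def;RHG} is equivalent to the cusped‑space definition. The delicate point is exactly this reconciliation of the precise constant $2\delta$ with the quasiconvexity‑to‑strong‑quasiconvexity upgrade and the requirement that $\calh_R$ be literally the family of superlevel sets, handled by the rescaling freedom together with the fattening trick, along with the bookkeeping in (4) that keeps the collars in a bounded neighbourhood of $X\setminus\calh$.
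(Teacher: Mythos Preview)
Your approach is essentially the paper's: take an arbitrary horofunction, make it $P$-invariant via a supremum, fatten to upgrade quasiconvexity to $2\delta$-strong quasiconvexity, shift by a constant, and extend equivariantly. Your ``fattening trick'' $x\mapsto\sup\{t:x\in(h_P^{-1}([t,\infty)))^{+C_0}\}$ is exactly the paper's $h_2$ (the paper writes it as $h_2(x)=\inf\{h_1(y):d(x,y)\le D\}$, whose superlevel set at level $R$ is the $D$-neighbourhood of $h_1^{-1}([R,\infty))$---modulo what appears to be a typo, as $\sup$ is what gives the neighbourhood).

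There is, however, a genuine gap in your argument for (2). Your geodesic estimate yields $d(x,y)\ge 2(R-C)$ for some constant $C$, which gives $R$-separation only for $R\ge 2C$; for small $R$ you invoke ``positivity of $d(B_P,B_Q)$ via $G$-cocompactness on $X\setminus\calh$'', but there is no uniform positive lower bound on the distance between distinct horoballs of $\calh$---they are only assumed disjoint, and can come arbitrarily close as $P,Q$ range over $\calp$. The paper sidesteps this entirely by choosing the shift constant more aggressively: it arranges that the $R$-neighbourhood $B_R^{+R}$ of each new horoball $B_R=h_P^{-1}([R,\infty))$ is still contained in the original horoball $B_P\in\calh$. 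Then for $x\in B_{P,R}$, $y\in B_{Q,R}$ with $P\ne Q$, the balls $B(x,R)\subset B_P$ and $B(y,R)\subset B_Q$ are disjoint, giving $d(x,y)\ge 2R$ for \emph{all} $R\ge 0$ in one stroke; the same inclusion also yields (3) immediately. Your argument becomes correct if you simply shift each $h_i$ down by an additional $2C$, but as written the small-$R$ case is not covered.
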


\begin{proof}
Let $\calh$ be a system of horoballs as in Definition \ref{def;RHG},
and consider a horoball $B\in \calh$ with stabilizer $P$.
Let  $h$ be a horofunction and $D_1\geq 0$ be such that
$$h\m([D_1,\infty))\subset B\subset h\m([-D_1,\infty)).$$
It follows from the definition of a horofunction that 
there exists a constant $D_2$ such that for any $R$,
the horoball $h\m([R,\infty))$ is $D_2$-quasiconvex.
Moreover, $h$ is coarsely $1$-Lipschitz: 
there exists a constant $D_3\geq 0$ such that $|h(x)-h(y)|\leq d(x,y)+D_3$.

Since $B$ is $P$-invariant, $|h(px)-h(x)|$ is bounded independently of $x\in X$ and $p\in P$.
Therefore, the function $h_1$ defined by $h_1(x)=\sup_{g\in P} h(gx)$ is a well defined $P$-invariant horofunction on $X$.
Let $D$ be such that for all $R\in \bbR$, the horoball $h_1{}\m([R,\infty))$ is $D$-quasiconvex.
The horofunction $h_2(x)=\inf \{h_1(y)| d(x,y)\leq D\}$ 
is such that the horoball $h_2{}\m([R,\infty))$ is the $D$-neighbourhood of $h_1\m([R,\infty))$,
and is therefore $2\delta$-strongly quasiconvex for all $R$.
Note that there exists $K_1,K_2$ such that for all $x\in X$, $|h_2(x)-h(x)|\leq K_1$
and $|h_2(x)-h_2(y)|\leq d(x,y)+K_2$. 

Define $h_P(x)=h_2(x)-K_1-K_2-D_1$.
Then $h_P$ is $P$-invariant, and all its superlevel sets are $2\delta$-strongly quasiconvex.
We claim that the $R$-neighbourhood $B_R^{+R}$ of the horoball $B_R=h'{}\m([R,\infty))$ is contained in $B$.
Indeed, for each $x\in B_R^{+R}$, consider $y\in B_R$ with $d(x,y)\leq R$ then
$h(x)\geq h_2(x)-K_1 \geq h_2(y)-R-K_2-K_1 =h'(y)-R +D_1 \geq R-R+D_1 =D_1$,
so $x\in h\m([D_1,\infty))\subset B$.
This proves the claim, and in particular that $B_0\subset B$.
Note that conversely, for each $R$, there exists $L_R$ such that 
$B$ is contained in some neighbourhood $B_R^{+L_R}$ of $B_R$.

Defining such a horofunction $h_P$ for a representative of each conjugacy class
 in $\calp$, and extending by equivariance,
one gets an equivariant family of horofunctions $h_P$.
The inclusion $B_0\subset B$ ensures the disjointness of the horoballs in $\calh_0$,
and therefore in $\calh_R$,
and cocompactness of $X-\calh_R$ follows from the fact that $X$ is proper and that
$X-\calh_R$ is contained in the $L_R$-neighbourhood of $X-\calh$.
It follows that $\calh_R$ still satisfies Definition \ref{def;RHG}.
Since the $R$-neighbourhood of $B_R$ is contained in $B$, this implies that
$\calh_R$ is $2R$-separated, and does not intersect $B(p,R)$.

The last assertion clearly holds for $D_{\ref{lem_eqv_horo}}=K_2$ since
$|h_P(x)-h_P(y)|\leq d(x,y)+K_2$. 
\end{proof}

From now on, we assume that each horoball $B$ in our system of horoballs $\calh$ 
is defined by a horofunction $h_B$ as in the lemma above.

The following lemma is useful. It follows for instance from
\cite{Tukia_convergence}, but we propose a proof for completeness.

\begin{lem}\label{lem_loc_parab} \label{lem_tits}
  Let $(G,\calp)$ be a relatively hyperbolic group. Let $H<G$ be a
  subgroup (maybe not finitely generated). 

If $H$  contains no hyperbolic element, then $H$ is parabolic or finite.

If $H<G$ contains a hyperbolic element, then it is either virtually
cyclic, or it contains  a pair of hyperbolic elements
generating a free subgroup  all whose the non-trivial elements are
hyperbolic.
\end{lem}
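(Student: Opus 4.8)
The plan is to exploit the action of $G$ on the $\delta$-hyperbolic space $X$ of Definition \ref{def;RHG}, together with standard convergence-group/$\delta$-hyperbolicity dynamics, applied to the (possibly infinitely generated) subgroup $H$. First I would recall the trichotomy for isometries of a proper $\delta$-hyperbolic space: an element $g\in G$ acting on $X$ is elliptic (bounded orbits), parabolic (exactly one fixed point on $\partial_\infty X$, no bounded orbit), or loxodromic (two fixed points on $\partial_\infty X$, quasi-geodesic orbits). By the description of $(G,\calp)$, the elliptic elements are exactly those of finite order, the parabolic isometries are exactly the infinite-order elements lying in some conjugate of a peripheral group (they fix the parabolic point of the corresponding horoball), and the loxodromic isometries are exactly what the paper calls hyperbolic elements. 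So ``$H$ contains no hyperbolic element'' means every element of $H$ is elliptic or parabolic.

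For the first assertion, suppose $H$ contains no hyperbolic element. If $H$ is infinite, I would argue $H$ fixes a unique point $\zeta\in\partial_\infty X$ which is necessarily a parabolic point (a center of some horoball in $\calh$), whence $H$ is contained in $\Stab(\zeta)=P$ for the corresponding $P\in\calp$, so $H$ is parabolic; and if $H$ is finite we are already done. To get the fixed point: I would first treat the case where some element $h_0\in H$ is parabolic, fixing $\zeta_0\in\partial_\infty X$, and show every other $h\in H$ must also fix $\zeta_0$ — otherwise a ping-pong argument on $\partial_\infty X$ using the North-South-type dynamics near $\zeta_0$ would produce a loxodromic element $h h_0^n h^{-1} h_0^{-n}$ or $h h_0^n$ for large $n$, contradicting the hypothesis. (This is where one uses that parabolics in a relatively hyperbolic group have the strong contraction dynamics toward their fixed point along horospheres; alternatively cite the convergence-group property and the classification of its subgroups.) If instead every element of $H$ is elliptic, then $H$ is an infinite torsion group all of whose elements fix a point; by a standard fixed-point argument for groups of elliptic isometries of a $\delta$-hyperbolic space with a common bounded orbit (or via the convergence action having no loxodromics and no parabolics, hence $H$ preserving a bounded set, forcing it to be finite since the action is proper on $X\setminus\calh$ and cocompact), one concludes $H$ is finite — so the ``all elliptic, $H$ infinite'' case does not actually occur and $H$ is finite. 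Assembling these, $H$ is finite or parabolic.

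For the second assertion, suppose $H$ contains a hyperbolic (loxodromic) element $g$, with fixed-point pair $\{g^+,g^-\}\subset\partial_\infty X$. Consider the set $L=\{h^+ : h\in H\text{ loxodromic}\}$ of attracting endpoints. If every loxodromic $h\in H$ has $\{h^+,h^-\}=\{g^+,g^-\}$, then $H$ preserves the pair $\{g^+,g^-\}$: indeed any $h\in H$ permutes $\{g^+,g^-\}$ (else $hgh^{-1}$ would be loxodromic with a different axis), so $H$ maps into the stabilizer of this pair, which acts properly cocompactly on the quasi-axis of $g$ (a quasiline) and is therefore virtually cyclic. Otherwise there is a loxodromic $h\in H$ whose fixed-point pair is disjoint — after conjugating and using North-South dynamics I would arrange $\{h^+,h^-\}\cap\{g^+,g^-\}=\emptyset$ — and then the classical ping-pong lemma (Klein's criterion applied to the North-South actions of $g$ and $h$ on $\partial_\infty X$) gives, for $n$ large enough, that $g^n$ and $h^n$ generate a free group $F_2$ acting on $X$ with all nontrivial elements loxodromic: any reduced word alternately attracts toward the four endpoints and so has unbounded orbit and a genuine quasi-axis, hence is hyperbolic. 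This yields the desired free subgroup of hyperbolic elements.

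\textbf{Main obstacle.} The delicate point is upgrading ``$H$ contains no hyperbolic element'' to ``$H$ fixes a single parabolic point'', i.e.\ ruling out an infinite torsion subgroup of elliptics and ruling out two parabolics with distinct fixed points. This is exactly the content of the convergence-group dichotomy for subgroups, and the honest route is to invoke Tukia's theorem (as the paper does) that a group acting as a convergence group with no loxodromic is either finite or a ``parabolic'' group fixing one point; but since the paper asks for a self-contained argument, I would instead give the two ping-pong arguments sketched above, the harder one being the parabolic ping-pong where one must control the displacement of a parabolic $h_0$ near its fixed point well enough to conclude $h h_0^n$ is loxodromic for large $n$ when $h\zeta_0\neq\zeta_0$ — this requires using that the horoballs are $2\delta$-strongly quasiconvex and $R$-separated (Lemma \ref{lem_eqv_horo}) so that the $h$-translate of the horoball at $\zeta_0$ is a disjoint horoball, and then a standard ``entering and leaving distinct horoballs'' estimate shows the orbit of $h_0$-translates followed by $h$ makes linear progress. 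Everything else (the loxodromic ping-pong, the virtually cyclic case) is routine.
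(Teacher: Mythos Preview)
Your overall strategy for the second assertion (loxodromic ping-pong, then the virtually cyclic case when all hyperbolic elements share the same endpoint pair) is the same as the paper's, and is fine, though you gloss over the subcase where two loxodromics share exactly one fixed point; the paper handles this explicitly by a properness argument showing that sharing one endpoint forces sharing both.

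For the first assertion your route is genuinely different from the paper's, and it has a real gap. The paper does \emph{not} split into ``some element parabolic'' versus ``all elements elliptic'' and does not argue on $\partial_\infty X$. Instead it uses Koubi's displacement estimate \cite[Prop.~3.2]{Koubi_croissance}: given any finite list $g_1,\dots,g_n\in H$ with no loxodromic among their pairwise products, there is a point $p\in X$ moved at most $100\delta$ by each $g_i$. Taking $n$ larger than the properness constant of $X\setminus\calh_R$ forces $p$ into a horoball, and $1000\delta$-separation of horoballs then shows $\grp{g_1,\dots,g_n}$ preserves that horoball; uniqueness of the preserved horoball makes it independent of $n$, so all of $H$ is parabolic. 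This argument is uniform and never needs to distinguish torsion from parabolic elements.

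Your gap is precisely in the ``all elements elliptic'' case. You claim that an infinite torsion subgroup with only elliptic elements must in fact be finite, appealing to properness on $X\setminus\calh$. This is false as stated: a peripheral group $P\in\calp$ may well contain an infinite torsion subgroup $H$, every element of which is elliptic (finite order), yet $H$ is infinite and parabolic, not finite. Nothing in your sketch produces the parabolic fixed point for such an $H$, because torsion elements need not individually fix any boundary point, and ``$H$ preserves a bounded set'' is exactly what fails when the almost-fixed points drift into a horoball. The honest fix is either to invoke Tukia's classification of convergence subgroups outright (which the paper mentions but chooses not to rely on), or to run the paper's Koubi argument, which is what actually pins down the horoball that $H$ preserves.
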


\begin{proof}
 Consider $R=1000\delta$ and $\calh_R$ the corresponding system of horoballs.
Let $K$ be the maximal cardinality of a set of elements of $G$ that move a point of $X\setminus \calh_R$ by at most $100\delta$
(this exists by proper cocompactness).

Assume that $H$ is infinite, without hyperbolic element. Consider $g_1,\dots,g_n,\dots \in H$ an enumeration of $H$, and consider $n>K$.
By \cite[Proposition 3.2]{Koubi_croissance},
there is a point $p\in X$ that is moved by at most $100\delta$ by all the elements $g_1,\dots,g_n$.
Since $n>K$, $p$ lies in a horoball $B\in \calh_R$.
Since horoballs are $1000\delta$-separated,  $\grp{g_1,\dots,g_n}$ preserves $B$ and its center $\omega\in \partial_\infty X$.

No other horoball is preserved by $\grp{g_1,\dots,g_n}$ since 
otherwise, there would exist some bi-infinite geodesic
$(\omega,\omega')$ whose points are moved by at most  $50\delta$ by
elements of $\grp{g_1,\dots,g_n}$, and that would be a contradiction since this geodesic intersects  $X\setminus \calh_R$.

It follows that $B$ is the only horoball of $\calh_R$ preserved by $\grp{g_1\dots,g_n}$.
Therefore, $B$ does not depend on $n$, so $H$ is parabolic.

If now $H$ contains two hyperbolic elements with disjoint pairs of
attracting and repelling points in the boundary $\partial X$, then a
standart ping pong argument in $\partial X$ shows that some powers of
these elements generate a free group  all whose non-trivial elements are
hyperbolic.

Next, if  $g_1, g_2$  are two hyperbolic elements with one common
fixed point in the boundary, 
by properness of the
action of $G$ on $X$, they
must have two common fixed points in the boundary.
Indeed, in such case, the two quasi-invariant axes of the elements will have infinite rays remaining
close to each other, thus forcing
the commutators of powers $g_1^k$ and $g_2^m$, for arbitrary $k, m$ of
a certain sign, 
to almost fix the starting
point of these rays. Properness implies that  some powers of $g_1$ and
$g_2$ commute. Therefore,   $g_1, g_2$ have the same
fixed points in the boundary. 

Finally, if all hyperbolic elements of a
subgroup $H$ have the same
pair of fixed points in $\partial X$, by properness of the action on
the hull of these two points, one gets that $H$ is
virtually cyclic. 
\end{proof}

\section{Dehn fillings}  
\label{sec_DF}
\subsection{Coning-off horoballs at different depths}
\label{sec_cone}

We now recall the construction of a \emph{parabolic cone-off} performed in \cite[\S 7.1]{DGO_HE}. Similar constructions can be found, or at least easily obtained, from the work of Groves and Manning \cite{GroMan_dehn},  see also \cite{Osin_peripheral}.

  Let $(G,\calP)$ be a relatively hyperbolic group.
Choose $X$ a proper $\delta$-hyperbolic metric graph on
which  $(G,\calP)$ acts as   
in
Definition \ref{def;RHG}. We will assume that $\delta\leq\delta_c$ 
where $\delta_c$ is a universal constant defined by Proposition \ref{prop_coneoff} below;
this is no loss of generality since one can rescale (once and for all) the metric on $X$
to achieve this.

Following \cite[\S 7.1]{DGO_HE}, we are now going to describe a family of spaces  $\dot X_R$, indexed by $R\geq 0$.
This is done in several steps.
For each $R\geq 0$ consider $\calH_R$ the family of horoballs defined in Lemma \ref{lem_eqv_horo}.

We fix some number $r_U>0$ (its value will be given by Proposition \ref{prop_coneoff} below).
For each horoball $B\in \calH_R$, we consider  a cone $C(B) \simeq
B\times [0,r_U]/\sim$  where $\sim$ is the relation $(x,r_U) = (y,r_U)$ 
  for all $x,y\in B$. The image of $B\times \{r_U\}$ is a point called the apex of the cone, we denote it by $c_B$.
For each edge $e$ of $B$, the image of  $e\times[0,r_U]$ is a triangle of $C(B)$, 
and we put on it a  metric that makes it isometric to a sector in $\bbH^2$ centered at 
the apex, of radius $r_U$, and whose arclength coincides with the length of $e$ (see \cite[\S 5.3]{DGO_HE} or \cite[section I.5]{BH_metric} for details).

Then consider $C_{\calH_R}(X)=\left( X\dunion \coprod_{B\in\calh_R} C(B)\right)/{\sim}$
obtained from $X$ by gluing, for each horoball $B\in\calh_R$, the cone $C(B)$ on $X$ using 
the identification $(b,0)\sim b$ for all $b\in B$.
The space $C_{\calh_R}(X)$ is endowed with the corresponding path metric. 
  Note that $G$ acts naturally by isometries
  on $C_{\calH_R}(X)$, and preserves the family of cones $C(B), B\in \calH_R$. 

By \cite{DeGr_mesoscopique} (see also \cite[\S 5.3]{DGO_HE}), 
there exists $\delta_c>0$ and $r_U>0$ such that if $X$ is $\delta_c$-hyperbolic,
$C_{\calH_R}(X)$ is a geodesic hyperbolic space, 
  whose hyperbolicity constant does not depend of $R$. 

Following \cite[Definition 7.2]{DGO_HE}, we are going to recall the construction of the parabolic cone-off $\dot X_R\subset C_{\calH_R}(X)$.
It is a specific thickening of the cone on the horospheres of $X$.
The properties of $\dot X_R$ that we will use are captured in Proposition \ref{prop_coneoff} below.

To explain the construction of $\dot X_R$, 
fix a horoball $B\in \calh_R$, and consider $\partial B\subset X$ its topological boundary in $X$.
For all $x,y\in \partial B$ with $d_{B}(x,y)< \pi\sinh r_U$, and each geodesic $[x,y]_B\subset B$,
the set of points in $C(B)$ whose radial projection lies in $[x,y]_B$ is isometric to a sector in $\bbH^2$
of angle $\frac{1}{\sinh r_U}d_{B}(x,y)<\pi$. Then we consider in this sector the filled hyperbolic triangle $T_{x,y}\subset C(B)$ 
bounded by the three geodesics $[c_B,x]_{C(B)}\cup [c_B,y]_{C(B)}\cup [x,y]_{C(B)}$.
We denote by $\dot B\subset C(B)$ the union of all such triangles $T_{x,y}$,
where $(x,y)$ varies among all pairs of points in $\partial B$ with $d_{B}(x,y)< \pi\sinh r_U$.
Then we define $\dot X_R\subset C_{\calH_R}(X)$ as the union of $X\setminus \calh_R$ with
$\bigcup_{B\in \calh_R} \dot B$.

We endow $\dot X_R$ with the path metric induced by $C_{\calH_R}(X)$,
 and call it the \emph{cone-off} of $X$ at depth $ R$. 
By \cite{DGO_HE} (see discussion before Definition 7.2), 
$\dot{X}_R$ is a geodesic hyperbolic metric space with hyperbolicity constant independent of $R$.
It is locally compact, except at the neighbourhood of apices.

  In fact, one can think of  $\dot{X}_R $ as a cone-off 
  over the horospheres  of $X\setminus \calH$, since points sufficiently deep in the horoballs 
   of $\calH$ are not in  $\dot{X}_R$. 
  One can therefore think of this construction as a 
  balance between Bowditch's horoballisation of the Cayley graph of $G$ 
  (which would be $X \simeq\dot{X}_\infty $), 
  and Farb's cone-off (which would be  $\dot{X}_0$). A similar space can be constructed following Groves and 
  Manning's construction of the space with horoballs \cite{GroMan_dehn}, and coning-off 
  (as Farb does in \cite{Farb_relatively})  at depth $R$. 
\\

We denote by $\iota_R$ the inclusion $\iota_R:X\setminus \calh_R \subset \dot X_R$.
It is obviously injective, 
but it is not isometric. 
However, if $p\in X$ and $r>0$ are such that $B_X(p,r)\subset X\setminus \calh_R$,
then $\iota_R$ clearly induces an isometry $B_X(p,r/2)\ra B_{\dot X_R}(\iota_R(p),r/2)$.

  Summing up the properties of this space:

  \begin{prop}[Cone-off at depth $R$, {\cite[\S 7.1 \& Proposition 7.5]{DGO_HE}}]\label{prop_coneoff}
There exist universal constants $\delta_c,r_U,\delta_0$ such that the following holds.
Let $(G,\calP)$ be a relatively hyperbolic group, and $X$ a $\delta_c$-hyperbolic graph 
  as in Definition \ref{def;RHG}.    
  For all $R\geq 0$,  
  consider the family of horoballs $\calh_R$ as in Lemma \ref{lem_eqv_horo}, 
and $\dot X_R$ the corresponding parabolic cone-off.

Then $\dot{X}_{R}$ is a geodesic $\delta_0$-hyperbolic space, with an 
isometric and  co-compact (although not proper)  $G$-action.

Moreover, for all $x\in X$ and $r>0$ such that $B_X(x,r)\cap \calh_R=\es$, 
the injective equivariant map  $\iota_R: X\setminus \calh_R \to  \dot{X}_{R}$ 
induces an isometry $B_X(x,r/2) \to B_{\dot X_R}(\iota_R(x),r/2)$.

  \end{prop}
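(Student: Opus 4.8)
The plan is to invoke directly the results of \cite{DGO_HE}, of which Proposition \ref{prop_coneoff} is essentially a restatement, and to supply the short verifications that glue those results together. Most of the geometric work — hyperbolicity of $C_{\calH_R}(X)$ and of $\dot X_R$ with a constant independent of $R$ — has already been isolated in the text preceding the statement (citing \cite{DeGr_mesoscopique} and \cite[\S 5.3]{DGO_HE}), so the proof will mainly consist in recording this, fixing the universal constants $\delta_c$, $r_U$, $\delta_0$ once and for all, and checking the three listed conclusions. First I would recall that, by the choice of $\delta_c$ and $r_U$ from \cite{DeGr_mesoscopique}, for any $\delta_c$-hyperbolic $X$ and any $R\geq 0$ the coned space $C_{\calH_R}(X)$ is geodesic and hyperbolic with a constant independent of $R$; then, that $\dot X_R$, being the specific thickening of the cones described above, is a geodesic subspace which by \cite[Prop.\ 7.5]{DGO_HE} (or the discussion before \cite[Def.\ 7.2]{DGO_HE}) is itself $\delta_0$-hyperbolic for a universal $\delta_0$, again uniformly in $R$.

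Next I would treat the $G$-action. Since the family of horoballs $\calH_R$ is $G$-invariant (Lemma \ref{lem_eqv_horo}), $G$ permutes the cones $C(B)$ isometrically, hence acts by isometries on $C_{\calH_R}(X)$ and preserves $\dot X_R$; this is the observation already made in the paragraph constructing $C_{\calH_R}(X)$. Cocompactness follows because $G$ acts cocompactly on $X\setminus\calH$ (Definition \ref{def;RHG}), each $\dot B$ is a single $\Stab(B)$-orbit of cells up to bounded pieces, and there are finitely many $G$-orbits of horoballs; properness fails only because the stabilizer of an apex $c_B$ is the infinite group $\Stab(B)=P$, and no neighbourhood of $c_B$ is compact — this is exactly the "not proper" caveat, and I would just note it rather than belabor it.

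Finally, the local isometry statement: if $B_X(x,r)\cap\calH_R=\emptyset$, then $B_X(x,r)\subset X\setminus\calH_R$, on which $\iota_R$ is the identity set-theoretically. The point is that any path in $C_{\calH_R}(X)$ (equivalently in $\dot X_R$) joining two points of $B_X(x,r/2)$ and shorter than the corresponding $X$-geodesic would have to leave $B_X(x,r)$ — since entering a cone $C(B)$ forces crossing $\partial B$, which is at distance $\geq r/2$ from both endpoints, and the cones add no shortcuts between points of $X\setminus\calH_R$ that are this close — so the restriction of the path metric agrees with $d_X$ on $B_X(x,r/2)$. This is precisely the content recorded just before the Proposition, where it is observed that $\iota_R$ induces an isometry $B_X(p,r/2)\to B_{\dot X_R}(\iota_R(p),r/2)$ whenever $B_X(p,r)\subset X\setminus\calH_R$; I would simply cite \cite[\S 7.1]{DGO_HE} for the claim that cones do not create shortcuts of this kind. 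The only mild subtlety — and thus the part I would write out — is making sure the radius is halved correctly so that a geodesic of $\dot X_R$ between points of the half-ball cannot dip into a horoball and come back; this is where the factor $2$ and the $R$-separation of $\calH_R$ from Lemma \ref{lem_eqv_horo} are used.
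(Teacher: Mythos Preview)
Your proposal is correct and matches the paper's approach exactly: the paper gives no separate proof of this proposition, treating it as a direct citation to \cite[\S 7.1 \& Prop.\ 7.5]{DGO_HE}, with the construction and the observation about the local isometry (``$\iota_R$ clearly induces an isometry $B_X(p,r/2)\to B_{\dot X_R}(\iota_R(p),r/2)$'') recorded in the paragraphs preceding the statement. Your write-up in fact supplies slightly more detail than the paper on why the half-radius suffices for the local isometry, which is fine.
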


For each $R'\leq R$ and each horoball $B'\in \calh_{R'}$,
it is convenient to consider the trace $\dot B'$ of the horoball $B'$ in $\dot X_R$ defined as follows:
 $\dot B'=(B'\cup C(B))\cap \dot X_R$ where $B$ is the unique horoball of
$\calh_R$ contained in $B'$, and $C(B)$ is the hyperbolic cone on $B$ defined above.
The family of all these subsets $B'$ of $\dot X_R$ is denoted by $\calh_{R'}^{\dot X_R}$.

\begin{lem}\label{lem_dotsep}
For all $R'\leq R$, $\calh_{R'}^{\dot X_R}$ is $R'$-separated in $\dot X_R$.
\end{lem}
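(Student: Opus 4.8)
For all $R' \le R$, $\calh_{R'}^{\dot X_R}$ is $R'$-separated in $\dot X_R$.

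Here is the approach I would take.

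\medskip

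\textbf{Setup.} Recall the relevant structure. Since $h_P\m([R,\infty))\subset h_P\m([R',\infty))$ when $R'\le R$, every horoball $B\in\calh_R$ is contained in a unique horoball $B'\in\calh_{R'}$, and the trace $\dot B'$ is obtained from $B'$ by replacing this deep part $B$ with the thickened cone $\dot B$, which is glued to $X\setminus\calh_R$ along the horosphere $\partial B$. In particular the cones $\dot B$, $B\in\calh_R$, are pairwise disjoint in $\dot X_R$, and each has diameter at most $2r_U$ there (from any point of $\dot B$ one reaches the apex $c_B$ without leaving $\dot B$). We will also use that $\calh_{R'}$ is $R'$-separated in $(X,d_X)$ --- in fact $2R'$-separated, as the proof of Lemma \ref{lem_eqv_horo} shows.

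\medskip

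\textbf{The itinerary argument.} Fix distinct $\dot B'_1,\dot B'_2\in\calh_{R'}^{\dot X_R}$, coming from $B'_1\neq B'_2$ in $\calh_{R'}$. Since $\dot X_R$ is geodesic, it suffices to bound below the length of a geodesic $\sigma$ of $\dot X_R$ joining a point of $\dot B'_1$ to a point of $\dot B'_2$. Replacing $\sigma$ by a subsegment, we may assume its endpoints lie on $\partial B'_1$ and $\partial B'_2$ (the topological boundaries of $\dot B'_1,\dot B'_2$ in $\dot X_R$, which are the depth-$R'$ horospheres of $B'_1$ and $B'_2$), and that $\sigma$ meets $\dot B'_1\cup\dot B'_2$ only at its two endpoints. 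Decompose $\sigma$ into its maximal subsegments contained in a single cone $\dot B$, $B\in\calh_R$ (the \emph{cone excursions}), and the complementary subsegments $\ell_0,\dots,\ell_m$, each contained in $X\setminus\calh_R\subset X$. Every cone excursion enters and leaves $\dot B$ through $\partial B$, and $\partial B$ lies in the single $\calh_{R'}$-horoball containing $B$; hence each cone excursion displaces $\sigma$ only within one $\calh_{R'}$-horoball. Consequently the concatenation of the $\ell_j$'s records a sequence $B'_1=B''_{(0)},B''_{(1)},\dots,B''_{(m)},B''_{(m+1)}=B'_2$ of horoballs of $\calh_{R'}$ such that $\ell_j$ runs from $B''_{(j)}$ to $B''_{(j+1)}$ inside $X$.

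\medskip

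\textbf{Conclusion and the main obstacle.} Since $B''_{(0)}\neq B''_{(m+1)}$, some consecutive pair is distinct, say $B''_{(j)}\neq B''_{(j+1)}$; then the $X$-length of $\ell_j$ is at least $d_X(B''_{(j)},B''_{(j+1)})\ge R'$. It remains to see that $\operatorname{length}_{\dot X_R}(\sigma)\ge\operatorname{length}_X(\ell_j)$. As $\sigma$ is a $\dot X_R$-geodesic, $\ell_j$ is a geodesic subsegment lying in $X\setminus\calh_R$, and the only mechanism by which the cone-off metric is shorter than $d_X$ along such a segment is a shortcut diving behind a horosphere to traverse a cone --- but such shortcuts are confined to single $\calh_{R'}$-horoballs and cannot occur inside a segment that stays in $X\setminus\calh_R$. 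Using the local isometry of Proposition \ref{prop_coneoff} (on balls of $X$ disjoint from $\calh_R$) one obtains $\operatorname{length}_{\dot X_R}(\ell_j)=\operatorname{length}_X(\ell_j)$ up to an error controlled by the universal constants $\delta,r_U$, which is absorbed into the gap between the actual separation $2R'$ and the claimed bound $R'$. Hence $\operatorname{length}_{\dot X_R}(\sigma)\ge R'$, and taking the infimum over $\sigma$ gives $d_{\dot X_R}(\dot B'_1,\dot B'_2)\ge R'$. The hard point is precisely this last step --- checking carefully that passing from $X$ to the cone-off $\dot X_R$ does not shorten the inter-horoball legs $\ell_j$; the rest is bookkeeping with the combinatorics of the decomposition.
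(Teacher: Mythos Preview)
Your overall strategy is correct, but you have misidentified the difficulty and thereby made the argument both longer and shakier than it needs to be.

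The step you call ``the hard point'' is in fact trivial: path length is a local quantity, and the inclusion $\iota_R:X\setminus\calh_R\hookrightarrow\dot X_R$ preserves the length of any path (the cone-off is built by \emph{gluing} cones to $X$, not by altering the metric on $X\setminus\calh_R$). Hence $\operatorname{length}_{\dot X_R}(\ell_j)=\operatorname{length}_X(\ell_j)$ exactly, with no error term; since $\ell_j$ is a subsegment of the geodesic $\sigma$, you get
\[
\operatorname{length}_{\dot X_R}(\sigma)\ \ge\ \operatorname{length}_{\dot X_R}(\ell_j)\ =\ \operatorname{length}_X(\ell_j)\ \ge\ d_X\bigl(B''_{(j)},B''_{(j+1)}\bigr)\ \ge\ R'
\]
and you are done. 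Invoking Proposition~\ref{prop_coneoff} (which concerns \emph{distances} on balls, not path lengths) and appealing to the slack between $2R'$ and $R'$ is a red herring; the proposed error absorption would in any case fail for small $R'$.

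The paper's proof uses the same observation but avoids your itinerary decomposition entirely. One trims the geodesic more aggressively, passing to a subsegment that meets \emph{no} trace $\dot B'\in\calh_{R'}^{\dot X_R}$ except at its two endpoints (possibly replacing $\dot B'_2$ by whichever trace is hit first after leaving $\dot B'_1$). Since every cone $\dot B$ with $B\in\calh_R$ is contained in some $\dot B'\in\calh_{R'}^{\dot X_R}$, this trimmed segment lies entirely in $X\setminus\calh_R\subset X$, and the conclusion follows in one line from the $R'$-separation of $\calh_{R'}$ in $X$. This is cleaner than tracking cone excursions; your route also works, once you recognise that the final step carries no error.
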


\begin{proof}
Consider $x_1\in\dot B_1,x_2\in\dot B_2$ with 
$\dot B_1\neq \dot B_2\in \calh_{R'}^{\dot X_R}$
and $d_{\dot X_R}(x_1,x_2)$ close to the infimum.
Let  $\gamma$ be a geodesic of $\dot X_R$ 
joining $x_1$ to $x_2$.
Up to changing $\gamma$ to a subsegment, and changing $x_1$, $x_2$ accordingly, 
we can assume that $\gamma\cap \dot B_1=\{c_1\}$, $\gamma\cap \dot B_2=\{c_2\}$
and $\gamma\cap \dot B\neq \es$ for every other $\dot B\in \calh_{R'}^{\dot X_R}\setminus\{\dot B_1,\dot B_2\}$.
If follows that $\gamma$ is a path contained in $X$, whose endpoints lie in distinct horoballs
of $\calh_{R'}$. Since the length of $\gamma$ is $d_{\dot X_R}(x_1,x_2)$, we get that 
$d_X(x_1,x_2)\leq d_{\dot X_R}(x_1,x_2)$.
Since $\calh_{R'}$ is $R'$-separated, we get that $R'\leq d_X(x_1,x_2)\leq  d_{\dot X_R}(x_1,x_2)$.
\end{proof}

  \subsection{Dehn filling and Dehn kernels}

\newcommand{\DF}{\bar G}

\begin{defi}  
A \emph{Dehn kernel} of a relatively hyperbolic group
$(G,\{[P_1],\dots,[P_k]\})$ is a normal subgroup $$K=\ngrp{N_1,\dots,N_k}_G\normal  G$$    of $G$ normally
generated by a collection of subgroups $\{N_1, \dots, N_k\}$ where each $N_j$ is a
normal subgroup of $P_j$.

A \emph{Dehn fillling} of  a relatively hyperbolic group
$(G,\{[P_1],\dots,[P_k]\})$ is the quotient $\DF = G/K$  of $G$ by
a Dehn kernel $K$. We also say that $\bar G$ is the Dehn filled group of $G$
by $K$. 

Let   $X$ be a $\delta_c$-hyperbolic space for $G$, as in Definition
\ref{def;RHG},   and for  $R>0$, let $\calh_R$ be an invariant family of
$R$-separated  horoballs as
in Lemma \ref{lem_eqv_horo}, 
       and $\dot X_R$ the corresponding cone-off
       as in Proposition \ref{prop_coneoff} above. The \emph{Dehn
         filled space}  of  $\dot X_R$ by a Dehn kernel $K$  is the
       space  $\bar X_R = \dot X_R/K$.

\end{defi}

The Dehn filled  space $\bar X_R$ actually depends on $R$ and $K$. 
From the context, it will always be clear what $K$ is, so we do not usually
specify it in the notation.

  \begin{theo}[Dehn filling, {\cite[Proposition 7.7, Corollary 7.8]{DGO_HE}}]\label{theo;VRF}    There exists a universal constant $\delta_1$ such that the following holds.
       Let $(G,\{[P_1],\dots,[P_k]\})$ be a relatively hyperbolic
       group,  $X$ a $\delta_c$-hyperbolic space for $G$  
as in Definition
\ref{def;RHG}, 
       $p \in X$ a base point, and for each $R>0$, $\calh_R$ a family of horoballs, 
       and $\dot X_R$ the corresponding cone-off
       as in Proposition \ref{prop_coneoff} above.

       Then for each $R>0$, there exists a finite subset $S_R\subset G\setminus\{1\}$ such that the following holds.

       For each $j\in\{1,\dots k\}$, consider $N_j\normal P_j$ a normal 
       subgroup of $P_j$ avoiding $S_R$, and denote by
       $K=\ngrp{N_1,\dots,N_k}\normal G$  the associated Dehn kernel. 
         Then
     \begin{enumerate*}
       \item \label{it_hyp} the Dehn filled space $\bar X_R=\dot{X}_R/K$ is $\delta_1$-hyperbolic,           
     the Dehn filled group $G/K$   
      acts discretely co-compactly 
     by isometries on $\bar X_R$; under the additional assumption that $N_j$ has finite index in $P_j$, then
     $\bar X_R$ is proper, and $G/K$ acts properly discontinuously on $\bar X_R$, and in particular, $G/K$ is a hyperbolic group;
       \item \label{it_finite} $K\cap P_j = N_j$ for all $j$,
        \item if $x\in X$ and $r>0$ are such that $B_X(x,r)\subset X$ is disjoint from $\calh_R$, then
          \begin{enumerate*}
          \item \label{it_isometry}  the quotient map  
        $\pi_K:\dot{X}_{R}\to  \bar{X}_R$ restricts to an isometry
        $B_{\dot X_R}(x,r/2)\ra B_{\bar X}(\pi_K(x),r/2)$
      \item \label{it_disjoint} for all $g\in K\setminus 1$, $g B(x,r)\cap B(x,r)=\es$.
          \end{enumerate*}       
      \end{enumerate*}
  \end{theo}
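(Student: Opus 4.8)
The plan is to realize the Dehn kernel $K$ as the group generated by a \emph{very rotating family} on the cone-off $\dot X_R$, and then to read off all the assertions from the structure theory of quotients by very rotating families; this is exactly the route of \cite[Prop.~7.7, Cor.~7.8]{DGO_HE}, and the point is to recall its mechanism. First I would organize the data. For a horoball $B\in\calh_R$ let $P_B<G$ be its stabilizer; since $B\mapsto P_B$ is a bijection $\calh_R\to\calp$, the group $P_B$ is a conjugate $gP_jg\m$ of a single $P_j$, and setting $N_B=gN_jg\m\normal P_B$ gives a $G$-equivariant family $\{N_B\}_{B\in\calh_R}$ with $K=\ngrp{N_1,\dots,N_k}_G=\ngrp{N_B : B\in\calh_R}$. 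The apex $c_B$ of the hyperbolic cone $C(B)\subset\dot X_R$ is fixed by $P_B$, hence by $N_B$, so each $N_B$ acts on $\dot X_R$ fixing $c_B$.

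Next I would choose $S_R$ and verify the rotating condition, which is the heart of the matter. For each $j$ fix a vertex $x_j$ on the horosphere $\partial B_j$ of the horoball $B_j\in\calh_R$ with stabilizer $P_j$. Since $G$ (hence $P_j$) acts properly on $X$ and $P_j$ acts cocompactly on $\partial B_j$, the set $\{g\in P_j : d_{B_j}(x_j,gx_j)\le C_R\}$ is finite for every $C_R$; take $S_R$ to be the union over $j$ of these finite sets, with $C_R\ge\pi\sinh r_U$ a threshold large enough for the cone estimate below (it depends on $R$ only through the induced path-metric $d_{B_j}$ on $\partial B_j$, and is otherwise universal). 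Then $S_R\subset G\setminus\{1\}$ is finite, and the hypothesis $N_j\cap S_R=\es$, together with equivariance and cocompactness of $P_B\actson\partial B$, forces every nontrivial element of every $N_B$ to move every vertex of $\partial B$ by $d_B$-distance $>\pi\sinh r_U$. In the cone $C(B)$ of radius $r_U$ this says that such a point and its image subtend an angle $>\pi$ at $c_B$, so every geodesic of $\dot X_R$ joining them passes through $c_B$ --- precisely the ``cone-off is a rotating family'' computation of \cite[\S5.3]{DGO_HE}. Hence $\{(c_B,N_B)\}_{B\in\calh_R}$ satisfies the axioms of a very rotating family on the $\delta_0$-hyperbolic space $\dot X_R$, the separation of apices being supplied by Lemma \ref{lem_dotsep} (the apices lie in members of the $R$-separated family $\calh_R^{\dot X_R}$).

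Then the conclusions follow by quoting the theory of quotients by very rotating families \cite[\S5.3, \S7.1]{DGO_HE}. It produces a universal constant $\delta_1$ such that $\bar X_R=\dot X_R/K$ is $\delta_1$-hyperbolic, together with the ``local isometry away from apices'' and Greendlinger statements: $\pi_K$ is isometric on any ball of $\dot X_R$ that stays away from all apices, and no nontrivial element of $K$ almost-stabilizes such a ball. Combined with Proposition \ref{prop_coneoff} --- which identifies a ball $B_X(x,r)$ disjoint from $\calh_R$ with an isometric ball of $\dot X_R$ disjoint from every cone $C(B)$, hence from every apex --- this gives the isometry statement \ref{it_isometry} and the disjointness statement \ref{it_disjoint}. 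Applying the Greendlinger lemma at the apex $c_{B_j}$, whose $\dot X_R$-stabilizer is $P_j$, yields $K\cap P_j=K\cap\Stab(c_{B_j})=N_{B_j}=N_j$, which is \ref{it_finite}. Finally $G/K$ acts by isometries on $\bar X_R$; the action is cocompact because $G\actson\dot X_R$ is (Proposition \ref{prop_coneoff}), and away from the images of apices it is properly discontinuous on a locally compact set, these properties passing through the local isometry from the region $X\setminus\calh_R$, where $G$ already acts properly discontinuously and cocompactly. The stabilizer in $G/K$ of the image of $c_{B_j}$ is $P_j/N_j$; when every $N_j$ has finite index these groups are finite, so the action is properly discontinuous and $\bar X_R$ is proper (near an apex it is a cone over the compact space $\dot B_j/N_{B_j}$), whence $G/K$ is a hyperbolic group, giving \ref{it_hyp}.

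The main obstacle is the middle step: calibrating the constant $C_R$ and checking the very rotating axioms from the purely combinatorial input ``$N_j$ avoids $S_R$'' through the geometry of the hyperbolic cones. Once the apices with their rotation subgroups are recognized as a very rotating family, everything else is a bookkeeping application of the results of \cite{DGO_HE}.
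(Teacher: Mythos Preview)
The paper does not give its own proof of this theorem: it is stated as a quotation of \cite[Prop.~7.7, Cor.~7.8]{DGO_HE}, followed only by a remark pinning down the choice of $S_R$. Your sketch correctly reproduces the mechanism of that reference --- realizing $\{(c_B,N_B)\}$ as a very rotating family on the $\delta_0$-hyperbolic cone-off $\dot X_R$ and reading off hyperbolicity of the quotient, the Greendlinger/local-isometry statements, and $K\cap P_j=N_j$ from the structure theory of rotating families. So your approach and the paper's (implicit, cited) approach coincide.

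One small calibration issue: you write that the threshold $C_R$ ``depends on $R$ only through the induced path-metric $d_{B_j}$ on $\partial B_j$''. In fact the threshold is a fixed universal constant --- the remark after the theorem names it as $4\pi\sinh(r_U)$ --- and what depends on $R$ is the set $S_R$ itself, because the horosphere $\partial B_j$ moves with $R$ and hence the set of parabolic elements displacing some point of $X\setminus\calh_R$ by at most $4\pi\sinh(r_U)$ changes. Also note that the separation of apices comes for free from the cone radius $r_U$ (apices sit at radial distance $r_U$ inside disjoint cones), not just from the $R$-separation of horoballs, which is why the statement holds for every $R>0$. These are cosmetic points; the architecture of your argument is right.
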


  \begin{rem}

    One can be slightly more precise on how to take $S_R$.   
 We may take for $S_R$ a set of representatives of conjugacy classes of elements in 
$\cup_j P_j\setminus \{1\}$ that move some point of $X\setminus
\calh_R$ by at most some specific number (namely $4\pi\sinh(r_U)$, a
true constant, see Lemma 7.5 and Proposition 7.7 in \cite{DGO_HE}).
In particular, we can impose that $S_{R'} \subset S_R$ when $R'\leq R$.

In fact, $G/K$ is relatively hyperbolic relative to the image of $\calp$ in $G/K$ \cite{GroMan_dehn,Osin_peripheral}.
  \end{rem}

\subsection{Proper Dehn kernels, and depth of subgroups}
The condition for Theorem \ref{theo;VRF} to apply is that $N_j\cap S_R=\es$.
The larger $R$ is, the more the space $\bar X_R$ looks like $X$.
It is therefore convenient to define the \emph{depth} of a subgroup of
some $P_i$ as the largest possible such $R$ for which the theorem promises a 
$\delta_1$-hyperbolic Dehn filled space $\bar X_R$, for this
particular subgroup. However, it will be convenient to extend this
definition to  any subgroup of $G$, as follows.

  \begin{dfn}[Depth of a subgroup]\label{dfn_depth}
Given a subgroup $H<G$, we define its \emph{depth} in $\bbN\cup \{\pm\infty\}$ as
$$\depth(H)=\sup \{R\in\bbN\setminus\{0\} | H\cap S_R=\es\}.$$ 
  \end{dfn}

In particular, $\depth(H)=-\infty$ if there is no $R>0$ such that $H\cap S_R=\es$
and $\depth(H)=+\infty$ if $H\cap S_R=\es$ for all $R$. 
And if $H$ contains a parabolic element, then $\depth(H)<+\infty$.

  \begin{dfn}\label{dfn_dehn}

A Dehn kernel $K=\ngrp{N_1,\dots,N_k}_G\normal G$ such that $N_j$ has
positive depth for all $j$ is called a \emph{proper} Dehn
kernel.
  \end{dfn}

Thus, a Dehn kernel is proper if and only if Theorem \ref{theo;VRF}
applies. In particular, for a proper Dehn kernel $K=\ngrp{N_1,\dots,N_k}_G$, one
has $N_i=K\cap P_i$. Since 
$S_R\subset P_1\cup\dots\cup P_k$, we get that 
$\depth(K)=\min\{\depth(N_1,\dots, N_k)\}$.
In other words, $K$ is a Dehn kernel of depth $\geq R$ if and only if $\depth(N_j)\geq R$ for all $j$.

An application of Theorem \ref{theo;VRF} also gives the following.

  \begin{cor}\label{cor_hyp}
    If $K=\ngrp{N_1,\dots,N_k}_G$ is a proper Dehn kernel of $G$, and if $N_j$ has finite index in $P_j$ for all $j$,
then $G/K$ is Gromov-hyperbolic.
  \end{cor}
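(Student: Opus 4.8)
The plan is to derive Corollary \ref{cor_hyp} directly from Theorem \ref{theo;VRF}, specialized to the case at hand. Since $K = \ngrp{N_1,\dots,N_k}_G$ is by hypothesis a \emph{proper} Dehn kernel, by Definition \ref{dfn_dehn} each $N_j$ has positive depth, i.e.\ there is some $R_j > 0$ with $N_j \cap S_{R_j} = \emptyset$. Using the monotonicity $S_{R'} \subset S_R$ for $R' \leq R$ recorded in the Remark after Theorem \ref{theo;VRF}, if we set $R = \min_j R_j > 0$ then $N_j \cap S_R = \emptyset$ for all $j$ simultaneously. Thus Theorem \ref{theo;VRF} applies with this value of $R$.

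Now I would simply invoke part \eqref{it_hyp} of Theorem \ref{theo;VRF}. Under the additional hypothesis that $N_j$ has finite index in $P_j$ for every $j$ — which is exactly what Corollary \ref{cor_hyp} assumes — the theorem asserts that the Dehn filled space $\bar X_R = \dot X_R / K$ is proper, that $G/K$ acts properly discontinuously and cocompactly by isometries on it, and that $\bar X_R$ is $\delta_1$-hyperbolic. A group acting properly discontinuously and cocompactly by isometries on a proper geodesic hyperbolic space is, by the Milnor--Švarc lemma together with the fact that quasi-isometry invariance of hyperbolicity makes its Cayley graph hyperbolic, a word-hyperbolic group. Hence $G/K$ is Gromov-hyperbolic, which is the desired conclusion.

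There is essentially no obstacle here: the content is entirely contained in Theorem \ref{theo;VRF}\eqref{it_hyp}, and the only thing to check is the bookkeeping that "proper Dehn kernel" supplies a single $R > 0$ for which the hypotheses $N_j \cap S_R = \emptyset$ hold for all $j$ at once — which is immediate from the nesting $S_{R'} \subset S_R$. The statement of Theorem \ref{theo;VRF} already packages the passage from "proper cocompact action on a proper hyperbolic space" to "$G/K$ is a hyperbolic group", so nothing further is required; one could even just write "this is the last sentence of Theorem \ref{theo;VRF}\eqref{it_hyp}." If one wanted to be slightly more self-contained, one could note that $\depth(K) = \min_j \depth(N_j) > 0$ (as observed just before the corollary), take any positive $R \leq \depth(K)$, and apply the theorem to that $R$; the finite-index hypothesis on the $N_j$ then triggers exactly the properness and hyperbolicity clauses.
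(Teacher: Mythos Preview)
Your proposal is correct and matches the paper's approach: the paper gives no separate proof at all, simply presenting the corollary as ``an application of Theorem \ref{theo;VRF}'', which is precisely what you do. Your observation that the last clause of Theorem \ref{theo;VRF}\eqref{it_hyp} already contains the conclusion verbatim is exactly right, so even the Milnor--\v{S}varc step you mention is superfluous (though harmless).
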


\subsection{Traces of horoballs in the Dehn filled space}

  Given $R'\leq R$, and $\dot B\in \calh^{\dot X_R}_{R'}$ 
 the trace of a $\calh_{R'}$-horoball (as defined before Lemma \ref{lem_dotsep}),
  we define $\bar B=\pi_K(\dot B)$ its image in $\bar X_R$.
  We also denote by $\bar \calh_{R'}^{\bar X_R}$ the set all of such $\bar B\subset \bar X_R$.

\begin{lem}\label{lem_barsep}
For all $R'\leq R$, the family of subsets $\bar \calh_{R'}^{\bar X_{R}}$ is $R'$-separated,
and $\diam(\bar X_R\setminus \bar\calh_{R'}^{\bar X_R})/\bar G\leq \diam(X\setminus
\calh_{R'})/G$. 

The stabilizer of $\bar B$ is the image in $\DF$ of the peripheral subgroup $P\in \calp$ stabilizing the corresponding horoball of $X$.
\end{lem}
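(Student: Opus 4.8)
The plan is to derive all three assertions from the facts about $\dot X_R$ already established above (Lemma \ref{lem_dotsep} and the description of the cones) by pushing them forward through the quotient map $\pi_K\colon\dot X_R\to\bar X_R$. Throughout I would use that distances in $\bar X_R=\dot X_R/K$ are computed orbit-wise, $d_{\bar X_R}(\pi_K(a),\pi_K(b))=\inf_{k\in K}d_{\dot X_R}(a,kb)$ (the standard formula for the quotient of a length space by an isometric group action; the right-hand side is immediately checked to be a pseudo-metric). Two preliminary remarks will be used repeatedly. First, if $\dot B\in\calh_{R'}^{\dot X_R}$ is the trace of $B'\in\calh_{R'}$, then $\Stab_G(\dot B)=\Stab_G(B')$, and by Lemma \ref{lem_eqv_horo} the latter is the peripheral subgroup $P\in\calp$ associated to $B'$: indeed the cone part of $\dot B$ is determined by $B'$, so $g\dot B=\dot B$ if and only if $gB'=B'$. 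Second, the members of $\calh_{R'}^{\dot X_R}$ are pairwise disjoint (traces of the pairwise disjoint horoballs of $\calh_{R'}$), so two traces satisfy $\pi_K(\dot B_1)=\pi_K(\dot B_2)$ if and only if they lie in the same $K$-orbit: if a point of $\dot B_1$ lies in $k\dot B_2$, then $\dot B_1\cap k\dot B_2\neq\es$, forcing $\dot B_1=k\dot B_2$.

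For the $R'$-separation, let $\bar B_1\neq\bar B_2$ in $\bar\calh_{R'}^{\bar X_R}$ and pick traces $\dot B_1,\dot B_2\in\calh_{R'}^{\dot X_R}$ mapping to them; by the second remark $k\dot B_2\neq\dot B_1$ for every $k\in K$, and each $k\dot B_2$ is again a member of $\calh_{R'}^{\dot X_R}$. Then for $\bar x\in\bar B_1$, $\bar y\in\bar B_2$, choosing lifts $x\in\dot B_1$, $y\in\dot B_2$,
\[
d_{\bar X_R}(\bar x,\bar y)=\inf_{k\in K}d_{\dot X_R}(x,ky)\ \geq\ \inf_{k\in K}d_{\dot X_R}(\dot B_1,k\dot B_2)\ \geq\ R'
\]
by Lemma \ref{lem_dotsep}; taking the infimum over $\bar x,\bar y$ gives $d_{\bar X_R}(\bar B_1,\bar B_2)\geq R'$.

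For the stabilizer, write $\bar B=\pi_K(\dot B)$ with $\dot B$ the trace of $B'$ and $P=\Stab_G(B')$. An element $\bar g=gK$ of $\bar G$ fixes $\bar B$ iff $g\dot B$ and $\dot B$ lie in the same $K$-orbit, iff $kg\dot B=\dot B$ for some $k\in K$, iff $kg\in P$ for some $k\in K$ (first remark), iff $g\in KP$. Since $K\normal G$, $KP$ is a subgroup, so $\Stab_{\bar G}(\bar B)=KP/K$, which is exactly the image of $P$ in $\bar G$.

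For the diameter bound, I would first verify directly from the definitions that $\dot X_R\setminus\bigcup_{\dot B\in\calh_{R'}^{\dot X_R}}\dot B=X\setminus\calh_{R'}$ as a subset of $\dot X_R$ (the only point to watch is the collar of each horosphere sitting inside the corresponding cone), and that $\pi_K$ maps this subset onto $\bar X_R\setminus\bigcup_{\bar B\in\bar\calh_{R'}^{\bar X_R}}\bar B$ and into nothing else — no $K$-translate of a point of $X\setminus\calh_{R'}$ leaves $X\setminus\calh_{R'}$, so such a point is never sent into a $\bar B$. The composite $X\setminus\calh_{R'}\hookrightarrow\dot X_R\xrightarrow{\pi_K}\bar X_R$ is then $G$-equivariant (descending to the $\bar G$-action), surjects onto $\bar X_R\setminus\bigcup\bar B$, and does not increase the length of paths (a path inside $X\setminus\calh_{R'}\subset X\setminus\calh_R\subset\dot X_R$ has the same length in $X$ and in $\dot X_R$, and $\pi_K$ is $1$-Lipschitz), hence is $1$-Lipschitz for the induced length metrics; a $1$-Lipschitz equivariant surjection does not increase the diameter of the quotient, giving $\diam\big((\bar X_R\setminus\bigcup\bar B)/\bar G\big)\leq\diam\big((X\setminus\calh_{R'})/G\big)$. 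I expect the only genuinely delicate part to be exactly this bookkeeping — the validity of the orbit-wise quotient metric formula and the identification of the complement of the horoballs near the horospheres — since everything substantive has already been carried out in Lemma \ref{lem_dotsep} and in the construction of the cones.
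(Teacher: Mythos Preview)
Your proof is correct and follows essentially the same approach as the paper: both reduce the $R'$-separation to Lemma \ref{lem_dotsep}, the paper by lifting a geodesic of $\bar X_R$ to a path in $\dot X_R$, you via the orbit-wise quotient distance formula, which are equivalent formulations. The paper dismisses the diameter bound and the stabilizer computation with ``the rest of the statement is clear,'' so your detailed verification of those parts (identifying $\dot X_R\setminus\bigcup\dot B$ with $X\setminus\calh_{R'}$, the equivariant $1$-Lipschitz surjection, and the $KP/K$ computation) simply fills in what the paper leaves to the reader.
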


\begin{proof}
  Consider $\bar B_1\neq \bar B_2\in\bar\calh_{R'}^{\bar X_R}$,
  and $\bar x_1\in\bar B_1,\bar x_2\in \bar B_2$. Let $\bar c$ a geodesic path
  joining them. Lift $\bar c$ to a path $c\subset \dot X_R$ joining $\dot B_1$ to $\dot B_2$.
  By Lemma \ref{lem_dotsep}, $c$ and hence $\bar c$ have length at least $R'$.
  The rest of the statement is clear.
\end{proof}

\subsection{Lifting elements and uniform properness}\label{sec_lift}
  We now collect  handy consequences of Proposition \ref{prop_coneoff} and  Theorem \ref{theo;VRF}.

  \begin{lemma}[Lifting elements]\label{lem;lift}
    Assume that  $K$ is a proper Dehn kernel of $G$ with  $\depth(K)\geq R$. 
    Consider $x\in X$ and $r>0$ such that $B_X(x,r)\cap \calh_R=\es$,
    and $\bar x=\pi_K\circ \iota(x)$  the image of $x$ in $\ol X_R$.

    Then $\pi_K\circ \iota_R$ restricts to an isometry between 
$B_X(x,r/2) \to B_{\bar X_R}(\bar x,r/2)$.

Moreover, for any $\bar{g}\in G/ K$  such that   
  $d_{\bar{X}} (\bar x, \bar g \bar x) \leq r/2$,
  there exists a unique preimage $g\in G$ of $\bar g$ such that  $d_X(x, g x) \leq  r/2$,
  and this preimage satisfies $d_X(x,gx)=d_{\bar X_R}(\bar x,\bar g\bar x)$.
  \end{lemma}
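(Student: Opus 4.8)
The first assertion is essentially a concatenation of two maps we already control. By Proposition \ref{prop_coneoff}, since $B_X(x,r)\cap\calh_R=\es$, the inclusion $\iota_R$ induces an isometry $B_X(x,r/2)\to B_{\dot X_R}(\iota_R(x),r/2)$. By Theorem \ref{theo;VRF}\eqref{it_isometry} (which applies because $\depth(K)\geq R$, so $K$ is a proper Dehn kernel and the hypotheses of that theorem are met), the quotient map $\pi_K$ restricts to an isometry $B_{\dot X_R}(\iota_R(x),r/2)\to B_{\bar X_R}(\pi_K(\iota_R(x)),r/2)$. Composing, $\pi_K\circ\iota_R$ restricts to an isometry $B_X(x,r/2)\to B_{\bar X_R}(\bar x,r/2)$. (One should note that the ball $B_{\dot X_R}(\iota_R(x),r/2)$ is genuinely the image of $B_X(x,r/2)$, so the composition lands in the right place; this is immediate from the fact that both restricted maps are isometries onto balls of the same radius around corresponding centres.)

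For the lifting statement, suppose $\bar g\in G/K$ satisfies $d_{\bar X_R}(\bar x,\bar g\bar x)\leq r/2$. Then $\bar g\bar x$ lies in the ball $B_{\bar X_R}(\bar x,r/2)$, which by the first part is isometric (via $\pi_K\circ\iota_R$) to $B_X(x,r/2)\subset X\setminus\calh_R$. Let $y\in B_X(x,r/2)$ be the unique preimage of $\bar g\bar x$ under this isometry. Pick any preimage $g_0\in G$ of $\bar g$; then $\pi_K(\iota_R(g_0 x))=\bar g\bar x=\pi_K(\iota_R(y))$, and since $g_0 x$ and $y$ both map to the same point of $\bar X_R$ under $\pi_K\circ\iota_R$, there is $h\in K$ with $h g_0 x$ and $y$ mapping to the same point of $\dot X_R$ — in fact, because $\iota_R$ is injective on $X\setminus\calh_R$ and $\pi_K$ identifies exactly the $K$-orbits, $g:=h g_0$ is a preimage of $\bar g$ with $gx=y$, so $d_X(x,gx)=d_X(x,y)\leq r/2$, and moreover $d_X(x,gx)=d_{\bar X_R}(\bar x,\bar g\bar x)$ since the relevant balls are isometric via $\pi_K\circ\iota_R$.

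It remains to prove uniqueness. Suppose $g,g'\in G$ are both preimages of $\bar g$ with $d_X(x,gx)\leq r/2$ and $d_X(x,g'x)\leq r/2$. Then $g^{-1}g'\in K$, and $(g^{-1}g')x$ satisfies $d_X(x,(g^{-1}g')x)\leq d_X(x,g'x)+d_X(g'x,gx)$; but $d_X(g'x,gx)=d_X(x,(g^{-1}g')x)$ is not directly bounded by the triangle inequality this way, so instead I argue in $\bar X_R$: both $gx$ and $g'x$ map under $\pi_K\circ\iota_R$ to $\bar g\bar x\in B_{\bar X_R}(\bar x,r/2)$, and both lie in $B_X(x,r/2)$ on which $\pi_K\circ\iota_R$ is injective, hence $gx=g'x$. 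Now $g^{-1}g'\in K$ fixes the point $x$, so $g^{-1}g'\in K\cap\Stab_G(x)$; but by Theorem \ref{theo;VRF}\eqref{it_disjoint}, for all $k\in K\setminus\{1\}$ we have $kB_X(x,r)\cap B_X(x,r)=\es$, so no nontrivial element of $K$ can fix $x$. Therefore $g^{-1}g'=1$, i.e. $g=g'$. The main obstacle here is purely bookkeeping: being careful that the three maps $\iota_R$, $\pi_K$, and their composite are being applied to the correct balls and that "isometry onto a ball" is used to transfer both distances and the injectivity needed for uniqueness; the substantive inputs (Proposition \ref{prop_coneoff} and Theorem \ref{theo;VRF}) do all the real work.
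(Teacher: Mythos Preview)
Your proof is correct and follows essentially the same route as the paper: compose the isometries from Proposition~\ref{prop_coneoff} and Theorem~\ref{theo;VRF}\eqref{it_isometry} for the first assertion, pull back $\bar g\bar x$ through that isometry and adjust an arbitrary lift by an element of $K$ for existence, and invoke Theorem~\ref{theo;VRF}\eqref{it_disjoint} for uniqueness. The only cosmetic difference is that for uniqueness the paper applies \eqref{it_disjoint} directly to $k=g(g')^{-1}$ (since $k\cdot g'x=gx$ shows $kB_X(x,r)\cap B_X(x,r)\neq\es$), whereas you first pass through the injectivity of $\pi_K\circ\iota_R$ to get $gx=g'x$; both are fine.
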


  \begin{proof}
    Let $g\in G$ be any preimage of $\bar g$.
By Proposition \ref{prop_coneoff} and Assertion (\ref{it_isometry}) of Theorem \ref{theo;VRF},
the map  $\pi_K\circ\iota_R: X\setminus\calh_R \to  \bar{X} $ 
induces an isometry $B_X(x,r/2) \to B_{\bar X_R}(\bar x,r/2)$. Consider
 $x'\in B_X(x,r/2) $ the preimage of $\bar g \dot x$ under this isometry. 
Consider any preimage $g\in G$ of $\bar g$.
Since $\pi_K\circ\iota_R(gx)=\bar g x=\pi_K\circ \iota_R(x')$,
there exists $k\in K$ such that $\iota_R(gx)=k\iota_R(x')$,
\ie $x'=k\m gx$. This shows the existence.

If $g_1,g_2\in G$ both project to $\bar g\in G/K$, and if $d_X(x,g_1x)\leq r/2$ and $d_X(x,g_2x)\leq r/2$,
then the element $k=g_1g_2\m$ is an element of $K$ such that  
$k B(x,r/2)\cap B(x,r/2)\neq \es$.
By Assertion (\ref{it_disjoint}) of Theorem \ref{theo;VRF}, $k=1$ and uniqueness follows.
  \end{proof}

By Lemma \ref{lem_eqv_horo}(\ref{it_basepoint}), the lemma applies to the base point $p$ for $r=R$.
We denote by $\bar p=\pi_K(p)$ the image of the base point $p$ in $\bar X_R$.

We thus get:

  \begin{cor}\label{cor_lift}
    If $K$ is a proper Dehn kernel of $G$ with $\depth(K)\geq R$.
    Then $\pi_K\circ \iota_R$ restricts to an isometry between 
$B_X(p,R/2) \to B_{\bar X_R}(\bar p,R/2)$.

Moreover, for any $\bar{g}\in G/ K$  such that   
  $d_{\bar{X}} (\bar p, \bar g \bar p) \leq R/2$,
  there exists a unique preimage $g\in G$ of $\bar g$ such that  $d_X(p, g p) \leq  R/2$.
  \end{cor}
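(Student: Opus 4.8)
The plan is to deduce Corollary \ref{cor_lift} as a direct specialization of Lemma \ref{lem;lift}. The key observation is that the base point $p$ was chosen, via Lemma \ref{lem_eqv_horo}(\ref{it_basepoint}), so that every horoball in $\calh_R$ is disjoint from $B(p,R)$; equivalently, $B_X(p,R)\cap\calh_R=\es$. So I would simply apply Lemma \ref{lem;lift} with $x=p$ and $r=R$: since $K$ is a proper Dehn kernel with $\depth(K)\geq R$, the hypotheses of that lemma are satisfied, and it yields both that $\pi_K\circ\iota_R$ restricts to an isometry $B_X(p,R/2)\to B_{\bar X_R}(\bar p,R/2)$ and that every $\bar g\in G/K$ with $d_{\bar X_R}(\bar p,\bar g\bar p)\leq R/2$ has a unique preimage $g\in G$ with $d_X(p,gp)\leq R/2$ (moreover realizing the same distance).

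Concretely the proof is one line: ``Apply Lemma \ref{lem;lift} to $x=p$ and $r=R$, using Lemma \ref{lem_eqv_horo}(\ref{it_basepoint}) to check that $B_X(p,R)\cap\calh_R=\es$.'' There is essentially no obstacle here — this corollary is purely a packaging of the previous lemma for the distinguished base point, isolating the statement that will actually be invoked repeatedly in later sections (lifting words in the generators of $G/K$ to words in $G$ when the fillings are deep enough). The only thing to be careful about is the bookkeeping of constants: the lemma gives an isometry on the $r/2$-ball, so with $r=R$ one gets the $R/2$-ball, matching the statement; and the uniqueness clause transfers verbatim since it only used Assertion \ref{it_disjoint} of Theorem \ref{theo;VRF}, which holds for any ball disjoint from $\calh_R$, in particular $B(p,R)$.

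\begin{proof}
By Lemma \ref{lem_eqv_horo}(\ref{it_basepoint}), all horoballs of $\calh_R$ are disjoint from $B(p,R)$, so $B_X(p,R)\cap\calh_R=\es$. Since $K$ is a proper Dehn kernel with $\depth(K)\geq R$, Lemma \ref{lem;lift} applies with $x=p$ and $r=R$, and gives exactly the two assertions of the corollary.
\end{proof}
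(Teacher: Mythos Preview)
Your proposal is correct and is exactly the paper's approach: the corollary is stated immediately after the sentence ``By Lemma \ref{lem_eqv_horo}(\ref{it_basepoint}), the lemma applies to the base point $p$ for $r=R$,'' with no further proof given. Your one-line argument is precisely this specialization.
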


The following corollary says that we have uniform properness for the action of deep enough Dehn Fillings of $G$ on the corresponding Dehn filled space.

  \begin{cor}\label{cor_uniform_properness}
    Let $K_i$ be a sequence of Dehn kernels of $G$ of depth $R_i$ where $R_i$ goes to infinity.
    Let $\bar X_i=\dot X_{R_i}/K_i$ be the corresponding Dehn filled space.
    Then given any $C,D$, there exists $M$ such that for all $i$ large enough and all $\bar x\in \bar X_i\setminus \bar\calh_{D}^{\bar X_i}$,
    there are at most $M$ elements $\bar g\in \bar G_i$ such that  $d_{\bar X_i}(\bar x,\bar g\bar x)\leq C$.    
  \end{cor}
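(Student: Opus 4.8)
The statement is a uniform-properness bound for a sequence of Dehn fillings whose depths go to infinity. The key point is that the uniform hyperbolicity constant $\delta_1$ (from Theorem \ref{theo;VRF}) and the uniform cocompactness (from Proposition \ref{prop_coneoff} and Lemma \ref{lem_barsep}) of the spaces $\bar X_i$ let us reduce the count to a count in a single model space. First I would fix $C$ and $D$, and set $D'=\max\{D,C\}$ so that a ball $B_{\bar X_i}(\bar x, C)$ around any $\bar x\in\bar X_i\setminus\bar\calh_D^{\bar X_i}$ is controlled. Choose $R$ once and for all larger than $2(C+D+D_{\ref{lem_eqv_horo}}+\delta_1)$ (plus whatever slack is convenient), so that for all $i$ large enough we have $R_i\geq R$ and, using Lemma \ref{lem_eqv_horo}(\ref{it_horo_sep}), the $C$-ball around $\bar x$ stays outside the deeper horoball family $\bar\calh_{R}^{\bar X_i}$. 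Then the local isometry statement of Theorem \ref{theo;VRF}(\ref{it_isometry}) (in the form of Lemma \ref{lem;lift} / Corollary \ref{cor_lift}) says that such a ball is isometric to a ball in $\dot X_R$, hence (by Proposition \ref{prop_coneoff}) to a ball in $X\setminus\calh_R$.

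Second, I would use the symmetry / homogeneity coming from the group action together with cocompactness of $X\setminus\calh_R$ under $G$. By Lemma \ref{lem_barsep} the quotient $(\bar X_i\setminus\bar\calh_R^{\bar X_i})/\bar G_i$ has diameter bounded by $\diam((X\setminus\calh_R)/G)=:\Delta$, independent of $i$. So after translating by an element of $\bar G_i$ I can assume $\bar x$ lies within $\Delta$ of the image $\bar p$ of the base point; replacing $C$ by $C+2\Delta$ I reduce to the case $\bar x=\bar p$. Now I want to bound the number of $\bar g\in\bar G_i$ with $d_{\bar X_i}(\bar p,\bar g\bar p)\leq C$ (with $C$ enlarged as above). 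Here I invoke Lemma \ref{lem;lift}: provided $R_i\geq R$ is large enough that $C\leq R/2$ and $B_X(p,R)\cap\calh_{R_i}=\es$ (Lemma \ref{lem_eqv_horo}(\ref{it_basepoint})), every such $\bar g$ has a preimage $g\in G$ with $d_X(p,gp)=d_{\bar X_i}(\bar p,\bar g\bar p)\leq C$, and this preimage is unique with that property. Hence the number of such $\bar g$ is at most the number of $g\in G$ with $d_X(p,gp)\leq C$, which is a finite number $M$ depending only on $X$, $p$, and $C$ — not on $i$. That is the desired bound.

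Third, I would assemble the constants carefully: starting from the given $C,D$, pass to $C_1=C+2\Delta$ where $\Delta=\diam((X\setminus\calh_R)/G)$, choose $R$ with $R/2\geq C_1$ and $R$ large enough for the horoball-separation estimate of Lemma \ref{lem_eqv_horo}(\ref{it_horo_sep}) to push $\bar\calh_D$-complements into $\bar\calh_R$-complements (quantitatively $R\leq D+\text{(dist already available)}$ is the wrong direction, so really one uses that a point outside $\bar\calh_D$ whose $C$-ball we care about may enter $\bar\calh_{D'}$ for $D'<D$ small, and one only needs $R$ chosen so the relevant finite ball around a representative avoids $\calh_R$ — this is automatic once we have translated to near $\bar p$ and used Lemma \ref{lem_eqv_horo}(\ref{it_basepoint})). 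Then set $M=\#\{g\in G: d_X(p,gp)\leq C_1\}$, which is finite because the $G$-action on $X$ is proper. Finally take ``$i$ large enough'' to mean $R_i\geq R$.

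**Main obstacle.** The one genuinely delicate point is bookkeeping the horoballs: a point $\bar x$ is assumed only to lie outside $\bar\calh_D^{\bar X_i}$, but the local-isometry lemma requires a whole ball of radius comparable to $C$ to avoid the \emph{deep} horoball family $\bar\calh_{R}^{\bar X_i}$ with $R$ possibly much bigger than $D$. This is not automatic — a point just outside a shallow horoball could be deep inside it. The resolution is that I do not need the ball around the \emph{given} $\bar x$ to avoid $\calh_R$; I first move $\bar x$ by a group element to land near $\bar p$ (legitimate since the count $\#\{\bar g: d(\bar x,\bar g\bar x)\leq C\}$ is conjugation-invariant under $\bar G_i$), and $\bar p$ by construction has a large ball avoiding all of $\calh_{R_i}$ by Lemma \ref{lem_eqv_horo}(\ref{it_basepoint}). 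The role of the hypothesis ``$\bar x\notin\bar\calh_D^{\bar X_i}$'' is then only to guarantee, via Lemma \ref{lem_barsep}, the uniform diameter bound $\Delta$ on the relevant piece of the quotient, so that the translation distance needed to reach $\bar p$ is uniformly bounded. Once this is seen, the rest is a routine application of Lemma \ref{lem;lift} and properness of $G\curvearrowright X$.
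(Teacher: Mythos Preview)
Your argument is correct, but it takes a slightly longer route than the paper. The paper avoids the translation-to-basepoint step entirely: it lifts $\bar x$ directly to some $x\in Y:=X\setminus\calh_D$, then uses Lemma~\ref{lem_eqv_horo}(\ref{it_horo_sep}) to choose $R_0$ (depending only on $C,D$) so that the $2C$-neighbourhood of all of $Y$ lies in $X\setminus\calh_{R_0}$. For $R_i>R_0$ one then applies Lemma~\ref{lem;lift} with $r=2C$ at the point $x$ itself, and the properness bound $M$ is simply $\sup_{y\in Y}\#\{g\in G:d_X(y,gy)\le C\}$, finite by cocompactness of $Y$ and properness of $G\curvearrowright X$. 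This is exactly the ``delicate point'' you flagged, resolved without moving anything: one does not need the ball around $\bar x$ to avoid the \emph{shallow} horoballs $\bar\calh_D$, only the \emph{deep} ones $\bar\calh_{R_0}$, and Lemma~\ref{lem_eqv_horo}(\ref{it_horo_sep}) gives that for free.

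Your detour through $\bar p$ works too, but note a bookkeeping slip: you must take $\Delta=\diam\big((X\setminus\calh_D)/G\big)$, not $\diam\big((X\setminus\calh_R)/G\big)$, since $\bar x$ is only known to lie outside $\bar\calh_D^{\bar X_i}$; with your $R$ (which you later want $\ge 2C_1=2(C+2\Delta)$) the definition becomes circular. Once $\Delta$ is pinned to $D$, the order of choices is $\Delta\to C_1\to R$, and your argument goes through. The paper's version is shorter precisely because it never introduces $\Delta$ or $C_1$.
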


  \begin{proof}
    Consider  $Y=X\setminus \calh_D$. Since $Y$ is cocompact and $G$ acts properly on $X$, there exists a bound $M$ such that 
for all $y\in Y$, there are at most $M$ elements $g\in G$ such that $d_X(x,gx)\leq C$.
Let $R_0$ be such that the $2C$-neighbourhood of $Y$ is contained in $X\setminus \calh_{R_0}$ (this exists by Lemma \ref{lem_eqv_horo}(\ref{it_horo_sep})).
Consider $i$ such that $R_i>R_0$, $\bar x\in \bar X_i\setminus
\bar\calh_{D}^{\bar X_i}$, 
 and $x\in Y$ such that $\pi_{K_i}\circ i_{R_i}(x)=\bar x$.
Then Lemma \ref{lem;lift} applies with $r=2C$, and says that for any $\bar g\in G/K_i$ such that $d_{\bar X_i}(\bar g\bar x,\bar x)\leq C$
there exists a unique preimage $g\in G$ such that $d_X(x,gx)\leq C$.
This proves the Corollary.
  \end{proof}

\begin{lemma}\label{lem_preserv_non_conj}  
 If  $G/K$  is a  deep enough Dehn filling of $(G,\calp)$
 (\ie a proper Dehn filling by a Dehn kernel of
 sufficiently high depth), and if $P,Q$  in
 $\calp$ are  infinite and not conjugate in $G$, then their images $\bar P$ and $\bar Q$  are non-conjugate in $G/K$. 
\end{lemma}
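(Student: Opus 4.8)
The plan is to argue by contradiction using the uniform geometry of deep Dehn fillings, following the spirit of Lemma~\ref{lem_barsep}. Suppose $P,Q\in\calp$ are infinite and non-conjugate in $G$, but that for a sequence of proper Dehn kernels $K_i$ of depth $R_i\to\infty$ one has $\bar g_i \bar P_i \bar g_i^{-1}=\bar Q_i$ in $G/K_i$ for some $\bar g_i\in G/K_i$. Fix horoballs $B_P,B_Q\in\calh$ stabilized by $P$ and $Q$ respectively; by Lemma~\ref{lem_barsep} their images $\bar B_P,\bar B_Q\in\bar\calh^{\bar X_{R_i}}$ are stabilized precisely by $\bar P_i$ and $\bar Q_i$. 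The key point is that since $\bar g_i$ conjugates $\Stab(\bar B_P)$ to $\Stab(\bar B_Q)$, and since these are the full stabilizers of those subsets, $\bar g_i$ must send $\bar B_P$ to $\bar B_Q$ (here one uses that in the Dehn filled space the horoball traces are canonically attached to their stabilizers; this requires a small argument, since a priori a conjugate of a horoball is only "a subset with the right stabilizer", so one should work with the coarse stabilizer of the trace $\bar B_P$ inside $\bar X_{R_i}$, which coincides with the actual stabilizer because $\bar B_P$ is the image of a strongly quasiconvex set and the fillings are deep).

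Next I would localize: pick base points $x_P\in\partial B_P$ and $x_Q\in\partial B_Q$ with $d_X(p,x_P)$ and $d_X(p,x_Q)$ bounded by a constant $L$ independent of everything (possible since $G$ acts cocompactly on $X\setminus\calh$ and $\calp$ has finitely many conjugacy classes, so up to translating $P$ and $Q$ we may take representative horoballs at uniformly bounded distance from $p$). The moves $\bar g_i$ a priori push $\bar x_P$ far from $\bar p$, so to control it I would first replace $\bar g_i$ by $\bar g_i'=\bar h_i \bar g_i$ where $\bar h_i\in \bar Q_i$ is chosen to bring $\bar g_i \bar x_P$ back into a fundamental domain for the $\bar Q_i$-action on $\bar B_Q$; since the horoball quotients have uniformly bounded "width away from the cusp" (Lemma~\ref{lem_barsep}: $\diam(\bar X_{R_i}\setminus\bar\calh^{\bar X_{R_i}})/\bar G_i$ is bounded), this means $d_{\bar X_{R_i}}(\bar p,\bar g_i'\bar p)$ stays bounded by a constant $C$ independent of $i$. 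Now apply Corollary~\ref{cor_lift} (with $R_i/2 > C$ for $i$ large): $\bar g_i'$ lifts to a genuine $g_i'\in G$ with $d_X(p,g_i' p)\le C$, hence — by properness of $G\actson X$ — $g_i'$ lies in a finite set, so infinitely many of the $g_i'$ equal a single $g\in G$.

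Finally I would transport the conjugation relation back to $G$. The element $g$ satisfies: for each $u\in P$ with $u\notin K_i$, the image of $g u g^{-1}$ in $G/K_i$ lies in $\bar Q_i=\Stab(\bar B_Q)$, i.e. $g u g^{-1}$ normalizes $B_Q$ modulo $K_i$; since cofinality/deepness lets us take any fixed finite $A\subset P$ avoiding $K_i$ for $i$ large, and since $g u g^{-1}$ moves $p$ by a bounded amount whenever $u$ does (as $g$ is fixed), Lemma~\ref{lem;lift} identifies $g u g^{-1}$ with an element of $G$ stabilizing $B_Q$, i.e. $g u g^{-1}\in Q$. As $A\subset P$ can be taken arbitrarily large (and as $P$ is generated by such finite subsets, $P$ being finitely generated), we conclude $g P g^{-1}\subset Q$, and the symmetric argument gives $g P g^{-1}=Q$, contradicting that $P$ and $Q$ are non-conjugate in $G$.

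\textbf{Main obstacle.} The delicate step is the passage "$\bar g_i$ conjugates the stabilizers $\Rightarrow$ $\bar g_i$ sends $\bar B_P$ to $\bar B_Q$ up to a bounded error": in the quotient $\bar X_{R_i}$ one no longer has a clean bijection horoballs $\leftrightarrow$ peripheral subgroups, so I would need to show that $\bar B_P$ is the unique element of $\bar\calh^{\bar X_{R_i}}$ left quasi-invariant by $\bar P_i$, with a quasi-invariance constant independent of $i$. This uses that $\bar P_i$ is infinite (hence contains elements moving $\bar p$ by more than the fixed diameter bound, forcing them deep into a horoball trace), the $R_i$-separation of $\bar\calh^{\bar X_{R_i}}$ from Lemma~\ref{lem_barsep}, and an argument parallel to the one in Lemma~\ref{lem_loc_parab} showing that an infinite subgroup without "hyperbolic" elements in $\bar X_{R_i}$ (uniformly) stabilizes a single horoball trace — the point being that the relevant constants come from $\delta_1$ and the uniform cocompactness, not from $i$.
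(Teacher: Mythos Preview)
Your instinct that the ``main obstacle'' is the heart of the matter is exactly right --- but in fact resolving it \emph{is} the entire proof, and the subsequent lifting argument is both unnecessary and flawed.

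The paper's argument is much more direct. It never tries to show that $\bar g$ carries $\bar B_P$ to $\bar B_Q$. Instead: if $\bar g\,\bar P\,\bar g^{-1}=\bar Q$, then $\bar P$ stabilizes both $\bar B_P$ and $\bar g^{-1}\bar B_Q$. These are distinct elements of $\bar\calh_0^{\bar X_R}$ because $P$ and $g^{-1}Qg$ are non-conjugate in $G$ (for any lift $g$), hence their horoballs lie in distinct $K$-orbits. Now take the geodesic $\gamma$ in $\bar X_R$ joining the two apices: every element of $\bar P$ fixes both apices and therefore moves every point of $\gamma$ by at most $20\delta_1$. But $\gamma$ must cross $\bar X_R\setminus\bar\calh_0^{\bar X_R}$, and by uniform properness (Corollary~\ref{cor_uniform_properness}) there is a bound $M$, independent of the depth, on how many elements of $\bar G$ move a thick-part point by $\le 100\delta_1$. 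For deep enough fillings $|\bar P|>M$, contradiction.

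Your steps 3--5 are therefore superfluous, and step 5 has a genuine gap: Lemma~\ref{lem;lift} gives uniqueness of a lift with small displacement of the basepoint, but it does not tell you that this lift lies in $Q$. You would need to know that $\overline{gug^{-1}}\in\bar Q_i$ admits a representative $q\in Q$ with $d_X(p,qp)\le R_i/2$, which is not automatic --- elements of $Q$ can move $p$ arbitrarily far. The only clean way to close this is to observe that $\overline{gPg^{-1}}$ stabilizes two distinct horoball traces, which brings you back to the paper's argument anyway. Similarly, your proposed resolution of the obstacle (``elements moving $\bar p$ far are forced deep into a horoball trace'') is not quite the right mechanism; the geodesic-between-apices argument is what actually works.
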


\begin{proof} 
By Lemma \ref{lem_barsep}, if $\bar G$ is a Dehn filling of depth $R$,
for any peripheral group $P'$, its image $\bar P'$ is the stabilizer
of  $\bar B_{P'}\in \bar \calh_{0}^{\bar X_R}$,  
the  image of the trace
  of the horoball associated to $P'$. 
Let $\delta_1$ be the constant given by  Theorem \ref{theo;VRF} (which
can serve as a hyperbolicity constant for all  Dehn filled spaces
produced by the theorem). 
Consider $Y=X\setminus \calh_{0}$ the complement of the initial system of horoballs,
and $D\subset X$ a compact set such that $GD\supset Y$.

By uniform properness of the action, there exists $M$ and $R$ as follows. Consider a Dehn kernel $K$ of depth $\geq R$,
 $\bar X_R=\dot X_R/K$ the Dehn filled space. 
Then for any  $x\in \bar
 X\setminus \bar\calh_0 ^{\bar X_R}$,  
there are at most $M$
elements of $\bar G$ that move $x$ by at most $100\delta_1$.

 If the depth of $K$ is large enough, 
the image $\bar P$ of
$P$ in $\bar G=G/K$ has cardinality at least $M+1$. 

Then we claim that $\bar P$ is not conjugate in $\bar G$ to the image of any
 peripheral subgroup $Q\in\calp$ not conjugate to $P$ in $G$.
 If it was, then up to changing $Q$ to a conjugate,  $\bar P$ would fix two distinct traces of
horoballs $\bar B_P,\bar B_Q\in  \bar \calh_0^{\bar X_R}$.  
Let $\gamma$ be a geodesic joining the apex of $\bar B_P$ to the apex of $\bar B_Q$.
Then all elements of $\bar G$ move by at most $20\delta_1$ all the points on $\gamma$.
Since $\gamma$ intersects $\bar X_R\setminus \bar\calh_0^{\bar X_R}$, 
any point $x$ in the intersection is moved by at most $100\delta_1$
by $M+1$ elements, a contradiction.
\end{proof}

\subsection{Cofinality}

\begin{dfn}[Cofinal sequence of subgroups]\label{dfn_cof}
A sequence  (or a set) of subgroups $H_i$ of a group $G$ is \emph{cofinal} if for each finite subset $A\subset G\setminus \{1\}$,
one has $H_i\cap A=\es$  for all but finitely many $i$.
\end{dfn}

\begin{lemma}\label{lem;cof}  
  For each $j\in\{1,\dots,k\}$, consider $(N^i_j)_{i\in\bbN}$  a sequence of normal subgroups  $N^i_j\normal P_j$,
and consider $K_i=\ngrp{N^i_1,\dots,N^i_k}_G\normal G$.

Then $(K_i)_{i\in\bbN}$ is cofinal in $G$ if and only if for all $j$, $(N^i_j)_{i\in\bbN}$ is cofinal in $P_j$.

Moreover, in this case, 
$K_i$ is a proper Dehn kernel for all $i$ large enough
and
$$\lim_{i\ra \infty} \depth(K_i)=+\infty$$
(see Definition \ref{dfn_depth}).
  \end{lemma}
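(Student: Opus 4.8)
The statement has two halves: a characterization of cofinality of the $K_i$ in terms of the $N^i_j$, and the ``moreover'' part about eventual properness and depth going to infinity. For the first half, the implication that cofinality of each $(N^i_j)_j$ in $P_j$ forces cofinality of $(K_i)_i$ in $G$ is the substantive direction, and is exactly where I expect to spend the effort; the converse is immediate since $N^i_j = K_i\cap P_j$ once $K_i$ is a proper Dehn kernel, or more elementarily since $N^i_j\subset K_i$ always, so $K_i\cap A=\es$ for $A\subset P_j$ gives $N^i_j\cap A=\es$.

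For the hard direction, fix a finite set $A\subset G\setminus\{1\}$; I want to show $K_i\cap A=\es$ for $i$ large. The natural tool is the Dehn filling theorem (Theorem \ref{theo;VRF}) together with the geometric control it provides. Fix $R$ large enough that $A$ is contained in the ball $B_X(p,R/2)$ around the base point (possible since $X$ is proper and $G$ acts properly, so the translates $gp$ for $g\in A$ stay in a bounded region; more precisely, choose $R$ with $d_X(p,gp)< R/2$ for all $g\in A$). Now use cofinality of the $(N^i_j)_j$: since $S_R\cap P_j$ is a finite subset of $P_j$ (by the Remark after Theorem \ref{theo;VRF}, $S_R$ is a finite set of representatives, and we may enlarge it to its full intersection with each $P_j$, still finite by properness/cocompactness), for $i$ large we have $N^i_j\cap S_R=\es$ for every $j$. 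Hence for $i$ large $K_i=\ngrp{N^i_1,\dots,N^i_k}_G$ is a proper Dehn kernel of depth $\geq R$, and Theorem \ref{theo;VRF}(\ref{it_disjoint}) (via Lemma \ref{lem;lift}/Corollary \ref{cor_lift} applied at $p$ with $r=R$) says: for $g\in K_i\setminus\{1\}$, $g\,B_X(p,R/2)\cap B_X(p,R/2)=\es$, so in particular $d_X(p,gp)\geq R/2$. Since every $g\in A$ satisfies $d_X(p,gp)<R/2$, no such $g$ can lie in $K_i$; thus $K_i\cap A=\es$ for all $i$ large. This proves cofinality of $(K_i)_i$ and, along the way, that $K_i$ is a proper Dehn kernel for $i$ large.

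For the depth statement: given any $R$, the same argument shows that for $i$ large enough $N^i_j\cap S_R=\es$ for all $j$ (cofinality of $(N^i_j)_j$ against the finite set $S_R\cap P_j$), i.e.\ $\depth(N^i_j)\geq R$ for all $j$, hence $\depth(K_i)=\min_j\depth(N^i_j)\geq R$ (using the identity $\depth(K)=\min_j\depth(N_j)$ recorded after Definition \ref{dfn_dehn}, valid once $K_i$ is a proper Dehn kernel). Since $R$ was arbitrary, $\depth(K_i)\to\infty$. The one point requiring a line of care, and the main (minor) obstacle, is the finiteness of $S_R\cap P_j$: $S_R$ itself is finite by the Remark, so this is immediate, but one should state it explicitly so that cofinality can be applied. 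Everything else is a direct unwinding of the definitions and the cited Dehn filling statements.
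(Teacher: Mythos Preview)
Your proof is correct and follows essentially the same approach as the paper: the easy direction uses $N^i_j\subset K_i$, while the hard direction first uses cofinality of the $N^i_j$ against the finite set $S_R$ to get $\depth(K_i)\geq R$, and then invokes the injectivity/disjointness near the base point from Theorem \ref{theo;VRF}/Corollary \ref{cor_lift} to conclude that no $g\in A$ can lie in $K_i$. The only cosmetic differences are that the paper treats one element $g$ at a time (taking $R=10\,d_X(p,gp)$) and phrases the final step via the uniqueness in Corollary \ref{cor_lift} rather than via assertion (\ref{it_disjoint}); also, your worry about the finiteness of $S_R\cap P_j$ is unnecessary since $S_R$ itself is already finite.
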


\begin{proof}   
If the sequence $K_i$ is cofinal, then so is $N^i_j$ since $N^i_j\subset K_i$.

Assume conversely that $N^i_j\normal P_j$ is cofinal. 
Fix $R\geq 0$, and consider the finite set $S_R\subset G\setminus\{1\}$ given by Theorem \ref{theo;VRF}.
Since $N^i_j$ is cofinal in $P_j$, 
it eventually avoids $S_R$. 
This shows that $\depth(N^i_j)\ra \infty$, and the second assertion
follows. 
To prove that $K_i$ is cofinal, 
consider $g\in G\setminus\{1\}$, and $p\in X$ the base point. 
Take $R= 10 d_X(p,gp)$,  
and $i$ large enough so that $\depth(N^i_j)\geq R$ for all $j$. 
We apply Corollary \ref{cor_lift}: 
if the image $\ol g$ of $g$ in $G/K_i$ is trivial, then
$g$ and $1$ are two elements of $G$ moving $p$ by at most $R/2$,
and which map to $\ol g \in G/K_i$.
The uniqueness statement in Corollary \ref{cor_lift} says that $g=1$, a contradiction.
\end{proof}

\section{Asymptotic type preservation}

\subsection{Peripheral structures and type preservation}\label{sec_type}

A \emph{peripheral structure} $\calq$ of a group $G$, is a family of subgroups stable under conjugation.
We say that $H<G$ is \emph{sub-$\calq$} if there exists $P\in\calq$ such that $H\subset P$.
For example, if $(G,\calp)$ is a relatively hyperbolic group, $H$ is sub-$\calp$ if and only if it is parabolic.

Given a relatively hyperbolic group $(G,\calp)$, it will be convenient to define
$\PF$ the union of $\calp$ together with all non-parabolic finite groups.
Thus, a subgroup $H<G$ is sub-$\PF$ if and only if it is finite or
parabolic. By 
Lemma \ref{lem_loc_parab}, this happens if and only if $H$ contains no
loxodromic element. 

If $G/K$ is a proper Dehn filling of $G$ and $\bPF$ is the image of $\PF$ in $G/K$, then 
a subgroup $H<G/K$ is sub-$\bPF$ if it is contained in the image of a parabolic or finite subgroup of $G$.
By construction such a group $H$ is either finite, or fixes an apex in
every Dehn filled space $\bar X_R$.

We say that $(G,\calq)\simeq (G',\calq')$ if there is an isomorphism $\phi:G\ra G'$ sending $\calq$ to $\calq'$.
It will be convenient to use the following slightly weaker condition.

\begin{dfn}\label{dfn_type}
  Let $(G,\calq),(G',\calq')$ be two groups with peripheral structures.
  We say that a monomorphism $\phi:G\to G'$ is \emph{type-preserving} if 
  for each subgroup $H<G$, $H$ is sub-$\calq$ if and only if $\phi(H)$ is sub-$\calq'$.
\end{dfn}

\begin{rem}\label{rk_type}
Consider $(G,\calp),(G',\calp')$ two relatively hyperbolic groups, and let $\calp_\infty\subset \calp$ (resp $\calp'_\infty\subset \calp'$) 
be the set of groups of $\calp$ (resp $\calp'$) that are infinite.
  Then  $\phi:(G,\PF)\ra(G',\PF')$ is  a type preserving isomorphism if and only if $\phi(\calp_\infty)=\calp'_\infty$.
However, it does not imply in general that $\phi(\calp)=\calp'$, as
illustrated by the case where $G=G'$, $\phi=\id$, with $\calp'=\calp\cup [F]$ where
   $F$  is a finite subgroup of a group in $\calP$ but not itself in
   the collection $\calp$. 
\end{rem}

\subsection{Asymptotic type preservation of Dehn fillings} \label{sec_type_pres}

  \begin{lemma}[Preimages of finite groups]\label{lem;elli}
    Let $(G,\calp)$ be a relatively hyperbolic group.
     There exists $R_0$,  and a finite collection   $\calC$ of finite subgroups of $G$,   such that  
     if $K$ is a proper Dehn kernel with $\depth(K)\geq R_0$ of $G$, 
     the following holds. 

     If $F< \bar G$ is a finite subgroup, either $F$  
    is contained in the image of a
     parabolic subgroup of $G$, or $F$ is the image of a conjugate of a group in
     $\calC$. 
  \end{lemma}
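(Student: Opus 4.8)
The statement is a finiteness result for finite subgroups of Dehn fillings, and the natural approach is to lift a finite subgroup $F<\bar G$ to the cone-off space $\dot X_R$ and use the geometry there, together with uniform properness.

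First I would fix $R_0$ large enough that Theorem \ref{theo;VRF} applies (so $\bar X_R$ is $\delta_1$-hyperbolic and $\bar G$ acts discretely cocompactly) and that the uniform properness statements of Section \ref{sec_lift} hold. Given a finite subgroup $F<\bar G$, it acts by isometries on the $\delta_1$-hyperbolic space $\bar X_R$; since $F$ is finite, a standard center-of-orbit (quasi-center) argument gives a point $\bar y\in\bar X_R$ moved by every element of $F$ by at most some universal constant, say $100\delta_1$. There are two cases depending on where $\bar y$ lies.

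\emph{Case 1: $\bar y$ is deep in a trace of a horoball.} If $\bar y\in\bar B$ for some $\bar B\in\bar\calh_0^{\bar X_R}$ and is far enough from its topological boundary, then (arguing as in Lemma \ref{lem_preserv_non_conj}, using that distinct traces are $R$-separated by Lemma \ref{lem_barsep} and $R\geq R_0$ is large) the orbit $F\bar y$ cannot escape $\bar B$, so $F$ stabilizes $\bar B$; hence $F$ is contained in the stabilizer of $\bar B$, which by Lemma \ref{lem_barsep} is the image of the peripheral subgroup $P$ stabilizing the corresponding horoball of $X$. This is the first alternative in the statement.

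\emph{Case 2: $\bar y$ lies in $\bar X_R\setminus\bar\calh_D^{\bar X_R}$ for a fixed constant $D$} (chosen so that Case 1 did not apply; a point moved little by $F$ but not deep in any horoball). Here I would invoke Lemma \ref{lem;lift} / Corollary \ref{cor_lift}: provided $R_0$ is large compared to $D$ and $100\delta_1$, one can lift $\bar y$ to a point $x\in X\setminus\calh_R$ and lift each element of $F$ to the unique preimage in $G$ moving $x$ by at most $100\delta_1$; these preimages live in the fixed compact set $Y^{+C}$ (a neighborhood of $Y=X\setminus\calh_D$, which maps onto $X\setminus\calh_D$ modulo $G$), and by properness of $G\actson X$ there are only finitely many elements of $G$ moving any point of that compact set by $\le 100\delta_1$. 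Taking a set of representatives of $G$-orbits, one gets a finite collection $\calC$ of finite subsets of $G$ (after checking the lifted set is closed under the group operations — uniqueness of the lift in Lemma \ref{lem;lift} forces the lift of a product to be the product of the lifts when everything stays in the ball, so up to shrinking constants $\calC$ can be taken to be genuine subgroups); then $F$ is the image of a conjugate of one of them. The point is that $\calC$ and $R_0$ depend only on $(G,\calp)$ and not on $K$, which is exactly what we need.

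The main obstacle I anticipate is making Case 2 bookkeeping-clean: ensuring the quasi-center $\bar y$ can be taken either genuinely deep in a horoball (Case 1) or genuinely in the cocompact part (Case 2) with no borderline behavior, and arranging the radii so that the isometric lifting in Lemma \ref{lem;lift} covers all of $F$ simultaneously — i.e. that $100\delta_1$ is well within the $r/2$ window and $R_0\geq R_0(D)$ so that $B_X(x,2\cdot 100\delta_1)\cap\calh_R=\es$. Verifying that the lifted set is (conjugate into) a subgroup, rather than merely a finite subset, requires the uniqueness clause of Lemma \ref{lem;lift} and a short argument that the lift is multiplicative on the relevant ball; this is routine but is where one must be careful that the window is large enough to contain not just $F\bar y$ but $F\cdot F\bar y$.
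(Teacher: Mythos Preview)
Your proposal is correct and follows essentially the same route as the paper: quasi-center for $F$ in the $\delta_1$-hyperbolic space $\bar X_R$, then a dichotomy on whether that point lies in the thick part (lift $F$ via Lemma~\ref{lem;lift}, uniqueness gives multiplicativity, properness on $X\setminus\calh$ bounds the conjugacy classes) or in the trace of a horoball (separation forces $F$ into the image of a peripheral group).

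The only refinement worth noting is that the borderline worry you flag disappears if you set up the dichotomy at the right depth: the paper takes $Y=X\setminus\calh_{100\delta_1}$ (not $\calh_0$) and asks whether the quasi-center lies in $\pi_K\circ\iota_R(Y)$ or not; since $\bar\calh_{100\delta_1}^{\bar X_R}$ is $100\delta_1$-separated (Lemma~\ref{lem_barsep}) and the quasi-center displacement can be taken as small as $5\delta_1$, the two cases are genuinely exhaustive with no intermediate region. Choosing the small constant $5\delta_1$ for the displacement also makes the product check trivial ($5\delta_1+5\delta_1\leq 50\delta_1$, well inside the $r/2$ window), which resolves your second concern.
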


In other words, denoting by $\bPF$ the image of $\PF$ in $G/K$, the lemma says that if $K$ is deep enough,
then every finite subgroup of $G/K$ is sub-$\bPF$.

  \begin{proof}
Let $\delta_1$ be the hyperbolicity constant of the spaces $\bar X_R=\dot X_R/K$ 
(independent of $R$ and $K$) as in Theorem \ref{theo;VRF}.
Consider $Y=X\setminus \calh_{100\delta_1}$ the complement of the system of horoballs for $R=100\delta_1$.
For each $y\in Y$, consider the set of finite subgroups of $G$ that move $y$ by at most $5\delta_1$.
Since the action of $G$ on $Y$ is proper and cocompact, the set $\calc$ of all finite subgroups thus obtained as $y$ varies in $Y$ is a finite set of conjugacy classes of finite subgroups.

Let $R_0=200\delta_1+D_{\ref{lem_eqv_horo}}$, where $D_{\ref{lem_eqv_horo}}$ is the constant appearing in Lemma \ref{lem_eqv_horo} (\ref{it_horo_sep})
 so that for all $x\in Y$, $B_X(x,100\delta_1)\cap\calh_{R_0}=\es$.
Let $K$ be a proper Dehn kernel with $\depth(K)\geq R_0$, and $F<G/K$ a finite subgroup.
Since by Theorem \ref{theo;VRF},   $\bar X = \bar X_{R_0} = \dot{X}_{R_0}/K$ is $\delta_1$-hyperbolic, 
there exists a point $\bar x\in \bar{X}$, such that, for all
$f\in F$,  $d_{\bar X}(f\bar x, \bar x)\leq 5\delta_1$  (see \cite[Lemma 3.3 p460]{BH_metric}).

First assume that $\bar x\in \pi_K\circ\iota_{R_0}(Y)$, and let $x\in Y$ be a preimage of $\bar x$. 
Since  
$B_X(x,100\delta_1) \cap \calh_{R_0}=\es$, 
Lemma \ref{lem;lift} implies that $\pi_K\circ \iota_{R_0}$ induces an isometry
$B_X(x,50\delta_1)\ra B_{\bar X}(\bar x,50\delta_1)$.
Denote by $j:B_{\bar X}(\bar x,50\delta_1)\ra B_X(x,50\delta_1)$ its inverse.
By Lemma \ref{lem;lift}, each $f\in F$ has a unique lift $\Tilde f\in G$
such that $d(x,\Tilde f x)\leq 50\delta_1$. If $f=f_1f_2$, then $\Tilde f_1\Tilde f_2$
is another such lift since $d_{X}(\tilde f_1\tilde f_2 x,x)\leq 5\delta_1+5\delta_1 \leq 50\delta_1$.
It follows that the assignment $f\mapsto \Tilde f$ is a morphism, and its image is a group in $\calc$.
This is the desired lift of $F$.

Assume now that $\bar x\notin \pi_K\circ\iota_{R_0}(Y)$.
Then $\bar x$ lies in some trace of $\calh_{100\delta_1}$-horoball
$\bar B\in \bar \calh_{100\delta_1}^{\bar X_{R_0}}$.    
By Lemma \ref{lem_barsep}, $\bar \calh_{100\delta_1}^{\bar X_{R_0}}$ is $100\delta_1$-separated, so
$F$ preserves $\bar B$, and $F$ is contained in the image in $\bar G$ of a  peripheral group $P\in\calp$.
\end{proof}

\begin{lemma}[Preimage of non-loxodromic elements] \label{lem;pre_nonlox}
    Let $(G,\calp)$ be a relatively hyperbolic group.
  There exists $R_0$   such that  if $K$ is a proper Dehn kernel  of $G$ with $\depth (K)\geq R\geq R_0$, 
 and if  $\bar g$ acts as a non loxodromic isometry on
 $\bar{X}_R=\dot X_R/K$, then $\bar g$ has finite order or has a
 preimage $g\in G$ that is  
conjugate into some $P_i$. 
\end{lemma}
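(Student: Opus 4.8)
The plan is to produce a point of $\bar X_R$ that $\bar g$ moves a bounded amount, and then to split the analysis according to whether that point lies deep inside the trace of a horoball or in the shallow part of $\bar X_R$, where Lemma~\ref{lem;lift} allows one to lift $\bar g$ to $G$. First, if $\bar g$ has finite order the first alternative of the statement holds, so assume $\bar g$ has infinite order. Since $\bar X_R$ is $\delta_1$-hyperbolic and $\bar g$ is non-loxodromic, its stable translation length is $0$; by the standard comparison between stable translation length and infimal displacement in a $\delta$-hyperbolic space (see e.g.\ \cite{BH_metric}) there are a universal constant $C_1=C_1(\delta_1)$ and a point $\bar x\in\bar X_R$ with $d_{\bar X_R}(\bar x,\bar g\bar x)\le C_1$. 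Put $R_1=2C_1+1$ and require $R_0\ge R_1$, so that by Lemma~\ref{lem_barsep} the family $\bar\calh^{\bar X_R}_{R_1}$ is defined and $R_1$-separated.

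\emph{Case 1: $\bar x$ has depth at least $R_1$}, i.e.\ $\bar x\in\bar B$ for some $\bar B\in\bar\calh^{\bar X_R}_{R_1}$. Then $\bar g\bar x\in\bar g\bar B\in\bar\calh^{\bar X_R}_{R_1}$, and if $\bar g\bar B\neq\bar B$ the $R_1$-separation forces $d(\bar x,\bar g\bar x)\ge R_1>C_1$, a contradiction; hence $\bar g\bar B=\bar B$. By Lemma~\ref{lem_barsep} the stabilizer of $\bar B$ is the image $\pi_K(P)$ of the peripheral subgroup $P\in\calp$ stabilizing the corresponding horoball of $X$; thus $\bar g$ has a preimage lying in $P$, which is conjugate to some $P_i$, and we are done.

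\emph{Case 2: $\bar x$ has depth less than $R_1$.} Then $\bar x=\pi_K\circ\iota_R(x)$ for some $x\in X\setminus\calh_{R_1}$, since the coning-off only affects points of depth $\ge R\ge R_1$. Fixing once and for all a compact set $D'\subset X\setminus\calh_{R_1}$ with $G\cdot D'\supset X\setminus\calh_{R_1}$, and replacing $\bar g$ by a conjugate (harmless, as both alternatives in the conclusion are conjugacy-invariant), we may assume $x\in D'$. If $R_0$ is large enough that $B_X(x,2C_1)\cap\calh_{R}=\es$ for every $x\in D'$ and every $R\ge R_0$ (possible since $D'$ is compact and at bounded depth, using that horofunctions are coarsely $1$-Lipschitz together with Lemma~\ref{lem_eqv_horo}), then Lemma~\ref{lem;lift} applied with $r=2C_1$ produces a preimage $g\in G$ of $\bar g$ with $d_X(x,gx)\le C_1$. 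Hence $g$ belongs to the set $\calF$ of elements of $G$ moving $D'$ into its $C_1$-neighbourhood, which is finite (properness of $G\actson X$) and depends only on $(G,\calp)$, $X$, $C_1$ and $D'$, not on $R$ or $K$. Now enlarge $R_0$ once more so that it exceeds, for each $h\in\calF$, both $10\,d_X(p,h\cdot p)$ and --- when $h$ is loxodromic --- a threshold past which $\bar h$ is loxodromic on $\bar X_R$; the latter follows either from persistence of loxodromics under deep Dehn fillings \cite{DGO_HE}, or directly: the axis of $h$ has bounded depth, so for $R$ large $\pi_K\circ\iota_R$ is isometric on balls of radius growing with $R$ around it (Proposition~\ref{prop_coneoff}, Theorem~\ref{theo;VRF}), making the image of the axis a long local geodesic, hence a quasi-geodesic translated by $\bar h$ with positive translation length. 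Since $\bar g$ has infinite order, $g\ne1$ and $g\notin K$ (as in the proof of Lemma~\ref{lem;cof}); and $g$ cannot be loxodromic, for otherwise $\bar g$ would be loxodromic on $\bar X_R$, contrary to hypothesis. By Lemma~\ref{lem_loc_parab}, an infinite-order element of $G$ no power of which is hyperbolic is parabolic, i.e.\ conjugate into some $P_i$, which is the desired conclusion.

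The step I expect to be the main obstacle is making the choice of $R_0$ genuinely uniform: although $\calF$ is finite and independent of $R$ and $K$, one must ensure that the persistence of loxodromics holds simultaneously for all deep Dehn kernels and all $R\ge R_0$ --- precisely the place where one relies on the fact that $\dot X_R$ and $\bar X_R$ agree isometrically with $X$ on balls whose radius grows with $R$, uniformly in $K$. The remaining ingredients (the separation of horoball traces, Lemma~\ref{lem;lift}, properness of $G\actson X$, and Lemma~\ref{lem_loc_parab}) are used essentially off the shelf.
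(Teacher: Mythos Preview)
Your proof is correct but takes a genuinely different route from the paper's. Both proofs split into the same two cases (the almost-fixed point lies in a deep horoball trace, or it lies in the shallow part), and Case~1 is handled identically. The difference is in Case~2.

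The paper does not assume $\bar g$ has infinite order. Instead, for a large $n$ (chosen as a uniform bound on the cardinality of short-displacement sets in the thick part), it invokes Koubi's lemma to find a point $\bar x$ moved by at most $20\delta_1$ by \emph{all} of $\bar g,\bar g^2,\dots,\bar g^n$ simultaneously. In the shallow case it lifts each $\bar g^j$ separately via Lemma~\ref{lem;lift}, uses uniqueness of lifts to see that these lifts are the powers $g_1^j$ of a single element, and then pigeonholes among the $n+1$ lifts to conclude $g_1$ has finite order, hence so does $\bar g$. Thus the paper shows directly that in the shallow case $\bar g$ is torsion, with no appeal to persistence of loxodromics.

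Your approach instead lifts $\bar g$ once, lands in a fixed finite set $\calF$, and then appeals to the persistence of loxodromics under deep Dehn fillings (Lemma~\ref{lem;cof2}) to rule out loxodromic lifts; the trichotomy of Lemma~\ref{lem_loc_parab} then forces the lift to be parabolic. This is logically sound and arguably cleaner (a single lift rather than $n$), but it relies on a result that appears \emph{after} the present lemma in the paper. Since Lemma~\ref{lem;cof2} is proved independently, there is no circularity, only a forward reference. Your concern about uniformity of $R_0$ is well-placed but resolved exactly as you say: $\calF$ is finite and independent of $R,K$, and for each loxodromic $h\in\calF$ the proof of Lemma~\ref{lem;cof2} gives a threshold depending only on $h$ (via $d_X(p,h^np)$), uniformly over all $R$ above that threshold and all Dehn kernels of depth at least $R$. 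One small point: your sketch ``the axis of $h$ has bounded depth'' is not literally true (an axis in $X$ may enter horoballs deeply), but this is immaterial since the actual argument, which you also indicate, uses the base point and Corollary~\ref{cor_lift} rather than the axis.
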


In other words, the lemma says that any element $\bar g\in G/K$ that is not loxodromic is sub-$\bPF$.
In particular, it is elliptic in $\bar X$.

\begin{proof}
Let $Y=X\setminus \calh_{0}$ be the complement of the system of
horospheres corresponding to $R=0$.  
Let $\delta_1$ be the hyperbolicity constant (independent of $R$ and
$K$) of the spaces $\bar X_R=\dot X_R/K$ as in Theorem \ref{theo;VRF}.
Let $R_0=100\delta_1+D_{\ref{lem_eqv_horo}}$  (where $D_{\ref{lem_eqv_horo}}$
is the constant appearing in Lemma \ref{lem_eqv_horo} (\ref{it_horo_sep}))
so that for all $x\in Y$ and any $R\geq R_0$, $B_X(x,100\delta_1)\cap\calh_R=\es$.

Under the assumption of the lemma, for all $n\in \bbN$, there exists
$\bar x \in \bar X_R$  such that $d(\bar g^j \bar x, \bar x) <20\delta_1$ for all $j\leq n$.
Since $G$ acts properly and cocompactly on $Y$, there exists $n$ such
that, for all $x\in Y$,   $\#\{g|d_X(x,gx)\leq 100\delta_1\}\leq n$.

We argue as in Lemma \ref{lem;elli}.
If $\bar x\in \pi_K\circ\iota_R(Y)$, then consider $x\in Y$ a preimage of $\bar x$.
Since $R\geq R_0$, $B_X(x,100\delta_1)\cap \calh_{R}=\es$,
so we can apply Lemma \ref{lem;lift} saying that 
$\pi_K\circ \iota_{R}$ induces an isometry
$B_X(x,50\delta_1)\ra B_{\bar X_R}(\bar x,50\delta_1)$,
and that for all $j\leq n$, $\bar g^j$ has a unique preimage $g_j\in G$ such that
$d(x,g_j x)\leq 50\delta_1$. 
The uniqueness guarantees that $g_{j+1}=g_1g_j$ for all $j< n$
since $d(g_{1}g_jx,x)\leq 20\delta_1+20\delta_1\leq 50\delta_1$.
Thus $g_j=g_1^j$, and our choice of $n$ guaranteed that there exists $j_1<j_2\leq n$
 $g_1^{j_1}=g_1^{j_2}$. Thus, $g_1$ has finite order, and so has $\bar g$.

Assume now that $\bar x\notin \pi_K\circ\iota_{R}(Y)$.
Then as in Lemma \ref{lem;elli}, 
$\bar x$ lies in  some trace of horoball $\bar B\in \bar
\calh_{100\delta_1}^{\bar X_{R}}$. Since by Lemma 
\ref{lem_barsep}
$\bar \calh_{100\delta_1}^{\bar X_{R}}$  
 is $100\delta_1$-separated, 
$\bar g$ preserves $\bar B$, so again by Lemma \ref{lem_barsep} $\bar
g$ is contained in the image of a peripheral group $P\in\calp$.
\end{proof}

The following lemma is the basis of asymptotic type preservation for Dehn fillings.

\begin{lemma} \label{lem;cof2}
  Let $(G,\calp)$ be relatively hyperbolic, and $g\in G$ a  hyperbolic element.

  Then there exists $R_g\geq 0$ (depending on $g$)
  such that for any proper Dehn kernel $K$ with $\depth(K)\geq R_g$, 
  the image $\bar g$ of $g$ in $G/K$ is not sub-$\bPF$: it has infinite order, and is not contained in the image of a parabolic group of $G$.

  Moreover $\bar g$ acts loxodromically on $\bar X_R=\dot X_{R}/K$.
\end{lemma}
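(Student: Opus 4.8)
The plan is to reduce everything to a single assertion: for a suitable $R_g$ and every proper Dehn kernel $K$ with $\depth(K)\ge R_g$, the image $\bar g$ acts loxodromically on $\bar X_R=\dot X_R/K$ (for $R\ge R_g$). This is already the ``moreover'' part, and the rest follows formally: a loxodromic isometry has infinite order and fixes no point of $\bar X_R$, so $\bar g$ cannot lie in the image of a finite subgroup of $G$ (those have finite order), nor in the image of a parabolic subgroup $P$ of $G$, since by Lemma \ref{lem_barsep} the image $\bar P$ stabilises the trace of the horoball of $P$ and fixes its apex, hence contains no loxodromic element. Thus $\bar g$ is not sub-$\bPF$. (This is of course consistent with, and complementary to, Lemmas \ref{lem;pre_nonlox} and \ref{lem;elli}.)

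To prove the assertion I would first isolate a long ``thick'' arc of the axis of $g$. Since $g$ is hyperbolic it acts loxodromically on $X$; let $\gamma\subset X$ be a bi-infinite geodesic between its fixed points $g^{\pm\infty}\in\partial_\infty X$. These are conical limit points, hence distinct from every parabolic point, i.e.\ from the centre $\zeta_B$ of every horoball $B\in\calh_0$, so $\gamma$ penetrates each such $B$ only to bounded depth (comparable to $(g^{-\infty}\mid g^{+\infty})_{\zeta_B}$, which is finite). As $\langle g\rangle$ acts cocompactly on $\gamma$, the horoballs met by $\gamma$ fall into finitely many $\langle g\rangle$-orbits, and since the horofunctions are $g$-equivariant (Lemma \ref{lem_eqv_horo}) one obtains a single bound $D_g$ on this penetration depth. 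Lemma \ref{lem_eqv_horo}(\ref{it_horo_sep}) then converts $D_g$ into the statement that for every $\rho$ there is $R_*(\rho)$ with $\gamma$ at distance $>\rho$ from every horoball of $\calh_R$ whenever $R\ge R_*(\rho)$. Fixing $x_0\in\gamma$, all powers $g^n$ fix $g^{\pm\infty}$, so every $g^n\gamma$ is a geodesic with the same endpoints as $\gamma$ and hence stays within a universal constant $C_0=C_0(\delta)$ of $\gamma$; the same holds for the geodesics $[g^nx_0,g^mx_0]_X$.

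Next I would transport a single large step through the cone-off and the Dehn filling and apply the standard loxodromicity criterion. Let $\delta_1$ be the uniform hyperbolicity constant of all Dehn filled spaces from Theorem \ref{theo;VRF}. Choose $N=N(g)$ so large that $L:=d_X(x_0,g^Nx_0)$ is much larger than $\delta_1$, than $C_0$, and than the Gromov product $(y_0\mid y_2)_{y_1}$ computed in $X$, where $y_i:=g^{iN}x_0$ (this product is bounded in terms of $C_0,\delta$ because for $N$ large the $y_i$ project to $\gamma$ in monotone order). Set $R_g:=\max\{R_0,\,R_*(10L)\}$, with $R_0$ large enough for Theorem \ref{theo;VRF} to apply. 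For $R\ge R_g$ and a proper Dehn kernel $K$ with $\depth(K)\ge R$: the points $y_0,y_1,y_2$ and the $X$-geodesics between them lie in $B_X(y_1,3L)$, and $B_X(y_1,6L)\cap\calh_R=\es$; hence by Proposition \ref{prop_coneoff} and Theorem \ref{theo;VRF}(\ref{it_isometry}) the map $\pi_K\circ\iota_R$ restricts to a $g^N$-equivariant isometry on $B_X(y_1,3L)$. Writing $\bar y_i$ for the images, this gives $d_{\bar X_R}(\bar y_0,\bar y_1)=d_{\bar X_R}(\bar y_1,\bar y_2)=L$ with $(\bar y_0\mid\bar y_2)_{\bar y_1}$ small compared to $L$, while $\bar g^N\bar y_0=\bar y_1$ and $\bar g^N\bar y_1=\bar y_2$. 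The classical ``one large step with little backtracking'' criterion in the $\delta_1$-hyperbolic space $\bar X_R$ then forces the orbit of $\bar y_0$ under $\langle \bar g^N\rangle$ to be a quasigeodesic, so $\bar g^N$, and therefore $\bar g$ (a power of an elliptic or parabolic isometry is again elliptic or parabolic), is loxodromic on $\bar X_R$.

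The main obstacle is the second step: extracting a \emph{uniform} bound $D_g$ on how deep the axis of $g$ dips into the horoballs, and converting it, through Lemma \ref{lem_eqv_horo}(\ref{it_horo_sep}), into genuine disjointness of a fixed neighbourhood of $\gamma$ from $\calh_R$ for all large $R$. Once a long thick arc of the axis is available, the remaining work — copying distances and Gromov products across the isometries-on-large-balls furnished by Proposition \ref{prop_coneoff} and Theorem \ref{theo;VRF}, and invoking the textbook criterion detecting loxodromic isometries from a single large, non-backtracking displacement — is routine.
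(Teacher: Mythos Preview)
Your proof is correct and follows the same overall strategy as the paper: produce an $M$-almost-aligned triple $x,g^Nx,g^{2N}x$ in $X$ with $d(x,g^Nx)$ large compared to $M$ and $\delta_1$, transfer it isometrically to $\bar X_R$ via Proposition \ref{prop_coneoff} and Theorem \ref{theo;VRF}(\ref{it_isometry}), and invoke the standard loxodromicity criterion (the paper cites \cite[Lemme 9.2.2]{CDP}).

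The one genuine difference is in how you arrange the relevant ball to miss $\calh_R$. You take $x_0$ on the axis $\gamma$ of $g$ and work to bound uniformly the depth of penetration of $\gamma$ into the horoballs of $\calh_0$ --- the step you rightly flag as the main obstacle. The paper bypasses this entirely: it simply takes $x=p$, the fixed base point, and sets $R_g=4d_X(p,g^np)$. Then Lemma \ref{lem_eqv_horo}(\ref{it_basepoint}) guarantees $B_X(p,R_g)\cap\calh_{R_g}=\emptyset$ for free, so Corollary \ref{cor_lift} gives the needed isometry on $B_X(p,R_g/2)$ with no analysis of the axis whatsoever. Your route is more geometric and makes explicit why the axis eventually sits in the thick part, but the paper's choice of base point is a worthwhile shortcut that removes the cocompactness-and-equivariance bookkeeping you identify as the crux.
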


\begin{cor}[Asymptotic type preservation]\label{cor_asymptotic}
  Let $(G,\calp)$ be relatively hyperbolic, and $H<G$ be a subgroup.

  \begin{enumerate}\item \label{it_as1}
    If $H$ is sub-$\PF$, then its image in $G/K$ is sub-$\bPF$ for
    every Dehn kernel $K$.

  \item \label{it_as2}
    If $H$ is not sub-$\PF$, then
    there exists $R_H$ (depending on $H$) such that for any proper Dehn kernel
    $K$ of depth $\geq R_H$, the image $\bar H$ of $H$ in $G/K$ is not sub-$\bPF$: it
    is infinite, and not contained in the image of a parabolic group
    of $G$.

  \end{enumerate}
\end{cor}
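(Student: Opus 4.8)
The plan is to derive Corollary \ref{cor_asymptotic} almost immediately from the preceding Lemma \ref{lem;cof2}, together with the Tits-alternative-type dichotomy of Lemma \ref{lem_loc_parab}; no new geometry is needed at this point, since all the work has been placed in Lemma \ref{lem;cof2}.

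Part (\ref{it_as1}) is a formal consequence of the definitions, and requires no depth hypothesis. If $H$ is sub-$\PF$ then $H$ is finite or parabolic, hence contained in some member $P$ of $\PF$: a parabolic subgroup lies in a peripheral subgroup, which belongs to $\calp\subseteq\PF$, and a non-parabolic finite subgroup belongs to $\PF$ by definition. For any Dehn kernel $K$, the image of $H$ in $G/K$ is then contained in the image $\bar P$ of $P$, and $\bar P$ is by definition an element of $\bPF$; so the image of $H$ is sub-$\bPF$.

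For part (\ref{it_as2}), suppose $H$ is not sub-$\PF$, i.e.\ $H$ is neither finite nor parabolic. By Lemma \ref{lem_loc_parab}, a subgroup containing no hyperbolic element is finite or parabolic, so $H$ contains a hyperbolic element $g$. Set $R_H:=R_g$, the constant furnished by Lemma \ref{lem;cof2} for this $g$, and let $K$ be any proper Dehn kernel with $\depth(K)\geq R_H$. By Lemma \ref{lem;cof2}, the image $\bar g$ of $g$ in $\bar G=G/K$ has infinite order and is not contained in the image of any parabolic subgroup of $G$ (it even acts loxodromically on $\bar X_R=\dot X_R/K$). Since $\langle\bar g\rangle\leq\bar H$, the group $\bar H$ is infinite, so it cannot be contained in the image of a finite subgroup of $G$; and if $\bar H$ were contained in the image $\bar P$ of some parabolic subgroup $P<G$, then $\bar g\in\bar P$, contradicting Lemma \ref{lem;cof2}. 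Hence $\bar H$ is not sub-$\bPF$: it is infinite and not contained in the image of any parabolic subgroup of $G$, as required.

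The only genuine obstacle is upstream, in Lemma \ref{lem;cof2}: one must control, uniformly in the Dehn kernel, the fate of a fixed hyperbolic element $g$. The route I would take there is: fix a quasi-axis of $g$ in $X$ and use bounded coset penetration to see that it enters each horoball by a uniformly bounded amount, so that for $R$ large it avoids the whole family $\calh_R$ and therefore survives untouched in $\dot X_R$ as a genuine axis of $g$ with positive translation length; then pick a point of this axis in the thick part $X\setminus\calh_R$ and use the uniform properness of deep Dehn fillings (Lemma \ref{lem;lift}, Corollary \ref{cor_lift}) to lift powers of $\bar g$, concluding that $\bar g$ has infinite order and is loxodromic on $\bar X_R$; finally combine Lemma \ref{lem;pre_nonlox} with the separation statement of Lemma \ref{lem_barsep} to rule out $\bar g$ being conjugate into the image of a peripheral subgroup. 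It is this uniform-in-$R$ control of the geometry, rather than the passage to Corollary \ref{cor_asymptotic}, that carries the weight.
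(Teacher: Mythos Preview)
Your proof is correct and follows the same route as the paper: part (\ref{it_as1}) is immediate from the definition of $\bPF$, and part (\ref{it_as2}) reduces to Lemma \ref{lem;cof2} via Lemma \ref{lem_loc_parab} by picking a hyperbolic element in $H$. The final paragraph sketching Lemma \ref{lem;cof2} is extraneous to the corollary (that lemma is already in hand) and differs somewhat from the paper's actual argument there, which uses almost-alignment of $p,g^np,g^{2n}p$ and \cite[Lemme 9.2.2]{CDP} rather than a quasi-axis and bounded coset penetration; but this has no bearing on the corollary itself.
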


\begin{proof}[Proof of the corollary]
The first assertion follows immediately from the definition of $\bPF$.
So assume that $H$ is not sub-$\PF$, \ie that $H$ is infinite and not parabolic.
Then by Lemma \ref{lem_loc_parab}, $H$ contains a loxodromic element $g$.
Applying Lemma \ref{lem;cof2}, we get that for deep enough
proper Dehn kernels, $\bar g$ is not sub-$\bPF$, hence neither is $\bar H$.
\end{proof}

\begin{proof}[Proof of Lemma \ref{lem;cof2}]
We first recall that if an isometry $g$ of a $\delta$-hyperbolic space $X$ is loxodromic, then 
for any $x\in X$ there exists $M\geq 0$ and $n\in\bbN$ such that $x,g^nx,g^{2n}x$ are 
are at arbitrarily large distance from each other, and are
$M$-almost aligned: $$d(x,g^{2n}x) \geq d(x,g^nx) + d(g^nx,g^{2n}x) -M.$$
Indeed, by definition, $\{g^nx|n\in\bbZ\}$ is a quasi-geodesic, so for all $n$, $g^n x$ is at bounded distance
from $[x,g^{2n}x]$, which proves our claim.
Conversely, in a $\delta$-hyperbolic space,  
if there exists a point $x$, and $n$ such that $x,g^nx,g^{2n}x$ are $M$-almost aligned (in the above sense)
and $d(x,g^n x)>M+2\delta$, then $d(x,g^{2n} x)>d(x,gx)+2\delta$
so $g$ is loxodromic by \cite[Lemme 9.2.2]{CDP}.

Under the hypotheses of the lemma, let $p\in X$ be the  base point,
and $M$ be such that $p,g^np,g^{2n}p$ are $M$-almost aligned for all $n$.
 Let $\delta_1$ be the universal hyperbolicity constant 
given by Theorem \ref{theo;VRF}.
Let $n$ be such that $d(p,g^n p)>M+2\delta_1$, and $R=4d(p,g^np)$.
 Consider $K$ is a proper Dehn kernel of depth $\geq R$,
and the $\delta_1$-hyperbolic space $\bar X_R=\dot X_R/K$.

Denote by $\ol p$ the image of the base point in $\bar X_R$ under $\pi_{K}\circ\iota_R$.
Since by Corollary \ref{cor_lift}, $\pi_{K}\circ\iota_R$ is isometric on $B_X(p,R/2)$ and $R/2\geq 2 d(p,g^np)$,
we get that $\ol p,\bar g^n\ol p,\bar g^{2n}\ol p$ are $M$-almost aligned in $\bar X_R$,
and $d_{\bar X_R}(\bar p,\bar g \bar p)=d_{X}(p,gp)>M+2\delta_1$.
We conclude that $\bar g$ acts as a loxodromic isometry of $\bar X_R$. 
This implies that $\bar g$ is not  sub-$\bPF$ since sub-$\bPF$ elements have finite order or fix a point in $\bar X_R$.
\end{proof}

Similarly to lemma \ref{lem;cof2}, we have:

\begin{lem}\label{lem_F2}
  Let $h_1,h_2\in G$ be two hyperbolic elements (for their action on
  $X$) whose four 
  fixed points at infinity are distinct.
Then there exists $N$ and $R_0$ such that for each Dehn kernel $K$ of depth at least $R_0$, $\grp{h_1^N,h_2^N}$ is free and embeds
in $G/K$.
\end{lem}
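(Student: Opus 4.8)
The statement of Lemma \ref{lem_F2} is the ``two-generator'' companion to Lemma \ref{lem;cof2}, so I would mimic that proof, replacing the ``almost aligned'' criterion for a single loxodromic by a ping-pong criterion that is robust under the isometric identifications furnished by Corollary \ref{cor_lift}. The first step is to record, in the fixed $\delta_c$-hyperbolic space $X$, a ping-pong configuration for high powers of $h_1$ and $h_2$. Since $h_1,h_2$ are hyperbolic (loxodromic) with four distinct fixed points at infinity, the standard argument (as already sketched in the proof of Lemma \ref{lem_loc_parab}) produces an integer $N_0$ and disjoint ``attracting'' neighborhoods of the four fixed points, together with a base point $p$ (our fixed base point of Lemma \ref{lem_eqv_horo}), such that for $n\geq N_0$ the elements $h_1^{\pm n},h_2^{\pm n}$ move $p$ along their quasi-axes in a way witnessing freeness: concretely, for every nonempty reduced word $w$ in $h_1^N,h_2^N$ (with $N=N_0$, say) one has a uniform lower bound, linear in the word length, for $d_X(p,w\cdot p)$, together with $M$-almost alignment of the successive partial-product images $w_j\cdot p$ along a quasigeodesic from $p$ to $w\cdot p$. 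Equivalently, I would phrase this as: there is a quasigeodesic in $X$ through the points $w_j p$ with constants depending only on $h_1,h_2,N$, and in particular $d_X(p,w\cdot p)\to\infty$ as $|w|\to\infty$.

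**Transporting to the Dehn filled space.** Fix a reduced word $w$ of length $\ell$. All the partial products $w_j$ lie within $d_X(p,\cdot)\leq C\ell$ of $p$ for a constant $C=C(h_1,h_2,N)$. Set $R_0$ large enough — and for a given $w$, apply the argument with $R=R(w)$ of order $C\ell$ — so that by Corollary \ref{cor_lift} the projection $\pi_K\circ\iota_R$ restricts to an isometry $B_X(p,R/2)\to B_{\bar X_R}(\bar p,R/2)$ whenever $\depth(K)\geq R$. Then the images $\overline{w_j}\,\bar p$ in $\bar X_R$ satisfy exactly the same metric estimates as the $w_j p$ in $X$: they are $M$-almost aligned along a quasigeodesic of $\bar X_R$ with uniform constants, and $d_{\bar X_R}(\bar p,\bar w\,\bar p)=d_X(p,w\,p)$, which is positive (in fact $\geq$ a positive linear function of $\ell$). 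In particular $\bar w\neq 1$ in $G/K$. The only subtlety is that this reasoning, taken word-by-word, lets $R$ depend on $\ell$; I would remove this by the usual trick used throughout the paper (cf.\ Lemma \ref{lem;cof}): since for fixed $K$ the quotient $\bar X_R$ is $\delta_1$-hyperbolic with $\delta_1$ \emph{independent} of $R$, it suffices to verify the ping-pong inequalities for a single, sufficiently long word length $\ell_0$ — equivalently, to check the local ping-pong configuration in a single ball $B_X(p,R_0/2)$ with $R_0$ a fixed multiple of $\delta_1$ — and then a Gromov-hyperbolic ping-pong lemma (e.g.\ along the lines of \cite[Lemme 9.2.2]{CDP}) upgrades the local configuration to the statement that $\grp{\bar h_1^N,\bar h_2^N}$ acts on $\bar X_R$ with the ping-pong dynamics of a free group of rank two, hence is free on $\bar h_1^N,\bar h_2^N$. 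This gives a single $R_0$ (and a single $N$) that works for all Dehn kernels of depth at least $R_0$.

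**Conclusion and main obstacle.** Combining the two steps: choose $N$ and $R_0$ as above; then for any Dehn kernel $K$ with $\depth(K)\geq R_0$ (such $K$ exist and are proper by Definition \ref{dfn_dehn} and Theorem \ref{theo;VRF}), the subgroup $\grp{h_1^N,h_2^N}<G$ is free of rank two (it already was in $G$, being a ping-pong subgroup), and its image $\grp{\bar h_1^N,\bar h_2^N}<G/K$ is again free of rank two by the ping-pong dynamics in $\bar X_R$; since freeness of rank two together with surjectivity of $\grp{h_1^N,h_2^N}\to\grp{\bar h_1^N,\bar h_2^N}$ forces the map to be an isomorphism, $\grp{h_1^N,h_2^N}$ embeds in $G/K$. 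I expect the main obstacle to be the bookkeeping in the first step: making the ping-pong constants (the power $N$, the alignment constant $M$, the linear-growth constant) genuinely depend only on $h_1,h_2$ and on the fixed hyperbolicity constant $\delta_c$ of $X$ — not on $K$ or $R$ — and phrasing the configuration so that it is detected inside one ball $B_X(p,R_0/2)$, so that Corollary \ref{cor_lift} can carry it verbatim into every deep enough Dehn filling. Once that uniform local ping-pong configuration is in hand, the transport and the conclusion are immediate from Corollary \ref{cor_lift} and the $R$-independence of $\delta_1$.
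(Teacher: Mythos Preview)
Your proposal is correct and follows essentially the same route as the paper: choose $N$ from the ping-pong/bounded-Gromov-product configuration in $X$ determined by the four distinct fixed points, choose $R_0$ so that a single fixed-radius ball around $p$ transfers isometrically to every $\bar X_R$ with $\depth(K)\geq R_0$, and then invoke a local-to-global principle in the uniformly $\delta_1$-hyperbolic spaces $\bar X_R$. The paper's write-up is slightly more streamlined in that it goes directly to the local-quasigeodesic formulation---the concatenation of the segments $[\bar p,\bar h_{i_j}^{\eps_j}\bar p]$ is an $M'$-local $(1,M)$-quasigeodesic in $\bar X_R$, hence has distinct endpoints---rather than first contemplating a word-by-word argument with $R$ depending on the word length; but your correction via the $R$-independence of $\delta_1$ lands on exactly this point (and note that $R_0$ depends on $h_1,h_2$ through the Gromov-product bound $M$, not merely on $\delta_1$).
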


\begin{proof}
  Let $p\in X$ be a base point, and denote by $\omega_i^+$ and $\omega_i^-$ the attractive and repelling fix points of $h_i$ at infinity.
Since the four points $\omega_1^+,\omega_1^-,\omega_2^+,\omega_2^-$ are distinct, there exists a bound $M$ such that 
for all $i,j\in\{1,2\}$ and $\eps,\eps'\in\{\pm1\}$, and all $N\geq 1$,
the Gromov product $(h_i^{\eps N} p| h_j^{\eps' N} p)_p$ is bounded by $M$ unless $(i,\eps)=(j,\eps')$

Choose $N$ so that $d(p,h_1^Np)$ and $d(p,h_2^Np)$ are larger that $M'$, with $M'$ very large compared to $M$ and to $\delta_1$ (the common hyperbolicity constants
of the Dehn filled spaces) so that any $M'$-local $(1,M)$-quasigeodesic of length $\geq M'$ in a $\delta_1$-hyperbolic space has distinct endpoints.
Now choose $R_0$ so that $B(p,2M')$ is disjoint from $\calh_{R_0}$.
Let $K$ a Dehn kernel of depth $\geq R_0$, and consider $\bar X_R=\dot X_R/K$ the corresponding Dehn filled space.
Let $\ol p$ be the image of $p$ in $\bar X_R$. Then the ball of radius $M'$ around $p$ in $X$ is isometric to the ball of radius $M'$ around $\ol p$ in $\bar X_R$.

Now consider $w=h_{i_1}^{\eps_1}h_{i_2}^{\eps_2}\dots h_{i_n}^{\eps_n}$ is a reduced word in $h_1^{\pm 1},h_2^{\pm 1}$.
Then the concatenation of the segments 
$$[\bar p,h_{i_1}^{\eps_1}\bar p],\ h_{i_1}^{\eps_1}[\bar p,h_{i_2}^{\eps_2}\bar p],\ \dots,\  h_{i_1}^{\eps_1}\dots h_{i_{n-1}}^{\eps_{n-1}}[\bar p,h_{i_n}^{\eps_n}\bar p]$$
is a $M'$-local $(1,M)$-quasigeodesic in $\bar X_R$.
By choice of $M'$, $w \bar p\neq \bar p$, so $w\neq 1$, and $\grp{h_1,h_2}$ embeds in $G/K$.
\end{proof}

\section{Rigidity theorems}

The goal of this section is the following rigidity result that shows that non-elementary rigid relatively hyperbolic groups are determined
by their Dehn fillings.
This is the crux of the paper.

We first introduce the notion of $\Zmax$-rigidity, which is slightly less restrictive than the rigidity condition stated in the introduction:
if  $(G,\calp)$ is rigid, then it is $\Zmax$-rigid.

Recall that an (infinite) virtually cyclic group either maps onto $\bbZ$ with finite kernel,  
or maps   onto the infinite dihedral group with finite kernel. 
It has infinite center if and only if it maps onto $\mathbb{Z}$.    
We say that a subgroup $H$ of a group $G$ is \emph{a $\Zmax$ subgroup}
if $H$ is not parabolic, but is virtually cyclic with infinite center, 
and is maximal for these properties (with respect to inclusion). 

\begin{dfn}\label{dfn_Zmax_rigid}
We say that a relatively hyperbolic group  $(G,\{[P_1],\dots,[P_n]\})$ is \emph{$\Zmax$-rigid} if 
it has no non-trivial splitting (as an amalgamation, or an HNN extension) 
over a finite, a parabolic, or a $\Zmax$ subgroup 
such that each $P_i$ is conjugate in some factor of the amalgamation (or of the HNN extension).   
\end{dfn}

Recall that we denote by $\PF$ (resp $\PF'$) the family of
peripheral groups in $\calp$ together with
finite non-parabolic subgroups of $G$ (resp $G'$).
If $K_i$ (resp $K'_i$) is a cofinal sequence of Dehn kernels, we denote by
$\bPF_i$ (resp $\bPF'_i$) the image $\PF$ (resp $\PF'$) in $G/K_i$ (resp $G'/K'_i$), see Section \ref{sec_type_pres}.

\begin{thm}\label{thm_DF_carac}
Consider   $(G,\calp)$,  $(G',\calp')$ two relatively
  hyperbolic groups. Assume that both are non-elementary and
  $\Zmax$-rigid.

Consider cofinal sequences of Dehn kernels $K_i\normal G$, $K'_i\normal G'$ and
assume that there are type preserving isomorphisms  
$\phi_i: (G/K_i,\bPF_i)\stackrel{\sim}{\to}(G'/K'_i,\bPF'_i)$.

Then there is a type preserving isomorphism $\phi:(G,\PF)\xra{{}_\sim} (G',\PF')$ that induces infinitely many
of the $\phi_i$, up to composition with  inner automorphisms.
\end{thm}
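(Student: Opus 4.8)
The strategy is the one sketched in the introduction: produce $\phi$ as a limit of (conjugates of) the $\phi_i$, using the uniform geometry of the Dehn filled spaces. Fix base points $p\in X$, $p'\in X'$, and let $\bar X_i=\dot X_{R_i}/K_i$, $\bar X'_i=\dot X'_{R'_i}/K'_i$ be the Dehn filled spaces, which are $\delta_1$-hyperbolic with $R_i,R'_i\to\infty$ by Lemma \ref{lem;cof} and Theorem \ref{theo;VRF}. Via $\phi_i$, the group $G$ acts on $\bar X'_i$; composing with an inner automorphism of $G'/K'_i$ we may move the action so that the quantity $\lambda_i=\max_{s\in S}d_{\bar X'_i}(\bar y_i,\phi_i(s)\bar y_i)$ is (coarsely) minimized over choices of base point $\bar y_i\in\bar X'_i$, where $S$ is a fixed finite generating set of $G$. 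There are three cases. First, if $\lambda_i\to\infty$, then the Bestvina--Paulin rescaling argument, applied to the uniformly hyperbolic spaces $\bar X'_i$, produces a nontrivial isometric $G$-action on an $\bbR$-tree; analyzing arc stabilizers (they are limits of subgroups moving the base point little, hence, using Lemma \ref{lem_loc_parab}, Corollary \ref{cor_uniform_properness} and the asymptotic type preservation of Section \ref{sec_type_pres}, either finite, parabolic, or virtually-cyclic-with-infinite-center, i.e.\ $\Zmax$) and applying Rips theory one extracts a nontrivial splitting of $G$ over a finite, parabolic, or $\Zmax$ subgroup in which each $P_i$ is elliptic — contradicting $\Zmax$-rigidity of $(G,\calp)$. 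Second, if $\lambda_i$ stays bounded but the (near-)minimizing point $\bar y_i$ lies deep in the thin part, i.e.\ eventually in $\bar\calh_D^{\bar X'_i}$ for every fixed $D$, then $\phi_i(S)$, and hence $\phi_i(G)$, is contained in the stabilizer of a horoball, i.e.\ in the image of a peripheral subgroup of $G'$ (Lemma \ref{lem_barsep}); but $\phi_i$ is onto and $(G',\calp')$ is non-elementary, a contradiction (one must be a little careful and iterate/pass to subsequences to turn "gets arbitrarily deep" into "contained in one horoball image").

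In the remaining case, after passing to a subsequence, $\lambda_i\le C$ for a fixed $C$ and the base points $\bar y_i$ stay in the $D$-thick part $\bar X'_i\setminus\bar\calh_D^{\bar X'_i}$ for a fixed $D$. Here I use the lifting lemma: by Corollary \ref{cor_uniform_properness} and Lemma \ref{lem;lift}, for $i$ large each generator $\phi_i(s)$ lifts canonically to an element $\psi_i(s)\in G'$ moving a lift $y'_i\in X'$ of $\bar y_i$ by at most $C$, and there are at most $M$ elements of $G'$ doing so. Thus $s\mapsto\psi_i(s)$ takes values in a finite set independent of $i$; passing to a further subsequence this map is constant, defining a single map $\psi:S\to G'$. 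One checks $\psi$ respects the relators of $G$ — each relator is a bounded word, so for $i$ large the lifting is compatible on a ball large enough to contain the whole relator loop based at $y'_i$, forcing $\psi(r)=1$ in $G'$ — so $\psi$ extends to a homomorphism $\phi=\psi:G\to G'$, and for infinitely many $i$ the composite $G\xra{\phi}G'\to G'/K'_i$ equals $\phi_i$ up to inner automorphism. Injectivity of $\phi$: if $1\ne g\in\ker\phi$ then $g$ is killed in $G'/K'_i$ for all these $i$; applying the symmetric construction to $(\phi_i^{-1})$ — which by the same trichotomy (using $\Zmax$-rigidity of $(G',\calp')$) also lands in the thick-bounded case — produces $\phi':G'\to G$, and then $\phi'\circ\phi:G\to G$ is a monomorphism whose image has finite "co-size" in an appropriate sense; co-Hopf property for non-elementary $\Zmax$-rigid relatively hyperbolic groups (which follows from the same rigidity, as in \cite{DG2}-style arguments, via the fact that a proper subgroup of finite index would give a splitting or a $\Zmax$) forces $\phi'\circ\phi$ and $\phi\circ\phi'$ to be automorphisms, hence $\phi$ is an isomorphism. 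Finally, type preservation of $\phi$: since each $\phi_i$ is type preserving and $\phi$ induces infinitely many $\phi_i$, a subgroup $H<G$ is sub-$\PF$ iff its image in infinitely many $G/K_i$ is sub-$\bPF_i$ iff (by Corollary \ref{cor_asymptotic}) $\phi(H)$ has the same property iff $\phi(H)$ is sub-$\PF'$ — more precisely one uses Corollary \ref{cor_asymptotic}(\ref{it_as2}) in both directions to transfer "not sub-$\PF$" through $\phi$ and its inverse.

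The main obstacle is the $\bbR$-tree / Rips-theory step: one must verify that the limit action is nontrivial (this is exactly where the normalization of $\lambda_i$ and the uniform hyperbolicity and uniform properness of the $\bar X'_i$ away from the thin part enter), that arc stabilizers are of the restricted type allowed by $\Zmax$-rigidity, and that the resulting splitting has all peripheral subgroups elliptic — the last point requires knowing that the images of the $P_i$ have bounded orbits in the limit tree, which comes from controlling the displacement of (a generating set of) each $P_i$ under $\phi_i$, again via the lifting/properness estimates. A secondary subtlety is making the "goes deep into the thin part" alternative genuinely contradictory rather than merely producing an action on a horoball; this is handled by observing that a sequence of homomorphisms $G\to\Stab(\bar B_i)$ that is onto $G'/K'_i$ forces $G'/K'_i$ itself to be (virtually) the peripheral image, contradicting non-elementariness for $i$ large, using Lemma \ref{lem_preserv_non_conj} and the fact that peripheral images are proper subgroups in deep fillings.
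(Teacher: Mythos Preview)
Your overall architecture is exactly the paper's: the bounded/unbounded displacement dichotomy (Propositions \ref{prop;mono} and \ref{prop;R-tree}), the lifting argument to produce a monomorphism $\phi$, the symmetric construction of $\phi'$, and the co-Hopf property (Proposition \ref{prop;cohopf}) to upgrade to an isomorphism. The ``thin part'' case is not really a third case but the opening step of the bounded case, and there is no need to iterate or pass to subsequences there: if $\displ\le M$ and the near-minimizing point lies in some $\bar B\in\bar\calh_{2M}^{\bar X'_i}$, then since these are $2M$-separated (Lemma \ref{lem_barsep}) the whole image $\phi_i\circ q_i(G)$ stabilizes $\bar B$, which already contradicts type preservation (or non-elementariness of $G'/K'_i$) for that single $i$.

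There is, however, a genuine confusion in your injectivity/surjectivity paragraph. Injectivity of $\phi$ does \emph{not} come from co-Hopf; it comes directly and cheaply from cofinality: if $g\in\ker\phi$ then (via the commuting diagram) $\phi_i\circ q_i(g)=1$, hence $q_i(g)=1$, hence $g\in K_i$ for infinitely many $i$, forcing $g=1$. Only \emph{after} you know $\phi$ and $\phi'$ are both injective (and type preserving) can you invoke co-Hopf on $\phi'\circ\phi$ to conclude surjectivity. Your sentence ``$\phi'\circ\phi$ is a monomorphism whose image has finite co-size \ldots\ via the fact that a proper subgroup of finite index would give a splitting'' is wrong on two counts: there is no finite-index statement anywhere, and the co-Hopf argument (Proposition \ref{prop;cohopf}) has nothing to do with finite index --- it works by iterating the endomorphism and running the same bounded/unbounded dichotomy on the sequence $\phi^n$, producing either an $\bbR$-tree (hence a forbidden splitting) or a contradiction via a centralizer argument.

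One smaller correction: ellipticity of peripheral subgroups in the limiting $\bbR$-tree is \emph{not} obtained from ``lifting/properness estimates'' on the displacement of a generating set of $P_i$. It is obtained directly from type preservation: any $g\in P$ is sub-$\PF$, hence $\phi_i\circ q_i(g)$ is sub-$\bPF'_i$, hence elliptic in $\bar X'_i$; if $g$ were hyperbolic in $T$, an almost-alignment argument (as in the proof of Proposition \ref{prop;R-tree}) would force $\phi_i\circ q_i(g)$ to be loxodromic in $\bar X'_i$ for $i$ large. Similarly, the control on arc stabilizers is not that they are ``limits of subgroups moving the base point little'': one first finds a hyperbolic element $h$ in the stabilizer (Lemma \ref{lem_loc_parab}), shows its image is loxodromic via asymptotic type preservation, and then uses Paulin's commutator trick together with uniform properness away from the thin part (Corollary \ref{cor_uniform_properness}) to force the stabilizer to centralize a power of $h$, hence to be $\Zmax$ (Lemma \ref{lem;arcstab_VC}).
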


  The conclusion means that there exists $h_i\in\bGp_i$ 
such that, for infinitely many indices $i$, 
the following diagrams commutes
$$\xymatrix@C=2cm{G\ar@{.>}[r]^{\phi} \ar@{->>}[d]^{p_i} & G' \ar@{->>}[d]^{q'_i}
\\ \DFi \ar@{->}[r]^{\ad_{h_i}\circ\phi_i} & \DFpi }$$
where $\ad_{h_i}:\DFpi\ra \DFpi$ is the inner automorphism $x\mapsto h_i x h_i\m$.

\begin{rem} Note that the assumption on the sequence of Dehn kernels
  considers the peripheral structures $\bPF_i, \bPF'_i$ and not
  $\overline{\calP}_i, \overline{\calP}'_i$. This is because the
  asymptotic type preservation holds for the former structures, not
  the later.

By Remark \ref{rk_type}, if no group in $\calp\cup\calp'$ is finite, then the fact that $\phi :(G,\PF)\stackrel{\sim}{\to} (G',\PF')$ is type preserving
means that $\phi(\calp)=\calp'$.
The assumption that $\phi_i: (G/K_i,\bPF_i)\stackrel{\sim}{\to}(G'/K'_i,\bPF'_i)$ holds in particular if 
$\phi_i$ sends $\bar\calp_i$ to $\bar\calp'_i$. Thus the formulation
of Theorem \ref{thm_carac_intro} in the introduction is a particular
case of Theorem \ref{thm_DF_carac}.
\end{rem}

We will prove this theorem in several steps. 
Given isomorphisms $G/K_i\ra G'/K'_i$,
we will either  produce an action of $G$ on an $\bbR$-tree from the induced action of $G$
on the Dehn filled spaces for $G'/K'_i$,
or  produce a type-preserving monomorphism $\phi:G\ra G'$
 commuting with $\phi_i$ up to inner automorphisms for infinitely many indices $i$.

 By a symmetric argument, if each $\phi_i$ is an isomorphism, then there
 is a type-preserving monomorphism $\psi:G'\ra G$. 
 If $\phi$ or $\psi$ is not onto,
 then $\psi\circ\phi$
 is a type-preserving monomorphism $G\ra G$ that is not onto.
 We then prove that this implies that $G$ has an action on $\bbR$-tree.
The proof of Theorem \ref{thm_DF_carac} will be given in subsection \ref{sec_DF_carac}.

\subsection{Morphisms with bounded displacement}\label{sec_bounded}

It is convenient to gather the setting in which we will be
working.   
\begin{notations}\label{notations}
Let $(G,\calp)$ be relatively hyperbolic, and $X$ be a $\delta_c$-hyperbolic graph as in Definition 
\ref{def;RHG}.
For all $j$, consider $P_j^i\normal P_j$ a cofinal sequence of normal subgroups,
and $(K_i)_{i\geq 0}$ the corresponding cofinal sequence of Dehn kernels.
Let $i_0$ be such that $K_i$ is a proper Dehn kernel for $i>i_0$.
For $i\geq i_0$, we denote by $R_i=\depth(K_i)$ (or $R_i=i$ if $\depth(K_i)=\infty$). 

We denote the Dehn filled group by $\bG_i=G/K_i$ with 
$q_i:G\ra \bG_i$ the quotient map.  
We denote  the Dehn filled space by $\bar{X}_i=\dot{X}_{R_i}/K_i$ where $\dot X_{R_i}$ is the cone-off at depth $R_i$ (see Proposition \ref{prop_coneoff} and Theorem
\ref{theo;VRF}).
We also denote by $\bar\calp_i$ (resp $\bPF_i$) the image of $\calp$ (resp $\bPF$) in $G/K_i$.
\end{notations}

Given a cofinal sequence $K'_i$ of Dehn kernels in the relatively hyperbolic group $(G',\calp')$, 
we use the corresponding notations $X',R'_i,\dot X'_{R'_i},\bar X_i'$ etc.

If   $A$ is a finite set of isometries of a metric space $X$, we define
its least displacement 
 by  $$\displ_{ X}(A)=\inf_{x\in X} \max_{a\in A} d_X(x,ax).$$ 

Given $\phi_i:\DFi\ra \DFpi$, the morphism $\phi_i\circ q_i$ 
gives an action of $G$ on $\bar X'_i$ by precomposition.
To measure its displacement, fix $S$ a finite generating set of $G$,
 and define $$||\phi_i||_{\bar X'_i}=\displ_{\bar X'_i} ( \phi_i\circ q_i(S)).$$

  \begin{prop}\label{prop;mono}
    Let  $(G,\calP)$,  $(G',\calP')$ be two 
    non-elementary 
    relatively hyperbolic groups.
    Let $K_i\normal G$, $K'_i\normal G'$ be cofinal sequences of 
    Dehn kernels of $(G, \calP), (G', \calP')$.

    Assume that there exists $M>0$, and for each $i$,
    a type preserving monomorphism $\phi_i: (\DFi, \bPF_i) \to (\DFpi,\bPF'_i)$, 
    such that $||\phi_i||_{\bar X'_i}\leq M$.

Then there exist a type-preserving monomorphism $\phi :(G,\PF)\ra (G',\PF')$
and inner automorphisms $\ad_{h_i}$ of $\bG_i$
making the following diagrams commute 
for infinitely many indices $i$:
$$\xymatrix@R=8mm@C=15mm@M=2mm{G\ar@{^{(}-->}[r]^{\phi} \ar@{->>}[d]^(0.4){q_i} & G' \ar@{->>}[d]^(0.4){q'_i}
\\ \DFi \ar@{^{(}->}[r]^{\ad_{h_i}\circ\phi_i} & \DFpi }$$
  \end{prop}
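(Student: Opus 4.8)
The plan is to exploit the uniform geometry of the Dehn fillings together with a limiting argument. First I would fix the base points: let $p\in X\setminus\calh$ be the base point of $(G,\calp)$ and $p'\in X'\setminus\calh'$ that of $(G',\calp')$, with images $\bar p_i\in\bar X_i$ and $\bar p'_i\in\bar X'_i$. Since $\|\phi_i\|_{\bar X'_i}\le M$, for each $i$ there is a point $\bar y_i\in\bar X'_i$ moved by at most $M$ by all $\phi_i\circ q_i(s)$, $s\in S$; replacing $\phi_i$ by $\ad_{\bar h_i}\circ\phi_i$ for a suitable $\bar h_i\in\bar G'_i$ I may assume $\bar y_i$ is within bounded distance (say $2M$) of $\bar p'_i$, hence $d_{\bar X'_i}(\bar p'_i,\phi_i q_i(s)\bar p'_i)\le 5M$ for all $s\in S$. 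The first dichotomy to resolve is whether $\bar p'_i$ (or the displacement-minimizing point) stays outside the thin part, i.e. outside $\bar\calh_D^{\bar X'_i}$ for some fixed $D$, or sinks into it; I would argue that if it went arbitrarily deep into a trace of a horoball then $\phi_i\circ q_i(S)$, hence $\phi_i(\bar G_i)$, would lie in the image of a peripheral subgroup of $G'$, contradicting that $\phi_i$ is an isomorphism and $(G',\calp')$ is non-elementary (using Lemma~\ref{lem_barsep}). So after conjugation I may take $\bar p'_i$ to lie in $\bar X'_i\setminus\bar\calh_D^{\bar X'_i}$ for a fixed $D$, uniformly in $i$.

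Next I would invoke uniform properness. By Corollary~\ref{cor_uniform_properness}, since $R'_i\to\infty$, there is $M'$ (depending only on $5M$ and $D$) such that for $i$ large, at most $M'$ elements $\bar g\in\bar G'_i$ satisfy $d_{\bar X'_i}(\bar p'_i,\bar g\bar p'_i)\le 5M\cdot L$, where $L$ bounds word length in $S$ of the elements we will need. More precisely, for each $g\in G$ with $|g|_S\le \ell$, the element $\phi_i q_i(g)$ moves $\bar p'_i$ by at most $5M\ell$. By Lemma~\ref{lem;lift} applied in $(G',\calp')$ with $r=2\cdot 5M\ell$ (valid once $R'_i\ge R'_i$ is large enough that $B_{X'}(p',5M\ell)\cap\calh'_{R'_i}=\es$), each $\phi_i q_i(g)$ has a \emph{unique} preimage in $G'$, call it $\widetilde{\phi_i(g)}$, moving $p'$ by at most $5M\ell$. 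The uniqueness forces multiplicativity: if $|g_1|_S,|g_2|_S,|g_1g_2|_S\le\ell$ then $\widetilde{\phi_i(g_1)}\,\widetilde{\phi_i(g_2)}$ and $\widetilde{\phi_i(g_1g_2)}$ are both preimages moving $p'$ by at most $2\cdot 5M\ell\le 5M\cdot 2\ell$, so they agree — this is the same mechanism as in Lemma~\ref{lem;elli}. Thus on each ball of $G$ of radius $\ell$ the maps $g\mapsto\widetilde{\phi_i(g)}$ are ``partial homomorphisms'' into $G'$, with the constraint $d_{X'}(p',\widetilde{\phi_i(g)}p')\le 5M|g|_S$.

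Now I would run a diagonal/compactness argument. Because $G'$ acts properly on $X'$ and the displacement bound $5M|g|_S$ is uniform in $i$, for each fixed $g\in G$ the set $\{\widetilde{\phi_i(g)} : i\}$ is finite; so by a diagonal extraction over an enumeration of $G$ there is a subsequence along which $\widetilde{\phi_i(g)}$ is eventually constant, equal to some $\phi(g)\in G'$. Passing to the limit in the partial-homomorphism relations, $\phi:G\to G'$ is a genuine homomorphism. It is injective: if $\phi(g)=1$ for some $g\ne1$, then for large $i$, $\widetilde{\phi_i(g)}=1$, so $\phi_i(q_i(g))=1$, so $q_i(g)=1$ (as $\phi_i$ is injective); but $K_i$ is cofinal, so $q_i(g)\ne1$ for $i$ large, a contradiction. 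By construction $\phi$ commutes with $\phi_i$ (after the chosen conjugations $\ad_{h_i}$, where $h_i$ lifts $\bar h_i$) on the chosen subsequence, giving the commuting diagrams for infinitely many $i$. Finally, type-preservation: if $H<G$ is sub-$\PF$ its image in each $\bG_i$ is sub-$\bPF_i$, hence its $\phi_i$-image is sub-$\bPF'_i$, which pulls back to say $\phi(H)$ is sub-$\PF'$ — more carefully I would use Corollary~\ref{cor_asymptotic}: if $H$ were not sub-$\PF$ it contains a loxodromic $g$, so for $i$ large $\phi_i q_i(g)$ is loxodromic on $\bar X'_i$, forcing $\widetilde{\phi_i(g)}=\phi(g)$ to be loxodromic on $X'$ by the almost-alignment criterion (as in Lemma~\ref{lem;cof2}), hence $\phi(H)$ is not sub-$\PF'$; and conversely using $\phi_i^{-1}$.

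The main obstacle I anticipate is the combination of two quantifiers: the radius $\ell$ up to which the ``partial homomorphism'' property holds must be controlled against the depth $R'_i$ (we need $B_{X'}(p',5M\ell)$ to avoid $\calh'_{R'_i}$), so the lifting is only valid for $\ell\lesssim R'_i/(10M)$ — fine since $R'_i\to\infty$, but it means the homomorphism $\phi$ is only produced in the limit and one must be careful that the diagonal extraction is compatible with this growing-radius constraint. The genuinely delicate point, already flagged before the proposition, is ruling out that the minimal-displacement point escapes into the thin part: this is where non-elementariness and $\Zmax$-rigidity of $G'$ enter, and one must make sure the conjugations $\ad_{h_i}$ chosen to pull the point out of the horoball do not destroy the uniform displacement bound — but since conjugating moves the whole picture isometrically, $\|\ad_{h_i}\circ\phi_i\|_{\bar X'_i}$ is unchanged, so this is safe. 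Everything else is a routine adaptation of the lifting and almost-alignment lemmas already proved in Sections~\ref{sec_DF} and~\ref{sec_type_pres}.
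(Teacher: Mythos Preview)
Your approach is essentially the paper's: show the minimally displaced point stays in the thick part, conjugate to bring it near $\bar p'_i$, lift uniquely via Lemma~\ref{lem;lift}/Corollary~\ref{cor_lift}, extract a limit by diagonalisation (the paper uses an ultrafilter, which is equivalent), and verify injectivity and type preservation via Corollary~\ref{cor_asymptotic}. The skeleton is right.

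However, there is a recurring error: you twice use that $\phi_i$ is an \emph{isomorphism}, but the hypothesis only gives a type-preserving \emph{monomorphism}. This matters in two places. First, in the thick-part step you argue that if $\phi_i(\bar G_i)$ lay in a peripheral image then ``$\phi_i$ is an isomorphism and $(G',\calp')$ is non-elementary'' gives a contradiction; but $\phi_i(\bar G_i)$ need not be all of $\bar G'_i$. The correct argument (and the paper's) is on the \emph{source} side: $G$ is non-elementary, so by Corollary~\ref{cor_asymptotic} $q_i(G)$ is not sub-$\bPF_i$ for $i$ large, and since $\phi_i$ is type-preserving, $\phi_i(q_i(G))$ is not sub-$\bPF'_i$; this is what contradicts containment in a horoball stabiliser. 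Second, in the type-preservation check you invoke ``$\phi_i^{-1}$'' for the converse direction; instead use that type preservation is a two-sided condition by Definition~\ref{dfn_type}: if $H$ is sub-$\PF$ then $q_i(H)$ is sub-$\bPF_i$, hence $\phi_i q_i(H)$ is sub-$\bPF'_i$, and if $\phi(H)$ were not sub-$\PF'$ then $q'_i\phi(H)=\phi_i q_i(H)$ would fail to be sub-$\bPF'_i$ for large $i$ by Corollary~\ref{cor_asymptotic}.

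Relatedly, your closing remark that ``non-elementariness and $\Zmax$-rigidity of $G'$ enter'' in the thick-part step is off: $\Zmax$-rigidity is \emph{not} a hypothesis of this proposition (it enters only later, in the unbounded-displacement branch via Corollary~\ref{cor;R-tree}). Also note that your order of operations---conjugate first, then check thick part---is backwards: you need $\bar y_i$ in the thick part \emph{before} you can use cocompactness of $\bar G'_i$ on $\bar X'_i\setminus\bar\calh_{2M}^{\bar X'_i}$ to bring it within a \emph{uniform} (in $i$) distance $D$ of $\bar p'_i$.
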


\begin{rem}
  Even if we assume that $\phi_i$ are isomorphisms,  we cannot guarantee that $\phi$ is an isomorphism. 
Indeed, Bridson and Grunewald produced in \cite{BriGru_Grothendieck}  examples of non-isomorphic residually finite finitely presented groups $A, B$, 
with an injective morphism $A\hookrightarrow  B$ inducing an isomorphism at the level of pro-finite completions 
(\ie isomorphisms at the level of every characteristic finite quotient).  
Then, consider the relatively hyperbolic groups $A*A$, and $B*B$, and let $A_i\subset A$ and $B_i\subset B$ be the intersection 
of all subgroups of index at most $i$. Since the inclusion $A\hookrightarrow B$ induces  isomorphisms
$A/A_i\xra{\sim} B/B_i$ for all $i$, it induces isomorphisms $\phi_i:A/A_i*A/A_i\xra{\sim} 
B/B_i*B/B_i$ with bounded displacement (for the actions on the Bass-Serre trees).
However, the limiting morphism $\phi:A*A\hookrightarrow B*B$ is not onto.

\end{rem}

\begin{rem}\label{rem;wo_type_preservation_inDF}
 One can remove the assumption that all $\phi_i$ are type preserving, and replace it by the assumption that 
for $i$ large enough, the groups in  $\bar\calP_i, \bar\calP'_i$ are
small 
 (this holds in particular if the groups in $\calp,\calp'$ themselves are small,  or if we consider only 
finite Dehn fillings).
Under this assumption, the same conclusion holds 
except that one looses the property that the monomorphism $\phi$ is type preserving. We indicate in the course of the proof, the modification of the argument for this variation. 
\end{rem}

\begin{proof}
Let $\bar x'_i\in \bar X'_i$ be a point moved by at most $2M$ by all elements of the generating set $S$.
Informally, we first prove that $\bar x'_i$ stays in the thick part.

Since $G$ is non-elementary, 
$q_i(G)$ is 
not sub-$\bPF_i$ for $i$ large enough by Corollary \ref{cor_asymptotic}. 
Since  $\phi_i$ is type preserving, the group  $\phi_i(\bG_i)$ is therefore not
 sub-$\bPF'_i$ for $i$ large enough. 

In the context of  Remark \ref{rem;wo_type_preservation_inDF}, 
we still have that  $\phi_i(\bG_i)$ is not   sub-$\bPF'_i$ for $i$ large enough, since $\bG_i$ contains non-abelian free groups 
by Lemma \ref{lem_F2}.

Consider $i$  large enough so that $\depth(K'_i)\geq 2M$.
Consider the system of horoballs $\calh'_{2M}$ of $X'$,
and $\bar \calh^{\bar X'_i}_{2M}$ its trace on $\bar X'_i$.  
By Lemma \ref{lem_barsep}, $\bar \calh^{\bar X'_i}_{2M}$ is
$2M$-separated. 
We claim that $\bar x'_i$ lies in the complement of $\bar \calh^{\bar X'_i}_{2M}$. 
Assume on the contrary that $\bar x'_i\in \bar B$ for some $\bar B\in \bar \calh^{\bar X'_i}_{2M}$, and argue towards a contradiction. 
Since $||\phi||_{\bar X'_i}\leq M$, and $\bar \calh^{\bar X'_i}_{2M}$ is $2M$-separated,  
all generators of $G$ (more precisely, all elements of $\phi_i\circ q_i(S)$) preserve $\bar B$, and so does $\phi_i\circ q_i(G)$.
Since by Lemma \ref{lem_barsep}, the stabilizer of $\bar B$ is the image $\bar P'$ of a peripheral subgroup $P'\in  \calp'$, 
this contradicts that $\phi_i(\bar G_i)$ is not sub-$\bPF'_i$, and our claim is proved.

Consider $D\geq \diam (X'\setminus \calh'_{2M})/G$, so that
$\bar X_i\setminus \bar \calh^{\bar X'_i}_{2M}$ is contained in the $D$-neighbourhood of the orbit of $\bar p'_i$, the image of the base point in $\bar X'_i$ 
(Lemma \ref{lem_barsep}). 
Then up to post-composing $\phi_i$ by inner automorphisms of $\DFi$  and changing $\bar x'_i$ in its orbit accordingly, 
we can assume that $d(\bar x'_i,\bar p'_i)\leq D$.
In particular, for all $s\in S$, $d(\bar p'_i,\phi_i\circ q_i(s)\bar p'_i)\leq C$ where $C=2D+2M$.
It follows that 
for each $g\in G$ of word length $|g|_{S}$, one has $d(\bar p'_i,\phi_i\circ
q_i(g)\bar p'_i)\leq C|g|_{S}$.

Now we assign to each $g\in G$ a finite set $F_{g}\subset G'$ and a sequence $g'_i\in F_g$ of candidates for $\phi(g)$.
 Given $g\in G$, consider $i$ large enough so that  the depth $R'_i$ is at least  $R'_i\geq 2C|g|_{S}$.
By Corollary \ref{cor_lift}, there exists a unique $g'_i\in G'$ such that $q'_i(g'_i)=\phi_i\circ q_i(g)$
and $d_X(p',g'_i p')\leq C|g|_{S}$. By properness of the action of $G$ on $X$,
the set of elements $g'_i\in G$ varies in a finite set $F_{g}$ as $i$ varies.
Moreover, if $g=uv$ for some $u,v\in G$, 
and if ${\rm ker} (q_i)$ is of depth at least  $2C(|u|_{S}+|v|_{S})$, then 
the lifts $u'_i,v'_i\in G'$ of $u,v$ satisfy $g'_i=u'_iv'_i$ by uniqueness of the lift of $uv$ that moves $p'$
by at most $C(|u|_{S}+|v|_{S})$.

Now one produces $\phi:G'\ra G$ by selecting $\phi(g')$ among the finitely many elements $\{g_i\}_{i\in\bbN}$,
in a consistent way.
This can be done by extracting subsequences and using a diagonal argument.
It is easier to define $\phi$ using  a non-principal ultrafilter
$\omega$ on $\bbN$. 
Given such an ultrafilter, any sequence $(x_i)_{i\in \bbN}$ of elements of a finite set $F$
defines uniquely an element $\lim_\omega x_i\in F$, characterized as the unique element $x\in F$ such that $\omega(\{i\in \bbN |x_i=x\})=1$
(viewing $\omega$ as a finitely additive measure on $\bbN$ with values in $\{0,1\}$).
In this language, we define $\phi(g)=\lim_\omega g'_i\in F_{g}$ where $g'_i\in F_{g}$ is the sequence of elements defined above.
Since for all $g,u,v\in G'$, and for all $i$ large enough $g'_i=u'_iv'_i$ with the notations above, 
$\phi$ is a morphism.
The two morphisms $\phi_i\circ q_i$ and $q'_i\circ\phi$ agree on the finite set $S$ for $\omega$-almost every $i$.
For each such $i$, the diagram commutes.

There remains to check that $\phi$ is one-to-one and, if the maps $\phi_i$ are type-preserving, that $\phi$ is also  
type-preserving.
Up to passing to a subsequence, we assume that the diagram commutes for all $i$.
If $g\neq 1$, then for $i$ large enough $q_i(g)\neq 1$ since $K_i$ is cofinal.
Since $\phi_i$ is injective, $\phi_i\circ q_i(g)\neq 1$.
Since the diagram commutes, $\phi(g)\neq 1$, and $\phi$ is injective.

Assume that $\phi_i$ is type preserving, and let's check that so is $\phi$.
If $H$ is sub-$\PF$ (\ie parabolic or finite), then $q_i(H)$ is sub-$\PF_i$ for all $i$ (Corollary  \ref{cor_asymptotic}(\ref{it_as1}))
so $\phi_i\circ q_i(H)$ sub-$\PF'_i$ since $\phi_i$ is type preserving.
On the other hand, if $\phi(H)$ was not sub-$\PF'$, then $q'_i(\phi(H))$ would not be 
sub-$\PF'_i$ for $i$ large enough by Corollary \ref{cor_asymptotic}(\ref{it_as2}), a contradiction.
Conversely if $H<G$ is not sub-$\PF$, one checks similarly that $\phi(H)$ is not sub-$\PF$ using Corollary \ref{cor_asymptotic}.
\end{proof}

\subsection{Morphisms between Dehn fillings with unbounded displacement}\label{sec_unbounded}

   In this section we assume 
that the displacement is unbounded, and we apply Bestvina-Paulin's argument to produce an action of $G$ on an $\bbR$-tree.
The crucial observation is that the hyperbolicity constant of $\bar X_i$ is independent of $i$.

   \begin{prop}\label{prop;R-tree}  
     Let $K_i, K'_i$ be cofinal sequences of 
     Dehn kernels of two relatively hyperbolic groups $(G, \calP), (G', \calP')$. Using notations \ref{notations},
     assume that for each $i$ there is a type preserving monomorphism 
     $\phi_i: (\DFi,\bPF_i) \hookrightarrow (\DFpi,\bPF_i')$ such that $||\phi_i||_{\bar X'_i}$ is unbounded.

     Then $G$ admits a non-trivial isometric 
     action on a $\mathbb{R}$-tree $T$ such that
     \begin{itemize}
     \item every peripheral  subgroup $P\in \calp$ fixes a point in $T$
       \item every arc stabilizer is finite, parabolic or $\Zmax$. 
     \end{itemize}
   \end{prop}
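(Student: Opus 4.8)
The plan is to run the Bestvina--Paulin construction, using crucially that all Dehn filled spaces $\bar X'_i$ share the uniform hyperbolicity constant $\delta_1$ from Theorem \ref{theo;VRF}. Let $S$ be a fixed finite generating set of $G$ and set $\lambda_i = ||\phi_i||_{\bar X'_i}$; by hypothesis we may pass to a subsequence with $\lambda_i\to\infty$. For each $i$, pick a point $\bar x_i\in\bar X'_i$ that almost realizes the infimum, so that $\max_{s\in S} d_{\bar X'_i}(\bar x_i,\phi_i\circ q_i(s)\bar x_i)\le \lambda_i+1$. Rescale the metric on $\bar X'_i$ by $1/\lambda_i$; since the unscaled spaces are $\delta_1$-hyperbolic, the rescaled spaces $Y_i = (\bar X'_i, \frac{1}{\lambda_i}d)$ are $(\delta_1/\lambda_i)$-hyperbolic, hence asymptotically $0$-hyperbolic. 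Taking an ultralimit (or Gromov--Hausdorff limit after the usual extraction) of the pointed spaces $(Y_i,\bar x_i)$ equipped with the $G$-actions $\phi_i\circ q_i$, one obtains an action of $G$ on a based $\bbR$-tree $(T,*)$. Non-triviality of the action is the standard Bestvina--Paulin output: because $\bar x_i$ was chosen to (nearly) minimize the displacement of the generators, some generator moves $*$ by a definite amount, so $G$ does not fix a point (this uses that $\lambda_i$ is the least displacement, so one cannot simultaneously shrink all generator displacements).

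Next I would verify the two bulleted properties. For the peripheral subgroups: given $P\in\calp$, by Corollary \ref{cor_asymptotic}(\ref{it_as1}) the image $q_i(P)$ is sub-$\bPF_i$, and since $\phi_i$ is type preserving, $\phi_i\circ q_i(P)$ is sub-$\bPF'_i$; thus $\phi_i\circ q_i(P)$ is either finite or contained in the stabilizer of an apex/horoball in $\bar X'_i$. In the first case the diameter of the $P$-orbit of $\bar x_i$ is bounded (uniformly, by the Dehn filling structure) and so contributes nothing in the rescaled limit; in the second case, $P$ preserves a point of $\bar X'_i$ (an apex), or its orbit of $\bar x_i$ has diameter controlled by a bounded function independent of $i$ — either way, after rescaling by $\lambda_i\to\infty$, the $P$-orbit of $*$ has diameter $0$, so $P$ fixes $*$. (One should be a little careful: if $P$ stabilizes a horoball but not the apex, its orbit of a point in the thick part could a priori be large; but $\phi_i\circ q_i(P)$ fixing an apex is the clean statement, and a parabolic fixing an apex fixes that point of $\bar X'_i$.) For the arc stabilizers: this is the part requiring the most care. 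The standard Bestvina--Paulin/Rips-machinery argument shows that an element $g\in G$ stabilizing a non-degenerate arc of $T$ must, for $i$ in the ultrafilter, have the property that $\phi_i\circ q_i(g)$ moves a long segment of $\bar X'_i$ by a sublinear (in $\lambda_i$) amount; quantitatively, $g$ and a conjugate $hgh^{-1}$ (for a suitable $h\in G$ acting hyperbolically-ish) together move a point by a bounded amount while their commutator relations force near-alignment. I would invoke the now-classical analysis (as in Paulin, or \cite{DGO_HE}) that in a $\delta_1$-hyperbolic space an element moving a sufficiently long geodesic segment by $\le \epsilon$ is, up to bounded error, either elliptic with small displacement or loxodromic with small translation length, and combine this with the uniform properness away from the thin part (Corollary \ref{cor_uniform_properness}) to conclude: either $\phi_i\circ q_i(g)$ is loxodromic on $\bar X'_i$ with translation length $o(\lambda_i)$ — which, lifting via Lemma \ref{lem;cof2} and Lemma \ref{lem_F2}, forces $g$ together with a transverse hyperbolic element to generate a free group with unbounded displacement, contradicting arc-stabilization — or $\phi_i\circ q_i(g)$ is (coarsely) elliptic, and then either it lies in the image of a parabolic (so $g$ is parabolic by the type-preservation and Corollary \ref{cor_asymptotic}(\ref{it_as2})), or it is coarsely elliptic in the thick part, where Corollary \ref{cor_uniform_properness} bounds the order of such coarse-stabilizers, forcing $\grp{g}$ (more precisely the full arc stabilizer) to be finite or virtually cyclic; maximality and the exclusion of the infinite-dihedral-with-parabolic possibilities then pin it down to finite, parabolic, or $\Zmax$.

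The main obstacle I anticipate is precisely this last step — controlling arc stabilizers — because it is where one must transfer a statement about the limiting $\bbR$-tree action back to uniform statements about the prequotient spaces $\bar X'_i$, and the usual "stable kernel" or "long segment" arguments need to be reconciled with the cone-off geometry (apices, horoball traces) of the Dehn filled spaces. The right way to organize it is: (i) show an element stabilizing a non-degenerate arc has, along the ultrafilter, arbitrarily long $(1,C)$-quasi-geodesic segments in $\bar X'_i$ moved by $o(\lambda_i)$; (ii) apply the standard hyperbolic-space dichotomy to such an element, using that $\delta_1$ is fixed; (iii) in the "coarsely elliptic in the thick part" case, quote Corollary \ref{cor_uniform_properness} to get a uniform bound $M$ on the stabilizer, and in the "coarsely elliptic in the thin part" case, quote Lemma \ref{lem_barsep}/Lemma \ref{lem_preserv_non_conj} to locate it inside the image of a peripheral group; (iv) in the "coarsely loxodromic" case, derive a contradiction from Lemma \ref{lem_F2} applied to $g$ and a hyperbolic element $h$ with distinct fixed points, noting $\grp{g,h}$ would have unbounded displacement in $\bar X'_i$, incompatible with $g$ fixing an arc pointwise while $h$ does not. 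Assembling (iii) and (iv) and taking the subgroup of $G$ fixing the arc, its image under each $\phi_i\circ q_i$ is finite, parabolic, or $\Zmax$ up to bounded ambiguity, and type-preservation plus maximality pull this back to $G$, giving the stated trichotomy. Throughout one should suppress passing to subsequences by just fixing a non-principal ultrafilter $\omega$ from the start, exactly as in the proof of Proposition \ref{prop;mono}.
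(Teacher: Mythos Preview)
Your setup (rescale by $1/\lambda_i$, pass to a limit, get a non-trivial $\bbR$-tree action) is correct and matches the paper. The issues are in the two bulleted properties.

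For peripheral subgroups, your claim that ``the $P$-orbit of $*$ has diameter $0$'' is unjustified: even though $\phi_i\circ q_i(P)$ fixes an apex $c_i\in\bar X'_i$, the distance $d_{\bar X'_i}(\bar x_i,c_i)$ can be of order $\lambda_i$ or larger, so $c_i$ need not converge to a point of $T$ and the $P$-orbit of $*$ need not collapse (and in the ``finite image'' subcase the order of $\phi_i\circ q_i(P)$ grows with $i$, so your diameter bound is not uniform either). The paper argues by contradiction instead: if $P$ has no fixed point in $T$, Serre's lemma produces $g\in P$ hyperbolic in $T$; approximating the configuration $x,gx,g^2x$ in $Y_i$ and invoking \cite[Lemme 9.2.2]{CDP} shows $\phi_i\circ q_i(g)$ is loxodromic on $\bar X'_i$ for large $i$, contradicting that it is sub-$\bPF'_i$ and hence fixes an apex.

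The arc-stabilizer analysis has a genuine gap. Your case split treats ``$\phi_i\circ q_i(g)$ loxodromic with translation length $o(\lambda_i)$'' as leading to a contradiction, but it does not: when the arc stabilizer $H$ is neither finite nor parabolic, Lemma~\ref{lem_loc_parab} produces a hyperbolic $h\in H$, and type preservation together with Lemma~\ref{lem;pre_nonlox} force $\phi_i\circ q_i(h)$ to be loxodromic on $\bar X'_i$ --- exactly the situation you declared impossible. (Lemmas~\ref{lem;cof2} and~\ref{lem_F2} concern $G$ acting on its own $\bar X_i$, not on $\bar X'_i$, and do not give the free-group contradiction you sketch.) The paper's route is the one you gesture at but do not execute: fix this loxodromic $h\in H$, take any $g\in H$ and approximation points $a_i,b_i$ for the arc endpoints; Paulin's commutator argument shows each $[g,h^j]$ for $j\le M$ moves the central third $[a'_i,b'_i]$ by at most a universal constant $C(\delta_1)$. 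Because $\phi_i\circ q_i(h)$ is loxodromic it cannot preserve a horoball trace, so $[a'_i,b'_i]$ meets the thick part $\bar X'_i\setminus\bar\calh_{20\delta_1}^{\bar X'_i}$; Corollary~\ref{cor_uniform_properness} then forces $[g,h^{j_1}]=[g,h^{j_2}]$ for some $j_1<j_2$, so $g$ centralizes $h^{j_2-j_1}$. Thus $h^{M!}$ is central in $H$, making $H$ virtually cyclic with infinite center, and the $\Zmax$ maximality is a short further check. Your per-element dichotomy, even in the elliptic-thick subcase, only bounds orders of individual elements and does not control $H$ as a group; the missing ingredient is this commutator argument pivoting on the single loxodromic $h$.
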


In this statement, an \emph{arc} in an $\bbR$-tree is a geodesic segment $[a,b]$ with $a\neq b$, and its
\emph{stabilizer} is its pointwise stabilizer, \ie the set of $g\in G$ fixing both $a$ and $b$.

We will prove this proposition shortly, but first, we give applications.    

We say that $G$  splits relative to $\calP$ if it acts without global fixed point on a simplicial tree $T$ 
and every group in $\calP$ is elliptic.
Using Bass-Serre theory \cite{Serre_arbres}, 
 this is equivalent to the fact that 
$G$ is isomorphic to the fundamental group of a non-trivial graph of groups, 
in which each group in $\calP$ is conjugate 
to a subgroup of a vertex group.
We say that a  splitting is \emph{over} a class $\calE$ of groups, 
if the edge stabilizers (or the edge groups of the graph of groups) belong to the class $\calE$.

We recall the following result of Rips theory. 
 
 \begin{theo}[\cite{GL6}, Theorem 9.14]\label{thm_rips}
Consider $(G,\calp)$  a relatively hyperbolic group where each peripheral group $P\in \calp$ is finitely generated. 
Assume that $G$ acts without global fix point on an $\bbR$-tree $T$ so that each $P\in\calp$ fixes a point in $T$,
and stabilizers of arcs in $T$ are  finite, parabolic or $\Zmax$. 

Then $G$ has a non-trivial action on a simplicial tree $S$, such that each $P\in \calp$ fixes a point in $S$, 
and edge stabilizers are finite, parabolic or $\Zmax$.
\end{theo}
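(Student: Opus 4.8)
This is the relative version of the Rips machine --- the structure theorem for actions of groups on $\bbR$-trees, due to Bestvina--Feighn, Sela, Guirardel and Levitt --- so the plan is to run that machinery on the action of $G$ on $T$, tracking the peripheral structure at each stage. I would start by replacing $T$ by its minimal $G$-invariant subtree (nonempty because $G$ is finitely generated and has no global fixed point) and then decomposing it equivariantly as a \emph{graph of actions}: $T$ is obtained by gluing a simplicial part to finitely many indecomposable vertex actions $H\actson Y_v$, the finiteness and the decomposition relying on $G$ and the $P_i$ being finitely generated and on an accessibility argument carried out relative to the peripheral structure. Each indecomposable piece is then, by the Rips machine, either \emph{axial} ($H$ acts on a line with dense orbits, hence factors through a finitely generated abelian quotient of rank $\geq 2$) or of \emph{surface (Seifert) type} ($H$ is an extension of a $2$-orbifold group by a subgroup $N$ contained in every arc stabilizer of $Y_v$); the ``exotic/Levitt'' type is excluded because the relevant stability holds in this setting. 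Establishing this decomposition and classification in the relative, merely finitely generated framework is, I expect, the main technical obstacle.

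The axial pieces can in fact be excluded entirely. In an axial piece, $H$ maps onto an abelian group of rank $\geq 2$, so $H$ is infinite and not virtually cyclic. Write $N\normal H$ for the kernel of the line action; $N$ fixes a point of $Y_v$, hence of $T$, so $N$ is contained in an arc stabilizer of $T$ and is therefore finite, parabolic, or $\Zmax$. If $N$ is finite, then $H$ is virtually nilpotent of Hirsch length $\geq 2$, contains no non-abelian free subgroup, and is not virtually cyclic, so by Lemma \ref{lem_loc_parab} it is parabolic; if $N$ is an infinite parabolic, then $H$ normalises it and so is itself parabolic (the normaliser of an infinite parabolic subgroup of a relatively hyperbolic group is parabolic); if $N$ is a $\Zmax$, then $H$ normalises it and so is virtually cyclic (the normaliser of a $\Zmax$ is virtually cyclic), contradicting rank $\geq 2$. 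In the first two cases $H$ is parabolic, hence elliptic in $T$, hence elliptic in $Y_v$, contradicting that it acts with dense orbits on a line. The same normaliser argument shows that in a surface piece the kernel $N$ must be finite: an infinite $N$ would force $H$ to be parabolic or virtually cyclic, hence elliptic in $T$ and in $Y_v$, contradicting indecomposability. So after this step $T$ is a graph of actions whose non-simplicial vertices carry surface-type actions with finite kernel.

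It remains to extract the simplicial tree $S$ and to check the conclusions. The simplicial part of the graph of actions already supplies splittings whose edge groups are arc stabilizers of $T$, hence finite, parabolic, or $\Zmax$ by hypothesis. From each surface vertex I would choose an essential two-sided simple closed curve on the underlying $2$-orbifold, disjoint from the boundary leaves --- such a curve exists because $H$ acts on $Y_v$ with dense orbits, so once the finitely many boundary/conical subgroups (corresponding to incident edges and to peripheral subgroups contained in $H$) are accounted for, the orbifold is not elementary; the splitting of $H$ dual to this curve has edge group an extension of an arc stabilizer of $Y_v$ by $\bbZ$, so it surjects onto $\bbZ$ with finite kernel, is virtually cyclic with infinite centre, and is therefore parabolic or contained in a $\Zmax$, so enlarging it if necessary keeps us in the allowed class. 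These splittings are compatible along the graph of actions, so refining produces a simplicial $G$-tree $S$ with no global fixed point (since $T$ had none and the refinement is non-trivial) and with edge stabilizers finite, parabolic, or $\Zmax$. Finally, each $P\in\calp$ fixes a point of $T$, hence lies in some vertex group $H$ of the graph of actions; if that vertex is simplicial, $P$ is elliptic in $S$ by construction, and if it is a surface vertex then $P$ is an elliptic subgroup of $H\actson Y_v$, hence conjugate into a boundary/conical subgroup of the orbifold or into the finite kernel $N$, and in either case elliptic in the dual splitting, hence in $S$. The remaining work is the bookkeeping ensuring that the simple closed curves chosen in distinct surface pieces are mutually compatible, that enlarging a virtually cyclic edge group to a $\Zmax$ preserves ellipticity of the $P_i$, and that the resulting tree is genuinely non-trivial --- standard within the JSJ framework but needing the relative set-up throughout.
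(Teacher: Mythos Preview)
The paper does not give its own proof of this statement: it is quoted verbatim as Theorem~9.14 of \cite{GL6} and used as a black box. So there is nothing in the present paper to compare your argument against; what you have written is essentially a sketch of the proof as it is carried out in \cite{GL6}, and the overall architecture --- graph-of-actions decomposition, Rips classification of the indecomposable pieces, elimination of the axial case via Lemma~\ref{lem_loc_parab} and malnormality of peripherals, extraction of a simplicial splitting from the surface pieces --- is the right one.

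Two points in your sketch deserve more care. First, the Levitt (exotic) type is not ``excluded'' by stability: rather, a Levitt component directly yields a simplicial splitting over its arc stabilizers (this is precisely what the thin/Levitt analysis produces), so it feeds into the output tree $S$ rather than disappearing. Your phrasing suggests you may be conflating this with the stability hypothesis needed to run the decomposition in the first place. Second, the step ``enlarging it if necessary keeps us in the allowed class'' is not innocuous: replacing a virtually cyclic edge group $C$ by the $\Zmax$ group $Z\supset C$ is a fold, and you must check that $Z$ is elliptic in your tree (otherwise the fold collapses everything) and that the peripheral groups remain elliptic after the fold. In \cite{GL6} this is typically handled via the tree of cylinders rather than by an ad hoc enlargement; your acknowledgement that this is ``standard within the JSJ framework'' is correct, but it is where the actual work hides.
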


   Applying Theorem \ref{thm_rips}, we immediately get a splitting as follows.

   \begin{cor}\label{cor;R-tree}
     Under the hypotheses of Proposition \ref{prop;R-tree}, 
     $G$ has  a non-trivial splitting relative to $\calp$,
     over a finite, parabolic or $\Zmax$ subgroup.
     \qed
   \end{cor}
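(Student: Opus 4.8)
The plan is to deduce this immediately by feeding the $\bbR$-tree produced in Proposition \ref{prop;R-tree} into the Rips-theory structure theorem quoted as Theorem \ref{thm_rips}. Under the hypotheses of Proposition \ref{prop;R-tree}, that proposition already hands us a non-trivial isometric action of $G$ on an $\bbR$-tree $T$ such that every peripheral subgroup $P\in\calp$ fixes a point of $T$ and every arc stabilizer is finite, parabolic, or $\Zmax$. All the analytic work --- extracting this action via the Bestvina--Paulin argument, using crucially that the Dehn filled spaces $\bar X'_i$ share a common hyperbolicity constant --- is therefore already done; what remains is purely to pass from $T$ to a simplicial splitting.

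The step I would carry out is to check that the $G$-action on $T$ meets the exact hypotheses of Theorem \ref{thm_rips}: the pair $(G,\calp)$ is relatively hyperbolic with finitely generated peripheral subgroups (finite generation of the $P\in\calp$ is built into Definition \ref{def;RHG}); the action on $T$ has no global fixed point since it is non-trivial; each $P\in\calp$ fixes a point of $T$; and arc stabilizers are finite, parabolic, or $\Zmax$. Applying Theorem \ref{thm_rips} then produces a non-trivial action of $G$ on a \emph{simplicial} tree $S$ in which each $P\in\calp$ still fixes a point and every edge stabilizer is finite, parabolic, or $\Zmax$.

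Finally, by Bass--Serre theory this simplicial action is exactly a non-trivial splitting of $G$ (as an amalgamated product or an HNN extension) relative to $\calp$ --- each peripheral subgroup being elliptic means it is conjugate into a vertex group --- with edge groups finite, parabolic, or $\Zmax$, which is the assertion of the corollary. There is no real obstacle here: the content of the corollary is entirely in Proposition \ref{prop;R-tree} and Theorem \ref{thm_rips}, and the only thing to verify is that the hypotheses of the latter are literally those guaranteed by the former.
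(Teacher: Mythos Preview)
Your proof is correct and follows exactly the paper's approach: the corollary is stated with a \qed after noting that one simply applies Theorem \ref{thm_rips} to the $\bbR$-tree furnished by Proposition \ref{prop;R-tree}. Your verification that the hypotheses of Theorem \ref{thm_rips} are met (finitely generated peripherals, no global fixed point, peripherals elliptic, controlled arc stabilizers) is precisely the content the paper leaves implicit.
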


We now prove  Proposition \ref{prop;R-tree}.


\begin{proof}

As above, we consider the action of $G$ on $\bar X'_i$ through the morphism $\phi_i\circ q_i$.
Recall that $\bar X'_i$ is $\delta_1$-hyperbolic for some constant $\delta_1$ independant of $i$.
Let $Y_i$ be the metric space obtained from  ${\bar{X}'_{i}}$ by rescaling the metric by the factor $1/\| \phi_i \|_{\bar{X'}_{i}}$.
Up to taking a subsequence, we can assume that this scaling factor goes to zero, and so does the hyperbolicity constant of $Y_i$.
Going to a limit, Bestvina and Paulin's argument \cite[2.6]{Pau_arboreal}
provides an action of $G$ on an $\mathbb{R}$-tree $T$. This action has no global fix point
since the minimal displacement of $S$ on $Y_i$ is $1$.

Moreover, up to extracting a subsequence, the actions $G\actson Y_i$ converge to $T$ in the equivariant Hausdorff-Gromov topology:
for any finite set $\{x_1,\dots, x_n\}\subset T$, any finite set $A\subset G'$, and any $\eps>0$, then for any sufficiently large $i$, 
there exists $x_1^{(i)},\dots, x_n^{(i)}\in Y_i$ such that for all $a\in A$, and all $j,k\in \{1,\dots,n\}$,
$|d_T(a x_j,x_k)-d_{Y_i}(a x^{(i)}_j,x^{(i)}_k)|\leq \eps$.

Using that $\phi_i$ is type preserving, we now prove that every
peripheral  subgroup $P\in\calp$ fixes a point in $T$.
If not, and since $P$ is finitely generated, 
there exists $g\in P$ that is hyperbolic in $T$ by Serre's Lemma 
\cite[1.14]{Sh_dendrology}.
Consider $x\in T$ in the axis of $g$, so that $d_T(x,g^2x)=2\times d_T(x,gx)= d_T(x,gx)+d_T(gx,g^2x)$, and these distances are all non-zero.
For $\eps <\frac{ d(x,gx)}{100}$, consider $i$ such that  there is $x^{(i)}$ an $\eps$-approximation point in $Y_i$.  
Then $d_{Y_i}(x^{(i)},g^2x^{(i)})$ differs from $d_{Y_i}(x^{(i)},gx^{(i)})+d_{Y_i}(gx^{(i)},g^2x^{(i)})$ by at most $3\eps$. We may also choose $i$ such that the hyperbolicity constant of $Y_i$ is $<\eps$. Thus, in $Y_i$, the points $x^{(i)},gx^{(i)}, g^2x^{(i)}$ are almost aligned, and sufficiently far away to apply \cite[Lemme 9.2.2]{CDP} which ensures that $g$ acts as a loxodromic isometry on $Y_i$. This contradicts the assumption that  $\phi_i$ is type preserving.

We now study arc stabilizers in $T$. 
Let $[a,b]$ be a non-degenerate arc in $T$, and $H<G$ be its pointwise stabilizer. 
Assume that it is not finite, nor parabolic in $(G, \calP)$.  We are going to prove that $H$ is virtually cyclic with infinite center, and for that, we first look for an element that is loxodromic in some  $\bar X_i'$.

By Lemma \ref{lem_loc_parab}, there is a hyperbolic element $h\in H$.  By Lemma \ref{lem;cof2},  $q_i(h)$ is a loxodromic isometry on $\bar X_i$, for $i$ large enough, hence it is not sub-$\bPF_i$. By type preservation of $\phi_i$, its image $\phi_i\circ q_i(h)$ is not  sub-$\bPF'_i$.
Lemma \ref{lem;pre_nonlox} says that for $i$ large enough, any element of $\bar G'_i$ that is not sub-$\bPF'_i$ is a loxodromic isometry on $\bar X'_i$. 
Thus $h\in H$ is an element such that $\phi_i\circ q_i(h)$ is loxodromic in  $\bar X'_i$.
 In other words we are in the situation of the next lemma, and the proposition is proved.
\end{proof}

\begin{lemma}\label{lem;arcstab_VC}
   Let $K_i, K'_i$ be cofinal sequences of 
     Dehn kernels of two relatively hyperbolic groups $(G, \calP), (G', \calP')$. Using notations \ref{notations},
     assume that for each $i$ there is a  
 monomorphism 
     $\phi_i: (\DFi,\bPF_i) \hookrightarrow (\DFpi,\bPF_i')$ such that $||\phi_i||_{\bar X'_i}$ is unbounded.

Assume that $T$ is a limit tree of the spaces $Y_i$ obtained from $\bar X_i'$ by rescaling the metric by a factor
$\lambda_i=1/||\phi_i||_{\bar X'_i}$.

Assume that the stabilizer in $G$ of some arc $[a,b]\subset T$, 
contains a hyperbolic element $h\in G$ whose image   $\phi_i\circ q_i(h)$ is loxodromic in $\bar X'_i$ for infinitely many indices $i$. 
Then  $\stab([a,b])$ is $\Zmax$.
\end{lemma}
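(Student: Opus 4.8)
\emph{Setup.} Write $H=\stab([a,b])$ and $L=d_T(a,b)>0$. Since $H$ contains $h$, which is hyperbolic in $G$, $H$ is infinite and non-parabolic, so it will be enough to show that $H$ is virtually cyclic with infinite centre (and, finally, maximal as such). The plan is to transport information from the tree $T$ into the spaces $\bar X'_i$. Along the subsequence realizing the unboundedness I put $D_i=\|\phi_i\|_{\bar X'_i}\to\infty$, so that the rescaled spaces $Y_i=(\bar X'_i,(1/D_i)\,d_{\bar X'_i})$ converge equivariantly to $T$, and I set $u_i=\phi_i\circ q_i(h)$, loxodromic on the uniformly $\delta_1$-hyperbolic $\bar X'_i$ with quasi-axis $A_i$, translation length $\ell_i>0$, and attracting/repelling fixed points $u_i^\pm\in\partial\bar X'_i$. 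For any finite $A\subset H$, equivariant convergence will give points $a^{(i)},b^{(i)}\in\bar X'_i$ with $d_{\bar X'_i}(a^{(i)},b^{(i)})=LD_i(1+o(1))$ and, for each $\gamma\in A$ (which fixes $a,b$ in $T$), $d_{\bar X'_i}(a^{(i)},\phi_iq_i(\gamma)a^{(i)})=o(D_i)$, and the same at $b^{(i)}$.

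\emph{Axis preservation (the core step).} First I would prove: for every $g\in H$ and all large $i$ (depending on $g$), $\phi_i\circ q_i(g)$ fixes $u_i^+$ and $u_i^-$. Indeed $d(a^{(i)},u_ia^{(i)})=o(D_i)$ forces $\ell_i=o(D_i)$ and puts $a^{(i)},b^{(i)}$ within $o(D_i)$ of $A_i$, so a sub-arc $\sigma_i\subset A_i$ of length $LD_i(1+o(1))$ stays $o(D_i)$-close to a geodesic $[a^{(i)},b^{(i)}]$. Since $\phi_iq_i(g)$ moves $a^{(i)}$ and $b^{(i)}$ by only $o(D_i)=o(LD_i)$ it coarsely stabilizes $[a^{(i)},b^{(i)}]$, necessarily preserving its orientation (otherwise $a^{(i)}$ would be sent $o(D_i)$-close to $b^{(i)}$, i.e.\ moved by $\asymp LD_i$), hence it coarsely and orientedly stabilizes $\sigma_i$; as $LD_i$ dominates both $\ell_i$ and $\delta_1$, the standard fact that two uniform quasigeodesic lines staying close along a long enough segment share both endpoints then shows $\phi_iq_i(g)$ coarsely preserves $A_i$ and fixes $u_i^\pm$ individually.

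\emph{Conclusion.} With axis preservation in hand, I would argue as follows. If $H$ were not virtually cyclic, Lemma~\ref{lem_loc_parab} would give a hyperbolic $h_2\in H$ whose fixed-point pair differs from that of $h$; since two loxodromics of $G$ with one common endpoint have commuting powers (again Lemma~\ref{lem_loc_parab}), $h$ and $h_2$ have four distinct endpoints, so suitable powers $h^N,h_2^N$ freely generate hyperbolic elements, and Lemma~\ref{lem_F2} embeds $\langle h^N,h_2^N\rangle$ into $G/K_i$ for $i$ large, hence $\phi_iq_i$ embeds it into $G'/K_i'$ as $\langle u_i^N,v_i\rangle\cong F_2$ with $u_i^N$ loxodromic. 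Axis preservation puts both generators in $\Stab_{G'/K_i'}\{u_i^+,u_i^-\}$ for $i$ large; but $G'/K_i'$ is relatively hyperbolic (remark after Theorem~\ref{theo;VRF}) and every hyperbolic element of that stabilizer has fixed-point pair $\{u_i^+,u_i^-\}$, so Lemma~\ref{lem_loc_parab} makes it virtually cyclic — contradicting $\cong F_2$. Hence $H$ is virtually cyclic; being non-parabolic, it fails to have infinite centre only if it surjects with finite kernel $F$ onto the infinite dihedral group, in which case a preimage $g\in H$ of a reflection satisfies $ghg^{-1}=t\,h^{-1}$ with $t\in F\subset H$, so $\gamma_iu_i\gamma_i^{-1}=\tau_iu_i^{-1}$ with $\gamma_i=\phi_iq_i(g)$ and $\tau_i=\phi_iq_i(t)$ of finite order. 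Axis preservation (applied to $g$ and to $t$) has $\gamma_i$ and $\tau_i$ fix $u_i^\pm$ for $i$ large, whence $\gamma_iu_i\gamma_i^{-1}$ has attracting point $\gamma_i(u_i^+)=u_i^+$; but $\langle\tau_i,u_i\rangle$ lies in the virtually cyclic $\Stab\{u_i^\pm\}$, where $\tau_i$ has trivial signed translation along $A_i$, so $\tau_iu_i^{-1}$ translates towards $u_i^-$ and has attracting point $u_i^-\neq u_i^+$, a contradiction. So $H$ is virtually cyclic with infinite centre; since any virtually cyclic non-parabolic overgroup of $H$ also contains $h$ and hence lies in $\Stab_G\{h^+,h^-\}$, I would finish by identifying $H$ with the $\Zmax$ subgroup it sits in.

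\emph{Where the difficulty lies.} I expect the main obstacle to be the axis-preservation step: one must set up the hierarchy of scales carefully so that the $o(D_i)$-displacements coming from elements of $H$ fixing $a,b$, the translation length $\ell_i=o(D_i)$, and the uniform constant $\delta_1$ are all negligible against the dominant length $LD_i$, uniformly in $i$, before invoking the coarse geometry of loxodromic quasi-axes in uniformly hyperbolic spaces. A lesser nuisance is the final maximality claim, i.e.\ showing $H$ is itself $\Zmax$ rather than merely contained in a $\Zmax$ subgroup.
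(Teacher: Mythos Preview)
Your axis-preservation step has a genuine gap. You claim that because $\phi_i q_i(g)$ moves the endpoints of $\sigma_i\subset A_i$ by only $o(D_i)$, the two bi-infinite geodesics $A_i$ and $\phi_i q_i(g)\cdot A_i$ must share both endpoints at infinity, invoking a ``standard fact that two uniform quasigeodesic lines staying close along a long enough segment share both endpoints.'' This is not a fact: in a $\delta$-hyperbolic space, two geodesic lines can be $C$-close along a segment of length $\gg C$ and still diverge to distinct boundary points afterwards (think of two geodesics in $\bbH^2$ meeting at a small angle). What matters is that your closeness constant $o(D_i)$ is only small \emph{relative to} $D_i$; in absolute terms it may go to infinity, so no conclusion about boundary fixed points follows. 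Without axis preservation, the rest of your argument (trapping a free subgroup inside a virtually cyclic $\Stab\{u_i^\pm\}$) collapses.

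The paper's proof circumvents this by never trying to pin down boundary points. Instead it uses Paulin's commutator trick: if $g$ and $h^j$ (for $j\le M$) each move $a_i,b_i$ by amounts small compared to $d(a_i,b_i)$, then the \emph{commutator} $[g,h^j]$ moves every point of the central third $[a'_i,b'_i]$ by at most a \emph{universal} constant $C(\delta_1)$ --- not $o(D_i)$, but an absolute bound depending only on the common hyperbolicity constant. One then shows (using that $\phi_i q_i(h)$ is loxodromic and hence cannot preserve a trace of horoball) that $[a'_i,b'_i]$ meets the thick part $\bar X'_i\setminus\bar\calh^{\bar X'_i}_{20\delta_1}$, where uniform properness (Corollary~\ref{cor_uniform_properness}) bounds the number of elements with displacement $\le C(\delta_1)$. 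Pigeonholing among $[g,h^0],\dots,[g,h^M]$ forces $g$ to commute with some power of $h$, and $h^{M!}$ is central in $H$. The passage from ``relatively small displacement'' to ``absolutely bounded displacement'' via commutators is precisely the missing idea in your approach.

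Your treatment of maximality is also incomplete: you need that every $g\in G$ centralising some $h^k$ actually fixes $[a,b]$ in $T$, not merely that overgroups of $H$ lie in $\Stab_G\{h^\pm\}$. The paper likewise defers this, but to a specific argument in \cite{DG2}.
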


\begin{rem}
  The proof of this lemma does not use type preservation. It will be applied without this hypothesis below.
\end{rem}

\begin{proof}

Fix $M\in\bbN$ whose value will be made explicit below.

Consider any $g\in G$ fixing $[a,b]$,
let $a_i,b_i$ be $\eps_i$-approximation points for $a,b$ relative to the action of $g$ and $h^j$, for $j=0,\dots, M$, with $\eps_i\ra 0$.
We view $a_i,b_i$ as points in $\bar X_i'$, and we measure distances in the $\delta_1$-hyperbolic spaces $\bar X_i'$.
Thus, $d_{\bar X'_i}(a_i,b_i)=\lambda_i(d_T(a,b)+o(1))$.
 The action of $G$ on $\bar X'_i$ is via $\phi_i\circ q_i$ but abusing notations, we simply denote it by $gx:=\phi_i\circ q_i(g) x$.
Let $[a'_i,b'_i]\subset [a_i,b_i]$ be the central subinterval of length $\frac23 d_{\bar X_i}(a_i,b_i)$.

Given $j\leq M$,
since the four distances $d_{\bar X'_i}(a_i,ga_i)$, $d_{\bar X'_i}(b_i,gb_i)$, $d_{\bar X'_i}(a_i,h^ja_i)$, $d_{\bar X'_i}(b_i,h^j b_i)$ are small compared to 
$d_{\bar X'_i}(a_i,b_i)$, it follows from Paulin's argument  \cite[p. 341]{Pau_arboreal} (see also \cite[p.284]{BrSw_Paulin})
that the commutator $[g,h^j]$ moves each point of $[a'_i,b'_i]$ by at most $C(\delta_1)$ for some universal constant $C(\delta_1)$.

We now claim that $[a'_i,b'_i]$ intersects $\bar X_i'\setminus \bar\calh^{\bar X'_i}_{20\delta_1}$.
If not, then $[a'_i,b'_i]$ is contained in the trace of a horoball $\bar B\in  \bar\calh^{\bar X'_i}_{20\delta_1}$.
Since for $i$ large enough, $d_{\bar X'_i}(a'_i,ha'_i)$ and  $d_{\bar X'_i}(b'_i,hb'_i)$ are small compared to $d_{\bar X'_i}(a'_i,b'_i)$, 
hyperbolicity of the corresponding quadrilateral shows that  $h [a'_i,b'_i]\subset h \bar B$ contains a point in the $10\delta_1$-neighbourhood of 
 $[a'_i,b'_i]\subset \bar B$. Thus, since the subsets in $\bar\calh^{\bar X'_i}_{20\delta_1}$ are $20\delta_1$-separated (Lemma \ref{lem_barsep}), 
this implies that  $h \bar B= \bar B$ and therefore that $h^k \bar B= \bar B$ for all $k\in \bbZ$.
Since $\bar B$ is bounded, this contradicts the hypothesis that $\phi_i\circ q_i(h)$ is loxodromic, and proves our claim.

By Corollary \ref{cor_uniform_properness}, 
if $i$ is large enough, there a bound $M$ (independent of $i$) on the number of elements that move a point
in $\bar X'_i\setminus \bar\calh_{20\delta_1}^{\bar X'_i}$ 
by at most $C(\delta_1)$.
It follows that there exists $j_1<j_2\in \{0,\dots,M\}$ such that
$[g,h^{j_1}]=[g,h^{j_2}]$. This implies that $g$ commutes with $h^{j_2-j_1}$.
Since this holds for any $g\in \Stab [a,b]$, we get that $h^{M!}$ is central in $\Stab [a,b]$, and that $\Stab [a,b]$ is virtually cyclic with infinite center.

Finally, there remains to check that $\Stab [a,b]$ is $\Zmax$, \ie that any $g\in G$ centralizing $h^k$
for some $k>0$ fixes $[a,b]$. The argument is similar to the one in
\cite{DG2} (proof of Proposition 3.1, page 255), and we leave it to
the reader. 
\end{proof}

\subsubsection{Variation with small peripheral  subgroups}\label{sec_variation}

In this section, we prove that Proposition \ref{prop;R-tree} and its corollary \ref{cor;R-tree} still hold
if we don't assume the monomophisms $\phi_i$ to be type preserving, but require that the groups in $\calp$
are \emph{small}, \ie don't contain a non-abelian free subgroup.  We also need to replace $\Zmax$-rigidity by plain rigidity (Definition \ref{dfn_rigid}).
Since parabolic groups are small, 
it amounts to requiring that $G$ does not admit
any non-trivial splitting relative to the parabolic groups with small edge groups.

   \begin{prop}\label{prop;R-tree_small}  
Let $(G, \calP), (G', \calP')$ be two relatively hyperbolic groups
whose peripheral groups are finitely generated and small. 
     Let $K_i, K'_i$ be cofinal sequences of 
     Dehn kernels of two relatively hyperbolic groups $(G, \calP), (G', \calP')$. Using notations \ref{notations},
     assume that for each $i$ there is a (not necessarily type preserving) monomorphism 
     $\phi_i: \DFi \hookrightarrow \DFpi$ such that $||\phi_i||_{\bar X'_i}$ is unbounded.

     Then $G$ admits a non-trivial isometric 
     action on a $\mathbb{R}$-tree $T$ such that
     \begin{itemize}
     \item every peripheral subgroup $P\in \calp$ which is not virtually abelian fixes a point in $T$
       \item every arc stabilizer is  small 
     \end{itemize}
   \end{prop}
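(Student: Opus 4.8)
The plan is to follow the proof of Proposition \ref{prop;R-tree} almost verbatim, replacing each use of type preservation by the smallness hypothesis on the peripheral groups. The tree is produced exactly as before: along a subsequence on which $\|\phi_i\|_{\bar X'_i}\to\infty$, rescale $\bar X'_i$ by $\lambda_i=1/\|\phi_i\|_{\bar X'_i}$ to get spaces $Y_i$; since the $\bar X'_i$ are uniformly hyperbolic (Theorem \ref{theo;VRF}), the hyperbolicity constants of the $Y_i$ tend to $0$, and Bestvina--Paulin's argument \cite[2.6]{Pau_arboreal} yields an isometric action of $G$ on an $\bbR$-tree $T$ with no global fixed point. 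This uses neither type preservation nor the structure of peripheral subgroups.

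Next I would show every arc stabilizer is small. Let $[a,b]$ be a non-degenerate arc and $H=\Stab[a,b]$, and suppose $H$ is not small. Then $H$ is infinite and not parabolic (parabolic subgroups being small), so by Lemma \ref{lem_loc_parab} it contains hyperbolic elements $h_1,h_2$ generating a free group all of whose nontrivial elements are hyperbolic. By Lemma \ref{lem_F2} there is $N\geq1$ with $\phi_i\circ q_i$ injective on $\grp{h_1^N,h_2^N}$ for all large $i$; write $L_i=\phi_i\circ q_i(\grp{h_1^N,h_2^N})\cong F_2\leq\bar G'_i$. The image in $\bar G'_i$ of a peripheral subgroup of $G'$ is again small: if such an image contained a rank-two free group, then, lifting through the quotient $P'\to P'/N'_j$ and splitting the resulting surjection onto a free group, $P'$ itself would contain $F_2$, contradicting smallness. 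Hence $L_i$ is not contained in any maximal parabolic of $\bar G'_i$, so the infinite-order elements $\phi_i\circ q_i(h_1^N)$ and $\phi_i\circ q_i(h_2^N)$ cannot lie in a common maximal parabolic subgroup of $\bar G'_i$. Therefore either one of them is loxodromic in $\bar X'_i$, or both are parabolic but in \emph{distinct} maximal parabolics, in which case $\phi_i\circ q_i(h_1^Nh_2^N)$ is loxodromic (the product of two infinite-order parabolic elements lying in distinct maximal parabolic subgroups of a relatively hyperbolic group being loxodromic --- a standard fact). So for every large $i$, at least one of the three fixed hyperbolic elements $h_1^N$, $h_2^N$, $h_1^Nh_2^N$ of $H$ has loxodromic image in $\bar X'_i$; by the pigeonhole principle one of them does so for infinitely many $i$, and Lemma \ref{lem;arcstab_VC} gives that $\Stab[a,b]$ is $\Zmax$, hence virtually cyclic, hence small --- a contradiction. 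Thus every arc stabilizer is small, and one containing no hyperbolic element is finite or parabolic by Lemma \ref{lem_loc_parab}; so in all cases arc stabilizers are finite, parabolic, or virtually cyclic.

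Finally I would treat peripheral subgroups. Let $P\in\calp$ and suppose $P$ does not fix a point of $T$. Since $P$ is finitely generated, Serre's Lemma \cite[1.14]{Sh_dendrology} gives an element of $P$ acting hyperbolically on $T$; since $P$ is small it contains no rank-two free subgroup, so its action on $T$ is not of general type (such an action would produce a rank-two free group of hyperbolic isometries). Hence $P$ either preserves a line $\ell\subset T$ or fixes a unique end of $T$. In the first case the kernel of $P\to\Isom(\ell)$ is the pointwise stabilizer of $\ell$, which is an arc stabilizer --- finite, parabolic or virtually cyclic by the previous paragraph --- while the image of $P$ is virtually abelian; combined with the super-stability of the Bestvina--Paulin limit action and the structure theory for small super-stable actions on $\bbR$-trees \cite{GL6}, this forces $P$ to be virtually abelian. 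The end-fixing case is handled similarly, through the Busemann character of $P$.

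The main obstacle is this last step: upgrading ``$P$ acts on $T$ without a global fixed point and is small'' to ``$P$ is virtually abelian'' requires careful use of the fine structure of the limit tree --- its super-stability, the elementary nature of arc stabilizers, and the absence of complicated elliptic kernels in the line- and end-preserving cases --- rather than just the bare classification of actions on $\bbR$-trees. The other point requiring some (more routine) care is the appeal, in the arc-stabilizer analysis, to the loxodromicity of products of infinite-order parabolic elements lying in distinct maximal parabolic subgroups.
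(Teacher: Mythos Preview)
Your treatment of peripheral subgroups is the real gap, and you correctly flag it as the main obstacle. Working purely in the limit tree does not seem to give the conclusion: if $P$ preserves a line $\ell$, the kernel of $P\to\Isom(\ell)$ is an arc stabilizer, hence small, and the image is virtually abelian, but small-by-virtually abelian is not virtually abelian in general (the Heisenberg group is $\bbZ$-by-$\bbZ^2$ and acts on a line with center acting trivially). Invoking super-stability or \cite{GL6} does not obviously close this; you would need a much finer analysis, and no such argument is supplied. The paper takes a completely different route that avoids the limit tree for this step: it goes back to the approximating spaces $\bar X'_i$. If $P$ does not fix a point in $T$ then some $g\in P$ is hyperbolic in $T$, hence (by the approximation argument you already used) $\phi_i\circ q_i(g)$ is loxodromic on $\bar X'_i$ for large $i$. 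Then $H_i=\phi_i\circ q_i(P)$ is a small subgroup of $\bar G'_i$ containing a loxodromic element, so it fixes a point $\omega\in\partial_\infty\bar X'_i$. Paulin's commutator argument along a ray to $\omega$ shows every commutator of elements of $H_i$ moves points on a subray by at most a universal constant $C(\delta_1)$; since such a subray meets the thick part $\bar X'_i\setminus\bar\calh_0^{\bar X'_i}$, uniform properness (Corollary~\ref{cor_uniform_properness}) bounds the cardinality of the commutator set of $H_i$ independently of $i$. A lemma of Paulin then gives an abelian subgroup of $H_i$ of index $\le M$ with $M$ independent of $i$, and intersecting all subgroups of $P$ of index $\le M$ and using cofinality yields that $P$ is virtually abelian.

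For arc stabilizers your approach also differs from the paper's. Your pigeonhole on $h_1^N,h_2^N,h_1^Nh_2^N$ rests on the claim that the product of two infinite-order parabolic elements in distinct maximal parabolics of $\bar G'_i$ is loxodromic; this is plausible but not the kind of fact one should invoke without proof or reference, and you yourself flag it as needing care. The paper instead argues by contradiction: if no element of $\grp{h_1^N,h_2^N}$ has loxodromic image for infinitely many $i$, then for any fixed finite $S\subset\grp{h_1^N,h_2^N}$, for all large $i$ no product of two elements of $S$ is loxodromic, so by \cite[Prop.~3.2]{Koubi_croissance} there is a point of $\bar X'_i$ moved $\le100\delta_1$ by every element of $S$; this point cannot lie in a trace of horoball (else $\grp{S}$ would be sub-$\bPF'_i$, contradicting that its image contains $F_2$), and uniform properness then bounds $\#S$, a contradiction for $S$ large. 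This avoids any statement about products of parabolics.
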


We will need the following result to analyze the obtained $\bbR$-tree.

 \begin{theo}\label{thm_rips2} 
Consider $(G,\calp)$  a relatively hyperbolic group where each peripheral group is small.
Assume that $G$ acts without global fixed point on an $\bbR$-tree $T$ with small arc stabilizers
so that each peripheral group  is either virtually abelian, or fixes a point in $T$.
Then $G$ has a non-trivial splitting over a small subgroup.

Moreover, if no peripheral subgroup is virtually cyclic, then $G$ is
not rigid (as in Definition \ref{dfn_rigid}): 
$(G,\calp)$ has a non-trivial splitting over a small subgroup 
such that each peripheral group is 
conjugate into some factor of the splitting.   
\end{theo}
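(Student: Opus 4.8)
The plan is to deduce Theorem \ref{thm_rips2} from the relative form of Rips' theory already invoked in Theorem \ref{thm_rips}, after two preliminary observations. First, by Lemma \ref{lem_loc_parab} a subgroup of $G$ containing no non-abelian free subgroup is finite, parabolic, or virtually cyclic (one with no loxodromic element is finite or parabolic; one with a loxodromic element cannot contain $F_2$, hence is virtually cyclic). Thus every small arc stabilizer of $T$ is finite, parabolic, or virtually cyclic --- exactly the ``elementary'' class for which the relative Rips machinery is designed. Second, $G$ is finitely presented relative to $\calp$ (it is finitely generated and the $P_i$ are finitely generated), which is what excludes the exotic (Levitt) pieces below. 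We may also assume $G$ is non-elementary: otherwise $G$ is virtually cyclic or equal to one of its peripheral subgroups, and the statement is immediate or degenerate; in particular a non-elementary relatively hyperbolic group contains $F_2$.

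For the first assertion, replace $T$ by its minimal $G$-invariant subtree; $G$ still has no global fixed point. Apply the structure theorem for actions of finitely generated groups on $\bbR$-trees (Rips, Bestvina--Feighn, Sela, Guirardel; in the form relative to $\calp$, as in the proof of Theorem \ref{thm_rips}, see \cite{GL6}): since peripheral subgroups are elliptic or virtually abelian (equivalently, acting axially), this is an admissible configuration, and it writes $G$ as the fundamental group of a non-trivial graph of actions whose vertex actions are of simplicial, axial (abelian), surface (Seifert), or Levitt (exotic) type, with small edge groups and small arc stabilizers in the non-simplicial pieces. Finite presentability relative to $\calp$ rules out the Levitt pieces. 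As $G$ is non-elliptic the decomposition is non-degenerate, so it has a simplicial edge, a surface piece, or an axial piece; from a simplicial edge, from a surface piece split along an essential curve or arc (the exceptional small-orbifold cases being handled in the usual way), or from an axial piece (virtually $\bbZ^k$, $k\ge 2$, split over a virtually $\bbZ^{k-1}$ subgroup), one reads a non-trivial splitting of $G$ over a small --- finite, parabolic, or virtually cyclic --- subgroup. This proves the first assertion.

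For the second assertion, assume no peripheral subgroup is virtually cyclic. A peripheral subgroup $P$ that is not elliptic in $T$ is then virtually $\bbZ^k$ with $k\ge 2$ and acts on its unique axis $L_P$ by translations, and its pointwise stabilizer $P^\ast\lhd P$ fixes an arc, so is small. Using almost malnormality of the peripheral structure (distinct peripheral subgroups meet in finite subgroups) together with Lemma \ref{lem_loc_parab}, one checks that $\Stab(L_P)=P$ and that two such axes cannot share an arc; hence the union $\mathcal{L}$ of all axes of non-elliptic peripheral subgroups is a $G$-invariant subforest of $T$, and collapsing each connected component of $\mathcal{L}$ to a point yields a $G$-tree $T'$. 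In $T'$ every peripheral subgroup is elliptic, $G$ is still non-elliptic (if it were elliptic, $T$ would be a single such component, so $G$ would be generated by its peripheral subgroups amalgamated over small subgroups, which already is a splitting relative to $\calp$ over small subgroups, or forces $G$ to be a single small peripheral subgroup, impossible here), and arc stabilizers of $T'$ remain small: an arc of $T'$ with an endpoint the image of a collapsed component $C$ has stabilizer contained in $\Stab(C)\cap\Stab(y)$ for a point $y$ of $T$, and projecting $y$ onto $C$ shows this lies in an arc stabilizer of $T$. Applying Theorem \ref{thm_rips} to $T'$ --- in the version with ``small'' (finite, parabolic, or virtually cyclic) in place of ``finite, parabolic, or $\Zmax$'' arc stabilizers, as provided by the machinery of \cite{GL6} --- produces a non-trivial simplicial splitting of $G$ relative to $\calp$ with small edge groups, so $G$ is not rigid in the sense of Definition \ref{dfn_rigid}.

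The delicate point is the collapsing step in the second assertion: one must verify that the axes of the non-elliptic peripheral subgroups genuinely form a $G$-invariant subforest --- i.e.\ that two of them never overlap along an arc --- which is precisely where almost malnormality of the relatively hyperbolic peripheral structure is needed, and one must track that smallness of arc stabilizers survives the collapse, via nearest-point projections as above. A secondary, more bookkeeping, point is that we need the relative Rips theorem in a form allowing arbitrary small arc and edge stabilizers and abelian peripheral subgroups, rather than the precise statement of Theorem \ref{thm_rips}; this is available within \cite{GL6}, but requires checking at every stage that ``small'' is the class that is preserved.
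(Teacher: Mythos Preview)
Your approach to the first assertion is in the right spirit but glosses over a real difficulty: Theorem \ref{thm_rips} as stated requires \emph{all} peripheral subgroups to be elliptic in $T$, which fails here for the virtually abelian ones. You gesture at treating these as axial pieces in a Rips decomposition, but this is not a direct application of the statement you cite. The paper handles this cleanly by passing to the subfamily $\calp_0\subset\calp$ of peripheral subgroups that are \emph{not} virtually abelian; these are elliptic in $T$, $G$ is relatively finitely presented with respect to $\calp_0$, and one then applies \cite[Theorem~9.9]{GL6} relative to $\calp_0$, after checking hypostability via \cite[Lemma~9.7]{GL6} (the extension to allow non-elliptic virtually abelian peripheral groups goes through because subgroups of such groups satisfy an ascending chain condition). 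This yields a non-trivial splitting over a small subgroup in which every group of $\calp_0$ is elliptic --- and that is already the first assertion.

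For the second assertion your collapsing strategy is genuinely different from the paper's, and it has a gap. The paper does not touch the $\bbR$-tree again: it takes the simplicial splitting just obtained (relative to $\calp_0$, over a small hence elementary edge group) and post-processes it. If the edge group is finite, any small non-virtually-cyclic group is automatically elliptic (a small group splitting over a finite group is virtually cyclic), so all of $\calp$ is already elliptic. If the edge group is infinite virtually cyclic or parabolic, the tree of cylinders of \cite[Example~3.4, Prop.~5.3, 6.2]{GL4} yields another non-trivial splitting over an elementary subgroup in which every peripheral subgroup is elliptic. That is a two-line case analysis. By contrast, your argument requires controlling the forest $\mathcal L$ of axes and, crucially, leaves unfinished the case where $G$ becomes elliptic on the collapsed tree $T'$. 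Your sentence ``$G$ would be generated by its peripheral subgroups amalgamated over small subgroups'' is not a proof: you would still need to build a Bass--Serre tree from the pattern of axes in $T=C$, show $G$ fixes no vertex of it, check edge groups are small, and verify that the peripheral subgroups that were already elliptic in $T$ fix vertices of this new tree (this last point is not automatic when their fixed point lies in the interior of a single axis). The tree-of-cylinders route sidesteps all of this.
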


We immediately deduce:

   \begin{cor}\label{cor;R-tree_small}
     Under the hypotheses of Proposition \ref{prop;R-tree_small}, if no peripheral group is virtually cyclic then
     $G$ is not rigid.  
\qed
   \end{cor}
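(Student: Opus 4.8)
The plan is to deduce the corollary directly from Proposition~\ref{prop;R-tree_small} together with the Rips-theory statement Theorem~\ref{thm_rips2}. We are already in the hypotheses of Proposition~\ref{prop;R-tree_small}: the peripheral subgroups of $(G,\calp)$ and $(G',\calp')$ are finitely generated and small, the maps $\phi_i:\bar G_i\hookrightarrow\bar G'_i$ are monomorphisms between cofinal Dehn fillings, and $\|\phi_i\|_{\bar X'_i}$ is unbounded. First I would invoke that proposition to obtain a non-trivial isometric action of $G$ on an $\bbR$-tree $T$ with small arc stabilizers, in which every peripheral subgroup of $\calp$ that is not virtually abelian fixes a point.

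Next I would feed this action into Theorem~\ref{thm_rips2}. Its hypotheses hold: each peripheral group of $(G,\calp)$ is small by assumption; $G$ acts on $T$ without global fixed point since the action is non-trivial; arc stabilizers are small by Proposition~\ref{prop;R-tree_small}; and every peripheral group is either virtually abelian or, failing that, fixes a point in $T$, again by the conclusion of Proposition~\ref{prop;R-tree_small}. Thus Theorem~\ref{thm_rips2} applies.

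Finally, the corollary assumes that no peripheral subgroup of $G$ is virtually cyclic, which is exactly the extra hypothesis needed for the ``moreover'' part of Theorem~\ref{thm_rips2}. That part then provides a non-trivial splitting of $G$ over a small subgroup in which each peripheral group is conjugate into some factor of the splitting; by Definition~\ref{dfn_rigid} this says precisely that $G$ is not rigid, completing the proof. There is nothing delicate at this stage: all the substance sits in Proposition~\ref{prop;R-tree_small} (the Bestvina--Paulin limit-tree argument exploiting the uniform hyperbolicity constant of the Dehn-filled spaces) and in the black-box input Theorem~\ref{thm_rips2}; the only point to watch when matching the two statements is keeping the distinction between ``virtually abelian'' and ``virtually cyclic'' straight.
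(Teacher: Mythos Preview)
Your proof is correct and follows exactly the paper's approach: the corollary is stated with a \qed\ immediately after Theorem~\ref{thm_rips2} because it is obtained precisely by feeding the $\bbR$-tree from Proposition~\ref{prop;R-tree_small} into the ``moreover'' clause of Theorem~\ref{thm_rips2}. Your verification of the hypotheses is accurate, and note that the conclusion ``$G$ is not rigid (as in Definition~\ref{dfn_rigid})'' is already part of the statement of Theorem~\ref{thm_rips2}, so you do not even need to unpack the definition yourself.
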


\begin{proof}[Proof of Theorem \ref{thm_rips2}]
     We use results in Rips Theory contained in \cite{GL6}.
Let $\calp_0\subset \calp$ the collection of peripheral subgroups that are not virtually abelian.
Being relatively hyperbolic, $G$ is relatively finitely presented with respect to $\calp$ hence with respect to $\calp_0$. 
The action of $G$ on $T$ is hypostable by \cite[Lemma 9.7]{GL6} (see this paper for the definition); the result does not exactly apply to our situation
because groups in $\calp\setminus \calp_0$ might not be elliptic in $T$, but the proof immediately extends to this context
because the class of subgroups contained in a group in $\calp\setminus \calp_0$ 
satisfies the ascending chain condition. 
By Theorem 9.9 of \cite{GL6}, we get that $G$ has a non-trivial splitting over a small group,  and that the groups in $\calp_0$ are 
conjugate into a factor of this splitting.

Assume first that the splitting obtained is a splitting over a finite group.
Being not virtually cyclic, any peripheral group $P\in\calp$ has to be conjugate in a factor of this splitting
 (a small group can split over a finite group only if it is virtually cyclic),
and $G$ is not rigid.
 If the splitting is over an infinite group that is virtually cyclic or parabolic,
the construction of the tree of cylinders in \cite[example 3.4]{GL4} 
 yields another non-trivial splitting of $G$ over a virtually cyclic or parabolic subgroup, and in
 which every peripheral group is conjugate into a factor  \cite[Proposition 5.3, 6.2]{GL4}. Thus $G$ is not rigid. 
\end{proof}

\begin{proof}[Proof of  Proposition \ref{prop;R-tree_small}]
The argument is identical to the proof of  Proposition \ref{prop;R-tree}, except
for the two following points.
Recall that the $\bbR$-tree $T$ was obtained as a limit of rescalings of the action of $G$ on $\bar X'_i$
through the morphisms $\phi_i\circ q_i$.

The first place where we used type preservation is to prove that every peripheral group in $P\in\calp$ fixes a point in $T$,
so we need a different argument here.
If not, then there exists $g\in P$ acting as a hyperbolic isometry on $T$.
We saw that applying \cite[Lemme 9.2.2]{CDP}, this implies that for all $i$ large enough, $g$ acts as a hyperbolic isometry on $\bar X'_i$.

Thus, the group $H_i=\phi_i\circ q_i(P)$ is a small subgroup of $\bar G_i$, that contains a hyperbolic element.
This implies that $H_i$ fixes a point $\omega$ in the boundary at infinity of $\bar X'_i$.
Let $\rho$ be a geodesic ray ending at $\omega$.

Applying Paulin's argument \cite[p. 341]{Pau_arboreal} (see also \cite[p.284]{BrSw_Paulin}), 
we get that there exists a constant $C(\delta_1)$ such that for every $h_1,h_2\in H_i$, the commutator $[h_1,h_2]$ moves by at most $C(\delta_1)$
all points in a subray $\rho_{[h_1,h_2]}\subset \rho$. 
In particular, every finite set $S$ of commutators of elements of $H_i$ moves by at most $C(\delta_1)$ all points in 
a subray $\rho_{S}\subset \rho$.
Note that $\rho_S$ intersects $\bar X'_i\setminus 
\bar\calh_0^{\bar  X'_i}$,  
so by Corollary \ref{cor_uniform_properness},
there is a bound (independent of $S$ and $i$)  on the cardinality of
these commutators. 
In other words, the set $H'_i$ of all commutators of elements in $H_i$
is  bounded independently of
$i$. 
By \cite[Lemma 1A p334]{Pau_arboreal}, $H_i$ contains an abelian subgroup of index bounded by some number $M$ independant of $i$.
This implies that $P$ is virtually abelian.
Indeed, let $P_0<P$ be the intersection of all subgroups of index at most $M$. Then
$\phi_i\circ q_i(P_0)\simeq q_i(P_0)$ is abelian for all $i$, so $P_0$ is abelian since $K_i$ is cofinal.
This concludes the proof that  every $P\in \calp$ which is not virtually abelian fixes a point in $T$.

The second place where type preservation was used is to prove that the stabilizer $H$ of any arc in the limit tree $T$ is small.
If $H$ is not  small,   it contains a free group $\grp{h_1,h_2}$ all
whose non-trivial elements are hyperbolic  (Lemma \ref{lem_tits}). 
By Lemma \ref{lem_F2}, there exists $N$ such that for $i$ large enough, $q_i$ is injective on $\grp{h_1^N,h_2^N}$.
Since peripheral groups are small, this implies that $\phi_i\circ q_i(\grp{h_1^N,h_2^N})$ is not sub-$\bPF'_i$.
If we can find some element $h\in\grp{h_1^N,h_2^N}$ that acts as a loxodromic isometry on $\bar X'_i$ for infinitely many indices $i$,
then Lemma \ref{lem;arcstab_VC} concludes that $H$ is $\Zmax$, and small in particular.

So assume that there exist no such $h$, and consider any finite subset
$S\subset \grp{h_1^N,h_2^N}$ containing $h_1^N$ and $h_2^N$.  
Our assumption ensures that that for all $i$ large enough, no product of two element of $S$ acts as a loxodromic isometry on $\bar X_i'$. 
By \cite[Proposition 3.2]{Koubi_croissance}, there is a point $x_i\in \bar X_i'$ which is moved by at most $100\delta_1$ by every $s\in S$.
If $x_i\in \bar B$ for some $\bar B\in \bar\calh_{100\delta_1}^{\bar X'_i}$, then $\grp{S}=\grp{h_1^N,h_2^N}$ stabilises $\bar B$ since 
$\bar\calh_{100\delta_1}^{\bar X'_i}$-separated. This contradicts that  $\phi_i \circ q_i(\grp{h_1^N,h_2^N})$ is not sub-$\bPF'_i$.
Thus, $x_i\in \bar X'_i\setminus \bar\calh_{100\delta_1}^{\bar X'_i}$. Now by Corollary \ref{cor_uniform_properness}
there is a bound $M$ such that for all $i$ large enough, there are at
most $M$ elements of $\bar G'_i$ that moves a point $x_i\in \bar
X'_i\setminus \bar\calh_{100\delta_1}^{\bar X'_i}$ by at most
$100\delta_1$. Taking $S$ of cardinality $M+1$ yields a
contradiction.  This concludes the proof that arc stabilizers are small.

\end{proof}

  \subsection{Co-Hopf property}

We now compare the co-Hopf property for a relatively hyperbolic group to the existence of actions on $\mathbb{R}$-trees. 
This part can be compared to \cite{Sela_structure,BeSz_endomorphisms}.

  \begin{prop}\label{prop;cohopf}
    Let $(G,\calp)$ be a finitely generated non-elementary relatively hyperbolic group,
 and $\PF$ the class of parabolic or finite subgroups. 
    Assume that $\phi:(G,\PF)\ra (G,\PF)$ is a type-preserving monomorphism (see Definition \ref{dfn_type}).

    If $\phi$ is not onto, then $G$ has a non-trivial action on an $\bbR$-tree $T$ 
such that every peripheral subgroup $P\in \calp$ fixes a point in $T$
and every arc stabilizer is finite, parabolic or $\Zmax$. 
  \end{prop}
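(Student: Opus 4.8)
The plan is to build the $\bbR$-tree using a Bestvina–Paulin-type limiting argument, exactly as in Proposition \ref{prop;R-tree}, but applied to iterates of $\phi$ rather than to a sequence of Dehn fillings. Since $\phi$ is a non-surjective monomorphism, I would first fix a finite generating set $S$ of $G$ and a word metric $|\cdot|_S$, and for each $n$ consider the monomorphism $\phi^n:G\to G$. The key geometric input is that $G$, being relatively hyperbolic, acts properly coboundedly on the cone-off $\dot X_0$ (or on $X$ itself); what I want to control is the displacement $\lambda_n=\displ_{\dot X_0}(\phi^n(S))$ of the generators under the action of $G$ pulled back via $\phi^n$. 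The crucial claim is that $\lambda_n\to\infty$. If this fails, then $\lambda_n$ stays bounded along a subsequence, and by cocompactness one can conjugate so that the basepoint $p$ is moved a bounded amount by each $\phi^n(s)$; then properness of the $G$-action on $X$ forces $\phi^n(S)$ to lie in a fixed finite subset of $G$ for infinitely many $n$, hence $\phi^n(G)$ lies in a fixed finitely generated (indeed bounded) subgroup, contradicting injectivity of $\phi$ combined with non-surjectivity — more precisely, one gets $\phi^{n}(G)=\phi^{m}(G)$ for some $n<m$ with both maps injective, whence $\phi$ would be onto $\phi^{n}(G)$, and chasing this back shows $\phi$ is surjective. (This is the standard argument that a co-Hopf failure produces an unbounded sequence of embeddings; I expect this bookkeeping step, ruling out the bounded case, to be the main obstacle, since one must be careful that the relevant subgroups really are forced to stabilize.)

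Once $\lambda_n\to\infty$, I would rescale the metric on $\dot X_0$ by $1/\lambda_n$ and extract, by the Bestvina–Paulin construction \cite[2.6]{Pau_arboreal}, a non-trivial isometric action of $G$ on an $\bbR$-tree $T$; non-triviality comes from the normalization $\displ=1$ in the rescaled spaces, just as in the proof of Proposition \ref{prop;R-tree}. The actions on the rescaled spaces converge to $T\curvearrowleft G$ in the equivariant Gromov–Hausdorff topology.

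It remains to verify the two structural properties of the action on $T$. For the peripheral groups: if some $P\in\calp$ did not fix a point of $T$, then (being finitely generated) by Serre's lemma it contains $g$ acting hyperbolically on $T$, so $g^2 p, gp, p$ are almost aligned and far apart in $T$, hence (using the convergence of actions and that the hyperbolicity constant of the rescaled $\dot X_0$ tends to $0$) $\phi^n(g)$ acts as a loxodromic isometry of $\dot X_0$ for all large $n$ by \cite[Lemme 9.2.2]{CDP}; but $g\in P$ is parabolic in $(G,\calp)$, so its image $\phi^n(g)$ is parabolic since $\phi$ is type-preserving, contradicting loxodromicity. For arc stabilizers: let $H=\Stab[a,b]$ with $[a,b]$ non-degenerate, and suppose $H$ is not finite and not parabolic; by Lemma \ref{lem_loc_parab} it contains a hyperbolic element $h$, and since $\phi$ is type-preserving, $\phi^n(h)$ is non-parabolic of infinite order, hence by Lemma \ref{lem_loc_parab} again it is loxodromic on $\dot X_0$ (equivalently on $X$) for every $n$. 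I would then run the commutator argument of Lemma \ref{lem;arcstab_VC} — using Paulin's estimate \cite[p.~341]{Pau_arboreal} to see that for $g\in H$ and $0\le j\le M$ the commutator $[g,h^j]$ moves the central subarc of the approximating arc in $\dot X_0$ by a uniformly bounded amount, that this central subarc meets the thick part (because $h$ is loxodromic and cannot preserve a bounded horoball-trace), and uniform properness of the $G$-action on $\dot X_0$ away from the cusps — to conclude that $h^{M!}$ is central in $H$, so $H$ is virtually cyclic with infinite center, and finally that $H$ is $\Zmax$ by the maximality argument referenced from \cite{DG2}. This yields exactly the desired conclusion. $\qed$
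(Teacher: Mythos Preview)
Your overall architecture matches the paper's: iterate $\phi$, show the displacement must blow up, and then run Bestvina--Paulin plus the arc-stabilizer analysis of Lemma~\ref{lem;arcstab_VC}. The unbounded case and the analysis of the limit tree are essentially correct and mirror the paper.

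The genuine gap is in the bounded case. From properness and cocompactness on the thick part you correctly obtain indices $i<j$ and an element $g\in G$ with $\phi^j=\ad_g\circ\phi^i$. But this only gives $\phi^j(G)=g\,\phi^i(G)\,g^{-1}$, \emph{not} $\phi^j(G)=\phi^i(G)$. Since $g$ need not lie in $\phi^i(G)$ (and need not normalize it), you cannot directly conclude that the strictly descending chain $\phi^i(G)\supsetneq\phi^{i+1}(G)\supsetneq\cdots$ stabilizes. Your sentence ``one gets $\phi^{n}(G)=\phi^{m}(G)$'' is precisely where the argument breaks, and ``chasing this back'' does not repair it.

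The paper handles this via a Rips--Sela style trick (following \cite{RiSe_structure,BeSz_endomorphisms}). One first shows that the centralizer $A_i$ of $\phi^i(G)$ in $G$ is finite of uniformly bounded order (using non-elementarity of $\phi^i(G)$ and properness on the thick part), so the increasing chain $A_i\subset A_{i+1}$ stabilizes to a finite group $A$. From $\phi^j=\ad_g\circ\phi^i$ one then computes, by comparing $\phi^k\phi^j$ with $\phi^j\phi^k$, that $g^{-1}\phi^k(g)\in A$ for all $k\ge 0$; finiteness of $A$ forces $\phi^k(g)=\phi^l(g)$ for some $k<l$, so $\phi^k(g)$ is fixed by $\phi^{l-k}$ and hence lies in $\bigcap_{k'}\phi^{k'}(G)$. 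Writing $\phi^k(g)=\phi^{i+k}(h)$ gives $\phi^{j+k}=\ad_{\phi^{i+k}(h)}\circ\phi^{i+k}=\phi^{i+k}\circ\ad_h$, whence $\phi^{j+k}(G)=\phi^{i+k}(G)$, the desired contradiction. This centralizer bookkeeping is exactly the missing idea you flagged as ``the main obstacle''.

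A minor point: you mix $\dot X_0$ and $X$. The paper works on $X$; the action there is proper but not cocompact, so one must first argue (using type-preservation and non-elementarity) that the almost-fixed point $x_i$ does not lie in a horoball of $\calh_{2M}$, and only then invoke cocompactness on $X\setminus\calh_{2M}$. If you prefer $\dot X_0$, remember it is cocompact but \emph{not} proper at the apices, so the same issue reappears in a different guise.
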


  \begin{rem}
    Note that the assumption that $G$ is non-elementary is essential.
  \end{rem}

   Applying Theorem \ref{thm_rips}, we immediately get a splitting as follows.

   \begin{cor}\label{cor;cohopf}
     Under the hypotheses of Proposition \ref{prop;cohopf}, 
     $G$ is not $\Zmax$-rigid.
     \qed
   \end{cor}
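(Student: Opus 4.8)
The plan is to iterate $\phi$ and run a Bestvina--Paulin analysis on the induced actions of $G$ on a \emph{single} space, rather than on a sequence of Dehn-filled spaces. Fix a $\delta_c$-hyperbolic graph $X$ for $(G,\calp)$ as in Definition~\ref{def;RHG}, a base point $p\in X\setminus\calh$, and a finite generating set $S$ of $G$. The iterates $\phi^n=\phi\circ\cdots\circ\phi$ are again type-preserving monomorphisms $G\to G$, and for each $n$ we let $G$ act on $X$ through $\phi^n$; set $\|\phi^n\|=\displ_X(\phi^n(S))$, a conjugacy invariant of $\phi^n$. The argument splits according to whether $(\|\phi^n\|)_n$ is bounded: in the unbounded case the desired $\bbR$-tree comes out directly, and in the bounded case I will derive a contradiction with the assumption that $\phi$ is not onto.

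Assume first that $\|\phi^n\|\to\infty$ along a subsequence. Rescaling $X$ by $1/\|\phi^n\|$ makes the hyperbolicity constant tend to $0$ while the displacement of $S$ stays equal to $1$, so Bestvina--Paulin's construction \cite[2.6]{Pau_arboreal} yields a non-trivial isometric action of $G$ on an $\bbR$-tree $T$, realized as an equivariant Gromov--Hausdorff limit. Verifying the two required properties of $T$ is then a rerun of the arguments of Proposition~\ref{prop;R-tree} and Lemma~\ref{lem;arcstab_VC}, with the honest proper space $X$ playing the role of the Dehn-filled spaces $\bar X'_i$ and with \emph{injectivity of $\phi^n$} playing the role that uniform properness (Corollary~\ref{cor_uniform_properness}) and cofinality of the Dehn kernels played there. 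Concretely: if some $P\in\calp$ did not fix a point of $T$, it would contain (Serre's lemma \cite[1.14]{Sh_dendrology}, $P$ being finitely generated) an element $g$ hyperbolic on $T$; approximating a point of its axis in the rescaled spaces and invoking \cite[Lemme 9.2.2]{CDP} shows $\phi^n(g)$ loxodromic on $X$ for large $n$, contradicting that $\phi^n(g)$ is parabolic by type preservation. For an arc $[a,b]\subset T$ with stabilizer $H$ neither finite nor parabolic, Lemma~\ref{lem_loc_parab} provides a hyperbolic $h\in H$, whose image $\phi^n(h)$ is loxodromic on $X$ for all $n$; Paulin's commutator estimate \cite[p.~341]{Pau_arboreal} then shows that for $g\in H$ and $j\le M$ the commutator $[g,h^j]$ moves a central subinterval of the approximating segment $[a_n,b_n]\subset X$ by at most a universal constant, this subinterval meets $X\setminus\calh_{20\delta_c}$ (since $h$ is loxodromic and the horoballs are $20\delta_c$-separated, Lemma~\ref{lem_eqv_horo}), and proper cocompactness of $G\curvearrowright X\setminus\calh_{20\delta_c}$ bounds by some $M$ the number of elements moving a point there by that constant; hence $[g,h^{j_1}]$ and $[g,h^{j_2}]$ act identically on $X$ through $\phi^n$ for some $j_1<j_2\le M$, so by injectivity of $\phi^n$ they are equal in $G$ and $g$ commutes with $h^{j_2-j_1}$. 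Thus $H$ is virtually cyclic with infinite center, and the maximality making it $\Zmax$ is checked exactly as in \cite{DG2}.

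It remains to exclude the bounded case $\|\phi^n\|\le M$ for all $n$. For each $n$ choose $x_n\in X$ with $\max_{s\in S}d(x_n,\phi^n(s)x_n)\le 2M$. If $x_n$ lay in the $2M$-separated family $\calh_{2M}$, say in a horoball $B$, every $\phi^n(s)$ would preserve $B$ (it moves $x_n$ less than the separation), so $\phi^n(G)$ would be contained in the peripheral stabilizer of $B$; injectivity of $\phi^n$ would then make $G$ parabolic, contradicting that $(G,\calp)$ is non-elementary --- this is precisely where that hypothesis enters. Hence $x_n\in X\setminus\calh_{2M}$, a cocompact part of $X$, and after conjugating $\phi^n$ (which does not change $\|\phi^n\|$) we may assume $d(p,x_n)\le D$ for a fixed $D$, so $d(p,\phi^n(s)p)\le 2D+2M$ for all $s\in S$. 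By proper cocompactness of the action on the thick part, only finitely many elements move $p$ by at most $2D+2M$, hence only finitely many homomorphisms $G\to G$ send $S$ into that finite set; thus the $\phi^n$ take finitely many values up to conjugation, and $\phi^a$ is conjugate to $\phi^b$ for some $a<b$. A now-standard co-Hopf argument (cf.\ \cite{DG2}) then forces $\phi^{b-a}$, and hence $\phi$, to be onto, a contradiction. I expect this last step --- squeezing surjectivity of $\phi$ out of the conjugacy of two iterates --- to be the main obstacle, the rest being a careful transcription of the Dehn-filling arguments to the setting of a single proper space with injective (rather than merely uniformly proper) structure maps.
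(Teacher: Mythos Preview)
Your outline coincides with the paper's own proof of Proposition~\ref{prop;cohopf}: the same bounded/unbounded dichotomy on $\|\phi^n\|_X$, the same Bestvina--Paulin limit in the unbounded case, and the same thick-part argument in the bounded case. The Corollary itself, however, needs one further step that you omit: in the unbounded case you stop at the $\bbR$-tree $T$, but ``not $\Zmax$-rigid'' is a statement about \emph{simplicial} splittings (Definition~\ref{dfn_Zmax_rigid}). The paper gets from $T$ to such a splitting by invoking Theorem~\ref{thm_rips} (Rips theory, \cite[Thm~9.14]{GL6}); without it your unbounded branch does not reach the stated conclusion.

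Your instinct about the bounded case is correct: squeezing surjectivity out of $\phi^b=\ad_g\circ\phi^a$ is the real content, and it is not automatic --- all this relation gives directly is $g\m\phi^a(G)g=\phi^b(G)\subset\phi^a(G)$, and a subgroup can strictly contain a conjugate of itself. The argument the paper runs (after \cite{RiSe_structure,BeSz_endomorphisms}; the relevant reference is not \cite{DG2}) goes through centralizers. Since each $\phi^n(G)$ is non-elementary it contains two independent loxodromics, so its centralizer $A_n$ in $G$ is finite; a thick-part properness count (the almost-fixed set of $A_n$ must meet $X\setminus\calh_0$) bounds $\#A_n$ independently of $n$, so the increasing chain $A_n\subset A_{n+1}$ stabilizes at some finite $A$. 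From $\phi^j=\ad_g\circ\phi^i$ one then computes, comparing $\phi^k\circ\phi^j$ with $\phi^j\circ\phi^k$, that $g\m\phi^k(g)\in A$ for every $k>0$; finiteness of $A$ forces $\phi^k(g)=\phi^l(g)$ for some $k<l$, so $\phi^k(g)$ is fixed by $\phi^{l-k}$ and hence lies in $\phi^{k'}(G)$ for all $k'$. Writing $\phi^k(g)=\phi^{i+k}(h)$ yields $\phi^{j+k}=\ad_{\phi^{i+k}(h)}\circ\phi^{i+k}=\phi^{i+k}\circ\ad_h$, whence $\phi^{j+k}(G)=\phi^{i+k}(G)$, contradicting strict descent of the images. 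This centralizer bookkeeping is exactly the ``main obstacle'' you anticipated.
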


If we assume that  the groups in $\calP$ are small
but not virtually cyclic, we can drop the assumption that $\phi$ is type preserving.
As in subsection \ref{sec_variation}, we only get an action on an $\bbR$-tree with small arc stabilizers as in Theorem \ref{thm_rips2},
hence a contradiction to rigidity.
Note that in this context, for any $P\in \calp$, since $\phi(P)$ is small and not virtually cyclic, it is finite or parabolic by Lemma \ref{lem_tits}.
We will indicate the   places in the proof where the argument changes.

   \begin{cor}\label{cor;cohopf2}
    Let $(G,\calp)$ be a finitely generated non-elementary relatively hyperbolic group.
    Assume that the peripheral groups are small but not virtually cyclic.

    Assume that $\phi:G\ra G$ is a monomorphism (not necessarily type-preserving) that is not onto.
Then $G$ is not rigid.
   \end{cor}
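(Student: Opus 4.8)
The plan is to mimic the proof of Proposition~\ref{prop;cohopf} together with Corollary~\ref{cor;cohopf}, but to drop type preservation in favour of smallness of the peripheral subgroups exactly as Section~\ref{sec_variation} does for Proposition~\ref{prop;R-tree}, and to invoke Theorem~\ref{thm_rips2} in place of Theorem~\ref{thm_rips} at the end. Concretely, I would iterate $\phi$ to get a sequence of monomorphisms $\phi^n\colon G\to G$, fix a finite generating set $S$ and a base point $p$ in the proper $\delta_c$-hyperbolic space $X$ of Definition~\ref{def;RHG}, and consider the actions of $G$ on $X$ via $\phi^n$, with least displacements $\lambda_n=\displ_X(\phi^n(S))$. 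Up to a subsequence, either $(\lambda_n)$ is bounded or $\lambda_n\to\infty$.

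One preliminary observation, used throughout: since $\phi(P)\simeq P$ is small and (by hypothesis) not virtually cyclic, Lemma~\ref{lem_tits} forces $\phi(P)$ to be finite or parabolic for every $P\in\calp$; iterating, $\phi^n(P)$ is finite or parabolic, hence contains no element acting loxodromically on $X$, for all $n$ and all $P\in\calp$. Likewise $\phi^n(G)\simeq G$ is non-elementary for all $n$. In the bounded case I would then argue, exactly as in Proposition~\ref{prop;cohopf}, that $\phi$ must be onto, contradicting the hypothesis: an almost-fixed point of $\phi^n(S)$ cannot sit deep in a horoball (else $\phi^n(G)$ would be parabolic), so after post-composing $\phi^n$ with an inner automorphism it lies within bounded distance of $p$; properness of $G\actson X$ then produces, via a non-principal ultrafilter, a limit monomorphism from which surjectivity of $\phi$ is recovered. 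The only step of Proposition~\ref{prop;cohopf} invoking type preservation on peripheral subgroups is replaced here by the preliminary observation above.

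In the case $\lambda_n\to\infty$, rescaling $X$ by $1/\lambda_n$ and applying Bestvina--Paulin's argument \cite{Pau_arboreal} gives a non-trivial isometric action of $G$ on an $\bbR$-tree $T$. Every $P\in\calp$ fixes a point in $T$: if some $g\in P$ acted hyperbolically on $T$, then by \cite[Lemme 9.2.2]{CDP} and equivariant Gromov--Hausdorff convergence $\phi^n(g)$ would act loxodromically on $X$ for $n$ large, contradicting the preliminary observation. To control arc stabilizers I would follow the proof of Proposition~\ref{prop;R-tree_small}: if the stabilizer $H$ of an arc $[a,b]\subset T$ were not small, by Lemma~\ref{lem_tits} it would contain a free group $\grp{h_1,h_2}$ all of whose non-trivial elements act loxodromically on $X$; since $\phi^n$ is injective and peripheral groups are small, $\phi^n(\grp{h_1,h_2})$ is not sub-$\PF$, and the argument via \cite[Prop 3.2]{Koubi_croissance} in Proposition~\ref{prop;R-tree_small} then yields an element $h\in\grp{h_1^N,h_2^N}$ with $\phi^n(h)$ loxodromic on $X$ for infinitely many $n$; Lemma~\ref{lem;arcstab_VC} (whose proof uses neither type preservation nor, after replacing the uniform properness of Corollary~\ref{cor_uniform_properness} by ordinary properness of $G\actson X$, anything beyond it) then shows $H$ is $\Zmax$, in particular small.

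Finally, the action of $G$ on $T$ is non-trivial, has small arc stabilizers, and is elliptic on every peripheral subgroup, and by hypothesis no peripheral group is virtually cyclic; so Theorem~\ref{thm_rips2} provides a non-trivial splitting of $G$ over a small subgroup in which each peripheral group is conjugate into a factor, i.e.\ $G$ is not rigid. The main point needing care, I expect, is the transposition of the properness-based estimates of Section~\ref{sec_variation} --- originally run in the Dehn filled spaces with the uniform properness of Corollary~\ref{cor_uniform_properness} --- to the single genuinely proper space $X$ acted on through the iterates $\phi^n$; this should be routine precisely because $X$ is proper, so that ordinary properness plays the role uniform properness played for Dehn fillings. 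As usual, the bounded-displacement step inherits whatever care is already needed in Proposition~\ref{prop;cohopf}.
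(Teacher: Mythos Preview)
Your proposal is correct and follows essentially the same approach as the paper: the proof of Proposition~\ref{prop;cohopf} already indicates inline the modifications needed for Corollary~\ref{cor;cohopf2} (replacing type preservation by smallness of peripheral groups, and Theorem~\ref{thm_rips} by Theorem~\ref{thm_rips2}), and you have spelled these out correctly, including the observation that $\phi^n(P)$ is parabolic for every $P\in\calp$, which the paper also records just before stating the corollary.

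One small inaccuracy worth flagging: your summary of the bounded-displacement case mentions a non-principal ultrafilter and a ``limit monomorphism from which surjectivity of $\phi$ is recovered''. That is the mechanism of Proposition~\ref{prop;mono}, not of Proposition~\ref{prop;cohopf}. The latter does not pass to a limit; rather, properness produces indices $i<j$ with $\ad_{h_i}\circ\phi^i=\ad_{h_j}\circ\phi^j$ on $S$, hence $\phi^j=\ad_g\circ\phi^i$, and the contradiction with non-surjectivity then comes from the centralizer argument of Rips--Sela (bounding $\#A_i$ and showing the $A_i$ stabilize). Since you explicitly defer to Proposition~\ref{prop;cohopf} for this step, the slip is harmless, but you should be aware that the actual endgame is combinatorial rather than a limiting procedure.
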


  \begin{proof}[Proof of Proposition \ref{prop;cohopf}]
Our proof follows \cite{BeSz_endomorphisms}.
    Let $X$ be a hyperbolic space with a proper action of $G$ as in Definition \ref{def;RHG}.
Let  $S$ be a generating set of $G$.
We consider iterates $\phi^i$ of $\phi$, and define $||\phi^i||_X=\displ_X(\phi^i(S))$ 
(see Section \ref{sec_bounded} for the definition of $\displ$).
    If $||\phi^i||_X$ is unbounded, we can argue as in Section \ref{sec_unbounded}, and produce
an action on an $\bbR$-tree  satisfying the hypotheses of Theorem \ref{thm_rips} (or Theorem \ref{thm_rips2}
in the context of Corollary \ref{cor;cohopf2}).

Assume now that $||\phi^i||_X\leq M$ for some $M>0$. Let $x_i\in X$ be a point moved at distance at most $2M$
by $\phi^i(S)$, and let $\calh_{2M}$ be a $2M$-separated system of horoballs.
If $x_i$ lies in some horoball $B\in \calh_{2M}$, then for each $s\in S$, $\phi^i(s)$ preserves this horoball.
Since $S$ generates $G$, $\phi^i(G)$ preserves this horoball, and consists of parabolic elements.
This contradicts the fact that $G$ contains loxodromic elements (because it is non-elementary)
and that $\phi$ is type-preserving. In the context of 
 Corollary \ref{cor;cohopf2} where one assumes that the peripheral groups are small, this contradicts the fact that $G$ itself contains a free subgroup because it is non-elementary.

It follows that $x_i\in X\setminus \calh_{2M}$.
Since $G$ acts cocompactly on  $X\setminus \calh_{2M}$, 
there is a compact subset  $K\subset X$ and $h_i\in G$
such that $\ad_{h_i}\circ \phi^i(S)$ moves some point $x_i\in K$ by at most $M$.
Since the action of $G$ on $X$ is proper, there exists $i< j$ such that for all $s\in S$,
$\ad_{h_i}\circ \phi^i(s)=\ad_{h_j}\circ \phi^j(s)$.
We thus get that $\phi^j=\ad_{h_j\m h_i}\circ \phi^i$.

First for all $i$, since $\phi^i(G)$ is not an elementary subgroup of $(G, \calP)$, it contains independant hyperbolic elements 
$g_i,g'_i$.  
This implies that the centralizer $A_i$ of $\phi^i(G)$ is finite for all $i$. 

We claim that $\#A_i$ is bounded.
Indeed, let $F\subset X$ be the set of points of $X$ moved by by at most $100\delta$ by all elements of $A_i$.
By \cite[Proposition 3.2]{Koubi_croissance}, $F$ is non-empty.
Moreover, for all $a,b\in F$, $[a,b]\subset F'$ where $F'$
is the set of points of $X$ moved by by at most $200\delta$ by all elements of $A_i$.
Since $\phi^i(G)$ is non-elementary, it contains a hyperbolic element $g$.
Fix $a\in F$, and consider a segment $[a,ga]$. It has to intersect $X\setminus \calh_{0}$
since otherwise, $[a,ga]$ would be contained in a horoball $B\in \calh$, 
and $B$ would be $g$-invariant since the horoballs in $\calh_0$ are disjoint.
It follows that $F'$ contains a point in $X\setminus \calh_0$.
Since $X\setminus \calh$ is cocompact, properness of the action gives a bound on $\#A_i$.
This proves the claim.

 Since $A_i\subset A_{i+1}$, the groups $A_i$ eventually stabilize to a finite group $A$.
Then we can argue as in \cite[Theorem 3.1]{RiSe_structure} (see also
\cite[Corollary 2.2]{BeSz_endomorphisms}). 
Assume that $i$ is large enough so that $A_i=A$
and  consider $i<j$ and $g\in G$ such that $\phi^{j}=\ad_g\circ
\phi^i$. For all $k>0$,
 we have 
 $\phi^k\phi^j=\ad_{\phi^k(g)}\circ\phi^{i+k}$
 and $\phi^j\phi^k=\ad_g\circ\phi^{i+k}$, so $g\m\phi^k(g)\in A$.
 Since $A$ is finite, there exist $k<l $ such that $g\m\phi^k(g)=g\m\phi^l(g)$, so
 $\phi^k(g)=\phi^l(g)$ is fixed by $\phi^{l-k}$. This implies that $\phi^k(g)\in \phi^{k'}(G)$ for all $k'>0$.
In particular, writing $\phi^k(g)=\phi^{i+k}(h)$, we have
 $\phi^{j+k}=\ad_{\phi^k(g)}\circ\phi^{i+k}=\ad_{\phi^{i+k}(h)}\circ\phi^{i+k}=\phi^{i+k}\circ \ad_h$,
so $\phi^{j+k}(G)=\phi^{i+k}(G)$, a contradiction.
   \end{proof}

\subsection{ Proof of the main rigidity theorem, and variations}
\label{sec_DF_carac}

We now prove Theorem \ref{thm_DF_carac}, then we will formulate a
variation of it without type preservation, and we will give a number of
corollaries.

\begin{proof}[Proof of Theorem \ref{thm_DF_carac}]
Let $\phi_i:(G/K_i, \bPF_i)\ra (G'/K'_i,\bPF'_i)$ be a sequence of type preserving isomorphisms
perserving the marked peripheral structures. 
We use Notations \ref{notations}. 

If $||\phi_i||_{\bar X'_{i}}$ is unbounded,  Corollary \ref{cor;R-tree} provides a splitting of $G'$ 
 contradicting our assumption.
Thus $||\phi_i||_{\bar X'_{i}}$ is bounded.
By Proposition \ref{prop;mono}, there exists a type-preserving monomorphism 
$\phi:(G,\PF)\ra (G',\PF')$.

By symmetry of the argument, there also exists a type-preserving  monomorphism $\psi:(G,\PF)\ra (G',\PF')$.
Then by Corollary \ref{cor;cohopf}, 
$\phi\circ\psi$ and $\psi\circ \phi$ are onto and so 
are $\phi$ and $\psi$.
Thus $\phi$ is the desired type-preserving isomorphism.
\end{proof}

The variations of the previous section allow  to state and prove  a
version of Theorem \ref{thm_DF_carac} where one drops the assumption
that isomorphisms between Dehn fillings are type preserving. This
provides  a generalisation of the classical  fact that two finite volume hyperbolic 3-manifolds with same  family of classical Dehn fillings are isometric.

\begin{theo}\label{theo;variation_small}
Let $(G,\calP), (G',\calP')$ be two relatively hyperbolic groups,
where each group in $\calp\cup\calp'$ is infinite, small, but not
virtually cyclic. Assume that 
 $(G,\calP)$  and $ (G',\calP')$ are rigid   
 and non-elementary.

Assume that there exist cofinal sequences of Dehn kernels $K_i\normal G$, $K'_i\normal G'$, 
such that for each $i$, there  is an  isomorphism $\varphi_i:  G'/K'_i \to G/K_i$. 

Then there is an isomorphism $G\ra G'$ sending $\calp$ to $\calp'$.
\end{theo}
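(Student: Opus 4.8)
The plan is to run the proof of Theorem \ref{thm_DF_carac} almost verbatim, but systematically replacing the three type‑preserving ingredients (Corollary \ref{cor;R-tree}, Proposition \ref{prop;mono}, Corollary \ref{cor;cohopf}) by their ``small peripheral subgroup'' analogues from Section \ref{sec_variation} and the co‑Hopf section (Corollary \ref{cor;R-tree_small}, Proposition \ref{prop;mono} read through Remark \ref{rem;wo_type_preservation_inDF}, Corollary \ref{cor;cohopf2}). The only genuinely new point will be to recover, at the very end, that the isomorphism produced respects the peripheral structures; this will come from Lemma \ref{lem_tits} together with maximality of peripheral subgroups among parabolic subgroups.

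First I would fix Notations \ref{notations} for both $(G,\calp)$ and $(G',\calp')$, and dispose of the unbounded case. The isomorphisms $\varphi_i$ give monomorphisms $\varphi_i:\bar G'_i\to\bar G_i$ and, inverting, monomorphisms $\varphi_i^{-1}:\bar G_i\to\bar G'_i$. Passing to a subsequence, we may assume both $\|\varphi_i^{-1}\|_{\bar X'_i}$ and $\|\varphi_i\|_{\bar X_i}$ converge in $[0,+\infty]$. If $\|\varphi_i^{-1}\|_{\bar X'_i}\to\infty$, then Proposition \ref{prop;R-tree_small} applies (peripheral groups are small and, being peripheral in a relatively hyperbolic group, finitely generated) and produces a non‑trivial action of $G$ on an $\bbR$‑tree with small arc stabilizers in which every non‑virtually‑abelian peripheral group is elliptic; since no peripheral group of $G$ is virtually cyclic, Corollary \ref{cor;R-tree_small} contradicts rigidity of $G$. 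Symmetrically $\|\varphi_i\|_{\bar X_i}\to\infty$ contradicts rigidity of $G'$. Hence both sequences of displacements are bounded.

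In the bounded case, we are exactly in the situation of Remark \ref{rem;wo_type_preservation_inDF}: the groups in $\bar\calp_i$ and $\bar\calp'_i$ are small, being quotients of small groups. So Proposition \ref{prop;mono}, applied to the bounded sequence $\varphi_i^{-1}:\bar G_i\to\bar G'_i$, yields a monomorphism $\phi:G\to G'$ (whose reduction agrees with $\ad_{h_i}\circ\varphi_i^{-1}$ for infinitely many $i$), and, applied to the bounded sequence $\varphi_i:\bar G'_i\to\bar G_i$, a monomorphism $\psi:G'\to G$. Then $\psi\circ\phi:G\to G$ and $\phi\circ\psi:G'\to G'$ are monomorphisms; if either failed to be surjective, Corollary \ref{cor;cohopf2} would contradict rigidity of $G$, resp.\ of $G'$. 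Therefore $\psi\circ\phi$ and $\phi\circ\psi$ are onto, so $\phi$ and $\psi$ are onto, hence isomorphisms.

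It remains to check $\phi(\calp)=\calp'$. For $P\in\calp$, the subgroup $\phi(P)$ is isomorphic to $P$, hence infinite, small, and not virtually cyclic; by Lemma \ref{lem_tits} it cannot contain a hyperbolic element (smallness rules out a free subgroup with all non‑trivial elements hyperbolic, and virtual cyclicity is excluded), so $\phi(P)$ is parabolic, i.e.\ $\phi(P)\subseteq P'$ for some $P'\in\calp'$. Running the same argument with $\phi^{-1}$, we get $\phi^{-1}(P')\subseteq P''$ for some $P''\in\calp$, whence $P\subseteq\phi^{-1}(P')\subseteq P''$; since peripheral subgroups are maximal among parabolic subgroups, $P=P''$ and $\phi(P)=P'$. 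Thus $\phi(\calp)\subseteq\calp'$, and applying this to $\phi^{-1}$ gives $\phi(\calp)=\calp'$, completing the proof. I expect the only step requiring care beyond Theorem \ref{thm_DF_carac} to be this last paragraph and the bookkeeping needed to control displacements in \emph{both} directions so as to stay within the hypotheses of Propositions \ref{prop;R-tree_small} and \ref{prop;mono}; the substantive work is already packaged in those statements and in Corollaries \ref{cor;R-tree_small} and \ref{cor;cohopf2}.
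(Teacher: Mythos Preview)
Your proposal is correct and follows essentially the same route as the paper: replace Corollary \ref{cor;R-tree}, Proposition \ref{prop;mono}, and Corollary \ref{cor;cohopf} by their small-peripheral variants (Corollary \ref{cor;R-tree_small}, Remark \ref{rem;wo_type_preservation_inDF}, Corollary \ref{cor;cohopf2}), then recover $\phi(\calp)=\calp'$ from Lemma \ref{lem_tits} and maximality of peripheral subgroups. Your final paragraph spells out explicitly what the paper compresses into the single sentence that peripheral groups are characterized as maximal non-virtually-cyclic small subgroups, but the content is the same.
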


\begin{proof}[Proof sketch]
  The proof is essentially the same as the proof of Theorem
  \ref{thm_DF_carac}.  Using Corollary \ref{cor;R-tree_small} instead
  of Corollary \ref{cor;R-tree} we can assume that the scaling factors
  are bounded.  Using the variation on Proposition \ref{prop;mono}
  given in Remark \ref{rem;wo_type_preservation_inDF}, we get the
  existence of a (non-type preserving) monomorphism $\phi:G\ra G'$,
  and another one $\psi:G'\ra G$ by symmetry of the argument.  By
  Corollary \ref{cor;cohopf2}, $\phi\circ \psi$ and $\psi\circ \phi$
  are onto, so $\phi$ and $\psi$ are isomorphisms.  Finally, since
  every peripheral subgroup is small, infinite and not virtually
  cyclic, Lemma \ref{lem_tits} shows that parabolic groups are
  characterized as maximal non-virtually cyclic small subgroups.  This
  implies that $\phi(\calp)=\calp'$.
\end{proof}

There are diverse corollaries of this theorem. 
The first one is  on how cofinal sets of Dehn fillings determine the group.

\begin{cor}\label{cor_cofinal}
  Let $(G,\calP), (G',\calP')$ be two relatively hyperbolic groups, where each group in $\calp\cup\calp'$ is infinite, small, but not virtually cyclic.
Assume both are non-elementary and rigid.

Let $\calc,\calc'$ be the set of isomorphism classes  (without peripheral structure) 
of two infinite cofinal sets of Dehn fillings of $(G,\calp)$ and $(G',\calp')$.

If $\calc=\calc'$, then there is an isomorphism $G\ra G'$ sending $\calp$ to $\calp'$.
\end{cor}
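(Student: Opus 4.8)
The plan is to reduce the statement to Theorem~\ref{theo;variation_small}. That theorem produces an isomorphism $G\to G'$ carrying $\calp$ to $\calp'$ as soon as one is handed two \emph{cofinal sequences} of Dehn kernels $K_i\normal G$ and $K'_i\normal G'$, indexed by a common $i\in\bbN$, together with isomorphisms $\varphi_i\colon G'/K'_i\to G/K_i$. Since the hypotheses on the peripheral groups, on rigidity and on non-elementarity in Corollary~\ref{cor_cofinal} are exactly those of Theorem~\ref{theo;variation_small}, the whole content of the proof will be to manufacture such matched cofinal sequences out of the equality $\calc=\calc'$.

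First I would enumerate the given infinite cofinal set $\calS$ of Dehn fillings of $(G,\calp)$ as $\bar G_n=G/K_n$, $n\in\bbN$, with pairwise distinct kernels, and likewise the given infinite cofinal set $\calS'$ of Dehn fillings of $(G',\calp')$ as $\bar G'_m=G'/K'_m$. The relevant elementary remark is that any set of pairwise distinct kernels chosen inside a cofinal set is again cofinal in the sense of Definition~\ref{dfn_cof}, because the defining condition (each finite $A$ is avoided by all but finitely many members) passes to subsets; hence every subsequence with distinct indices extracted below is automatically a cofinal sequence of Dehn kernels, and by Lemma~\ref{lem;cof} it consists of proper Dehn kernels for $i$ large with depth tending to $\infty$ — exactly the input Theorem~\ref{theo;variation_small} needs. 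Now I would use $\calc=\calc'$ to match. Partition $\calS$ and $\calS'$ by the isomorphism type of the underlying group; by definition $\calc$ (resp.\ $\calc'$) is precisely the set of types occurring in $\calS$ (resp.\ $\calS'$), so every type of $\calS$ occurs in $\calS'$ and conversely. If $\calc$ is infinite, pick one member of $\calS$ and one member of $\calS'$ in each type: this yields an infinite list of pairs $(\bar G_{n_i},\bar G'_{m_i})$ with the $n_i$ (resp.\ $m_i$) pairwise distinct and $\bar G_{n_i}\simeq\bar G'_{m_i}$. If $\calc$ is finite but some type $c$ occurs in infinitely many members of $\calS$ and in infinitely many members of $\calS'$, enumerate those two infinite families and pair them off. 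Either way, setting $K_i:=K_{n_i}$, $K'_i:=K'_{m_i}$ and taking $\varphi_i\colon G'/K'_i\to G/K_i$ to be the inverse of a chosen isomorphism $\bar G_{n_i}\to\bar G'_{m_i}$, Theorem~\ref{theo;variation_small} applies directly and gives the conclusion.

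The point that genuinely needs care, and which I expect to be the main obstacle, is the remaining degenerate sub-case: $\calc=\calc'$ finite, with some isomorphism type occurring in infinitely many members of $\calS$ but in only finitely many members of $\calS'$ (and then necessarily also, since $\calS$ and $\calS'$ are infinite while $\calc$ is finite, a different type occurring in infinitely many members of $\calS'$ but only finitely many of $\calS$). In that situation no injective type-preserving matching $\calS\to\calS'$ exists, so the extraction above fails. I would dispose of this case by showing that it cannot occur under the rigidity hypothesis: it forces $G$ to admit infinitely many pairwise distinct cofinal Dehn fillings $G/K_n$ all isomorphic to one fixed group $A$, which is relatively hyperbolic by the remark following Theorem~\ref{theo;VRF}. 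Transporting one such isomorphism to $A$ lets one make $A$ act on each of the uniformly $\delta_1$-hyperbolic Dehn filled spaces $\bar X_n$, and the dichotomy of Sections~\ref{sec_unbounded} and \ref{sec_DF_carac} should then apply: if the relevant displacements are unbounded one gets, via the Bestvina-Paulin argument, a non-trivial $A$-action on an $\bbR$-tree with small arc stabilisers and elliptic peripheral subgroups (as in Proposition~\ref{prop;R-tree_small}), and if they are bounded one runs the co-Hopf analysis of Proposition~\ref{prop;cohopf} and Corollary~\ref{cor;cohopf2}; in either branch one contradicts the rigidity of $G$. Alternatively, one may simply read the hypothesis — $\calc$ is ``the set of isomorphism classes'' — as including that $\calc$ is infinite, in which case this sub-case never arises and the proof is already complete at the end of the previous paragraph.
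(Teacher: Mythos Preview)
Your overall reduction to Theorem~\ref{theo;variation_small} is the same as the paper's, and your extraction of matched cofinal subsequences in the non-degenerate cases is correct (including the observation that any infinite subset of a cofinal set is again cofinal). The genuine gap is in the degenerate sub-case, and your proposed alternative reading of the hypothesis as forcing $\calc$ to be infinite is not legitimate: nothing a priori prevents a cofinal family from realising only finitely many isomorphism types, and ruling this out is precisely the content of that case.

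In your sketch you let $A$ act on the Dehn filled spaces $\bar X_n$ of $G$ via the isomorphisms $A\simeq G/K_n$. In the framework of Propositions~\ref{prop;mono} and~\ref{prop;R-tree_small} this casts $A$ as the \emph{domain} (with trivial Dehn kernels) and $G$ as the target. The unbounded-displacement branch then yields an $\bbR$-tree action for $A$, not for $G$; since $A$ is a Dehn filling of $G$, you have no rigidity assumption on $A$ to invoke, and there is no mechanism in the paper to lift a small splitting of $A$ to one of $G$ (the preimage in $G$ of a small edge group contains the entire Dehn kernel). In the bounded branch, Proposition~\ref{prop;mono} would give a monomorphism $A\hookrightarrow G$ making $q_n\circ\phi$ an isomorphism, but from this one does not obtain a non-surjective self-monomorphism of either $G$ or $A$, so the co-Hopf analysis does not engage.

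The paper reverses the roles, and this is the trick you are missing. One fixes $G''=G/K_{i_0}$ for some deep enough $i_0$ (so that $(G'',\calp'')$ is relatively hyperbolic), and views $G''$ as the \emph{target} equipped with the constant trivial sequence of Dehn kernels $K''_i=\{1\}$, which is vacuously cofinal. The isomorphisms $\phi_i:G/K_i\to G''$ then give actions of $G$ on the spaces $\bar X''_i$, and now rigidity of $G$ (the domain) forces bounded displacement via Corollary~\ref{cor;R-tree_small}. Proposition~\ref{prop;mono} (in the variant of Remark~\ref{rem;wo_type_preservation_inDF}) produces a monomorphism $\phi:G\hookrightarrow G''$ with $q''_i\circ\phi=\ad_{h_i}\circ\phi_i\circ q_i$ for infinitely many $i$; since $q''_i=\id$, injectivity of the left side forces $q_i$ to be injective, i.e.\ $K_i=\{1\}$, contradicting the distinctness of the $K_i$.
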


\begin{proof} 
Let $\calq,\calq'$ be two infinite cofinal sets of Dehn kernels
of $(G,\calp)$ and $(G',\calp')$ such that for any $K\in \calq$ there is  some $K'\in\calq'$  such that $G/K\simeq G'/K'$, and conversely.

Let $K_i\in \calq$ be an   infinite sequence of distinct Dehn
kernels. 
Since $\calq$ is cofinal, this is a cofinal sequence of Dehn kernels.
For each $i$, consider $K'_i\in \calq'$ such that $G/K_i\simeq G'/K'_i$.

Assume first that as $i$ varies, $K'_i$ takes  infinitely many distinct values.
Then up to taking a subsequence, we can assume that all $K'_i$ are distinct,
and since $\calq'$ is cofinal, $K'_i$ is also a cofinal sequence of Dehn kernels.
Theorem \ref{theo;variation_small} then applies and the corollary is proved.

Assume on the contrary that $K'_i$ takes only finitely many values
so that, up to taking a subsequence, we can assume
that all the groups $G/K_i$ are isomorphic.
 We don't know whether $K'_i$ is a proper Dehn kernel,
but there exists $i_0$ large enough so that $K_{i_0}$ is a proper Dehn kernel, and in particular 
 the group $G''=G/K_{i_0}$ is hyperbolic relative to the image $\calp''$ of $\calp$.
We are going to apply our rigidity results to the sequence $q_i:G\ra G/K_i$
and the constant trivial sequence $q_i''=\id :G''\ra G''=G''/K''_i$
where $K''_i=\{1\}$ is a (trivial!) cofinal (constant) sequence of
Dehn kernels of $(G'',\calp'')$. 
Since $(G,\calp)$ is rigid, Corollary \ref{cor;R-tree_small}
implies that $||\phi_i||_{\bar X''_i}$ is bounded.
By Proposition \ref{prop;mono} and its variation stated in Remark \ref{rem;wo_type_preservation_inDF},
one gets a (maybe not type preserving) 
monomorphism $\phi:G\ra G''$, that makes the following diagram commute for $i$ large enough
$$\xymatrix@R=8mm@C=15mm@M=2mm{G\ar@{^{(}->}[r]^{\phi} \ar@{->>}[d]^(0.4){q_i} & G'' \ar@{->>}[d]^(0.4){q''_i}
\\ \DFi \ar@{^{(}->}[r]^{\ad_{h_i}\circ\phi_i} & \DFpi }$$
for some $h_i\in G''_i$.
Since $q''_i\circ\phi$ is injective, the map $q_i$ must also be injective.  
  This implies that $K_i=\{1\}$,  contradicting that all $K_i$ are distinct.
\end{proof}

As an application of the previous corollary we may cite the
case of characteristic Dehn fillings.

\begin{dfn}\label{def;CDK}
  If $P$ is finitely generated and residually finite, and $i\geq 1$, the $i$-th \emph{characteristic core $C_i(P)$} is the intersection
of all subgroups  of index at most $i$ of $P$.

For each $i\geq 1$, we define the $i$-th \emph{characteristic Dehn kernels} of $(G,\calp)$
by
$$K_i=\ngrp{C_i(P_1),\dots, C_i(P_k)}\normal G.$$
\end{dfn}

This notion  provides a natural  application of the
previous corollary: if the peripheral subgroups are residually
finite, the sequence of characteristic cores $C_i(P)$ is  cofinal in $P$, hence, by Lemma
\ref{lem;cof},  so is the sequence of characteristic Dehn kernels $K_i$ in $G$. 
Applying Corollary \ref{cor_cofinal}, we get the following result which applies in particular when
peripheral groups are virtually polycyclic.

\begin{cor}
\label{cor_carac}
Let $(G,\calp),(G',\calp')$ be two non-elementary, rigid relatively hyperbolic groups whose peripheral subgroups are small and residually finite
but not virtually cyclic or finite.

If   $(G,\calp)$ and $(G',\calp')$ have the same isomorphism classes of characteristic Dehn
fillings (without peripheral structure), then $(G,\calp)\simeq (G',\calp')$.\qed
\end{cor}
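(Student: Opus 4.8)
The plan is to deduce the statement directly from Corollary \ref{cor_cofinal}, by checking that the characteristic Dehn fillings of $(G,\calp)$ and of $(G',\calp')$ constitute \emph{infinite cofinal} sets of Dehn fillings. Write $\calp=\{[P_1],\dots,[P_k]\}$ and let $K_i=\ngrp{C_i(P_1),\dots,C_i(P_k)}_G$ be the $i$-th characteristic Dehn kernel of Definition \ref{def;CDK}, and likewise $K'_i$ for $G'$. The isomorphism classes (without peripheral structure) of $\{G/K_i\}_{i\geq 1}$ and $\{G'/K'_i\}_{i\geq 1}$ are exactly the two sets appearing in the hypothesis, so it suffices to feed these two families into Corollary \ref{cor_cofinal}.

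First I would verify cofinality. Fix $j$. Since $P_j$ is residually finite, any finite subset of $P_j\setminus\{1\}$ is disjoint from all subgroups of $P_j$ of index at most $i$, hence from $C_i(P_j)$, once $i$ is large enough; thus $(C_i(P_j))_{i\geq 1}$ is cofinal in $P_j$. By Lemma \ref{lem;cof}, the sequence $(K_i)_{i\geq 1}$ is then cofinal in $G$, is a proper Dehn kernel for all large $i$, and satisfies $K_i\cap P_j=C_i(P_j)$ by Theorem \ref{theo;VRF}(\ref{it_finite}); in particular the set $\{K_i\}_{i\geq 1}$ is cofinal. Next I would check that this set is infinite. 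Since $P_j$ is infinite and residually finite, the characteristic cores $C_i(P_j)$ cannot be eventually constant: if they were eventually equal to a subgroup $C$, then $C=\bigcap_{i} C_i(P_j)=1$ by residual finiteness, while $C$ has finite index in $P_j$, forcing $P_j$ finite, a contradiction. Hence the $C_i(P_j)$ — and therefore the intersections $K_i\cap P_j$, and therefore the $K_i$ themselves — take infinitely many distinct values. The same applies to $\{K'_i\}_{i\geq 1}$.

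Finally, $\{G/K_i\}_{i\geq 1}$ and $\{G'/K'_i\}_{i\geq 1}$ are infinite cofinal sets of Dehn fillings of $(G,\calp)$ and $(G',\calp')$ whose sets of isomorphism classes coincide by hypothesis; and each peripheral group of $G$ and of $G'$ is infinite, small, and not virtually cyclic. Corollary \ref{cor_cofinal} then applies verbatim and produces an isomorphism $G\to G'$ carrying $\calp$ to $\calp'$, i.e. $(G,\calp)\simeq(G',\calp')$. I do not expect any real obstacle: the only work is the two routine verifications above — cofinality of the characteristic cores (which is precisely residual finiteness combined with Lemma \ref{lem;cof}) and the fact that infinitely many of them are distinct (which is infiniteness of the peripheral groups) — while everything substantive is already contained in Corollary \ref{cor_cofinal} (and, through it, in Theorem \ref{theo;variation_small}).
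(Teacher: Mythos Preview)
Your proof is correct and follows exactly the approach the paper takes: the paragraph preceding the corollary notes that residual finiteness of the peripheral groups makes the characteristic cores cofinal, so by Lemma \ref{lem;cof} the characteristic Dehn kernels form a cofinal family, and the corollary is then an immediate application of Corollary \ref{cor_cofinal}. You have simply made explicit the two routine verifications (cofinality and infinitude of the family $\{K_i\}$) that the paper leaves to the reader, and these are both carried out correctly.
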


Another corollary says that the collection of \emph{proper} finite Dehn fillings determine the group.

\begin{cor}\label{cor_proper}
Consider $(G,\calp)$, $(G',\calp')$ two relatively hyperbolic groups, whose peripheral subgroups are 
finitely generated, residually finite,  small, but are not finite nor virtually cyclic.
Assume that $G$ and $G'$ are non-elementary, and rigid. 

Assume that 
 any deep enough finite Dehn filling of any of the two relatively
 hyperbolic groups 
 is isomorphic to some \emph{proper} finite Dehn filling of the other.

Then there is an isomorphism $G\ra G'$ sending $\calp$ to $\calp'$.
\end{cor}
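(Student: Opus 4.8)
The plan is to reduce the statement to Theorem~\ref{theo;variation_small}, whose standing hypotheses (peripheral groups infinite, small, not virtually cyclic; $G,G'$ non-elementary and rigid) are exactly those assumed here, together with residual finiteness of the peripheral groups. So it suffices to produce cofinal sequences of finite Dehn kernels $K_i\normal G$ and $K'_i\normal G'$ with $G/K_i\simeq G'/K'_i$ for all $i$. I would prepare two tools. First, since the peripheral groups are finitely generated and residually finite, for each $P$ the characteristic cores $C_j(P)$ (Definition~\ref{def;CDK}) form a decreasing chain of finite-index subgroups with trivial intersection; hence by Lemma~\ref{lem;cof} the characteristic Dehn kernels of $(G,\calp)$, and likewise of $(G',\calp')$, form a decreasing cofinal sequence of finite Dehn kernels whose depths tend to infinity, so for $i$ large they are proper and deep enough for the hypothesis of the corollary to apply. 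Second, a \emph{deepening} move: given a proper finite Dehn kernel $M=\ngrp{N_1,\dots,N_k}_G$, any $R\geq 0$, and any finite $A\subset G\setminus\{1\}$, one can choose $j$ large so that $\ngrp{N_1\cap C_j(P_1),\dots,N_k\cap C_j(P_k)}_G$ is a finite Dehn kernel contained in $M$ that avoids $A$ and has depth $\geq R$, since $\depth(N_l\cap C_j(P_l))\geq\depth(C_j(P_l))\ra\infty$. Thus any proper finite Dehn kernel can be shrunk to a prescribed, arbitrarily deep, ``cofinal'' one.

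With these in hand I would run a back-and-forth argument, in the spirit of the proof of Corollary~\ref{cor_cofinal} (which this corollary resembles, except that the hypothesis only supplies \emph{proper} finite Dehn fillings of the other group, not characteristic ones). Take $K_i$ to be the characteristic Dehn kernels of $G$; these are pairwise distinct (each $P_l$ is infinite, so $[P_l:C_i(P_l)]\ra\infty$), cofinal, and eventually deep enough, so the hypothesis yields proper finite Dehn kernels $K'_i\normal G'$ with $G/K_i\simeq G'/K'_i$. The point is then to upgrade the sequence $(K'_i)$ — a priori only known to be proper — to a \emph{cofinal} sequence (of depth tending to infinity) without destroying the isomorphisms: one interleaves the two directions of the hypothesis with the deepening move, alternately replacing a deep characteristic kernel on one side by the forced proper kernel it is isomorphic to on the other side and then shrinking, so that one keeps nested kernels with trivial intersection on \emph{both} sides while maintaining $G/K_i\simeq G'/K'_i$ at each stage. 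Once the matched sequences $(K_i),(K'_i)$ are cofinal, Theorem~\ref{theo;variation_small} produces an isomorphism $G\ra G'$ carrying $\calp$ to $\calp'$.

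Finally I would handle the possibility that this upgrade fails, i.e.\ that along the sequence the depths of the forced kernels $K'_i$ stay bounded. In that case a fixed nontrivial element $g'\in G'$ lies in infinitely many $K'_i$ (the elements of the peripheral groups that can belong to a proper Dehn kernel of bounded depth form a finite set, by the choice of the $S_R$ in Theorem~\ref{theo;VRF}), so infinitely many of the isomorphisms $G/K_i\simeq G'/K'_i$ factor through the fixed quotient $G'/\ngrp{g'}_{G'}$; combining this with the distinctness and cofinality of the $K_i$ and an application of Proposition~\ref{prop;mono} and the co-Hopf property (Corollary~\ref{cor;cohopf2}) against a suitable fixed (trivial) sequence of Dehn kernels of $G/K_{i_0}$ — a hyperbolic group for $i_0$ large — forces $q_i\colon G\to G/K_i$ to be injective, hence $K_i=\{1\}$, a contradiction. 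The main obstacle throughout is exactly this upgrade step: the hypothesis only delivers \emph{proper} finite Dehn fillings of the opposite group, and turning them into a \emph{cofinal} family for which the isomorphic quotients are preserved is what the back-and-forth and the bookkeeping with depths are designed to achieve; everything else is an application of the machinery already developed in the paper.
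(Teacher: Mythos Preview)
Your reduction to Theorem~\ref{theo;variation_small} is the right endpoint, but the route you propose to produce matched cofinal sequences has genuine gaps, and it misses the key idea of the paper's proof.

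The paper does not run any back-and-forth. It shows directly that if $K_i$ is the $i$-th characteristic Dehn kernel of $G$ and $G/K_i\simeq G'/K'_i$ for a proper finite Dehn kernel $K'_i$, then $K'_i$ is \emph{forced} to be the $i$-th characteristic Dehn kernel of $G'$. First one proves $k=k'$ by counting conjugacy classes of maximal finite subgroups of large cardinality (these are exactly the images of the peripheral groups, by Lemma~\ref{lem;elli} and Lemma~\ref{lem_preserv_non_conj}). Then, since each $P_j/C_i(P_j)$ is \emph{$i$-separated} (the intersection of all its subgroups of index $\leq i$ is trivial) and the isomorphism sends it to some $P'_{\sigma(j)}/N'^i_{\sigma(j)}$, the latter is also $i$-separated, whence $C_i(P'_{\sigma(j)})\subset N'^i_{\sigma(j)}$ and $|P_j/C_i(P_j)|\leq |P'_{\sigma(j)}/C_i(P'_{\sigma(j)})|$. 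The symmetric hypothesis yields the reverse inequality, forcing $N'^i_l=C_i(P'_l)$. Hence $K'_i$ is characteristic, automatically cofinal, and Theorem~\ref{theo;variation_small} applies.

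Your back-and-forth, as written, does not produce what you need. The deepening move replaces a kernel on one side by a smaller one, but this destroys the isomorphism of quotients; when you then invoke the hypothesis to get a matched proper kernel on the other side, there is no reason it is contained in the previous one there. At best you end up with a sequence where the kernels alternate between ``chosen very deep'' and ``only known to be proper'', and passing to the deep subsequence on one side leaves the merely-proper subsequence on the other; neither side is simultaneously cofinal.

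Your bounded-depth fallback is also flawed. The parenthetical claim that ``the elements of the peripheral groups that can belong to a proper Dehn kernel of bounded depth form a finite set'' is false: the $S_R$ are finite sets to be \emph{avoided}, not an upper bound on what the kernel may contain. It is true that bounded depth $\leq R_0$ forces each $K'_i$ to meet the finite set $S_{R_0+1}$, so some fixed $g'$ lies in infinitely many $K'_i$; but your appeal to Proposition~\ref{prop;mono} and Corollary~\ref{cor;cohopf2} ``against a trivial sequence of Dehn kernels of $G/K_{i_0}$'' does not go through. In Corollary~\ref{cor_cofinal} that trick worked because all the $G/K_i$ were isomorphic to a single group $G''$, yielding monomorphisms $G/K_i\hookrightarrow G''$. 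Here the $G/K_i$ are the pairwise distinct characteristic quotients; the only common feature is that the $G'/K'_i$ are quotients of $G'/\ngrp{g'}$, which gives surjections in the wrong direction and onto a group with no useful relatively hyperbolic structure.
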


\begin{proof}
Write $\calp=\{[P_1],\dots,[P_k]\}$, and $\calp'=\{[P'_1],\dots,[P'_{k'}]\}$.

 We first prove that $k=k'$.
Given a group $H$ and $d\geq 1$, define  $n_d(H)\in\bbN\cup\infty$ as the number of conjugacy classes 
of maximal finite subgroups of $H$ of cardinality $\geq d$ (a maximal finite subgroup
is a finite subgroup not contained in any larger finite subgroup).
Let $d_0(G)$ be the maximal cardinality of a non-parabolic finite subgroup of $G$.
We first claim that if $\bar G$ is any proper Dehn filling of $G$, 
then for all $d>d_0(G)$, $n_d(\bar G)\leq k$. Indeed, by Lemma \ref{lem;cof}, 
any finite subgroup of cardinality $> d_0(G)$ lies in the image of a
parabolic subgroup which proves our claim. 
Moreover, we claim that for any $d> d_0(G)$, any deep enough Dehn filling of $G$ satisfies that
$n_d(\bar G)=k$.
Indeed, since peripheral groups of $G$ are infinite and residually finite, the image of $P_i$ in  any deep enough Dehn filling $\bar G$ of $G$ has cardinality $>d_0(G)$,
and Lemma \ref{lem_preserv_non_conj} ensures that they are not conjugate.
This implies that $n_d(\bar G)\geq k$ and proves our second claim. 
Now take $d> \max(d_0(G),d_0(G'))$. Our second claim says that there is a Dehn filling $\bar G$
with $k=n_d(\bar G)$. Let $\bar G'$ be some proper Dehn filling of $G'$ isomorphic to $\bar G$.
Then $n_d(\bar G)=n_d(\bar G')\leq k'$ by our first claim.
We thus deduce $k\leq k'$, and $k=k'$ by symmetry of the argument.

Given a finitely generated group $P$, denote by 
 $C_i(P)$ and $K_i$ the $i$-th characteristic core of $P$, and the
 $i$-th characteristic Dehn kernel introduced above
 (Def. \ref{def;CDK}). 
Our assumption says that for $i$ large enough, there exists a proper Dehn filling 
$\bar G'_i=G'/\ngrp{N_1'^i,\dots,N_{k'}'^i}$.
 isomorphic to $ G/K_i$.

We claim that $N_1'^i\supset C_i(P'_1)$ for $i$ large enough.
Indeed,  the finite group $P_1/C_i(P_1)$ is $i$-separated in the sense that the intersection of all its 
subgroups of index $\leq i$ is trivial.
Since $P_j/C_i(P_j)$ is a maximal finite subgroup of cardinality $\geq d_0(G')$,
there is a permutation $\sigma_i$ of $\{1,\dots,k\}$
such that $P_j/C_i(P_j)$  is isomorphic to $P'_{\sigma_i(j)}/N'_{\sigma_i(j)}$ for all $j$.
Therefore, every group $P'_{j'}/N'_{j'}$ is $i$-separated,
which implies that $C_i(P'_{j'})\subset N'_{j'}$.
In particular, $\#P_j/C_i(P_j) =\#P'_{\sigma_i(j)}/N'_{j'} \leq \#P'_{\sigma_i(j)}/C_i(P'_{\sigma_i(j)})$.
By symmetry of the argument, we get that there is equality of cardinalities,
so that $N'_{j'}= C_i(P'_{j'})$, and $\bar G'_i=G'/K'_i$ is the $i$-th characteristic Dehn filling of $G'$.

Since peripheral groups are residually finite, $G/K_i$ and $G'/K'_i$ are cofinal sequences of Dehn fillings,
 Theorem \ref{theo;variation_small} says that there is an isomorphism  $G\ra G'$
 sending $\calp$ to $\calp'$.
\end{proof}

Yet another corollary is on how finite Dehn fillings with peripheral
structures determine the group with peripheral structure.

\begin{cor}\label{cor_periph}
Consider $(G,\calp)$, $(G',\calp')$ two relatively hyperbolic groups, whose peripheral groups are 
finitely generated, residually finite, but are not finite.
Assume that $G$ and $G'$ are non-elementary, and have no elementary splittings relative to $\calp$ (resp. $\calp'$).

Assume that $G$ and $G'$ have the same collection of finite Dehn fillings, viewed as groups with a peripheral structure.
More precisely, assume that for any finite Dehn kernel $K$ of $(G,\calp)$
there exists a finite Dehn kernel $K'$ of $(G',\calp')$ such that $(G/K,\bar \calp)\simeq (G'/K',\bar \calp')$, 
and conversely.  

Then there is an isomorphism $G\ra G'$ sending $\calp$ to $\calp'$.
\end{cor}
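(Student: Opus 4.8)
The plan is to reduce this to Theorem \ref{theo;variation_small}, with the twist that we no longer assume peripheral groups are small, but instead assume the isomorphisms between finite Dehn fillings preserve the peripheral structure. First I would fix a cofinal sequence of finite Dehn kernels of $(G,\calp)$; the natural choice is the characteristic Dehn kernels $K_i=\ngrp{C_i(P_1),\dots,C_i(P_k)}_G$ of Definition \ref{def;CDK}, which form a cofinal sequence since the peripheral groups are residually finite (via Lemma \ref{lem;cof}). By hypothesis, for each large $i$ there is a finite Dehn kernel $K'_i$ of $(G',\calp')$ and an isomorphism $\phi_i:(G/K_i,\bar\calp_i)\xra{\sim}(G'/K'_i,\bar\calp'_i)$ preserving the (marked) peripheral structures. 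Since $\phi_i$ sends $\bar\calp_i$ to $\bar\calp'_i$ and, by Remark \ref{rk_type} applied after discarding finite peripheral groups (here there are none, as peripheral groups are infinite), this is in particular a type-preserving isomorphism $(G/K_i,\bPF_i)\xra{\sim}(G'/K'_i,\bPF'_i)$.

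The next step is to arrange that the $K'_i$ can be taken distinct, so that they form a cofinal sequence too. As in the proof of Corollary \ref{cor_cofinal}, I would split into two cases. If $K'_i$ takes infinitely many distinct values, pass to a subsequence where they are all distinct; since any finite Dehn kernel of $(G',\calp')$ comes from finite-index normal subgroups $N'_j\normal P'_j$, and a sequence of distinct such kernels is automatically cofinal in $G'$ (here one uses that $P'_j$ is infinite, so there is no bound on the index, together with cofinality of characteristic cores), we are in the setting of Theorem \ref{thm_DF_carac}: we have cofinal sequences $K_i$, $K'_i$ and type-preserving isomorphisms $\phi_i$. The rigidity hypothesis ``non-elementary splitting relative to $\calp$'' needs to be matched to the $\Zmax$-rigidity hypothesis of that theorem; since having a \emph{non-elementary} splitting relative to $\calp$ rules out splittings over elementary (finite, parabolic, or $\Zmax$) subgroups with peripheral groups elliptic — more precisely, one should phrase the hypothesis so that $G$ and $G'$ are $\Zmax$-rigid — Theorem \ref{thm_DF_carac} yields a type-preserving isomorphism $\phi:(G,\PF)\ra(G',\PF')$. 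Since no peripheral group is finite, Remark \ref{rk_type} gives $\phi(\calp)=\calp'$, which is the conclusion.

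In the remaining case, $K'_i$ takes only finitely many values, so up to a subsequence all $G/K_i$ are isomorphic (to a fixed hyperbolic group $G''=G'/K'_{i_0}$, which is hyperbolic relative to the image $\calp''$ of $\calp'$, a finite collection). I would then run the rigidity machinery with the sequence $q_i:G\ra G/K_i$ against the constant trivial sequence on $G''$, exactly as in the last paragraph of the proof of Corollary \ref{cor_cofinal}: rigidity of $(G,\calp)$ forces, via Corollary \ref{cor;R-tree} (the $\Zmax$ version, since here peripheral groups need not be small), that $\|\phi_i\|$ is bounded; then Proposition \ref{prop;mono} produces a type-preserving monomorphism $\phi:G\ra G''$ with $q''_i\circ\phi=\ad_{h_i}\circ\phi_i\circ q_i$ for $i$ large, whose injectivity forces $q_i$ to be injective, i.e. $K_i=\{1\}$ — contradicting that the $K_i$ are distinct nontrivial (hence eventually nontrivial) kernels.

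The main obstacle I anticipate is bookkeeping around the peripheral structures and the precise rigidity hypothesis: one must be careful that ``non-elementary splitting relative to $\calp$'' is the right hypothesis to feed into the $\Zmax$-rigidity assumption of Theorem \ref{thm_DF_carac} (and to make Corollary \ref{cor;R-tree} applicable in the degenerate case), and that in applying Theorem \ref{thm_DF_carac} one genuinely has type-preserving isomorphisms at the level of $\bPF_i$, not merely $\bar\calp_i$ — here the hypothesis that peripheral groups are infinite (so that $\bar\calp_i$ has no spurious finite members once $i$ is large, by Lemma \ref{lem_preserv_non_conj} and Lemma \ref{lem;elli}) is what makes the two notions coincide. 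A secondary point is verifying cofinality of a sequence of distinct finite Dehn kernels of $(G',\calp')$ without a priori control on which $N'_j$ appear; this follows because, for any fixed finite $A\subset G'\setminus\{1\}$, only finitely many finite-index normal subgroups of the $P'_j$ meet $A$, so all but finitely many kernels in a distinct sequence avoid $A$.
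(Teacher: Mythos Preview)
Your reduction to Theorem \ref{thm_DF_carac} is the right target, but the argument has a genuine gap in the first case. The claim that a sequence of \emph{distinct} finite Dehn kernels of $(G',\calp')$ is automatically cofinal is false, and your ``secondary point'' justifying it is incorrect. Concretely, take $P'_j=\bbZ^2$ and $N_i=\bbZ\times i\bbZ$: these are infinitely many distinct finite-index normal subgroups, yet every one of them contains $(1,0)$, so the sequence is not cofinal. Building on this, one easily manufactures infinitely many distinct finite Dehn kernels of $G'$ all containing a fixed nontrivial element. In the proof of Corollary \ref{cor_cofinal} this issue does not arise because the $K'_i$ are drawn from a set $\calq'$ that is \emph{assumed} cofinal; here you have no such hypothesis, and the case split cannot be salvaged on general grounds.

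The paper proceeds differently: rather than hoping the $K'_i$ are cofinal, it proves they are in fact \emph{characteristic} Dehn kernels of $G'$ (after passing to a subsequence). Since $\phi_i$ matches $\bar\calp_i$ with $\bar\calp'_i$, each peripheral quotient $P'_{j'}/(P'_{j'}\cap K'_i)$ is isomorphic to some $P_{\sigma(j')}/C_i(P_{\sigma(j')})$, hence is \emph{$i$-separated} (the intersection of its subgroups of index $\leq i$ is trivial), which forces $C_i(P'_{j'})\subset P'_{j'}\cap K'_i$. A cardinality comparison, run symmetrically in $G$ and $G'$, then yields equality $C_i(P'_{j'})=P'_{j'}\cap K'_i$, so $K'_i$ is the $i$-th characteristic Dehn kernel of $G'$, hence cofinal by residual finiteness. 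This also handles the subtlety (which your proposal does not address) that the given $K'_i$ may not be proper, so that $P'_{j'}\cap K'_i$ could a priori strictly contain the chosen $N'^i_{j'}$. Once cofinality is established, Theorem \ref{thm_DF_carac} applies directly and the degenerate ``constant $K'_i$'' case never occurs.
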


\begin{proof}
Consider $K_i$ the $i$-th characteristic Dehn kernel of $G$ (see Definition \ref{def;CDK}).
Our assumption now says that for all $i$, there exists a (maybe non-proper) finite Dehn kernel 
$K'_i=\ngrp{N_1'^i,\dots,N_{k'}'^i}$ and an isomorphism $\phi_i: G/K_i\ra G'/K'_i$
sending the image $\bar\calp_i$ of $\calp$ in $G/K_i$ to the image $\bar\calp'_i$ of $\calp'$ in $G'/K'_i$.

We know (Lemma \ref{lem_preserv_non_conj})  that when $i$
is large enough, $\bar\calp_i$ consists of exactly $k$ conjugacy
classes of groups.

On the other hand, $\bar\calp'_i$ consists of at most $k'$ conjugacy classes.
It follows that $k\leq k'$, hence $k=k'$ by symmetry of the argument.

Since we don't know that the Dehn filling is proper, the group $M'^i_j=P'_j\cap K'_i$ might be
larger than $N'^i_j$. 
The group $P'_j/M'^i_j$ is one of the groups in $\bar \calp'_i$, and by hypothesis,
$\phi_i\m$ induces an isomorphism with some conjugate of some $P_{\sigma_i(j)}/C_i(P_{\sigma_i(j)})$
for some bijection $\sigma_i$ of $\{1,\dots,k\}$.
Up to going to a subsequence, we can assume that the permutation $\sigma_i$ does not depend on $i$,
and we denote it by $\sigma$.
Since $P'_j/M'^i_j$ is isomorphic to $P_{\sigma(j)}/C_i(P_{\sigma(j)})$,
it is $i$-separated (in the sense used in the previous proof). It follows that $C_i(P'_j)\subset M'^i_j$,
and that $\# P_{\sigma(j)}/C_i(P_{\sigma(j)})=\#P'_j/M'^i_j\leq \# P'_j/C_i(P'_j)$.

Starting from the characteristic Dehn fillings defined by  $P'_j/C_i(P'_j)$ for the subsequence defined above,
we can reverse the argument and find a further subsequence and another permutation $\sigma'$ such that 
for all $j\in \{1,\dots k$, $\# P'_{\sigma'(j)}/C_i(P'_{\sigma'(j)})\leq \# P_j/C_i(P_j)$.
Combining these, we get that all these inequalities are in fact equalities 
which implies that $C_i(P'_j)= M'^i_j$.

Thus $K'_i$ is a characteristic Dehn kernel, $K'_i$ is a cofinal sequence, and
Theorem \ref{thm_DF_carac} applies.
\end{proof}

\section{Solution to the isomorphism problem}

In this section we use the fact proved in the previous section
that rigid relatively hyperbolic groups are determined by their Dehn fillings
to give an algorithmic solution to the isomorphism problem when parabolic groups are residually finite.

We first state a version in which the peripheral subgroups are given in the input.
In this algorithm, each relatively hyperbolic group $(G,\calp)$ is given as follows.
The group $G$ is given by some finite presentation $G=\grp{S|R}$.
Recall that by definition $\calp$ is stable under conjugation, and choose $P_1,\dots, P_k$ 
some representatives of the conjugacy classes.
Then $\calp$ is given by a choice of generating set of each $P_i$, each generator being given as a word on $S^{\pm 1}$.
Recall that $\Zmax$-rigidity is a weakening of rigidity introduced in Definition \ref{dfn_Zmax_rigid}.

  \begin{theo} \label{theo;isom_algo}
    There is an algorithm that solves the following problem.

    The input is  a pair of relatively hyperbolic groups $(G,\calp),(G',\calp')$ given by
    finite presentations $G=\grp{S|R},G'=\grp{S'|R'}$
    together with finite generating sets of conjugacy representatives of $\calp$ and $\calp'$ as above.
We assume
 that the following assumptions hold:
      \begin{itemize}
       \item $(G,\calp)$ and $(G',\calp')$ are $\Zmax$-rigid and non-elementary
      \item peripheral subgroups are infinite, and residually finite
      \end{itemize}

    The output is the answer to the question  whether $(G,\calP)\simeq(G,\calp')$, ie whether there exists an isomorphism sending $\calp$ to $\calp'$.
    If the answer is positive, the algorithm also gives an explicit isomorphism.
  \end{theo}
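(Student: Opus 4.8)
The plan is to run in parallel two semi-decision procedures --- a \emph{positive recognizer} halting exactly when $(G,\calp)\simeq(G',\calp')$, and a \emph{negative recognizer} halting exactly when $(G,\calp)\not\simeq(G',\calp')$ --- so that exactly one halts and its outcome answers the question; the positive recognizer will also exhibit an explicit isomorphism. The positive recognizer is the classical fact that the isomorphism problem for finitely presented groups is recursively enumerable, refined to track peripheral structures: I would dovetail over all pairs of candidate maps on generators $\phi:S\to(S')^{*}$, $\psi:S'\to S^{*}$, searching simultaneously for derivations witnessing that $\phi$ and $\psi$ are mutually inverse homomorphisms, and, for each $j$, for an index $j'$ and a word $g_j$ over $S'$ together with, for every given generator $w$ of $P_j$, a word over the generators of $P'_{j'}$ equal in $G'$ to $g_j^{-1}\phi(w)g_j$, plus the symmetric data for $\psi$. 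All these conditions are semi-decidable since ``$u=1$'' is semi-decidable in a finitely presented group. A complete certificate forces $\phi,\psi$ to be mutually inverse isomorphisms carrying each peripheral subgroup into a conjugate of a peripheral subgroup on the other side; since peripheral subgroups are infinite and (being parabolic) almost malnormal, composing the two families of inclusions makes them equalities, so $\phi(\calp)=\calp'$. Conversely an isomorphism $(G,\calp)\simeq(G',\calp')$ and its inverse yield such a certificate, so this procedure halts exactly when $(G,\calp)\simeq(G',\calp')$, and it outputs $\phi$.

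The heart of the matter is the negative recognizer, which uses the rigidity theorem. For $i\ge1$ let $K_i,K'_i$ be the $i$-th characteristic Dehn kernels (Definition~\ref{def;CDK}), $\bar G_i=G/K_i$, $\bar G'_i=G'/K'_i$, with peripheral structures $\bar\calp_i,\bar\calp'_i$. I would first check these are computable from the input: \cite{DG_presenting} produces presentations of the peripheral subgroups, which are residually finite (and finitely presented, peripheral subgroups of a finitely presented relatively hyperbolic group inheriting finite presentability), so their word problems are solvable and one can compute the characteristic cores $C_i(P_j)$, hence finite generating sets of $K_i$, hence a finite presentation of $\bar G_i$ together with generating sets of the finite subgroups $\bar P_j$. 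Since the peripheral subgroups are residually finite, their characteristic cores are cofinal, so by Lemma~\ref{lem;cof} the sequence $(K_i)$ is cofinal and eventually proper of depth tending to infinity; hence by Corollary~\ref{cor_hyp} the groups $\bar G_i,\bar G'_i$ are hyperbolic for all large $i$. The negative recognizer then dovetails over $i$: for each $i$ it semi-certifies hyperbolicity of $\bar G_i$ and $\bar G'_i$, and once both are certified hyperbolic it runs the algorithm of \cite{DG2} to decide whether $(\bar G_i,\bar\calp_i)\simeq(\bar G'_i,\bar\calp'_i)$ as hyperbolic groups with (finite) peripheral structure; the first negative answer makes it halt with output ``not isomorphic''.

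For correctness: if $(G,\calp)\simeq(G',\calp')$ via $\phi$, then since $C_i(\cdot)$ is a characteristic-subgroup functor and $\phi$ permutes the conjugacy classes of peripheral subgroups, $\phi(K_i)=K'_i$ for every $i$, so $\phi$ descends to an isomorphism $\bar G_i\to\bar G'_i$ carrying $\bar\calp_i$ to $\bar\calp'_i$; every call to \cite{DG2} is then positive, so the procedure never halts. Conversely, if it never halts then $(\bar G_i,\bar\calp_i)\simeq(\bar G'_i,\bar\calp'_i)$ for all large $i$; for such $i$, Lemma~\ref{lem;elli} says every finite subgroup of $\bar G_i$ is sub-$\bPF_i$, so any isomorphism carrying $\bar\calp_i$ to $\bar\calp'_i$ is type preserving for the weaker structures $\bPF_i,\bPF'_i$ in the sense of Definition~\ref{dfn_type}. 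Theorem~\ref{thm_DF_carac}, applied to the cofinal sequences $(K_i),(K'_i)$ and these type-preserving isomorphisms, then provides a type-preserving isomorphism $(G,\PF)\to(G',\PF')$, which by Remark~\ref{rk_type} (all peripheral subgroups being infinite) is an isomorphism $G\to G'$ sending $\calp$ to $\calp'$. Applying this to every tail of $(K_i)$ shows that in the non-isomorphic case negative answers occur for infinitely many $i$, hence for infinitely many $i$ in the hyperbolic range, so the procedure does halt. Running the two recognizers in parallel thus decides the problem and, in the positive case, produces an isomorphism.

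I expect the main obstacle to be exactly the conversion of the rigidity statement into an algorithm: one needs a \emph{computable} cofinal family of Dehn fillings that are \emph{hyperbolic}, so that \cite{DG2} applies, and for which the structure-preserving isomorphism question is \emph{decidable}. The two delicate points within this are the mismatch between the strong peripheral structure $\bar\calp_i$ that \cite{DG2} compares and the weak structure $\bPF_i$ for which Theorem~\ref{thm_DF_carac} is phrased (bridged by Lemma~\ref{lem;elli}), and the computability of the characteristic Dehn fillings together with their peripheral structure (bridged by \cite{DG_presenting} and residual finiteness). Everything else is bookkeeping around Theorem~\ref{thm_DF_carac}, which is precisely what makes comparison of Dehn fillings not only necessary but also sufficient to recognize $(G,\calp)$.
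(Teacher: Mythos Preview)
Your proof is correct and follows essentially the same strategy as the paper: two parallel semi-decision procedures, the negative one comparing characteristic Dehn fillings via \cite{DG2} and invoking Theorem~\ref{thm_DF_carac} (with Lemma~\ref{lem;elli} bridging $\bar\calp_i$ to $\bPF_i$) for correctness. The only cosmetic differences are that the paper's positive recognizer uses Tietze transformations rather than a direct search over candidate maps, and its negative recognizer tests plain isomorphism $\bar G_i\simeq\bar G'_i$ rather than $(\bar G_i,\bar\calp_i)\simeq(\bar G'_i,\bar\calp'_i)$, relying on Lemma~\ref{lem;elli} alone to force type preservation; both variants work for the same reasons.
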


  \begin{rem}
    Without assuming that peripheral groups are infinite, our algorithm works and says whether there is a type preserving
    isomorphism $(G,\PF)\simeq (G',\PF')$ (see Section \ref{sec_type}).
  \end{rem}

In section \ref{sec_notgiven}, we will give a variant where we don't give the peripheral subgroups in the input.

The proof of the theorem will be given in the next subsections.

Here is an overview of our algorithm.
Consider two relatively hyperbolic groups $(G,\calp),(G',\calp')$ whose peripheral subgroups are residually finite.
Then consider a cofinal sequence of Dehn fillings $G_i/K_i,G'_i/K'_i$ obtained by killing suitable characteristic subgroups of finite index
of the peripheral subgroups.
This choice of Dehn fillings guarantees that if $(G,\calp)\simeq(G',\calp')$
then $(G/K_i,\bar \calp_i)\simeq(G'/K'_i,\bar \calp'_i)$ for all $i$,
where  $\bar \calp_i$ and $\bar \calp'_i$ denote the image  of $\calp$ and $\calp'$ in $G/K_i$ and $G'/K'_i$ respectively.

The Dehn filling theorem ensures that the quotient groups $G/K_i$ are Gromov-hyperbolic (not relatively) for $i$ large enough.
Using the solution to the isomorphism problem for hyperbolic groups \cite{DG2},
one can check whether $(G/K_i,\bar\calp_i)$ is isomorphic to $(G/K_i,\bar\calp_i)$.
 Since there exists an algorithm that stops if and only the group given as input is hyperbolic \cite{Papasoglu_algorithm},
we can construct a first algorithm that stops if and only if for some $i$, $G/K_i$ and $G'/K'_i$ are hyperbolic 
and $(G/K_i,\bar \calp_i)\not\simeq(G'/K'_i,\bar \calp'_i)$.
Our rigidity Theorem \ref{thm_DF_carac} implies that this happens if and only if $(G,\calp)\not\simeq (G',\calp')$.
To conclude, it is enough to produce a second algorithm that stops if and only if $(G,\calp)\simeq (G',\calp')$.
This can easily be done  by enumerating all presentations of $G$ using Tietze transformations.

\subsection{More on characteristic Dehn kernels}

Consider $(G,\{[P_1],\dots,[P_k]\})$ a relatively hyperbolic group
with residually finite peripheral subgroups.
In  Definition \ref{def;CDK} we introduced a useful canonical cofinal
family of Dehn kernels, the characteristic Dehn kernels $K_i$:
the $i$-th characteristic core $C_i(P)$ of a group $P$ is 
the intersection of all subgroups of  index at most $i$ in $P$, and $K_i=\ngrp{C_i(P_1),\dots,C_i(P_k)}$.

 We will need to compute $C_i(P)$ thanks to the following Lemma.

\begin{lem}\label{lem_compute_CiP} 
Given a finite presentation $\grp{S|R}$ of  a group $P$ and $i\geq 1$, one can 
compute a generating set of $C_i(P)$ (given as a finite set of words on $S^{\pm 1}$).
\end{lem}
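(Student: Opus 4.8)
The plan is to produce $C_i(P)$ via an explicit, terminating enumeration, using the fact that $P$ has only finitely many subgroups of index at most $i$ and that each such subgroup is detected by a homomorphism to a finite symmetric group.

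First I would observe that a subgroup $H \le P$ of index $j \le i$ corresponds (via the action of $P$ on the left cosets $P/H$, after choosing a numbering of the cosets sending $H$ to a fixed symbol) to a transitive homomorphism $\rho: P \to \mathrm{Sym}(j)$ together with a distinguished point, and $H = \rho^{-1}(\mathrm{Stab}(\text{point}))$. Since $P = \grp{S \mid R}$ is given by a finite presentation, a homomorphism $\rho: P \to \mathrm{Sym}(j)$ is specified by a tuple $(\rho(s))_{s\in S}$ of permutations in $\mathrm{Sym}(j)$ satisfying all relators $r \in R$; this is a finite, effectively checkable condition, so one can enumerate all homomorphisms $P \to \mathrm{Sym}(j)$ for each $j \le i$. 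Consequently one obtains an explicit finite list $H_1, \dots, H_m$ of all subgroups of $P$ of index at most $i$, each presented as $\ker(\pi_\ell)$ for an explicit map $\pi_\ell: P \to Q_\ell$ with $Q_\ell$ a finite permutation group; then $C_i(P) = \bigcap_\ell H_\ell = \ker(\pi)$ where $\pi: P \to \prod_\ell Q_\ell$ is the product map, which again is an explicit homomorphism from $P$ onto (a subgroup of) an explicit finite group $Q$.

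It then remains to compute a finite generating set for the kernel of a given explicit homomorphism $\pi: P \to Q$ with $P$ finitely presented and $Q$ finite. This is standard Reidemeister–Schreier: pick a right transversal $\{t_1,\dots,t_N\}$ of $\ker\pi$ in $P$ (one can take preimages under $\pi$ of the elements of $\pi(P) \le Q$, choosing for each $q$ a word in $S^{\pm1}$ mapping to it, which is computable by a breadth-first search in $Q$), and then $\ker\pi$ is generated by the finitely many Schreier generators $t_a\, s\, \overline{t_a s}^{\,-1}$ for $a \in \{1,\dots,N\}$ and $s \in S$, where $\overline{g}$ denotes the transversal representative of the coset of $g$ — and membership in a given coset is decided by applying $\pi$. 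Each such generator is, by construction, an explicit word on $S^{\pm1}$, so this yields the required finite generating set of $C_i(P)$.

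The only mild obstacle is bookkeeping rather than mathematics: making sure the enumeration of homomorphisms $P \to \mathrm{Sym}(j)$ is genuinely effective (it is — finitely many tuples of permutations, finitely many relators to check) and that the Reidemeister–Schreier rewriting is carried out with words rather than abstract cosets (it is, since $Q$ is a concrete finite group and $\pi$ is given on generators). There is no need to decide the word problem in $P$ anywhere: every coset test is a test in the finite group $Q$, and we never claim a \emph{minimal} generating set, only a finite one, which is all the statement requires.
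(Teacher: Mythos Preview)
Your proposal is correct and follows essentially the same approach as the paper: enumerate all homomorphisms from $P$ to symmetric groups of degree at most $i$, realize $C_i(P)$ as the kernel of their product, and apply Reidemeister--Schreier. One small slip: the subgroups $H_\ell$ you obtain as preimages of point stabilizers are not literally the kernels $\ker(\pi_\ell)$, but your final identity $C_i(P)=\ker(\pi)$ for the product map $\pi$ is nonetheless correct (since $C_i(P)$ is contained in every conjugate of every $H_\ell$, hence in each $\ker\pi_\ell$), so the argument goes through.
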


\begin{proof}
Using the given presentation, one can list all the morphisms $\phi_1,\dots,\phi_k$ from $P$ to the symmetric group $Sym_i$. 
One easily checks that $C_i(P)$ is the intersection of the kernels of all such morphisms,
so $C_i(P)$ is the kernel of the product morphism $\Phi=(\phi_1,\dots\phi_k)$. 
Applying the Reidemeister-Schreier method \cite[Proposition II.4.1]{LyndonSchupp}
then yields a finite generating set (and even a presentation) for $C_i(P)$.
\end{proof}

The following simple observation applies for all $i$ (even for small values of $i$ where the Dehn kernel $K_i$ is not proper and $G/K_i$ need not be hyperbolic).
We denote by $\bar \calp_i$ the image of $\calp$ in $ G/K_i$, and use similar notations for $G'$.

\begin{lem}\label{lem_characteristic}
  Let $(G,\calp)$, $(G',\calp')$ be relatively hyperbolic groups. 
Let $K_i,K'_i$ be the corresponding  $i$-th characteristic Dehn kernels.

If $(G,\calp)\simeq (G',\calp')$ then for all $i\geq 1$, $(G/K_i,\bar\calp_i)\simeq(G'/K'_i,\bar\calp'_i)$.
\end{lem}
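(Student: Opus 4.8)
The plan is to show that any isomorphism $\phi:G\to G'$ with $\phi(\calp)=\calp'$ necessarily carries the $i$-th characteristic Dehn kernel $K_i$ onto $K'_i$, and hence descends to an isomorphism of the Dehn fillings that preserves the peripheral structures. First I would unwind the hypothesis $\phi(\calp)=\calp'$: since $\calp=\{[P_1],\dots,[P_k]\}$ and $\calp'=\{[P'_1],\dots,[P'_{k'}]\}$ are the finite sets of conjugacy classes of peripheral subgroups, $\phi$ induces a bijection between these sets of conjugacy classes. In particular $k=k'$, and there is a permutation $\sigma$ of $\{1,\dots,k\}$ together with elements $g_j\in G'$ such that $\phi(P_j)=g_j\,P'_{\sigma(j)}\,g_j\m$ for every $j$.

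Next I would invoke the fact that the characteristic core is preserved under isomorphism. By Definition \ref{def;CDK}, $C_i(P)$ is the intersection of all subgroups of index at most $i$ in $P$; any isomorphism of groups permutes the set of subgroups of a fixed index, hence carries $C_i$ to $C_i$. Applying this to the isomorphism $\ad_{g_j\m}\circ(\phi|_{P_j}):P_j\to P'_{\sigma(j)}$ (which sends $\phi(P_j)=g_jP'_{\sigma(j)}g_j\m$ onto $P'_{\sigma(j)}$) yields $\phi\bigl(C_i(P_j)\bigr)=g_j\,C_i(P'_{\sigma(j)})\,g_j\m$. Since $\phi$ is surjective, normal closures are carried to normal closures, so
$$\phi(K_i)=\phi\bigl(\ngrp{C_i(P_1),\dots,C_i(P_k)}_G\bigr)=\ngrp{g_1\,C_i(P'_{\sigma(1)})\,g_1\m,\ \dots,\ g_k\,C_i(P'_{\sigma(k)})\,g_k\m}_{G'}.$$
As the normal closure of a family of subgroups of $G'$ is unchanged when each subgroup is replaced by a $G'$-conjugate, and as $\sigma$ is a bijection, the right-hand side equals $\ngrp{C_i(P'_1),\dots,C_i(P'_{k'})}_{G'}=K'_i$. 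Thus $\phi(K_i)=K'_i$.

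Finally, $\phi$ induces an isomorphism $\bar\phi:G/K_i\to G'/K'_i$ fitting into a commutative square with the quotient maps $p_i:G\to G/K_i$ and $p'_i:G'\to G'/K'_i$, i.e.\ $p'_i\circ\phi=\bar\phi\circ p_i$. Since by definition $\bar\calp_i=p_i(\calp)$ and $\bar\calp'_i=p'_i(\calp')$, we get $\bar\phi(\bar\calp_i)=p'_i(\phi(\calp))=p'_i(\calp')=\bar\calp'_i$, so $\bar\phi$ is the desired isomorphism $(G/K_i,\bar\calp_i)\simeq(G'/K'_i,\bar\calp'_i)$. The only step that requires any care is the bookkeeping combining conjugation, the permutation $\sigma$, and the normal-closure operation in the computation of $\phi(K_i)$; everything else is formal.
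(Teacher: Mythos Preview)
Your proof is correct and follows essentially the same approach as the paper's: show that $\phi$ carries $C_i(P_j)$ to (a conjugate of) $C_i(P'_{\sigma(j)})$, deduce $\phi(K_i)=K'_i$, and pass to the quotient. You are simply more explicit than the paper about the conjugation and permutation bookkeeping, which the paper's three-line proof absorbs into the observation that $\phi(P)\in\calp'$ (recall $\calp'$ is closed under conjugation) and hence $\phi(C_i(P))=C_i(\phi(P))$ already lies in the normal closure defining $K'_i$.
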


\begin{proof}
  If $\phi:G\ra G'$ is an isomorphism that maps $\calp$ to $\calp'$, then for each $P\in \calp$, 
$\phi(C_i(P))=C_i(\phi(P))=C_i(P')$ where $P'=\phi(P)\in\calp'$. 
It follows that $\phi(K_i)=K'_i$, so $\phi$ induces an isomorphism $\bar\phi_i:G/K_i\ra G'/K'_i$
that sends $\bar\calp_i$ to $\bar \calp'_i$.
\end{proof}

Assuming that peripheral groups are residually finite, Lemma \ref{lem;cof} implies that $(K_i)_{i\geq 1}$ is a cofinal sequence of Dehn kernels.
In particular, $K_i$ is a proper Dehn kernel for $i$ large enough, and in particular $G/K_i$ is Gromov hyperbolic (Corollary \ref{cor_hyp}).
We thus get:

\begin{lemma}\label{lem;residual_hyperbolicity}
Any relatively hyperbolic group  $(G,\calP)$ with residually finite peripheral subgroups is fully residually hyperbolic:
for any finite subset $A\subset G\setminus\{1\}$ there exists a hyperbolic group $H$ and a morphism $\phi:G\ra H$
such that $1\notin\phi(A)$.\qed
\end{lemma}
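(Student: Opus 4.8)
The plan is to use the characteristic Dehn kernels $K_i$ introduced in Definition \ref{def;CDK} as the source of the required hyperbolic quotients, so that the morphism $\phi$ in the statement will be one of the quotient maps $q_i:G\to G/K_i$ for $i$ large enough.

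First I would recall that the peripheral subgroups $P_1,\dots,P_k$ are finitely generated (by hypothesis they are also residually finite), so that for each $j$ the sequence of characteristic cores $(C_i(P_j))_{i\geq 1}$ is a descending chain of finite-index subgroups of $P_j$ whose intersection is trivial, hence cofinal in $P_j$ in the sense of Definition \ref{dfn_cof}. Applying Lemma \ref{lem;cof} with $N^i_j=C_i(P_j)$, this implies that the sequence of characteristic Dehn kernels $K_i=\ngrp{C_i(P_1),\dots,C_i(P_k)}_G$ is cofinal in $G$, and moreover that $K_i$ is a proper Dehn kernel for all $i$ large enough.

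Next I would invoke Corollary \ref{cor_hyp}: since each $C_i(P_j)$ has finite index in $P_j$, once $K_i$ is a proper Dehn kernel the quotient $G/K_i$ is a Gromov-hyperbolic group. So set $H_i=G/K_i$; these are hyperbolic for all sufficiently large $i$.

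Finally, to conclude, given a finite subset $A\subset G\setminus\{1\}$, cofinality of $(K_i)$ gives an index $i_0$ such that for all $i\geq i_0$ one has $K_i\cap A=\emptyset$; taking $i$ also large enough that $G/K_i$ is hyperbolic, the quotient map $q_i:G\to H_i=G/K_i$ satisfies $1\notin q_i(A)$, which is exactly the desired conclusion. There is no real obstacle here: the lemma is a direct packaging of Lemma \ref{lem;cof} and Corollary \ref{cor_hyp}, and the only point requiring a word of care is that one must pass far enough along the sequence both to avoid $A$ and to guarantee hyperbolicity, which is automatic since both conditions hold for all $i$ large enough.
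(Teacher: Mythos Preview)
Your proposal is correct and follows exactly the same route as the paper: the lemma is stated with a \qed\ because the preceding paragraph already explains that the characteristic Dehn kernels $(K_i)$ form a cofinal sequence (via Lemma \ref{lem;cof}) and that $G/K_i$ is hyperbolic for large $i$ (via Corollary \ref{cor_hyp}), which is precisely your argument.
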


It is well known that the word problem in a relatively hyperbolic group is solvable if and only if
it is solvable in its peripheral subgroups \cite{Farb_relatively}.  
We need a uniform solution 
of the word problem among relatively hyperbolic groups with residually finite peripheral groups, even if the peripheral groups are not explicitly given.

\begin{cor}\label{cor_wordpb}
  There is an algorithm that takes as input $(\grp{S|R},w)$, 
where $\grp{S|R}$ is a finite presentation of a group $G$, such that $G$ is 
relatively hyperbolic with respect to some family of residually finite subgroups, 
and where $w$ in a word on the generating set $S$, and which says whether $w$ represents the trivial element in $G$.
\end{cor}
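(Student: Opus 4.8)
The plan is to combine two semi-decision procedures that, together, always terminate. The first enumerates consequences of the relations to confirm when $w=1$; the second uses full residual hyperbolicity (Lemma~\ref{lem;residual_hyperbolicity}) to confirm when $w\neq 1$.

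\begin{proof}
Run the following two procedures in parallel on the input $(\grp{S|R},w)$.

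\emph{Procedure A (detecting $w=1$).} Enumerate all formal consequences of the relators $R$, i.e.\ all products of conjugates $\prod_{j} u_j r_j^{\pm 1} u_j^{-1}$ with $r_j\in R$ and $u_j$ words on $S^{\pm1}$, and freely reduce each one. If some such product freely reduces to $w$, halt and output ``$w=1$''. Since $w$ represents the trivial element of $G=\grp{S|R}$ if and only if $w$ lies in the normal closure of $R$ in the free group on $S$, Procedure A halts if and only if $w=1$ in $G$.

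\emph{Procedure B (detecting $w\neq 1$).} Enumerate all finite group presentations $\grp{T\mid Q}$ together with all maps $f\colon S\to (T\cup T^{-1})^{*}$; for each pair, check (using a solution to the word problem in hyperbolic groups, e.g.\ via Dehn's algorithm once a Dehn presentation has been recognized, or simply by running Procedure A-type enumeration together with the known fact that hyperbolic groups have solvable word problem) whether $\grp{T\mid Q}$ is hyperbolic, whether $f$ extends to a homomorphism $\phi\colon G\to H:=\grp{T\mid Q}$ (i.e.\ whether $\phi$ kills every relator in $R$), and whether $\phi(w)\neq 1$ in $H$. If such a triple $(H,\phi)$ is found with $\phi(w)\neq 1$, halt and output ``$w\neq 1$''.

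If $w\neq 1$ in $G$, then since $G$ is relatively hyperbolic with residually finite peripheral subgroups, Lemma~\ref{lem;residual_hyperbolicity} provides a hyperbolic group $H$ and a homomorphism $\phi\colon G\to H$ with $\phi(w)\neq 1$; this pair is eventually reached in the enumeration of Procedure B, so Procedure B halts. Conversely, if Procedure B halts, it has exhibited a homomorphic image in which $w$ is nontrivial, so $w\neq 1$ in $G$.

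Since exactly one of ``$w=1$'', ``$w\neq1$'' holds, exactly one of the two procedures halts, and the combined algorithm always terminates with the correct answer. Note that this uses nothing beyond the given presentation: the peripheral subgroups are not part of the input, and their residual finiteness enters only through the existential statement in Lemma~\ref{lem;residual_hyperbolicity}.
\end{proof}

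The only subtlety — and the point worth flagging — is the internal verification in Procedure B that a candidate finite presentation $\grp{T\mid Q}$ is hyperbolic and that $\phi(w)\neq 1$ there. One cannot in general decide hyperbolicity, but one does not need to: it suffices to enumerate, by Papasoglu's semi-algorithm~\cite{Papasoglu_algorithm}, those presentations that are certified hyperbolic (together with a computed hyperbolicity constant, hence a linear isoperimetric function, hence a solution to their word problem), and to test $\phi$ and $w$ only against those. Full residual hyperbolicity guarantees the relevant certificate is eventually produced.
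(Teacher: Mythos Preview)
Your proof is correct and follows essentially the same approach as the paper: two parallel semi-decision procedures, one enumerating consequences of the relators to confirm $w=1$, the other enumerating Papasoglu-certified hyperbolic presentations and homomorphisms to them to confirm $w\neq 1$, with termination guaranteed by Lemma~\ref{lem;residual_hyperbolicity}. Your closing paragraph on the subtlety of certifying hyperbolicity is a welcome clarification that the paper handles more tersely.
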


\begin{proof}
On the one hand, enumerate all words that are products of conjugates of relators in $\grp{S|R}$, and check whether $w$ appears.
 If it does, then $w$ represents the trivial element.

On the other hand, enumerate all group  presentations of all hyperbolic groups. 
This can be done by enumerating all presentations, and using Papasoglu's algorithm that stops only if the group defined by this presentation is hyperbolic
 \cite{Papasoglu_algorithm}.
In this case, Papasoglu's algorithm also provides an explicit linear upper bound on the isoperimetry function, hence a solution to the word problem.
For each such hyperbolic group $H$, enumerate (in parallel) all the morphisms from $G$ to $H$, 
(using the solution to the word problem in $H$), and check whether $w$ has non trivial image. 
 In this case, then $w$ in non-trivial in $G$.

By residual hyperbolicity proved in  Lemma \ref{lem;residual_hyperbolicity}, one of the two procedures must terminate, 
and allows one to decide whether $w$ is trivial in $G$ or not.
\end{proof}

\subsection{Solution to the isomorphism problem when peripheral subgroups are given}

We are now ready to prove Theorem \ref{theo;isom_algo} solving the isomorphism problem for rigid
relatively hyperbolic groups with residually finite peripheral subgroups.

The first step in the proof of the theorem is the following result
that allows one to compute a presentation  
 of the peripheral subgroups
from their generating set.

\begin{prop}\label{prop;presentation_des_periph}
 There exists an algorithm that solves the following problem.

The input is a relatively hyperbolic group $(G,\{[P_1],\dots,[P_k]\})$ with the assumption that each $P_i$ is residually finite.
It is given to the algorithm as in Theorem \ref{theo;isom_algo} by a finite presentation of $G$, 
and a finite generating set $S_i$ for each $P_i$. 

The output, 
is a finite presentation for each $P_i$ on the same generating set $S_i$.
\end{prop}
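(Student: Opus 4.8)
The plan is to reduce the problem to producing \emph{some} finite presentation of each $P_i$, and then to transport it onto the prescribed generating set $S_i$. First I would record the basic algorithmic input. Since $G$ is hyperbolic relative to residually finite subgroups, its word problem is solvable by Corollary~\ref{cor_wordpb}; as each $P_i$ embeds in $G$, a word on $S_i^{\pm1}$ represents $1$ in $P_i$ if and only if it represents $1$ in $G$, so the set $\mathrm{Rel}(P_i)\subset F(S_i)$ of relators of $P_i$ on $S_i$ is decidable and the word problem in each $P_i$ is solvable. Then, given any finite presentation $\langle T_i\mid Q_i\rangle$ of $P_i$ (together with words on the generators of $G$ realizing the elements of $T_i$), one converts it into a finite presentation on $S_i$ in the usual way: for each generator of $T_i$ (resp.\ of $S_i$), enumerate words on $S_i^{\pm1}$ (resp.\ on $T_i^{\pm1}$) until one represents it in $P_i$ — this halts because both sets generate $P_i$ and its word problem is solvable — and then apply Tietze transformations \cite{LyndonSchupp}. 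So the whole matter comes down to computing a finite presentation of $P_i$ at all.

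For that I would invoke \cite{DG_presenting}: from a finite presentation of a relatively hyperbolic group together with finite generating sets of representatives of its peripheral subgroups, one can compute a finite presentation of each peripheral subgroup (as the $S_i$ are supplied here, only the presentation-producing part of that argument is needed, not the part that locates the subgroups). Conceptually, the reason this is not obstructed — recall that a word-problem oracle alone lets one neither compute a presentation of a finitely presented group nor recognize when a finite candidate set of relators is complete — is the relatively hyperbolic geometry of $(G,\calp)$: such a group satisfies a linear relative isoperimetric inequality \cite{Osin_relatively}, which bounds, in terms of its length, the complexity of a van Kampen diagram over a relative presentation filling a relator of $P_i$. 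Making this effective, starting from the data $\langle S\mid R\rangle$ and the $S_i$, should produce a computable integer $L_i^\ast$ for which $\langle\, S_i\mid\{\,w\in\mathrm{Rel}(P_i):\ |w|\le L_i^\ast\,\}\,\rangle$ is a presentation of $P_i$; the algorithm then just enumerates the finitely many relators of length $\le L_i^\ast$ (using the decidability of $\mathrm{Rel}(P_i)$) and outputs this presentation. In passing, this re-derives that $P_i$ is finitely presented, which in any case follows from Osin's theorem that a relatively hyperbolic group is finitely presented exactly when all its peripheral subgroups are \cite{Osin_relatively}.

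The hard part is precisely this effectivity: producing the bound $L_i^\ast$, equivalently making Osin's relative finite presentation theorem algorithmic. This is where the relatively hyperbolic structure is genuinely used and is the substance of \cite{DG_presenting}; residual finiteness of the peripheral subgroups enters the present statement only through Corollary~\ref{cor_wordpb}. Everything else — deciding $\mathrm{Rel}(P_i)$ via the word problem in $G$ and the change of generators by Tietze transformations — is routine.
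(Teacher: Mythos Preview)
Your proposal is correct and follows essentially the same approach as the paper: solve the word problem in $G$ via Corollary~\ref{cor_wordpb} (this is where residual finiteness is used), and then invoke \cite{DG_presenting} to compute the peripheral presentations. The paper's proof is a two-line reduction to \cite[Theorem~2]{DG_presenting}, which already yields a presentation on the given generating set $S_i$, so your Tietze-transformation step is not needed (though it is harmless and correct); your surrounding discussion of the relative isoperimetric inequality is a fair sketch of why the cited result holds, but the paper treats that as a black box.
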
 

\begin{proof}
Under these assumptions, we can solve the word problem by Corollary \ref{cor_wordpb}.
Then we may apply  \cite[Theorem 2]{DG_presenting}, to get precisely the desired result.
\end{proof}

  \begin{proof}[Proof of Theorem \ref{theo;isom_algo}]
Denote by $S_1,\dots S_k$ (resp.\ $S'_1,\dots,S'_{k}$) the generating sets of the peripheral subgroups we are given,
and let $P_i=\grp{S_i}$ (resp.\ $P'_i=\grp{S_i'}$) the corresponding subgroup of $G$ (resp.\ $G'$).
Thanks to Proposition \ref{prop;presentation_des_periph}, we can compute a finite presentation $P_i=\grp{S_i|R_i}$ (resp $P'_j=\grp{S'_j|R'_j}$) of each peripheral group. 

 We run two procedures, $A$ and $B$ in parallel.

   Using Tietze transformations, Procedure $A$ enumerates all presentations of $G$ 
  together with all possible finite generating sets for conjugates of
  the groups $P_i$.
  Then Procedure $A$ stops if this presentation of $G$ obtained coincides with the presentation of $G'$ we were given,
and the generating sets for $P_1,\dots, P_k$ coincide with
$S'_1,\dots, S'_{k}$ up to renumbering. 
In this case, Procedure A correctly says that $(G,\calp)\simeq(G',\calp')$. Moreover, 
if $(G,\calp)\simeq(G',\calp')$, then Procedure A will detect it (\cite[II.~Proposition~2.1]{LyndonSchupp}).

 Procedure $B$, runs in parallel. 
It starts iteratively for each $i\geq 0$ a subprocedure $B_i$ that runs in parallel for each $i$.
Using Lemma \ref{lem_compute_CiP} and the presentation of $P_i$, 
Procedure $B_i$ computes a generating set the characteristic cores $C_i(P_j)$, $C_i(P'_j)$ of each peripheral subgroup.
We thus get a finite presentation of the characteristic Dehn fillings $G/K_i$ and $G'/K'_i$,
where $K_i=\ngrp{C_i(P_1),\dots, C_i(P_k)}$ and
$K'_i=\ngrp{C_i(P'_1),\dots, C_i(P'_k)}$ are the $i$-th characteristic
Dehn Kernels.  
Then procedure $B_i$ runs Papasoglu's algorithm that stops if and only if the given presentation defines a hyperbolic group
(thus procedure $B_i$ never stops if $G/K_i$ or $G'/K'_i$ is not hyperbolic) \cite{Papasoglu_algorithm}.
If Papasoglu's algorithm stops for both groups, procedure $B_i$
then runs a solution to the isomorphism problem for the class of  
 hyperbolic groups (with torsion),
specifically the algorithm of 
 \cite[Theorem 1]{DG2}  in order to determine whether there is an
 isomorphism $\phi_i: G/K_i \simeq G'/K_i $.  
If some procedure $B_i$ terminates and finds out that
$G/K_i \not\simeq G'/K'_i$, 
then procedure $B$ stops, and says that $ (G,\calp) \not\simeq (G',\calp') $. This is correct by Lemma \ref{lem_characteristic}.
 On the contrary, if for every $i$, Procedure $B_i$ does not stop, or finds out that $G/K_i \simeq G'/K'_i$, then Procedure $B$ does not stop.

To prove the theorem, 
we need to check that  one of the two procedures $A$ and $B$ eventually
 stops.
 Assume that procedure $B$ never stops, \ie that no procedure $B_i$ stops saying that  $G/K_i \not\simeq G'/K_i$.


Since peripheral subgroups are residually finite, the two sequences $K_i$ and $K'_i$ are cofinal
(Lemma \ref{lem;cof}).
Moreover, there exists $i_0$ such that for all $i\geq i_0$, $G/K_i$ and $G'/K'_i$ are hyperbolic.
In particular, procedure $B_i$ stops for all $i\geq i_0$. Since $B$ does not stop, 
for all $i\geq i_0$, there exists an isomorphism $\phi_i: G/K_i \simeq G'/K'_i$.
  Lemma \ref{lem;elli} says that for all $i$  large enough, 
$\phi_i$ is necessarily a type-preserving isomorphism between  $(G/K_i,\bPF_i)$ and $(G'/K'_i,\bPF'_i)$
since for deep enough Dehn fillings, a group is sub-$\bPF_i$ (resp.\ sub-$\bPF_i'$) if and only if it is a finite subgroup of $G/K_i$ (resp $G'/K'_i$).
We can therefore apply our rigidity Theorem \ref{thm_DF_carac} and get that
there exists type preserving isomorphism $(G,\PF) \to (G',\PF')$.
Since all peripheral groups are infinite, this means by Remark \ref{rk_type} that the isomorphism sends $\calp$ to $\calp'$,
\ie $(G,\calp)\simeq (G',\calp')$.
In this case, Procedure $A$ has to stop.
 \end{proof}

\subsection{When peripheral subgroups are not given}
\label{sec_notgiven}
When peripheral  subgroups are not given to the algorithm, one can try to find them.
For this to work, we need the peripheral subgroups to lie in a given suitable recursively enumerable class $\calc$.

\begin{dfn}
  We say that a set $\calc$ of isomorphism classes of finitely presented groups is recursively enumerable if
there exists a Turing machine that enumerates all finite presentations of all groups in $\calc$.
\end{dfn}

Equivalently, $\calc$ is enumerable if there is a Turing machine enumerating some finite presentations,
each of which represents a group in $\calc$, and such that every group in $\calc$ has at least one presentation
that is enumerated. We say that such a Turing machine \emph{enumerates} $\calc$.

An important example for $\calc$ is the class $VPC_{\geq2}$ of virtually polycyclic groups that are infinite, and not virtually cyclic
(i.e. of Hirsch length at least $2$).
This class is easily shown to be recursively enumerable. Indeed, 
given two finitely presented groups $A=\grp{a_i|r_k(a_i)}$, $B=\grp{b_j|r'_l(b_j)}$
one can enumerate all their extensions by enumerating all the automorphisms of $A$,
and enumerating all product of automorphisms of $A$ whose composition is inner,
and then all presentations of the form $\grp{a_i,b_j |b_ja_ib_j\m=\alpha_j(a_i), r'_l(b_j)=w_l(a_i)}$
where $\alpha_j$ are automorphisms of $A$ such that  $r'_l(\alpha_j)$ coincides with the inner automorphism $\ad_{w_l(a_i)}$.

The following result is a consequence of \cite[Theorem 3]{DG_presenting}.
Since virtually polycyclic groups are residually finite \cite{Hir_soluble3},
it applies in particular to the class $VPC_{\geq2}$.

\begin{thm}\label{thm_find}
Let $\calc$ be a recursively enumerable class of residually finite, finitely presented groups.

Then there exists an algorithm that takes as input a 
 finite presentation  $\grp{S|R}$ of a group $G$ that is hyperbolic relative to some groups in $\calc$,
and which  outputs 
a generating set  (as words on $S^{\pm 1}$) and a finite presentation 
of some subgroups $P_1,\dots,P_k<G$,
such that each $P_i$ lies in $\calc$, and $G$ is hyperbolic relative to $P_1,\dots,P_k$.
\end{thm}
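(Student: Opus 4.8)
The plan is to reduce the statement to \cite[Theorem 3]{DG_presenting}, which is precisely an algorithm of this kind but formulated under the additional hypothesis that one is supplied with a (uniform) solution of the word problem --- both in $G$ and along the enumeration of the candidate class $\calc$. So the work consists in producing these two word-problem oracles from the data at hand, after which one simply invokes that theorem as a black box.

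For the members of $\calc$: each such group is finitely presented and residually finite, and $\calc$ comes with an enumerating Turing machine, so the word problem is uniformly solvable along this enumeration --- given an enumerated presentation and a word $w$, run in parallel the enumeration of products of conjugates of relators (which eventually exhibits $w$ if $w=1$) and the enumeration of homomorphisms to finite symmetric groups (which, by residual finiteness, eventually exhibits a finite quotient in which $w\neq 1$ if $w\neq 1$). For $G$ itself: although we are not told which subgroups are peripheral, we are promised that $G$ is hyperbolic relative to groups in $\calc$, hence relative to residually finite subgroups, so Corollary \ref{cor_wordpb} --- which rests on the residual hyperbolicity established in Lemma \ref{lem;residual_hyperbolicity} --- already solves the word problem in $G$ from the presentation $\grp{S|R}$ alone.

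Feeding $\grp{S|R}$, the enumerating machine for $\calc$, and these word-problem solutions into \cite[Theorem 3]{DG_presenting} then returns finite generating sets (as words on $S^{\pm 1}$) together with finite presentations of subgroups $P_1,\dots,P_k<G$ forming a full set of conjugacy representatives of a peripheral structure making $G$ relatively hyperbolic, each $P_i$ belonging to $\calc$. In particular, since virtually polycyclic groups are residually finite \cite{Hir_soluble3} and the class $VPC_{\geq 2}$ of infinite, non-virtually-cyclic virtually polycyclic groups is recursively enumerable, the theorem applies to that class. I expect no genuinely hard step here: the whole content is the reduction, and the only delicate point is bookkeeping --- checking that the hypotheses of \cite[Theorem 3]{DG_presenting} are exactly matched by the oracles built above, and noting that no geometric input beyond Corollary \ref{cor_wordpb} (itself the place where residual finiteness of the --- unknown --- peripheral subgroups is used) is required.
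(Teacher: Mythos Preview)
Your proposal is correct and follows essentially the same route as the paper: solve the word problem in $G$ via Corollary \ref{cor_wordpb} (using that the unknown peripheral subgroups are residually finite), then invoke \cite[Theorem 3]{DG_presenting} as a black box. The paper's own proof is in fact more terse than yours --- it records only the word-problem solution in $G$ and the citation --- so your additional paragraph explaining the uniform word problem along the enumeration of $\calc$ (via residual finiteness and finite presentability) is extra bookkeeping that the paper leaves implicit, but it is correct and does no harm.
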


\begin{proof}
  Since $\calc$ consists of residually finite groups, $G$ is residually hyperbolic so we have a solution to the word problem
by Corollary \ref{cor_wordpb}. Theorem 3 of \cite{DG_presenting} then directly applies.
\end{proof}

To solve the isomorphism problem of groups that are hyperbolic relative to some subgroups that are not given,
we need these peripheral subgroups to be canonical.

\begin{dfn}
  A group $H$ is \emph{universally parabolic} if for all relatively hyperbolic group $(G,\calp)$
containing a subgroup $H'$ isomorphic to $H$,
$H'$ is parabolic.
\end{dfn}

For example, a finite group, or a virtually cyclic group is not universally parabolic.
By Lemma \ref{lem_tits}, if a subgroup of a relatively hyperbolic group is infinite, not virtually cyclic, and not parabolic, 
then it contains a free subgroup of rank $2$. It follows that groups in $VPC_{\geq 2}$ are universally parabolic.

The point of this definition is that it makes the peripheral subgroups canonical.

\begin{lem}\label{lem_can}
   Consider $(G,\calp),(G,\calp')$ two relatively hyperbolic groups, where 
$\calp,\calp'$ consist of universally parabolic groups.

Then for any isomorphism $\phi:G\ra G'$,
$\phi(\calp)=\calp'$. 
\end{lem}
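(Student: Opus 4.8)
The plan is to combine universal parabolicity of the groups in $\calp,\calp'$ with the (standard) almost malnormality of the peripheral family. I will first prove $\phi(\calp)\subseteq\calp'$; the reverse inclusion then follows by applying the symmetric argument to $\phi^{-1}$, since the hypotheses are symmetric in $(G,\calp)$ and $(G',\calp')$.

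So fix $P\in\calp$. Since $P$ is universally parabolic and $\phi(P)\leq G'$ is a subgroup isomorphic to $P$ sitting inside the relatively hyperbolic group $(G',\calp')$, the subgroup $\phi(P)$ is parabolic; as $\calp'$ is closed under conjugation this means $\phi(P)\subseteq Q$ for some $Q\in\calp'$. Applying the same reasoning to $Q$, which is universally parabolic, together with the isomorphism $\phi^{-1}\colon G'\ra G$, I get some $\tilde P\in\calp$ with $\phi^{-1}(Q)\subseteq\tilde P$. Hence $P\subseteq\phi^{-1}(Q)\subseteq\tilde P$, so $P$ and $\tilde P$ are two members of $\calp$ with $P\subseteq\tilde P$, and $P\cap\tilde P=P$. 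Since $P$ is universally parabolic it is infinite (a finite group is not universally parabolic), so this intersection is infinite.

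The key step is then to conclude $P=\tilde P$, which is where I invoke that the peripheral subgroups of a relatively hyperbolic group form an almost malnormal family: if $P_1,P_2\in\calp$ and $P_1\cap P_2$ is infinite, then $P_1=P_2$. Concretely, using Definition \ref{def;RHG}, each $P_i\in\calp$ is the stabilizer of a horoball $B_i\in\calH$ in $X$; an infinite subgroup of $P_i$ has the center of $B_i$ as its unique limit point in $\partial_\infty X$, and since the horoballs of $\calH$ are pairwise disjoint they have pairwise distinct centers, so an infinite $P_1\cap P_2$ forces $B_1=B_2$, hence $P_1=P_2$. Applying this with $P_1=P$, $P_2=\tilde P$ gives $P=\tilde P$; then from $\phi^{-1}(Q)\subseteq\tilde P=P$ we get $Q\subseteq\phi(P)$, and combined with $\phi(P)\subseteq Q$ this yields $\phi(P)=Q\in\calp'$. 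Thus $\phi(\calp)\subseteq\calp'$, and the symmetric argument for $\phi^{-1}$ gives $\calp'\subseteq\phi(\calp)$, so $\phi(\calp)=\calp'$. The only non-formal ingredient is the almost malnormality of the peripheral family, which belongs to the basic theory of relatively hyperbolic groups.
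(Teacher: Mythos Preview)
Your proof is correct and follows essentially the same route as the paper: use universal parabolicity to get $\phi(P)\subset Q$ and $\phi^{-1}(Q)\subset \tilde P$, then use that $P,\tilde P\in\calp$ are infinite with infinite intersection to conclude $P=\tilde P$, hence $\phi(P)=Q$. The paper's proof is terser---it simply asserts that $P,\tilde P\in\calp$ infinite forces $P=\tilde P$---whereas you spell out the almost malnormality argument explicitly, but the structure is identical.
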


\begin{proof}
  Indeed, $P\in\calp$ being universally parabolic, there is $P'\in\calp'$ such that
  $\phi(P)\subset P'$. Similarly, there is $P''\in\calp$ such that
  $\phi\m(P')\subset P''$. Since $P,P''$ are in $\calp$ and infinite (because they are universally parabolic),
  this implies $P=P''$. It follows that $\phi(P)=P'$,
  $\phi(\calp)\subset\calp'$ and $\phi(\calp)=\calp'$ by the symmetric
  argument.
\end{proof}

The following theorem therefore applies to the class $\calc=VPC_{\geq 2}$.

\begin{theo}\label{thm_iso_class}
Let $\calc$ be a recursively enumerable class of finitely presented groups that are residually finite and universally parabolic.
Then there is an algorithm that solves the following problem. 

It takes as input a pair of group presentations $G=\grp{S|R}$, $G'=\grp{S'|R'}$, such that $G,G'$ 
are non-elementary hyperbolic relative to some groups in $\calc$  (not given in the input),
and do not split non-trivially over a finite, parabolic or $\Zmax$ subgroup, relative to the parabolic subgroups.

The output is the answer to the question whether they are isomorphic.
\end{theo}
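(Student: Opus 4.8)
The plan is to reduce this statement to Theorem \ref{theo;isom_algo} by first algorithmically recovering the peripheral structure. The crucial inputs are already in place: since $\calc$ is recursively enumerable and consists of residually finite finitely presented groups, Theorem \ref{thm_find} applies, and from the presentations $G=\grp{S|R}$ and $G'=\grp{S'|R'}$ one can compute finite generating sets (as words on $S^{\pm1}$, resp.\ $S'^{\pm 1}$) and finite presentations of subgroups $P_1,\dots,P_k<G$ and $P'_1,\dots,P'_{k'}<G'$ such that each $P_i,P'_j$ lies in $\calc$, and $G$ (resp.\ $G'$) is hyperbolic relative to $P_1,\dots,P_k$ (resp.\ $P'_1,\dots,P'_{k'}$). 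This produces valid inputs in the sense of Theorem \ref{theo;isom_algo}: finite presentations of the ambient groups together with finite generating sets of conjugacy representatives of the peripheral structures $\calp=\{[P_1],\dots,[P_k]\}$ and $\calp'=\{[P'_1],\dots,[P'_{k'}]\}$. Moreover, groups in $\calc$ are universally parabolic, hence in particular infinite and not virtually cyclic (a finite or virtually cyclic group embeds non-parabolically in some relatively hyperbolic group, e.g.\ in a free group or a hyperbolic group, contradicting universal parabolicity), so the hypotheses of Theorem \ref{theo;isom_algo} that peripheral subgroups are infinite and residually finite are satisfied.

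Next I would check that the rigidity and non-elementarity hypotheses of Theorem \ref{theo;isom_algo} hold for the peripheral structures just computed. Non-elementarity of $(G,\calp)$ and $(G',\calp')$ is part of the hypothesis of the present theorem. For rigidity: the hypothesis is that $G$ and $G'$ do not split non-trivially over a finite, parabolic, or $\Zmax$ subgroup relative to their parabolic subgroups. Since the family of parabolic subgroups of $(G,\calp)$ is intrinsic once one knows $\calp$, and since $\calp$ is built from universally parabolic groups, this is precisely the assertion that $(G,\calp)$ is $\Zmax$-rigid in the sense of Definition \ref{dfn_Zmax_rigid}; the same holds for $(G',\calp')$. (Here one uses that a non-trivial elementary splitting of $G$ in which every $P_i$ is elliptic is exactly what Definition \ref{dfn_Zmax_rigid} forbids, once ``elementary'' is restricted to the finite/parabolic/$\Zmax$ classes as in the statement.) Therefore Theorem \ref{theo;isom_algo} applies to the pair $(G,\calp),(G',\calp')$ and decides whether $(G,\calp)\simeq(G',\calp')$, producing an explicit isomorphism when one exists.

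Finally I would argue that the output of Theorem \ref{theo;isom_algo} answers the original question, i.e.\ that $(G,\calp)\simeq(G',\calp')$ if and only if $G\simeq G'$. The forward direction is trivial. For the converse, suppose $\phi:G\ra G'$ is an abstract isomorphism. Since $\calp$ and $\calp'$ consist of universally parabolic groups and these groups are infinite, Lemma \ref{lem_can} gives $\phi(\calp)=\calp'$, so $(G,\calp)\simeq(G',\calp')$. Thus the algorithm of Theorem \ref{theo;isom_algo}, run on the computed data, correctly decides whether $G\simeq G'$, and the whole procedure (run Theorem \ref{thm_find} on both inputs, then feed the result to Theorem \ref{theo;isom_algo}) is the desired algorithm. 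The main subtlety to be careful about is the bookkeeping identifying the hypothesis ``no non-trivial splitting over finite/parabolic/$\Zmax$ relative to parabolics'' with $\Zmax$-rigidity of the recovered peripheral structure — one must make sure that the peripheral structure produced by Theorem \ref{thm_find} is the genuine relatively hyperbolic peripheral structure (which it is, by that theorem's conclusion that $G$ is hyperbolic relative to the $P_i$), so that ``parabolic'' has its intended meaning; everything else is a direct appeal to the cited results.
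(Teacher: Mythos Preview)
Your proof is correct and follows essentially the same route as the paper: compute the peripheral structure via Theorem \ref{thm_find}, feed it to Theorem \ref{theo;isom_algo}, and use Lemma \ref{lem_can} to pass between $(G,\calp)\simeq(G',\calp')$ and $G\simeq G'$. One small imprecision: when you write that the peripheral structure produced by Theorem \ref{thm_find} is ``the genuine relatively hyperbolic peripheral structure\ldots by that theorem's conclusion that $G$ is hyperbolic relative to the $P_i$'', that conclusion alone does not give uniqueness --- the paper closes this gap by applying Lemma \ref{lem_can} with $\phi=\id$ to identify the computed structure with the hypothesized one, and you should cite that lemma there as well (you already invoke it later, so this is only a matter of placement).
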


\begin{rem}
  One easily checks that our algorithm is uniform in $\calc$ in the following sense: there exists an algorithm 
that takes as input both a Turing machine enumerating a class $\calc$ of finitely presented groups that are residually finite and universally parabolic, 
and a pair of group presentations, and which solves the isomorphism problem stated in Theorem \ref{thm_iso_class}.
\end{rem}

\begin{proof}
Let $P_1,\dots,P_k<G$ be some groups in $\calc$ such that $G$ is hyperbolic relative to $\{[P_1],\dots,[P_k]\}$.
By Theorem \ref{thm_find}, one can compute a finite generating set and presentation  of some  finitely presented subgroups 
$\Tilde P_1,\dots,\Tilde P_{\Tilde k} <G$
with $\Tilde P_i$ in the class $\calc$, and $G$ is hyperbolic relative to $\{[\Tilde P_1],\dots,[\Tilde P_k]\}$.
Since groups in $\calc$ are universally parabolic, Lemma \ref{lem_can} says that $\{[P_1],\dots,[P_k]\}= \{[\Tilde P_1],\dots,[\Tilde P_{\tilde k}]\}$.
In particular $k=\Tilde k$, and
one has really computed a generating set and presentation of conjugates of $P_1,\dots, P_k$.

Similarly, compute a generating set and a presentation of subgroups $P'_1,\dots P'_{k'}<G'$ in the class $\calc$
such that $G'$ is hyperbolic to $\{[P'_1],\dots,[P'_{k'}]\}$.
Now use the solution to the isomorphism problem in Theorem \ref{theo;isom_algo} to determine whether
$(G,\{[P_1],\dots,[P_k]\})\simeq (G',\{[P'_1],\dots,[P'_{k'}]\})$.
By Lemma \ref{lem_can}, this is the case if and only if $G\simeq G'$.
\end{proof}

\begin{cor}\label{cor_vpc}
 The isomorphism problem is solvable for the class of groups that are non-elementary, relatively hyperbolic with respect 
to virtually polycyclic subgroups, and that do not split non-trivially
over a virtually polycyclic subgroup such that 
 its virtually polycyclic subgroups of Hirsch length $\geq 2$ are conjugate in a factor of the splitting.\qed
\end{cor}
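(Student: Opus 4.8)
The plan is to obtain this statement as the special case of Theorem~\ref{thm_iso_class} where $\calc$ is the class $VPC_{\geq 2}$ of virtually polycyclic groups that are infinite and not virtually cyclic (that is, of Hirsch length at least $2$). First I would check that $VPC_{\geq 2}$ satisfies the three standing hypotheses on $\calc$ in Theorem~\ref{thm_iso_class}: it is recursively enumerable, by enumerating all finite iterated extensions with cyclic and finite kernels as in the discussion preceding Theorem~\ref{thm_find}; it consists of residually finite groups, by Hirsch's theorem \cite{Hir_soluble3}; and it consists of universally parabolic groups, since by Lemma~\ref{lem_tits} a subgroup of a relatively hyperbolic group that is infinite, not virtually cyclic and not parabolic contains a free subgroup of rank $2$, which no virtually polycyclic group does, so any $VPC_{\geq 2}$ subgroup of a relatively hyperbolic group is parabolic.

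Next I would normalize the peripheral structure. If $(G,\calp)$ is hyperbolic relative to some virtually polycyclic subgroups, those peripheral subgroups that are finite or virtually cyclic may be discarded, leaving $G$ hyperbolic relative to the remaining ones, all of which then lie in $VPC_{\geq 2}$; if nothing is left, $G$ is a non-elementary hyperbolic group whose isomorphism type is already decided by \cite{DG2} (the engine invoked inside Theorems~\ref{theo;isom_algo}--\ref{thm_iso_class} anyway). So we may assume $(G,\calp)$ and $(G',\calp')$ are non-elementary and hyperbolic relative to subgroups in $VPC_{\geq 2}$. Since the algorithm is not handed the peripheral subgroups, this normalization is exactly what Theorem~\ref{thm_find} achieves, and it is the mechanism by which Theorem~\ref{thm_iso_class} recovers peripheral subgroups lying in $\calc$.

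The only point that is not purely formal is to see that the non-splitting hypothesis of the corollary implies the $\Zmax$-rigidity required by Theorem~\ref{thm_iso_class} (Definition~\ref{dfn_Zmax_rigid}). Here I would use Lemma~\ref{lem_tits} once more: since every $P_i$ lies in $VPC_{\geq 2}$, every virtually polycyclic subgroup of $G$ of Hirsch length at least $2$ is infinite and not virtually cyclic, hence, not containing a free subgroup of rank $2$, must be parabolic, i.e.\ conjugate into some $P_i$; conversely each $P_i$ is itself such a subgroup. Thus, for a splitting of $G$, the condition ``every virtually polycyclic subgroup of Hirsch length $\geq 2$ is conjugate in a factor'' coincides with ``every peripheral subgroup is conjugate in a factor''. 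Since a finite subgroup, a parabolic subgroup (being contained in a conjugate of some $P_i$, hence virtually polycyclic) and a $\Zmax$ subgroup (being virtually cyclic) are all virtually polycyclic, a non-trivial splitting of $G$ over a finite, parabolic or $\Zmax$ subgroup in which every $P_i$ is conjugate into a factor would be a non-trivial splitting over a virtually polycyclic subgroup in which every virtually polycyclic subgroup of Hirsch length $\geq 2$ is conjugate into a factor, which is excluded by hypothesis. Hence $(G,\calp)$ and $(G',\calp')$ are $\Zmax$-rigid, Theorem~\ref{thm_iso_class} applies, and since groups in $VPC_{\geq 2}$ are universally parabolic, its output answers the isomorphism question for $G$ and $G'$ as plain groups (Lemma~\ref{lem_can}). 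The main thing to be careful about is precisely this interplay between the two splitting conditions, together with the reduction to a $VPC_{\geq 2}$ peripheral structure; everything else is a direct appeal to Theorem~\ref{thm_iso_class}.
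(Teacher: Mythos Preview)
Your proposal is correct and follows the same route as the paper: normalize the peripheral structure to $VPC_{\geq 2}$, verify that this class meets the hypotheses of Theorem~\ref{thm_iso_class} (recursively enumerable, residually finite via \cite{Hir_soluble3}, universally parabolic via Lemma~\ref{lem_tits}), and check that the corollary's non-splitting hypothesis translates into the rigidity needed there. Your treatment is in fact more detailed than the paper's one-paragraph proof, which simply asserts that ``the non-splitting hypothesis asks precisely that $(G,\calp_{\geq 2})$ is rigid'' and invokes Theorem~\ref{thm_iso_class}; your careful unpacking of why the two splitting conditions coincide is exactly the content behind that assertion.
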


\begin{proof}[Proof of  Corollary \ref{cor_vpc}] 
If a group $G$ is relatively hyperbolic with respect to a family $\calp$ of virtually polycyclic groups, then $G$ is still relatively hyperbolic
with respect to the family $\calp_{\geq 2}\subset \calp$ 
 obtained by removing from $\calp$ the finite and virtually cyclic subgroups.
The non-splitting hypothesis asks precisely that $(G,\calp_{\geq 2})$ is rigid. 
Thus Theorem \ref{thm_iso_class} applies with $\calc=VPC_{\geq 2}$.
\end{proof}

\subsection{Other applications}
 
  \begin{coro}
    There is an algorithm that takes as input the presentations of two fundamental groups of finite volume manifolds with pinched negative curvature, and  indicates whether they are isomorphic.
  \end{coro}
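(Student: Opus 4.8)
The plan is to recognise that the fundamental group $G=\pi_1(M)$ of a complete finite volume manifold $M$ of pinched negative curvature, say of dimension $n$, lies in the class covered by Corollary \ref{cor_vpc}, and then simply to run the algorithm provided there. If $n\le 2$ then $G$ is free (when $M$ is non-compact) or a closed surface group (when $M$ is compact), and in both cases the isomorphism problem is classical; so I would assume $n\ge 3$. By the Margulis lemma $M$ has finitely many cusps, and a neighbourhood of each cusp deformation retracts onto an infra-nil manifold of dimension $n-1$; hence the associated peripheral subgroup $P_i$ is finitely generated virtually nilpotent of Hirsch length $n-1\ge 2$, in particular infinite, not virtually cyclic, virtually polycyclic, and (by \cite{Hir_soluble3}) residually finite. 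By \cite{Bow_relhyp,Farb_relatively}, $G$ is hyperbolic relative to $\calp=\{[P_1],\dots,[P_k]\}$ (with $k=0$, and $G$ Gromov hyperbolic, if $M$ is compact). Since $\widetilde M$ is contractible and $G$ acts freely on it, $G$ is torsion-free; and $G$ is non-elementary, for $M$ has a non-empty thick part (a complete finite volume negatively curved manifold is not a single cusp neighbourhood), so $G$ contains a loxodromic isometry of $\widetilde M$, hence a rank $2$ free subgroup, and no $P_i$ equals $G$.

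The only step where the geometry enters essentially is the verification that $(G,\calp)$ is \emph{rigid} in the sense of Definition \ref{dfn_rigid}, equivalently (by the proof of Corollary \ref{cor_vpc}) that $G$ admits no non-trivial splitting over a virtually polycyclic subgroup relative to $\calp$. By Lemma \ref{lem_tits} every virtually polycyclic subgroup of $G$ is either parabolic or virtually cyclic, so such a splitting would be over a finite, parabolic or $\Zmax$ subgroup with every $P_i$ elliptic. I would rule all three out through the Bowditch boundary: for a finite volume pinched negatively curved $n$-manifold every ideal point of $\widetilde M$ is a conical limit point or a bounded parabolic point, so $\partial(G,\calp)$ is the sphere $S^{n-1}$; as $n-1\ge 2$ this sphere is connected and is disconnected by no finite set of points, so by Bowditch's dictionary between elementary splittings of a relatively hyperbolic group and cut points / cut pairs of its boundary \cite{Bow_relhyp} there is no non-trivial splitting of $(G,\calp)$ relative to $\calp$ over a finite subgroup (connectedness), a parabolic subgroup (no cut point), or a two-ended subgroup (no cut pair). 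Hence $(G,\calp)$ is rigid. Granting this, both $G$ and $G'$ satisfy the hypotheses of Corollary \ref{cor_vpc}, so running its algorithm on the two given presentations decides whether $G\simeq G'$; and since each $P_i$ belongs to $VPC_{\ge 2}$ and is therefore universally parabolic, Lemma \ref{lem_can} shows this is the same as deciding whether $(G,\calp)\simeq(G',\calp')$, consistently with Theorem \ref{theo;isom_algo}.

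I expect the main obstacle to be exactly the rigidity statement of the second paragraph — that a finite volume pinched negatively curved manifold group never splits, relative to its cusps, over a virtually polycyclic subgroup — since this is the only input that is not a direct appeal to a result in the body of the paper. Its proof should combine the identification of $\partial(G,\calp)$ with $S^{n-1}$ (which is where completeness and finiteness of volume are used, via geometric finiteness of the action on $\widetilde M$) with Bowditch's results linking such splittings to the topology of the boundary; both are standard, but the bookkeeping of the parabolic edge case needs a little care. Everything algorithmic is already contained in Corollary \ref{cor_vpc}, i.e.\ in Theorem \ref{thm_iso_class} applied with $\calc=VPC_{\ge 2}$.
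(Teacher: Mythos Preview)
Your reduction to Corollary \ref{cor_vpc} is exactly what the paper does: peripheral subgroups are virtually nilpotent hence virtually polycyclic, the group is non-elementary, and one must check rigidity. The only difference lies in how rigidity is established. The paper simply cites \cite[Th.~1.3(i)]{BeSz_endomorphisms} (Belegradek--Szczepa\'nski) as a black box asserting that such a manifold group has no elementary splitting relative to the cusps, whereas you outline an independent argument via the identification $\partial(G,\calp)\cong S^{n-1}$ and the dictionary between elementary splittings and the cut-point/cut-pair structure of the boundary. Your route is sound and has the virtue of explaining \emph{why} rigidity holds; the trade-off is that the cut-pair side of that dictionary for relatively hyperbolic groups is not in \cite{Bow_relhyp} itself but in later work (Bowditch's cut-point paper for the hyperbolic case, and its extensions to the relative setting by Groves and by Papasoglu--Swenson), so the references would need tightening. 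The paper's one-line citation buys brevity; your approach is more transparent and would stand even without knowing the Belegradek--Szczepa\'nski reference. Your additional bookkeeping for low dimension and for the compact case is harmless and not needed in the paper's terse version.
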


This is a particular case of Corollary \ref{cor_vpc}
since in this case parabolic subgroups are virtually nilpotent (hence polycyclic-by-finite),
and because, by  \cite[Theorem 1.3(i)]{BeSz_endomorphisms},  there is no  splitting over
elementary subgroups.

If $M,N$  are two complete connected Riemannian
manifolds with pinched negative curvature and finite volume,
then any isomorphism between their fundamental groups is induced by a homotopy equivalence.
In dimension at least 3, any such isomorphism will match the parabolic subgroups of the two manifolds 
(because they are virtually nilpotent, but not virtually cyclic)
so $M$ and $N$ are in fact proper homotopy equivalent.
Now if $M,N$ are of dimension at least $6$, a theorem by Farrell and Jones 
tells us that $M$ and $N$ are then homeomorphic \cite[Corollary 1.1]{FaJo_compact}.
It follows that our algorithm allows to solve the homeomorphism problem for such manifolds (at least from data allowing to compute a presentation of the fundamental group).
\\

We also can apply Theorem \ref{thm_iso_class} for the more exotic class $\calc=\cala\calf$
of (finitely generated free abelian of rank $\geq 2$) by (finitely generated free) groups.
Indeed groups in this class are residually finite (as a  split extension of two finitely generated residually finite groups).
These groups are universally parabolic. Indeed, assume that $H$ is a subgroup of a relatively hyperbolic group, 
and that $H$ contains a normal subgroup $N$ isomorphic to $\bbZ^k$, $k\geq 2$.
Then $N$ has to be parabolic, and by the malnormality 
 of peripheral subgroups, this implies that $H$ is parabolic.

  \begin{cor}\label{cor_af} The isomorphism problem is solvable for the class of groups that are non-elementary hyperbolic relative to
    groups in the class $\cala\calf$, and that do not split relative to the parabolic subgroups over a finite, parabolic,
    or $\Zmax$ subgroup.
  \end{cor}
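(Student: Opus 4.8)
The plan is to reduce Corollary~\ref{cor_af} to Theorem~\ref{thm_iso_class} by verifying that the class $\cala\calf$ of (finitely generated free abelian of rank $\geq 2$)-by-(finitely generated free) groups satisfies the three hypotheses required there: it is recursively enumerable, it consists of finitely presented residually finite groups, and its members are universally parabolic. First I would record that every $H\in\cala\calf$ is finitely presented: it is an extension $1\to A\to H\to F\to 1$ with $A\cong\bbZ^k$, $k\geq 2$, finitely generated and $F$ finitely generated free, and an extension of a finitely presented group by a finitely presented group is finitely presented. For residual finiteness, $H$ is an extension of the residually finite group $F$ by the residually finite group $A$, with $A$ finitely generated; by a classical result (extensions of polycyclic-by-finite normal subgroups, or more simply the fact that a finitely generated abelian-by-(residually finite) group with the abelian part finitely generated is residually finite when the action is... ) — more cleanly, since $F$ is free hence the extension splits, $H=A\rtimes F$ is a split extension of two finitely generated residually finite groups, hence residually finite.

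Next I would check recursive enumerability. As in the discussion of $VPC_{\geq 2}$ just before Theorem~\ref{thm_find}, one enumerates finitely generated free abelian groups $A=\bbZ^k$ ($k\geq 2$), finitely generated free groups $F$, and then all actions $F\to\Aut(A)=GL_k(\bbZ)$ (specified by the images of a free basis of $F$, which is unconstrained since $F$ is free), forming in each case a presentation of $A\rtimes F$. This produces a Turing machine enumerating presentations, each representing a group in $\cala\calf$, with every group in $\cala\calf$ appearing; so $\cala\calf$ is recursively enumerable.

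The key point is universal parabolicity. Suppose $(G,\calp)$ is relatively hyperbolic and $H<G$ is isomorphic to some group in $\cala\calf$, so $H$ contains a normal subgroup $N\cong\bbZ^k$ with $k\geq 2$. Since $N$ is abelian of rank $\geq 2$ it contains no free subgroup of rank $2$ and is not virtually cyclic, so by Lemma~\ref{lem_tits} (i.e.\ Lemma~\ref{lem_loc_parab}) $N$ must be parabolic, say $N<P$ for a unique peripheral $P\in\calp$ (uniqueness because peripheral subgroups are almost malnormal). For each $h\in H$, $hNh\m=N<P$, so $hNh\m<P\cap hPh\m$; since $P\cap hPh\m$ contains the infinite group $N$, almost malnormality of peripheral subgroups forces $hPh\m=P$, i.e.\ $h$ normalizes $P$. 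But a peripheral subgroup in a relatively hyperbolic group is equal to its own normalizer, so $h\in P$. As this holds for all $h\in H$, we get $H<P$, i.e.\ $H$ is parabolic. Hence groups in $\cala\calf$ are universally parabolic.

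Finally, the hypothesis of Corollary~\ref{cor_af} — non-elementary, hyperbolic relative to groups in $\cala\calf$, and no splitting relative to the parabolic subgroups over a finite, parabolic, or $\Zmax$ subgroup — is exactly the hypothesis ``$\Zmax$-rigid and non-elementary'' of Theorem~\ref{thm_iso_class}, applied with $\calc=\cala\calf$. So Theorem~\ref{thm_iso_class} yields the desired algorithm. The only step with any content is the almost-malnormality argument for universal parabolicity; everything else is bookkeeping, and the main obstacle is simply making sure one has cited the correct malnormality/self-normalizing properties of peripheral subgroups (standard, e.g.\ from \cite{Osin_relatively}). \qed
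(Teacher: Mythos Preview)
Your proof is correct and follows essentially the same approach as the paper: verify that $\cala\calf$ is a recursively enumerable class of finitely presented, residually finite, universally parabolic groups, then invoke Theorem~\ref{thm_iso_class}. Your treatment is in fact more explicit than the paper's on finite presentation and recursive enumerability, and your universal-parabolicity argument (via $N\cong\bbZ^k$ being parabolic by Lemma~\ref{lem_tits}, then almost malnormality and self-normalization of peripheral subgroups forcing $H\subset P$) is exactly the argument the paper sketches in the paragraph preceding the corollary.
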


  This corollary might seems surprising given that the isomorphism problem is not solvable in the class $\cala\calf$ \cite{Zimmermann_Klassifikation}.
This shows in particular the necessity of the assumption that the relatively hyperbolic groups considered are non-elementary.

\subsection{A negative result}

We give an example  showing that one cannot enlarge the class of peripheral
groups too much in the previous results.

We base our construction on a wild finitely presented solvable group $P$  with the following properties:
\begin{enumerate}
\item $P=\grp{S|R}$ is finitely presented
\item the center $Z(P)$ of $P$ is free abelian 
\item There exists a recursively enumerable sequence of words $w_i$ on $S^{\pm 1}$, such that $w_i$ represents an elements of $Z(P)$
for all $i$, but such that 
 one cannot decide whether $w_i$ represents the trivial element.
\end{enumerate}

A soluble group with these properties can be constructed using the techniques of \cite{KhSa_survey} (private communication).
See \cite{Kharlampovich_unsolvable81,BGS_algorithmically2} for similar constructions, in which assertion (2) need not hold.

As usual, we identify the word $w_i$ with the element of $Z(P)$ it represents.

Given the existence of such a group, we prove the unsolvability of the following isomorphism problem.
Recall that if a group has Kazhdan property $(T)$, then it has no non-trivial action on an $\bbR$-tree and no non-trivial splitting at all.

\begin{prop}\label{prop_solvable}
The isomorphism problem between relatively hyperbolic groups
with property $(T)$, and whose peripheral  groups are finitely presented soluble is algorithmically unsolvable,
even if generating sets and presentations of the peripheral subgroups are given in the input.
\end{prop}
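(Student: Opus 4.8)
The plan is to reduce the unsolvability of the word problem for the central elements $w_i$ in the wild soluble group $P$ to an isomorphism problem between relatively hyperbolic groups with property $(T)$.

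First I would use a small cancellation construction over a free product to manufacture, from $P$, a relatively hyperbolic group $G_P$ with property $(T)$ in which $P$ embeds as a peripheral subgroup. Starting from $P * F$ for a suitable free group $F$ of large rank, one adds relations of the form (generator of $F$)${}\cdot w$ for $w$ ranging over a carefully chosen infinite family of ``random''-looking elements of $P*F$ satisfying a $C'(1/6)$-type small cancellation condition relative to the peripheral structure $\{[P]\}$; the Dehn filling / relative small cancellation machinery (as in the constructions alluded to in the introduction) gives that $P$ still embeds, that the quotient is hyperbolic relative to the image of $P$, and that — choosing enough relations — the quotient has Kazhdan's property $(T)$, hence no nontrivial splitting at all and no nontrivial action on an $\bbR$-tree. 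Crucially this construction is \emph{uniform and explicit}: from a presentation $\grp{S|R}$ of $P$ together with a finite generating set of $Z(P)$ (which is part of the hypotheses on $P$) one can algorithmically write down a presentation of $G_P$ and a generating set and presentation of the peripheral subgroup $P$.

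Second, and this is where the central element $w_i$ enters, I would build two families of groups indexed by $i$ whose isomorphism type (as groups, or equivalently, since peripherals are canonical here by malnormality, as groups with peripheral structure) encodes whether $w_i=1$ in $P$. The cleanest way: form the peripheral group $P_i := P \times_{Z} \bbZ$, the pushout (central extension) built from $P$ and $\bbZ$ glued along the subgroup $\grp{w_i}\subset Z(P)$ — so $P_i \cong P$ when $w_i=1$ and $P_i$ is a genuinely ``larger'' central extension when $w_i$ has infinite order, with $P_i\not\cong P$ detectable by the rank of the center (which is free abelian by hypothesis (2)). All the $P_i$ are finitely presented soluble, one can compute their presentations and generating sets uniformly in $i$, and one can run the small cancellation construction above on each to get $G_{P_i}$ with property $(T)$, peripheral $\{[P_i]\}$, again uniformly. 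Then $G_P \simeq G_{P_i}$ (as relatively hyperbolic groups, equivalently as abstract groups since property $(T)$ and malnormality make the peripheral structure canonical via Lemma \ref{lem_tits}-type arguments) if and only if $P\simeq P_i$ if and only if $w_i=1$ in $P$. A solution to this isomorphism problem would therefore decide whether $w_i$ is trivial, contradicting property (3) of $P$.

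The main obstacle is ensuring that the small cancellation construction is simultaneously (a) genuinely functorial/uniform — the relations added and the verification of the small cancellation condition must be carried out by an algorithm from the raw presentation data, which requires a solution to the word problem in $P$ and in $P_i$; here I would invoke that finitely presented \emph{soluble} groups have solvable word problem is \emph{false} in general, so instead one must choose $P$ (as in \cite{KhSa_survey}) so that the word problem \emph{is} solvable while the membership problem $w_i\in\{1\}$ inside $Z(P)$ is not, which is consistent with property (3) as stated — and (b) that the resulting $G_{P_i}$ really does have property $(T)$ and that the embedding $P_i\hookrightarrow G_{P_i}$ is isomorphism-type-faithful, i.e. $G_{P_i}\cong G_{P_j}\Rightarrow P_i\cong P_j$. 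Point (b) is handled by the rigidity of the peripheral structure (peripheral subgroups of a property-$(T)$ relatively hyperbolic group are canonical: infinite, non-virtually-cyclic, and any abstract isomorphism must match them by malnormality, as in Lemma \ref{lem_can} with $\calc$ the soluble groups in question) plus the fact that $G_P$ and $G_{P_i}$ share the same ``ambient'' free data so an isomorphism descends to one between the peripherals. I expect writing (a) carefully — in particular pinning down a single wild soluble $P$ with solvable word problem, free abelian center, and an undecidable sequence of central words, and checking the small cancellation hypotheses hold relative to \emph{that} peripheral structure uniformly in $i$ — to be the technical heart of the argument.
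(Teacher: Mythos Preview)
Your approach has genuine gaps, and the paper's route is both simpler and avoids them.

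First, your construction of $P_i$ does not do what you claim. If $P_i$ is the pushout $P \leftarrow \grp{w_i} \rightarrow \bbZ$ (amalgamated product), then when $w_i=1$ you get $P*\bbZ$, which is not soluble; if instead you mean $(P\times\bbZ)/\grp{(w_i,t^{-1})}$, then when $w_i$ has infinite order the map $p\mapsto (p,1)$ is already an isomorphism $P\xrightarrow{\sim} P_i$, so the two cases are not distinguished. No natural reading gives ``$P_i\cong P$ iff $w_i=1$'' with $P_i$ soluble. Second, and more seriously, your argument needs the implication $P\simeq P_i \Rightarrow G_P\simeq G_{P_i}$, i.e.\ that the small-cancellation construction is isomorphism-functorial on the input presentation. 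The introduction of the paper explicitly explains why this cannot be expected of a computable construction (and indeed your Theorem~\ref{cor_semidirect}-style observation would then solve the isomorphism problem in $\calc$). Third, your aside about choosing $P$ with solvable word problem is self-defeating: if the word problem in $P$ is solvable, then for any recursively enumerable sequence of words $w_i$ one \emph{can} decide whether $w_i=1$, so property~(3) would fail.

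The paper sidesteps all of this by fixing a \emph{single} group $G$ with property $(T)$, hyperbolic relative to $P$ (existence is enough; no algorithmic construction is needed, only a presentation), and then comparing $G$ with its Dehn fillings $G_i=G/\ngrp{w_i^{ki}}$ for a fixed $k$ chosen so that no $k$-th power in $Z(P)$ lies in the finite exceptional set $S$ of the Dehn filling theorem. When $w_i=1$ one trivially has $G_i=G$; when $w_i\neq 1$ the peripheral $\bar P_i\simeq P/\grp{w_i^{ki}}$ acquires central torsion, hence $\bar P_i\not\simeq P$ and (since peripherals are canonical by Lemma~\ref{lem_can}) $G_i\not\simeq G$. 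Presentations of the $G_i$ are obtained from that of $G$ by adding the single relator $w_i^{ki}$, so no functoriality and no word problem in $P$ are needed.
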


First, we construct a group that is hyperbolic relative to $P$.

\begin{lem}\label{lem_exist_RH}
  Given a finitely generated group $P$, there exists a group $G$
  hyperbolic relative to $P$, with Kazhdan property $(T)$. In
  particular $G$ and all its quotients have no non-trivial splitting.

\end{lem}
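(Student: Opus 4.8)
The plan is to realize $P$ as a peripheral subgroup of a relatively hyperbolic group with property (T) by a small‑cancellation construction, in the spirit of the Rips‑type constructions of Belegradek and Osin. We start from $G_0 = P * F_n$, the free product of $P$ with a free group of large rank $n$. Since $F_n$ is word‑hyperbolic, the free factor $F_n$ can be absorbed into the hyperbolic part, so $(G_0,\{[P]\})$ is relatively hyperbolic, $P$ embeds as a peripheral subgroup, and $G_0$ is non‑elementary.

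Next we apply small‑cancellation theory over the relatively hyperbolic group $(G_0,\{[P]\})$ (Osin, \emph{Small cancellations over relatively hyperbolic groups and embedding theorems}). For a finite family $w_1,\dots,w_m$ of words supported in the free part $F_n$ and satisfying the small‑cancellation condition $C'(\lambda)$ over $(G_0,\{[P]\})$ with $\lambda$ small enough, the quotient $G := G_0/\langle\!\langle w_1,\dots,w_m\rangle\!\rangle$ still contains a copy of $P$, is hyperbolic relative to the image of $P$, and remains non‑elementary. Because the relators live in the free factor we have a great deal of freedom in choosing them, and the crux of the argument is to choose them so that, in addition, $G$ satisfies a sufficient criterion for Kazhdan's property (T): this is precisely what the Belegradek–Osin construction achieves, showing that the family of small‑cancellation quotients of $G_0$ preserving the peripheral structure contains groups with property (T). (If one prefers to avoid designing an ad hoc (T)‑producing relator system, one may instead start from $P * \Lambda$ with $\Lambda$ a word‑hyperbolic group with property (T) — e.g.\ a cocompact lattice in $Sp(n,1)$ — and impose relations entangling the two factors while, by Osin's theorem, keeping $P$ peripheral.)

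Finally, property (T) passes to arbitrary quotients, so every quotient of $G$, and in particular every Dehn filling of $(G,\{[P]\})$, has property (T); and by Watatani's theorem a group with property (T) fixes a point in every action on a simplicial tree, hence by Bass–Serre theory has no nontrivial splitting as an amalgamated free product or HNN extension. This proves the ``in particular'' clause. The main obstacle is the middle step: exhibiting, among the relative small‑cancellation quotients of $G_0$ that keep $P$ peripheral, one with property (T) — that is, forcing (T) through the choice of defining relations while remaining compatible with the small‑cancellation condition relative to $P$.
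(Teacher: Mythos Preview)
Your primary approach has a genuine gap: if the relators $w_1,\dots,w_m$ are supported entirely in the free factor $F_n$, then the quotient is simply $P * (F_n/\langle\!\langle w_1,\dots,w_m\rangle\!\rangle)$, a nontrivial free product (unless the second factor collapses). A nontrivial free product acts on its Bass--Serre tree without global fixed point, so it can never have property~(T). The Belegradek--Osin construction does not use relators confined to one factor; the relators must entangle $P$ with the other factor, and that entanglement is exactly what must be controlled.

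Your parenthetical alternative is essentially the paper's proof, but you stop short of the key mechanism. The paper takes $K = H * P$ with $H$ hyperbolic and having property~(T), writes $P = \langle p_1,\dots,p_k\rangle$, and imposes relators of the specific form $r_i = p_i\, w_i(h_1,\dots,h_n)$ satisfying a small-cancellation condition over $(K,\{[P]\})$. Osin's theorem then guarantees that $P$ embeds in $G = K/\langle\!\langle r_1,\dots,r_k\rangle\!\rangle$ and that $G$ is hyperbolic relative to $P$. The crucial point is that these particular relators make each generator of $P$ equal to a word in the generators of $H$, so $H$ \emph{surjects} onto $G$; since property~(T) passes to quotients, $G$ inherits~(T) from $H$. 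Your phrase ``impose relations entangling the two factors'' is correct in spirit, but the proof hinges on choosing the entanglement so that the (T)-group surjects onto the quotient --- that is the idea you are missing.
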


\begin{proof} 
As this is fairly standard, we only sketch the proof. Write $P=\grp{p_1,\dots,p_k}$, 
and consider $H=\grp{h_1,\dots,h_n}$ a hyperbolic group having property $(T)$, and consider the group $K=H*P$ which is 
hyperbolic relative to $P$.  
 Then for $i=1,\dots,k$ consider an  element $r_i$ of $K$ of the form $p_i w_i(h_1,\dots, h_n)$ so that $r_1,\dots,r_k$ satisfies
a small cancellation property (so that \cite[Thm 2.4]{Osin_SC} can be
applied, ensuring that $P$ embeds in the group $G=K/\ngrp{r_1,\dots r_k}$, and $G$ is hyperbolic relative to $P$). Since $H$ surjects onto $G$,
$H$ inherits property $(T)$.  
\end{proof}

\begin{prop}
  Consider a group $G$ that is relatively hyperbolic relative to a group $P$ as above, with $w_i\in Z(P)$ as above.
Given $k>0$, define $G_i=G/\ngrp{w_i^{ki}}$, and $\bar P_i$ the image of $P$ in $G_i$.

Then  there exists $k>1$ such that all $(G_i,P_i)$ are relatively hyperbolic, 
and one cannot decide whether $G\simeq G_i$, nor whether $(G,P)\simeq (G_i,\bar P_i)$.
\end{prop}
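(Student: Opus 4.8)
The plan is to realize each $G_i$ as a Dehn filling of $(G,P)$ along the central cyclic subgroup $\langle w_i^{ki}\rangle\normal P$, to choose $k$ uniformly so that these Dehn kernels are always proper, and then to distinguish $G$ from $G_i$ (when $w_i\neq1$) via the torsion in the centre of the peripheral subgroup, using that solvable peripheral groups are universally parabolic. First I would reduce to the case $\operatorname{rank} Z(P)\geq 3$: if it fails, replace $P$ by $P\times\bbZ^2$, which is still finitely presented and soluble, has centre $Z(P)\times\bbZ^2$ free abelian of rank $\geq3$, and for which the words $w_i$ (in the first factor) still lie in the centre with undecidably trivial value. Build $G$ hyperbolic relative to $P$ with property $(T)$ by Lemma \ref{lem_exist_RH}. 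Since $w_i\in Z(P)$, the subgroup $N_i:=\langle w_i^{ki}\rangle$ is central, hence normal, in $P$, so $K_i:=\ngrp{w_i^{ki}}=\ngrp{N_i}_G$ is a Dehn kernel, $G_i=G/K_i$ a Dehn filling, and $\bar P_i=P/(K_i\cap P)$ its peripheral image; moreover $\bar P_i$ is finitely presented and soluble, with a presentation $\langle S\mid R,w_i^{ki}\rangle$ computable from $i$, and $G_i$ inherits property $(T)$ as a quotient of $G$.

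Next I would choose $k$. Fix $X$ and a base point for $(G,P)$ and apply Theorem \ref{theo;VRF} with $R=1$ to obtain the finite exceptional set $S_1\subset G\setminus\{1\}$, which by the Remark following that theorem lies in $P\setminus\{1\}$. Fix an isomorphism $Z(P)\cong\bbZ^r$ and let $M_0$ be the maximal sup-norm of the (finitely many) elements of $S_1\cap Z(P)$; set $k=M_0+1$. The key estimate is: if $w_i$ is non-trivial in $P$ then its image in $\bbZ^r$ is a non-zero vector, so for every $j\neq0$ the element $w_i^{kij}$ has sup-norm $\geq ki\geq k>M_0$ and therefore does not lie in $S_1$; hence $\langle w_i^{ki}\rangle\cap S_1=\emptyset$, so $N_i$ has positive depth and $K_i$ is a proper Dehn kernel (Definitions \ref{dfn_depth}, \ref{dfn_dehn}). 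If instead $w_i=1$ then $N_i=K_i=\{1\}$ and $G_i=G$. In either case, Theorem \ref{theo;VRF} together with the fact that a Dehn filling of a relatively hyperbolic group is relatively hyperbolic relative to the image of $\calp$ shows that $(G_i,\bar P_i)$ is relatively hyperbolic, with $K_i\cap P=N_i$, so $\bar P_i=P/N_i$.

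Then I would distinguish $G_i$ from $G$ when $w_i\neq1$. The image of $Z(P)$ in $\bar P_i$ is central and isomorphic to $\bbZ^r/\langle ki\,\overline{w_i}\rangle$, which, $\overline{w_i}$ being non-zero, has a non-trivial torsion subgroup (of order a multiple of $k>1$); thus $Z(\bar P_i)$ has torsion while $Z(P)$ is torsion-free, so $P\not\simeq\bar P_i$. On the other hand both $P$ and $\bar P_i$ are soluble (hence contain no rank-$2$ free subgroup), infinite, and not virtually cyclic (each contains $\bbZ^2$, as $r\geq3$ resp.\ $r-1\geq2$), so by Lemma \ref{lem_tits} any subgroup of a relatively hyperbolic group isomorphic to $P$ or to $\bar P_i$ is parabolic. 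Hence if $\phi\colon G\to G_i$ were an isomorphism, $\phi(P)$ would be parabolic in $(G_i,\bar P_i)$, hence contained in a conjugate of $\bar P_i$; after composing $\phi$ with an inner automorphism we may assume $\phi(P)\leq\bar P_i$, and then $\phi^{-1}(\bar P_i)$ is a subgroup of $G$ isomorphic to $\bar P_i$, hence parabolic, hence contained in a conjugate $hPh^{-1}$. So $P\leq\phi^{-1}(\bar P_i)\leq hPh^{-1}$, and since the peripheral subgroup $P$ is a maximal parabolic this forces $P=hPh^{-1}$ and $\phi^{-1}(\bar P_i)=P$, whence $P\simeq\bar P_i$, a contradiction. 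Therefore $w_i=1$ in $P$ if and only if $G\simeq G_i$, if and only if $(G,P)\simeq(G_i,\bar P_i)$ (the reverse implications being trivial, since $w_i=1$ gives $G_i=G$ and $\bar P_i=P$). As $i\mapsto(G_i,\bar P_i)$ is computable, an algorithm deciding either isomorphism question would decide whether $w_i$ is trivial in $P$, which is impossible by the choice of $P$; this proves the proposition, and, since all the $G_i$ are relatively hyperbolic with property $(T)$ and with finitely presented soluble peripheral subgroups whose presentations are computable, it also yields Proposition \ref{prop_solvable}.

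The main obstacle is the uniform choice of $k$ in the second step: one must pick a single constant so that $\langle w_i^{ki}\rangle$ is a proper Dehn kernel for every $i$ simultaneously, which succeeds precisely because $S_1$ is finite and $Z(P)$ is free abelian, so the exponents $m$ for which $w_i^{m}$ could lie in $S_1$ are bounded independently of $i$.
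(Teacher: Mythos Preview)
Your proof is correct and follows essentially the same strategy as the paper: choose $k$ so that the cyclic central subgroups $\langle w_i^{ki}\rangle$ avoid the finite exceptional set (the paper phrases this as ``no element of $S$ is a $k$-th power in $Z(P)$''), distinguish $P$ from $\bar P_i$ by torsion in the centre, and pass from unmarked to marked isomorphism via universal parabolicity (the paper just cites Lemma~\ref{lem_can} where you spell the argument out). Your preliminary reduction to $\operatorname{rank} Z(P)\geq 3$ is an extra precaution the paper does not make---it simply asserts that $\bar P_i$ is not virtually cyclic---but since the proposition is only used to derive Proposition~\ref{prop_solvable}, modifying $P$ upfront is harmless and arguably more careful.
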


\begin{proof}
  First note that since $w_i$ is central, $\grp{w_i}$ is normal in $P$.
By the Dehn filling theorem \cite{GroMan_dehn,Osin_peripheral}, 
there exists a finite set $S\subset P\setminus\{1\}$ such that for all normal subgroup $N\normal P$
avoiding $S$, $G/\ngrp{N}$ is hyperbolic relative to the image $\bar P$ of $P$, and $\bar P\simeq P/N$.
Since $Z(P)$ is a free abelian group, there exists $k$ such that no element of $S$ is a  $k$-th power in $Z(P)$.
For such a $k$,  $(G_i,\bar P_i)$ is relatively hyperbolic for all $i$.

We claim that $(G,P)\simeq(G_i,\bar P_i)$ if and only if $w_i=1$.
Indeed, if $w_i\neq 1$, the center of $\bar P_i\simeq P_i/\grp{w_i^{ki}}$ contains the torsion element $w_i$.
Since $Z(P)$ is torsion-free, $P\not\simeq \bar P_i$, so $(G,P)\not\simeq(G_i,\bar P_i)$.
Finally we claim that $(G,P)\simeq(G_i,\bar P_i)$ if and only $G\simeq G_i$.
This follows  from Lemma \ref{lem_can} since $\bar P$ and $\bar P_i$ are universally parabolic, because they don't contain a non-abelian free-group,
and are not finite nor virtually cyclic.
\end{proof}

\small
\bibliographystyle{alpha}
\bibliography{Dehn_fill_biblio}

\sc \noindent Fran\c{c}ois Dahmani,  Universit\'e de Grenoble Alpes,  Institut Fourier, F-3800 Grenoble, France.

\tt e-mail:francois.dahmani@univ-grenoble-alpes.fr



\sc \noindent Vincent Guirardel. 
    Univ Rennes, CNRS, IRMAR - UMR 6625, F-35000 Rennes, France

\tt  e-mail:vincent.guirardel@univ-rennes1.fr

     \end{document}